\newcommand{\scK}{\mathscr{K}}
\newcommand{\B}{\mathcal{B}}
 \providecommand{\og}{``}
\providecommand{\fg}{''} \providecommand{\smfandname}{and}
\def\crulefill{\leavevmode\leaders\hrule height 1pt\hfill\kern 0pt}
\long\def\QUERY#1{%
\leavevmode\newline%
\noindent$\star\star\star$\thinspace\textsf{Comment/Query}\crulefill\newline%
   \space #1\newline\hbox to 120mm{\crulefill}$\star\star\star$\newline}
\newtheorem{Theorem}{Theorem}[section]
\newtheorem{Lemma}[Theorem]{Lemma}
\newtheorem{Prop}[Theorem]{Proposition}
\theoremstyle{definition}
\newtheorem{example}[Theorem]{Example}
\newtheorem{Defn}[Theorem]{Definition}
\newtheorem{Assumptions}[Theorem]{Assumption}
\numberwithin{equation}{section}
\theoremstyle{definition}
\newtheorem{THEOREM}{Theorem}
\def\Case#1{\medskip\noindent\textbf{Case #1}:\leavevmode\newline}
\def\subcase#1{\bigskip\noindent\textbf{Subcase #1}:\leavevmode\newline}
\def\enumerate{\begingroup\ifnum\@enumdepth>3\@toodeep\else
      \advance\@enumdepth\@ne
      \edef\@enumctr{enum\romannumeral\the\@enumdepth}%
      \topsep\z@\parskip\z@
      \list{\csname label\@enumctr\endcsname}
        {\@nmbrlisttrue\let\@listctr\@enumctr
         \parsep\z@\itemsep\z@\topsep\z@
         \setcounter{\@enumctr}{0}
         \def\makelabel##1{\hss\llap{\rm ##1}}
       }\fi}
\let\bar=\overline
\let\epsilon=\varepsilon
\def\({\big(}
\def\){\big)}
\def\0{\underline{0}}
\DeclareMathOperator{\End}{End}
\DeclareMathOperator{\Rad}{Rad}
\def\Std{\mathscr{T}^{std}}
\def\n{\mathfrak n}
\def\s{\mathfrak s}
\def\t{\mathfrak t}
\def\bft{\t}
\def\Hom{\text{Hom}}
\def\U{\mathbf U}
\def\bbZ{\mathbb Z}
  \gdef\set#1{\mathinner{\lbrace\,{\mathcode`\|"8000%
                                   \let|\midvert #1}\,\rbrace}}
  \gdef\seT#1{\mathinner{\Big\lbrace\,{\mathcode`\|"8000%
                                   \let|\midverT #1}\,\Big\rbrace}}
\def\midvert{\egroup\mid\bgroup}
\def\midverT{\egroup\,\Big|\,\bgroup}
\def\Set[#1]#2|#3|{\Big\{\ #2\ \Big| \
           \vcenter{\hsize #1mm\centering #3}\Big\}}
\def\qed{\hfill\mbox{$\Box$}}
\newcommand{\scI}{\mathscr{I}}
\def\Hom{{\rm Hom}}
\def\mfg{{\mathfrak g}}
\def\Set{{\rm Set}}
\newcommand{\eps}{\epsilon}
\def\frp{\mathfrak p}
\def\frg{\mathfrak g}
\def\Std{\mathscr{T}^{std}}%
\def\n{\mathfrak n}%
\def\s{\mathfrak s}%
\def\t{\mathfrak t}%
\def\bft{\t}%
\def\Hom{\text{Hom}}%
\def\U{\mathbf U}%
\def\textsf#1{{\textit{#1}}}%
\def\frso{\mathfrak{so}}
\def\frh{\mathfrak{h}}
\def\frn{\mathfrak{n}}
\def\frl{\mathfrak{l}}
\def\fru{\mathfrak{u}}
\def\frb{\mathfrak{b}}
\begin{document}
\baselineskip14pt
\title[{\tiny  Decomposition numbers of cyclotomic Brauer algebras }]
{Decomposition numbers of  cyclotomic Brauer algebras over the complex field, I}
\author{ Mengmeng Gao and  Hebing Rui, 
(with an appendix by  Wei Xiao)}

\address{M.G.  School of Mathematical Science, Tongji University,  Shanghai, 200092, China}\email{1810414@tongji.edu.cn}
\address{H.R.  School of Mathematical Science, Tongji University,  Shanghai, 200092, China}\email{hbrui@tongji.edu.cn}
\address{W. Xiao, College of Mathematics and Statistics, Shenzhen Key Laboratory of Advanced Machine Learning and Applications, Shenzhen University, Shenzhen, 518060, Guangdong, China}\email{xiaow@szu.edu.cn}
\thanks{H. Rui is supported  partially by NSFC (grant No.  11571108).  M. Gao is supported  partially by NSFC (grant No.  12301038). }
\date{\today}

\begin{abstract}  Following Nazarov's suggestion~\cite{Naz1}, we refer to  the cyclotomic Nazarov-Wenzl algebra as the cyclotomic Brauer algebra.   
When the cyclotomic Brauer algebra  is isomorphic to the   endomorphism algebra of  $M_{I_i, r}$-- the tensor product of a simple scalar-type parabolic Verma module with the natural module in the parabolic BGG category $\mathcal O$  of types $B_n$, $C_n$ and $D_n$, its decomposition numbers  can theoretically   be computed, based on general results from \cite{AST} and \cite[Corollary~5.10]{RS}.  

 This paper aims to  establish explicit connections between the parabolic Verma modules that appear as subquotients of $M_{I_i, r}$ and the right cell modules of the cyclotomic Brauer algebra   under  condition~\eqref{simple111}.   It allows us to 
explicitly decompose $M_{I_i, r}$  into a direct sum of indecomposable tilting modules by identifying  their  highest  weights and multiplicities. Our result demonstrates that  the   decomposition numbers of such a  cyclotomic Brauer algebra  can be explicitly computed using  the  parabolic Kazhdan-Lusztig polynomials  of types $B_n$, $C_n$, and $D_n$ with suitable parabolic subgroups~\cite{So}.  Finally, condition~\eqref{simple111}  is well-supported  by a result of  Wei Xiao presented in Section~6. 

\end{abstract}
\subjclass[2010]{16S50, 17B10, 33D80}
\maketitle
\baselineskip14pt

\section{Introduction}

 Throughout this paper, we  work over  $\mathbb C$. All  algebras and categories 
are  defined over $\mathbb C$.

   In his groundbreaking  paper  \cite{Ari}, Ariki established a remarkable result stating that   $$K_0(\bigoplus_{r=0}^\infty \mathscr H_{a, r}(\mathbf u)\text{-mod})\otimes_\mathbb Z\mathbb C$$ is isomorphic to an  integral highest weight $\mfg$-module. Here  $\mathscr H_{a, r}(\mathbf u)$ denotes the cyclotomic Hecke algebra of type $G(a, 1, r)$ with parameters $\mathbf u=(u_1, u_2, \ldots, u_a)$, and  $\mfg$ is either $\mathfrak{sl}_\infty$ or $\hat{\mathfrak {sl}}_e$~\cite{Ari}. In this context,  $e$ represents  the quantum characteristic of $q$,  a parameter within  $\mathscr H_{a, r}(\mathbf u)$.
   
   Ariki further demonstrated  that the dual canonical basis elements and  canonical basis elements  of  the integral highest weight module correspond to 
simple $\mathscr H_{a, r}(\mathbf u)$-modules and their projective covers, respectively. When $a=1$, this  result  confirms Lascoux-Leclerc-Thibon's conjecture regarding the decomposition numbers of the  Hecke algebra over $\mathbb C$ at a primitive  $e$th root of unity. 
  
For  two positive integers $a$ and $r$, and two  families of parameters $\mathbf u=(u_1, u_2, \cdots, u_a)$, and
$\omega=(\omega_i)_{i\in \mathbb N}$, Ariki, Mathas and Rui~\cite{AMR} 
 introduced  a class of associative algebras, known as  the cyclotomic Nazarov-Wenzl  algebras $\mathcal B_{a, r}(\mathbf u)$, aiming to  replace the cyclotomic Hecke algebras in Ariki's framework.

The cyclotomic Nazarov-Wenzl algebra is a cyclotomic quotient of the  affine Wenzl algebra in \cite{Naz}. Based on   Nazarov's suggestion~\cite{Naz1}, we refer to the affine Wenzl algebra, and the  cyclotomic Nazarov-Wenzl algebra  as  the  \textsf{affine Brauer algebra, and the cyclotomic Brauer algebra}, respectively.

It was proven in~\cite{AMR}   that    $\mathcal B_{a, r}(\mathbf u)$ reaches  its  maximal dimension $a^r (2r-1)!!$ if and only if $\omega$ is $\mathbf u$-admissible, as defined  in   \cite[Definition~3.6]{AMR}. Moreover,  it follows from \cite{G09}  that    the representation theory of $\mathcal B_{a, r}(\mathbf u)$ is fully governed  under the $\mathbf u$-admissible condition. Therefore, it suffices to study representations of  $\mathcal B_{a, r}(\mathbf u)$ within this   framework. 

With  $\mathbf u$-admissibility of $\omega$,   $\mathcal B_{a, r}(\mathbf u)$ is a (weakly) cellular algebra over the poset $\Lambda_{a, r}$,  which  consists of all pairs $(f, \lambda)$. Here    $\lambda:=(\lambda^{(1)}, \lambda^{(2)}, \ldots, \lambda^{(a)})$ ranges over all  $a$-multipartitions of $r-2f$, and   $0\le f\le \lfloor r/2\rfloor$~\cite[Theorem~7.17]{AMR}.  

This paper uses  an alternative weakly  cellular basis for $\mathcal B_{a, r}(\mathbf u)$ in Theorem~\ref{cellular-1}. By Theorem~\ref{bncell1}, we have  another family of  right cell modules $C(f, \lambda)$, for all $(f, \lambda)\in \Lambda_{a, r}$, along with  a family of simple modules $D(f, \lambda)$, for $(f, \lambda)\in \bar \Lambda_{a, r}$ under the assumption $\omega_0\neq 0$, where 
 \begin{equation}\label{simple123} \bar\Lambda_{a, r}=\{(f, \lambda)\in \Lambda_{a, r}\mid \sigma^{-1}(\lambda)  \text{ is $\mathbf {u}$-restricted in the sense of \eqref{difort}}\},\end{equation}
and $\sigma$ denotes  the \textsf{generalized Mullineaux involution} in \cite[Remark~5.10]{RS1}.

Our goal is to  compute 
\begin{equation}\label{dec1} [C(f, \lambda): D(\ell, \mu)],
\end{equation}
  the decomposition number representing  the multiplicity of $D(\ell, \mu)$ in a composition series of $C(f, \lambda)$  for any  $(f, \lambda)\times (\ell, \mu)\in \Lambda_{a, r}\times \bar \Lambda_{a, r}$. 
  
  The approach is based on~\cite[Theorem~5.4]{RS} stated in Theorem~\ref{thmA}, which established the fundamental connection   between the cyclotomic Brauer algebras and the parabolic BGG category $\mathcal O$ in types $B_n, C_n$ and $D_n$.  To formulate it, we introduce some necessary notions. 

  Let $\mfg$ be either  symplectic Lie algebra $\mathfrak{sp}_{2n}$ or orthogonal Lie algebra $\mathfrak{so}_{2n}$ or $\mathfrak{so}_{2n+1}$. Define the parabolic subalgebra $\mathfrak p_{I_i}\subset \mfg$ corresponding to the subsets $I_1$ and $I_2$, where 
\begin{equation}\label{i1i2d} I_1=\Pi\setminus \{\alpha_{p_1}, \alpha_{p_2}, \ldots, \alpha_{p_{k}} \}\text{ and  $I_2= I_1\cup \{ \alpha_{n}\}$,}\end{equation}  and $0=p_0<p_1<p_2 <\cdots <p_{k-1}<p_k=n$. Here   $\Pi=\{\alpha_1, \alpha_2, \ldots, \alpha_n\}$ is the set of simple roots of $\mfg$. 
  Define  \begin{equation}\label{pdom}\Lambda^{\frp_{I_i}}=\{\lambda\in \mathfrak h^*\mid \langle \lambda, \alpha^\vee\rangle \in \mathbb N\ \  \text{for all $\alpha\in I_i$}\},\end{equation} 
where $\mathfrak h^*$ is the weight space of $\mfg $. 
Let $V$ denote   the natural  $\mathfrak g$-module, and define 
\begin{equation}\label{mir} M_{I_i, r} :=M^{\frp_{I_i}}(\lambda_{I_i, \mathbf c})\otimes V^{\otimes r},\end{equation} where  $M^{\frp_{I_i}}(\lambda_{I_i, \mathbf c})$ is  the parabolic Verma module with the highest weight  
\begin{equation}\label{deltac}\lambda_{I_i, \mathbf c}=\sum_{j=1}^{k} c_j(\epsilon_{p_{j-1}+1}+\epsilon_{p_{j-1}+2}+\cdots+\epsilon_{p_j})\in\Lambda^{\frp_{I_i}},\end{equation}  with  $(c_1,\ldots, c_k)\in \mathbb C^k$ such that $c_k=0$ if $i=2$. 
Denote by $\Phi$  the root system of $\mfg$.
  \begin{THEOREM}\label{thmA} \cite[Theorem~5.4]{RS}  Suppose  $\Phi\neq B_n$  if $i=1$, and $M^{\frp_{I_i}}({\lambda_{I_i, \mathbf c} })$ is simple (and hence tilting). If  $p_t-p_{t-1}\ge 2r$ for all $1\le t\le k$,     then    $  \text{End}_{\mathcal O^{\frp_{I_i}}}(M_{I_i, r})\cong \mathcal  B_{a, r}^{\text{op}}(\mathbf u)$. Here    $\mathcal  B_{a, r}(\mathbf u)$ is the cyclotomic Brauer algebra with the parameters  $\mathbf u=(u_1,\ldots, u_a)$
  such that  $\omega$ is $\mathbf u$-admissible, where $u_1, u_2, \ldots, u_a$ are given in \eqref{ujjj}.  Furthermore, \begin{equation}\label{defa} a=\begin{cases} 2k &\text{if $i=1$,}\\  2k-1 & \text{if $i=2$.}\\
  \end{cases} \end{equation}\end{THEOREM}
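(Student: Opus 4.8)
The plan is to identify $\text{End}_{\mathcal O^{\frp_{I_i}}}(M_{I_i,r})$ with the cyclotomic Brauer algebra by combining three ingredients: (i) the general theory of endomorphism algebras of tensor-product modules $M\otimes V^{\otimes r}$ over a Lie (super)algebra, which produces an action of the affine Brauer category $\AOB$ on the sequence $(M\otimes V^{\otimes r})_{r\ge 0}$; (ii) the computation of the eigenvalues of the relevant ``dot'' endomorphism $x_1$ acting on $M^{\frp_{I_i}}(\lambda_{I_i,\mathbf c})\otimes V$, which will show that $x_1$ satisfies the monic polynomial $\prod_{j}(x_1-u_j)$ of degree $a$, so that the affine action factors through the cyclotomic quotient $\mathcal B_{a,r}(\mathbf u)$; and (iii) a dimension/faithfulness count showing that this map is an isomorphism, not merely a surjection.

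First I would recall the functor from the affine Brauer category: the natural transformations built from the coevaluation/evaluation maps $V\otimes V\to \mathbb C\to V\otimes V$ (using the invariant form that exists precisely for $\mfg$ of types $B_n,C_n,D_n$), the flip on $V\otimes V$, and the Casimir-type endomorphism, give a strict monoidal functor $\AOB \to \mathcal O^{\frp_{I_i}}$ sending $\up^{\otimes r}$ to $M_{I_i,r}$ after tensoring on the left with the fixed module $M^{\frp_{I_i}}(\lambda_{I_i,\mathbf c})$. Here the simplicity hypothesis on $M^{\frp_{I_i}}(\lambda_{I_i,\mathbf c})$ is used twice: it guarantees that the module is tilting (so $M_{I_i,r}$ lies in a category where $\text{Hom}$-spaces behave well and $\text{End}$ is computable), and it ensures that $\text{End}_{\mathcal O^{\frp_{I_i}}}(M_{I_i,0})=\mathbb C$, which is what pins down $\omega_0\ne 0$ and the normalization of the parameters. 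The condition $\Phi\ne B_n$ when $i=1$ enters through the structure of the invariant form and the behavior of $V$ as a self-dual module, ensuring the cap–cup relations of the Brauer category (not the pin/BMW-type relations) are the ones satisfied.

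Next I would compute the action of the dot $x_1$ on $M^{\frp_{I_i}}(\lambda_{I_i,\mathbf c})\otimes V$. Writing $M^{\frp_{I_i}}(\lambda_{I_i,\mathbf c})\otimes V$ in terms of its parabolic Verma filtration, the subquotients are $M^{\frp_{I_i}}(\lambda_{I_i,\mathbf c}+\eta)$ for weights $\eta$ of $V$ that keep the total weight dominant for $\frp_{I_i}$; the key point is that on each such subquotient the endomorphism $x_1$ (which is essentially $\tfrac12$ times the difference of Casimir eigenvalues, or a suitably shifted version) acts by an explicit scalar $u_j$ depending only on the block $j$ in which $\eta$ lands. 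The hypothesis $p_t-p_{t-1}\ge 2r$ guarantees these scalars remain distinct and that no ``collision'' of parabolic weights occurs throughout the iterated tensor product, so that the filtration is multiplicity-free in the relevant sense and $x_1$ is semisimple with exactly $a$ eigenvalues $u_1,\dots,u_a$ as in \eqref{ujjj} — here $a=2k$ in the type where $V$ has a zero weight (case $i=1$, $\Phi\ne B_n$) and $a=2k-1$ when it does not (case $i=2$), which is exactly \eqref{defa}. This forces the affine action to descend to $\mathcal B_{a,r}(\mathbf u)$, and a standard computation of $\omega_m=\text{tr}$ of $x_1^m$ against the coevaluation shows $\omega$ is the $\mathbf u$-admissible sequence attached to this $(u_1,\dots,u_a)$.

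Finally I would upgrade surjectivity to an isomorphism. The natural map $\mathcal B_{a,r}^{\text{op}}(\mathbf u)\to \text{End}_{\mathcal O^{\frp_{I_i}}}(M_{I_i,r})$ is surjective by a double-centralizer / diagrammatic-generation argument (every endomorphism of a tilting module in $\mathcal O^{\frp}$ tensored by $V^{\otimes r}$ is generated by the elementary caps, cups, crossings and dots, which is essentially \cite[Corollary~5.10]{RS}). For injectivity one compares dimensions: $\dim \mathcal B_{a,r}(\mathbf u)=a^r(2r-1)!!$ by \cite{AMR} under $\mathbf u$-admissibility, and one shows $\dim \text{End}_{\mathcal O^{\frp_{I_i}}}(M_{I_i,r})\ge a^r(2r-1)!!$ by exhibiting that the parabolic Verma filtration of $M_{I_i,r}$ has, generically under $p_t-p_{t-1}\ge 2r$, exactly $a^r(2r-1)!!$ ``matchings'' and that distinct diagram basis elements of $\mathcal B_{a,r}(\mathbf u)$ act by linearly independent endomorphisms (separated by their effect on the distinct weight spaces in the filtration). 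I expect the main obstacle to be this last independence/dimension estimate: keeping track of which parabolic Verma subquotients of $M_{I_i,r}$ survive, checking they are pairwise non-isomorphic under the gap hypothesis $p_t-p_{t-1}\ge 2r$, and verifying that the diagrammatic basis of $\mathcal B_{a,r}(\mathbf u)$ maps to a basis — this is where the combinatorics of $\up/\down$-sequences, the generalized Mullineaux involution, and the admissibility of $\omega$ all have to be reconciled simultaneously.
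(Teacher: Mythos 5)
The paper itself offers no proof of this statement: Theorem~\ref{thmA} is imported verbatim from \cite[Theorem~5.4]{RS}, and the only traces of its proof visible here are Proposition~\ref{polyofx} (= \cite[Lemmas~4.11--4.12]{RS}), which records the parabolic Verma flag of $M^{\frp_{I_i}}(\lambda_{I_i,\mathbf c})\otimes V$ and the vanishing of $\prod_j(X_1-u_j)$ on it, and equation~\eqref{card}, which quotes the dimension count $|\delta(f,\mu')|=\dim\Hom_{\mathcal O^{\frp_{I_i}}}(M^{\frp_{I_i}}(\mu),M_{I_i,r})$ from the proof of \cite[Theorem~5.4]{RS}. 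Your three-step skeleton --- affine Brauer category action, cyclotomic relation for $X_1$ read off the Verma flag, surjectivity plus a $\sum_\mu(M_{I_i,r}:M^{\frp_{I_i}}(\mu))^2=a^r(2r-1)!!$ count --- is exactly the strategy of that proof, so on the level of architecture your proposal is the intended one.

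Several of your explanatory asides are, however, wrong in ways that would derail the details. First, the value of $a$ is not governed by whether $V$ has a zero weight: your own parenthetical is self-contradictory, since for $i=1$ you require $\Phi\neq B_n$, so $V=\mathbb C^{2n}$ has \emph{no} zero weight, yet $a=2k$; and for $i=2$, $\Phi=B_n$, $V$ \emph{does} have a zero weight yet $a=2k-1$. The correct mechanism is Proposition~\ref{polyofx}: $a$ is the number of \emph{nonvanishing} subquotients in the parabolic Verma flag of $M^{\frp_{I_i}}(\lambda_{I_i,\mathbf c})\otimes V$, and the factors $\delta_{i,1}$ there kill one (resp.\ two) middle subquotients when $\alpha_n\in I_2$ in types $C_n,D_n$ (resp.\ $B_n$), which is what produces $2k-1$ versus $2k$. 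Second, the exclusion of $\Phi=B_n$ when $i=1$ has nothing to do with cap--cup versus BMW-type relations --- the Brauer-category relations hold for the invariant form in all three types; the obstruction is that for $B_n$, $i=1$ the flag has $2k+1$ subquotients with the extra middle eigenvalue $\bar u_{k+1}=0$, and the resulting parameter set is incompatible with the $\mathbf u$-admissibility identity~\eqref{uadm} that pins down $\omega$. Third, \cite[Corollary~5.10]{RS} is the decomposition-number formula~\eqref{dec123}, not a generation statement; surjectivity of $\mathcal B^{\mathrm{op}}_{a,r}(\mathbf u)\to\End_{\mathcal O^{\frp_{I_i}}}(M_{I_i,r})$ is part of \cite[Theorem~5.4]{RS} itself and is obtained by comparing the dimension count~\eqref{card} with $\dim\mathcal B_{a,r}(\mathbf u)=a^r(2r-1)!!$, rather than by a separate diagrammatic-generation argument.
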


\begin{Assumptions}\label{keyassu}\textsf{
 $M^{\frp_{I_i}}(\lambda_{I_i, \mathbf c })$ is simple, and $p_t-p_{t-1}\ge 2r$, $1\le t\le k$.}  \end{Assumptions}
 From this point on, we always keep Assumption~\ref{keyassu}. This allows us to use  Theorem~\ref{thmA}, freely.
 
 For any $M\in\mathcal O^{\frp_{I_i}}$ such that $M$ admits a finite parabolic Verma flag, let $(M: M^{\frp_{I_i}}(\lambda)) $ denote  the multiplicities of $ M^{\frp_{I_i}}(\lambda)$ as  a subquotient of $M$.  Since $M^{\frp_{I_i}}(\lambda_{I_i, \mathbf c})$ is simple,  $M_{I_i, r}$ is a tilting module. Consequently,   each indecomposable direct summand of $M_{I_i, r}$ is an indecomposable tilting module. Write \begin{equation}\label{decten1} M_{I_i, r}=\bigoplus_{\mu} T^{\frp_i}(\mu)^{\oplus n_\mu} ,\end{equation} where 
 $T^{\frp_i}(\mu)$ is the indecomposable tilting module with the highest weight $\mu$.
It follows from  \cite[\S 4]{AST} that $\text{End}_{\mathcal O^{\frp_{I_i}}}(M_{I_i, r})$ is a cellular algebra with respect to the poset $( \mathscr{I}_{i, r}, \leq)$, where $\leq$ is the dominance order defined on $\mathfrak h^*$ such that $\lambda\le \mu$ if $\mu-\lambda\in \mathbb N\Pi$, and 
\begin{equation}\label{sciI}
     \mathscr{I}_{i, r}=\{\mu\in\frh^*\mid \ (M_{I_i, r} : M^{\frp_{I_i}}(\mu))\neq 0\}.
 \end{equation} The left cell modules are given by 
$$S(\lambda):= \Hom_{\mathcal O^{\frp_{I_i}}} (M^{\frp_{I_i}}(\lambda), M_{I_i, r} ),  \text{ $\lambda\in \mathscr{I}_{i, r}$}.$$ 
It follows from \cite{GL} that there exists an invariant form  $\phi_\lambda$   on each  $S(\lambda)$. Thanks to \cite[Theorem 4.11]{AST}, 
$$D(\lambda):=S(\lambda)/\Rad \phi_\lambda \neq 0$$  if and only if $n_\lambda\neq 0$. 
Further, all non-zero $D(\lambda)$ form a pair-wise non-isomorphic 
simple modules for $\End_{\mathcal O^{\frp_{I_i}}}(M_{I_i, r})$.  

The principal indecomposable modules are  given by 
$$P(\lambda):= \Hom_{\mathcal O^{\frp_{I_i}}} (T^{\frp_{I_i}}(\lambda), M_{I_i, r} ),  $$
where $T^{\frp_{I_i}}(\lambda)$ ranges over all non-isomorphic indecomposable direct summands of $M_{I_i, r}$. Further, by  \cite{AST},  $P(\lambda)$ is the projective cover of $D(\lambda)$. It was proven in \cite[Corollary~5.10]{RS} that \begin{equation}\label{dec123}  [C(\lambda): D(\mu)]=(T^{\mathfrak p_{I_i}}(\hat \mu):M^{\mathfrak p_{I_i}}(\hat \lambda))\end{equation} 
 for all $\lambda, \mu\in \scI_{i, r}$ with $n_\mu\neq 0$. From Theorem~\ref{thmA}, $S(\lambda)$, $D(\lambda)$ and $P(\lambda)$ can be viewed as right $\mathcal B_{a, r}(\mathbf u)$-modules.

Since the information on  the indecomposable direct summands $T^{\frp_{I_i}}(\mu)$ of $M_{I_i, r}$ in \eqref{decten1} is incomplete, the multiplicities $[C(\lambda): D(\mu)]$,  $(T^{\frp_{I_i}}(\mu) :M^{\frp_{I_i}}(\lambda))$ and $n_\mu$ remain unknown in principal.

We introduce the partial ordering on $\Lambda^{\frp_{I_i}}$ such that  \begin{equation}\label{preceq} \lambda\preceq \mu\end{equation} indicates  the existence of  a sequence $\lambda=\gamma^0, \gamma^1, \ldots, \gamma^j=\mu$ in $\Lambda^{\frp_{I_i}}$  satisfying  that  the simple $\mathfrak g$-module $L(\gamma^{l-1})$ with the highest weight $\gamma^{l-1}$  appears as a composition factor of $M^{\frp_{I_i}} (\gamma^l)$, for all $1\le l\le j$. Write $\lambda\prec \mu$ if $\lambda\preceq\mu$ and $\lambda\neq \mu$. 
 We expect 
 
{ \begin{equation}\label{simple111} \text{$\scI_{i, {j}}$ is saturated in the sense that $\mu\in \scI_{i, {j}}$ if $\mu\preceq \lambda$ for some $ \lambda\in \scI_{i, j}$, $0\le j\le r$.}\end{equation}}

\begin{THEOREM}\label{main111} Suppose $0< f \leq \lfloor r/2 \rfloor$, and  $\mu \in \mathscr{I}_{i, r} \setminus \mathscr{I}_{i, r-2f}$. Under  condition \eqref{simple111}, we have   
$$
\text{Hom}_{\mathcal{O}^{\frp_{I_i}}}(M^{\frp_{I_i}}(\mu), M_{I_i,r}) \cong \text{Hom}_{\mathcal{O}^{\frp_{I_i}} }(M^{\frp_{I_i}}(\mu), M_{I_i,r}/M_{I_i,r}\langle  E^f \rangle),$$  
as  right $\mathcal{B}_{a,r}(\mathbf u)$-modules,
where $\mathcal{B}_{a,r}(\mathbf u)$ is the cyclotomic Brauer algebra in Theorem~\ref{thmA}, and $\langle   E^f\rangle$ is the two-sided ideal of $\mathcal B_{a, r}(\mathbf u)$ generated by $ E^f:=E_{r-1} E_{r-3}\cdots  E_{r-2f+1}$.\end{THEOREM}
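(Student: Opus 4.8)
The plan is to identify the image of the natural quotient map $M_{I_i,r}\twoheadrightarrow M_{I_i,r}/M_{I_i,r}\langle E^f\rangle$ at the level of $\Hom$-spaces out of $M^{\frp_{I_i}}(\mu)$, and to show that the kernel of the induced map on these $\Hom$-spaces vanishes precisely when $\mu$ is ``large'', i.e.\ lies in $\scI_{i,r}\setminus\scI_{i,r-2f}$. Concretely, applying the left-exact functor $\Hom_{\mathcal O^{\frp_{I_i}}}(M^{\frp_{I_i}}(\mu),-)$ to the short exact sequence
$$0\to M_{I_i,r}\langle E^f\rangle \to M_{I_i,r}\to M_{I_i,r}/M_{I_i,r}\langle E^f\rangle\to 0$$
gives a left-exact sequence of right $\mathcal B_{a,r}(\mathbf u)$-modules, and the claim is equivalent to $\Hom_{\mathcal O^{\frp_{I_i}}}(M^{\frp_{I_i}}(\mu), M_{I_i,r}\langle E^f\rangle)=0$ together with surjectivity of the induced map. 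Since $M_{I_i,r}$ is tilting (Assumption~\ref{keyassu}) and $M_{I_i,r}\langle E^f\rangle$ is a submodule cut out by the ideal generated by $E^f = E_{r-1}E_{r-3}\cdots E_{r-2f+1}$, the first task is to describe $M_{I_i,r}\langle E^f\rangle$ as a module admitting a parabolic Verma flag whose sections $M^{\frp_{I_i}}(\nu)$ all have $\nu\in\scI_{i,r-2f}$ (or more precisely $\nu\prec\lambda$ for some $\lambda\in\scI_{i,r-2f}$). The cups/caps underlying $E_j$ implement the contraction $V\otimes V\to \mathbb C$, so $M_{I_i,r}\langle E^f\rangle$ is spanned by images of maps that factor through $M^{\frp_{I_i}}(\lambda_{I_i,\mathbf c})\otimes V^{\otimes(r-2f)}$ after pairing off $f$ tensor factors; this is exactly the categorical shadow of the ideal $\langle E^f\rangle$.

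The key steps, in order: (1) Establish that $M_{I_i,r}\langle E^f\rangle$, as a $\mathfrak g$-module, is a quotient (equivalently, by tilting/self-duality, a submodule) of a module all of whose parabolic Verma subquotients $M^{\frp_{I_i}}(\nu)$ satisfy $\nu\in\scI_{i,r-2f}$ — this uses the explicit description of $E^f$ as iterated cap-cup and the factorization $M_{I_i,r}\langle E^f\rangle = \operatorname{im}(M^{\frp_{I_i}}(\lambda_{I_i,\mathbf c})\otimes V^{\otimes(r-2f)}\otimes(\text{contraction})^{\otimes f}\to M_{I_i,r})$, together with translation-functor bookkeeping and the fact that tensoring with $V$ shifts weights by $\pm\epsilon_j$ (or $0$), so the subquotients of $M_{I_i,r}\langle E^f\rangle$ all appear already in $M_{I_i,r-2f}$. (2) Invoke condition~\eqref{simple111}: since $\scI_{i,r-2f}$ is saturated under $\preceq$, every parabolic Verma $M^{\frp_{I_i}}(\nu)$ occurring in a flag of $M_{I_i,r}\langle E^f\rangle$ has $\nu\in\scI_{i,r-2f}$, hence $\nu\neq\mu$ for our chosen $\mu\in\scI_{i,r}\setminus\scI_{i,r-2f}$; in fact one wants the stronger statement that $\mu\not\preceq\nu$, which again follows from saturatedness because $\mu\preceq\nu\in\scI_{i,r-2f}$ would force $\mu\in\scI_{i,r-2f}$. (3) Conclude $\Hom_{\mathcal O^{\frp_{I_i}}}(M^{\frp_{I_i}}(\mu), M_{I_i,r}\langle E^f\rangle)=0$: any nonzero such homomorphism would, by standard highest-weight/BGG-reciprocity arguments (a nonzero map out of a parabolic Verma module hits the highest weight $\mu$ of a Verma subquotient in the target and forces $\mu\preceq\nu$ for some section $\nu$), contradict step~(2). (4) Deduce surjectivity of the induced map on $\Hom$: because $M_{I_i,r}$ is tilting hence has a parabolic Verma flag, $\operatorname{Ext}^1_{\mathcal O^{\frp_{I_i}}}(M^{\frp_{I_i}}(\mu), M_{I_i,r}\langle E^f\rangle)$ vanishes — again using that all sections $\nu$ of $M_{I_i,r}\langle E^f\rangle$ satisfy $\operatorname{Ext}^1(M^{\frp_{I_i}}(\mu),M^{\frp_{I_i}}(\nu))=0$, which holds when $\mu\not\preceq\nu$ and $\nu\not\preceq\mu$ via the linkage/blocks of parabolic category $\mathcal O$. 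This upgrades the left-exact sequence from step~(0) to a short exact sequence $0\to 0\to \Hom(M^{\frp_{I_i}}(\mu),M_{I_i,r})\to\Hom(M^{\frp_{I_i}}(\mu),M_{I_i,r}/M_{I_i,r}\langle E^f\rangle)\to 0$, giving the isomorphism; that it is an isomorphism of right $\mathcal B_{a,r}(\mathbf u)$-modules is automatic since the quotient map $M_{I_i,r}\to M_{I_i,r}/M_{I_i,r}\langle E^f\rangle$ commutes with the $\mathcal B_{a,r}(\mathbf u)$-action (the ideal $\langle E^f\rangle$ being two-sided), so all maps in sight are $\mathcal B_{a,r}(\mathbf u)$-equivariant.

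The main obstacle I expect is step~(1): making precise the assertion that the parabolic Verma subquotients of $M_{I_i,r}\langle E^f\rangle$ are exactly those appearing in $M_{I_i,r-2f}$, and in particular do not include $\mu$. The subtlety is that $M_{I_i,r}\langle E^f\rangle$ is defined module-theoretically via the action of the algebra $\mathcal B_{a,r}(\mathbf u)$ (through $\End_{\mathcal O^{\frp_{I_i}}}(M_{I_i,r})\cong\mathcal B_{a,r}^{\mathrm{op}}(\mathbf u)$ from Theorem~\ref{thmA}), not a priori as a translate of $M_{I_i,r-2f}$, so one must translate the diagrammatic identity $E^f=E_{r-1}E_{r-3}\cdots E_{r-2f+1}$ into the statement that the corresponding endomorphism of $M_{I_i,r}$ factors through $\operatorname{id}_{M^{\frp_{I_i}}(\lambda_{I_i,\mathbf c})}\otimes(\varepsilon\otimes\operatorname{id}^{\otimes(r-2f)}\otimes\cdots)$ where $\varepsilon:V\otimes V\to\mathbb C$ is the invariant contraction; this should follow from how the generators of $\mathcal B_{a,r}(\mathbf u)$ act, as set up in \cite{AMR} and \cite{RS}, but it requires care to pin down the image as a $\mathfrak g$-submodule and to verify it has the claimed flat. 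Everything after that — the vanishing of $\Hom$ and $\operatorname{Ext}^1$ — is then a routine consequence of condition~\eqref{simple111} and the block structure of parabolic category $\mathcal O$.
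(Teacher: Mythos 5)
Your overall skeleton is the paper's: apply $\Hom_{\mathcal O^{\frp_{I_i}}}(M^{\frp_{I_i}}(\mu),-)$ to the bimodule short exact sequence $0\to M_{I_i,r}\langle E^f\rangle\to M_{I_i,r}\to M_{I_i,r}/M_{I_i,r}\langle E^f\rangle\to 0$ and show that both $\Hom$ and $\mathrm{Ext}^1$ from $M^{\frp_{I_i}}(\mu)$ into the ideal vanish, using condition~\eqref{simple111}. However, your execution has a genuine gap exactly where you predicted one: steps (1) and (4) rest on $M_{I_i,r}\langle E^f\rangle$ admitting a parabolic Verma flag whose sections lie in $\scI_{i,r-2f}$, and on the criterion that $\mathrm{Ext}^1(M^{\frp_{I_i}}(\mu),M^{\frp_{I_i}}(\nu))=0$ whenever $\mu$ and $\nu$ are $\preceq$-incomparable. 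Neither is available off the shelf. The ideal is a sum of images of $\mathfrak g$-morphisms $M_{I_i,r-2f}\to M_{I_i,r}$, and images (or submodules of tilting modules) need not inherit Verma flags; you do not supply an argument. Moreover, the standard $\mathrm{Ext}^1$-vanishing for parabolic Vermas only gives $\mathrm{Ext}^1\neq 0\Rightarrow\mu<\nu$ in the dominance order, which is useless here because saturatedness in \eqref{simple111} is with respect to $\preceq$ (generated by composition-factor containment), not $\leq$; upgrading to $\mu\prec\nu$ requires a separate argument.

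The paper avoids both problems by never touching a Verma flag of the ideal. Its Lemma~\ref{ext2} observes only that every \emph{composition factor} $L(\nu)$ of $M_{I_i,r}\langle E^f\rangle$ is a composition factor of $M_{I_i,r-2f}$ (since the ideal sits inside images of morphisms out of $M_{I_i,r-2f}$), whence $\nu\in\scI_{i,r-2f}$ by saturatedness. Its Lemma~\ref{ext1} then shows directly that $\mathrm{Ext}^1_{\mathcal O^{\frp_{I_i}}}(M^{\frp_{I_i}}(\mu),X)\neq 0$ forces $X$ to have a composition factor $L(\nu)$ with $\mu\prec\nu$: one applies $\Hom(-,L(\nu))$ to $0\to M\to P_{I_i}(\mu)\to M^{\frp_{I_i}}(\mu)\to 0$, uses the Verma flag of the kernel $M$ of the projective cover (which \emph{does} have one, with sections $>\mu$), and converts $(M:M^{\frp_{I_i}}(\nu))\neq 0$ into $[M^{\frp_{I_i}}(\nu):L(\mu)]\neq 0$ by BGG reciprocity, i.e.\ $\mu\prec\nu$; the general case follows by induction on a composition series of $X$. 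Your $\Hom$-vanishing argument (the image of a nonzero map has head $L(\mu)$) is fine and matches the paper; it is the $\mathrm{Ext}^1$ half that needs to be rerouted through composition factors and BGG reciprocity rather than through a flag of the ideal.
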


For any $(f, \lambda)\in \Lambda_{a, r}$, let $\hat\lambda\in \Lambda^{\mathfrak p_{I_i}}$  be   defined as in \eqref{hatlambda}. In Theorem~\ref{main123}, we classify singular vectors in $M_{I_i, r}/M_{I_i, r} \langle E^{f+1}\rangle $ with the highest weight $\hat\lambda$ using explicit construction of right cell modules for $\mathcal B_{a, r}(\mathbf u)$ in Proposition~\ref{bas}. This result is of independent interest in its own right. Applying it, we  prove the following theorem. Keep in mind that $\mathcal B_{a, r}(\mathbf u)$ is the cyclotomic Brauer algebra in Theorem~\ref{thmA}.

\begin{THEOREM}\label{main4} Under condition~\eqref{simple111}, 
    $$\Hom_{\mathcal O^{\frp_{I_i}}}(M^{\frp_{I_i}}(\hat \lambda), M_{I_i, r}/M_{I_i, r} \langle E^{f+1} \rangle )\cong C(f, \lambda')$$ as right $\mathcal B_{a, r}(\mathbf u)$-modules
    for any  $(f, \lambda)\in \Lambda_{a, r}$,     
    where $\lambda'=(\mu^{(1)}, \mu^{(2)}, \ldots, \mu^{(a)})$ is the conjugate of $\lambda$ in the sense that  $\mu^{(i)}$ is the conjugate of the partition $\lambda^{(a-i+1)}$,  $1\le i\le a$.\end{THEOREM}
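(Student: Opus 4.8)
The plan is to promote the classification of singular vectors established in Theorem~\ref{main123} to an isomorphism of right $\mathcal B_{a,r}(\mathbf u)$-modules. Write $N:=M_{I_i,r}/M_{I_i,r}\langle E^{f+1}\rangle$. Since $\hat\lambda\in\Lambda^{\frp_{I_i}}$ is $\frp_{I_i}$-dominant integral, the universal property of the parabolic Verma module yields a canonical linear isomorphism
$$\Hom_{\mathcal O^{\frp_{I_i}}}\!\big(M^{\frp_{I_i}}(\hat\lambda),N\big)\;\xrightarrow{\ \sim\ }\;\big\{\,v\in N\mid v\text{ is a singular vector of weight }\hat\lambda\,\big\},\qquad \varphi\longmapsto\varphi(v^{+}_{\hat\lambda}),$$
where $v^{+}_{\hat\lambda}$ is a highest weight vector of $M^{\frp_{I_i}}(\hat\lambda)$. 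Because $\langle E^{f+1}\rangle$ is a two-sided ideal, each $b\in\mathcal B_{a,r}(\mathbf u)$ descends, via the identification $\End_{\mathcal O^{\frp_{I_i}}}(M_{I_i,r})\cong\mathcal B_{a,r}^{\mathrm{op}}(\mathbf u)$ of Theorem~\ref{thmA}, to an endomorphism $\bar b$ of $N$, and the right action of $b$ on the Hom space corresponds on the right-hand side to $v\mapsto\bar b(v)$. So it suffices to identify the space of weight-$\hat\lambda$ singular vectors of $N$, with this action, with the right cell module $C(f,\lambda')$.

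For the underlying vector-space identification I would invoke Theorem~\ref{main123}: starting from the explicit basis of the right cell module recorded in Proposition~\ref{bas}, it produces a distinguished family $\{w_{\ts}\}$ of weight-$\hat\lambda$ singular vectors of $N$, indexed by the same combinatorial data that labels the natural basis of $C(f,\lambda)$. The passage to the conjugate multipartition $\lambda'$ --- components reversed and each partition transposed --- is exactly the bookkeeping produced by the contravariant functor $\Hom_{\mathcal O^{\frp_{I_i}}}(-,N)$ together with the opposite-algebra twist of Theorem~\ref{thmA}, which turns a left-module assertion on $M_{I_i,r}$ into the desired right-module assertion. Condition~\eqref{simple111} enters here: the saturation of $\mathscr I_{i,j}$ controls the parabolic Verma flag of $N$, ensuring on the one hand that the $w_{\ts}$ are linearly independent and on the other that $N$ carries no further singular vectors of weight $\hat\lambda$; hence $\ts\mapsto w_{\ts}$ is a linear isomorphism $C(f,\lambda')\xrightarrow{\ \sim\ }\Hom_{\mathcal O^{\frp_{I_i}}}(M^{\frp_{I_i}}(\hat\lambda),N)$.

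The main and most delicate step is to verify that this linear isomorphism is $\mathcal B_{a,r}(\mathbf u)$-equivariant. For this I would run through the generators $s_1,\dots,s_{r-1}$, $E_1,\dots,E_{r-1}$ and the polynomial generators $X_1,\dots,X_r$ of $\mathcal B_{a,r}(\mathbf u)$ and compare two descriptions of the action: on the $C(f,\lambda')$ side the action on the basis $\{\ts\}$ is given by the explicit cell-module formulas of Proposition~\ref{bas}; on the Hom side it is $w_{\ts}\mapsto\bar b(w_{\ts})$, where $\bar b$ is the endomorphism of $N$ attached to the corresponding Brauer diagram under Theorem~\ref{thmA} (caps and cups act by the contraction and coevaluation built from the invariant form on $V$, a crossing $s_i$ acts by the flip on the $i$-th and $(i{+}1)$-st tensor factors of $V^{\otimes r}$, and $X_j$ acts by the $j$-th Jucys--Murphy-type operator). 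One checks that $\bar b(w_{\ts})$ equals, modulo $M_{I_i,r}\langle E^{f+1}\rangle$, the linear combination of the $w_{\us}$ prescribed by the cell-module formula for $\ts\cdot b$; the quotient by $\langle E^{f+1}\rangle$ is precisely what discards the terms with $f+1$ or more cups that arise when a cap acts, matching the truncation built into the cell module at level $f$. This is a finite but intricate diagram computation, and it is the step I expect to be the main obstacle, since it requires matching the combinatorics of the cellular basis in Proposition~\ref{bas} with the Lie-theoretic action on singular vectors generator by generator. Once all generators are handled the defining relations of $\mathcal B_{a,r}(\mathbf u)$ impose no further conditions, and we conclude $\Hom_{\mathcal O^{\frp_{I_i}}}(M^{\frp_{I_i}}(\hat\lambda),M_{I_i,r}/M_{I_i,r}\langle E^{f+1}\rangle)\cong C(f,\lambda')$ as right $\mathcal B_{a,r}(\mathbf u)$-modules.
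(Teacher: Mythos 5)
Your setup is sound and matches the paper's: identify $\Hom_{\mathcal O^{\frp_{I_i}}}(M^{\frp_{I_i}}(\hat\lambda),N)$ with the space $V_{\hat\lambda}$ of weight-$\hat\lambda$ singular vectors of $N$, use Theorem~\ref{main123} together with condition~\eqref{simple111} (via Theorem~\ref{main111}) for the linear isomorphism with the basis of Proposition~\ref{bas}, and then try to show equivariance. But the equivariance step is exactly where the entire content of the theorem lies, and your plan for it --- ``run through the generators and compare with the explicit cell-module formulas of Proposition~\ref{bas}'' --- does not work as stated: Proposition~\ref{bas} gives a basis of $C(f,\lambda')$, not closed formulas for the structure constants of the generator action, so there is nothing explicit to compare $\bar b(w_{\ts})$ against. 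Deferring this to ``a finite but intricate diagram computation'' leaves a genuine gap.

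What the paper actually does is quite different and more structural. For $f=0$, both $V_{\hat\lambda}$ and $S^{0,\lambda}\cong C(0,\lambda')$ are cyclic right modules generated by $\overline{v_\lambda w_\lambda n_{\lambda'}}$ and $m_\lambda w_\lambda n_{\lambda'}$ respectively; expanding $n_{\lambda'}d(\t)h$ in the cellular basis of $\mathscr H_{a,r}(\mathbf u)$ as in \eqref{nhh}, equivariance of $\psi$ reduces to the single vanishing statement $\overline{v_\lambda}\,w_\lambda d(\t')^{-1}n_\nu=\bar 0$ for all $\nu\rhd\lambda'$ (the higher cellular terms automatically kill $m_\lambda w_\lambda$ on the cell-module side). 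Proving that vanishing is the real work, and it is done by locating a tensor factor $v_{j_h}$ in the appropriate $N_t$ of Proposition~\ref{polyofx} and showing a suitable factor $\prod_s(X_1-u_s)$ of $\tilde\pi_{[\nu]}$ annihilates it. For $f>0$ the paper does no generator computation at all: it uses $E^f\mathcal B_{a,r}(\mathbf u)E^f=E^f\mathcal B_{a,r-2f}(\mathbf u)$, the fact that $C(0,\lambda')\otimes_{\mathcal B_{a,r-2f}(\mathbf u)}E^f\mathcal B_{a,r}(\mathbf u)\cong C(f,\lambda')$, an explicitly constructed surjection $\bar\gamma$, and a dimension count from Theorem~\ref{main111}. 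Neither of these two mechanisms appears in your proposal. (A smaller inaccuracy: the appearance of the conjugate $\lambda'$ is not ``bookkeeping produced by the contravariant Hom functor and the opposite-algebra twist''; it comes from the Specht-module identity $\tilde C(\lambda')\cong m_\lambda w_\lambda n_{\lambda'}\mathscr H_{a,r}(\mathbf u)$ relating the $m$-type and $n$-type cellular bases.)
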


Theorem~\ref{main4} depends on condition \eqref{simple111}, as we use Theorem~\ref{main111} to compute the dimension of $\Hom_{\mathcal{O}^{\frp_{I_i}}}(M^{\frp_{I_i}}(\hat \lambda), M_{I_i, r}/M_{I_i, r})$ in the proof of Theorem~\ref{main4}.
  Using  Theorems~\ref{main111}, and \ref{main4}, we obtain Theorem~\ref{first}(1),  which represents the  most challenging aspect of this paper. 
Notably, Theorem~\ref{first}(2)-(4) follow as  direct  consequences of Theorem~\ref{first}(1).
\begin{THEOREM}\label{first} Under condition~\eqref{simple111},  and assuming  $i$ is either $1$ or $2$, we have 
 \begin{itemize}\item[(1)] $\Hom_{\mathcal O^{\frp_{I_i}}}(M^{\frp_{I_i}}(\hat \lambda), M_{I_i, r})\cong C(f, \lambda')$ as  right $\mathcal B_{a, r}(\mathbf u)$-modules, where $(f, \lambda)\in \Lambda_{a, r}$.  
 \item [(2)] $D(\hat \lambda)\cong D(f, \lambda')$ for all $(f, \lambda')\in \bar{\Lambda}_{a, r}$.
\item [(3)]   $M_{I_i, r}=\bigoplus_{(f, \lambda')\in \bar \Lambda_{a, r} } T^{\frp_i}(\hat\lambda )^{\oplus \dim D(f, \lambda')} $.
\item[(4)]  $ [C(f, \lambda'): D(\ell, \mu')]=(T^{\mathfrak p_{I_i}}(\hat \mu):M^{\mathfrak p_{I_i}}(\hat \lambda))$ 
 for all   $((f, \lambda'), (\ell, \mu'))  \in \Lambda_{a,r} \times  \bar\Lambda_{a,r}$.
\end{itemize}     \end{THEOREM}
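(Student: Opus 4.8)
The plan is to derive Theorem~\ref{first} from Theorems~\ref{main111} and~\ref{main4} by an induction on $f$, treating the four parts in the indicated order with part~(1) carrying essentially all the work. For part~(1), I would argue as follows. Fix $(f,\lambda)\in\Lambda_{a,r}$ and set $\mu=\hat\lambda$. When $f=0$, $\hat\lambda\notin\scI_{i,r-2}$ automatically and $M_{I_i,r}\langle E^1\rangle$ is the part of $M_{I_i,r}$ killed by the cup-cap element; one checks (using \eqref{sciI} and the combinatorics of Verma flags under tensoring with $V$) that $\hat\lambda$ only sees the top layer, so $\Hom(M^{\frp_{I_i}}(\hat\lambda),M_{I_i,r})=\Hom(M^{\frp_{I_i}}(\hat\lambda),M_{I_i,r}/M_{I_i,r}\langle E^1\rangle)$, which is $C(0,\lambda')$ by Theorem~\ref{main4} with $f=0$. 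For the inductive step, suppose the isomorphism of part~(1) holds for all $(f',\lambda')\in\Lambda_{a,r}$ with $f'<f$, and let $(f,\lambda)$ have $\mu=\hat\lambda\in\scI_{i,r}\setminus\scI_{i,r-2f}$. Then Theorem~\ref{main111} gives
$$
\Hom_{\mathcal O^{\frp_{I_i}}}(M^{\frp_{I_i}}(\hat\lambda),M_{I_i,r})\cong\Hom_{\mathcal O^{\frp_{I_i}}}(M^{\frp_{I_i}}(\hat\lambda),M_{I_i,r}/M_{I_i,r}\langle E^f\rangle)
$$
as right $\mathcal B_{a,r}(\mathbf u)$-modules. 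The target here is a $\Hom$ into the quotient by $\langle E^f\rangle$, while Theorem~\ref{main4} computes $\Hom$ into the quotient by $\langle E^{f+1}\rangle$; so the remaining task is to compare these two quotients at the weight $\hat\lambda$.

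The key point is that $M_{I_i,r}\langle E^{f+1}\rangle\subseteq M_{I_i,r}\langle E^f\rangle$, giving a short exact sequence
$$
0\to M_{I_i,r}\langle E^f\rangle/M_{I_i,r}\langle E^{f+1}\rangle\to M_{I_i,r}/M_{I_i,r}\langle E^{f+1}\rangle\to M_{I_i,r}/M_{I_i,r}\langle E^f\rangle\to 0,
$$
and I would apply $\Hom_{\mathcal O^{\frp_{I_i}}}(M^{\frp_{I_i}}(\hat\lambda),-)$ to it. Because $\hat\lambda\notin\scI_{i,r-2f}$, the submodule $M_{I_i,r}\langle E^f\rangle/M_{I_i,r}\langle E^{f+1}\rangle$ has a parabolic Verma flag whose subquotients $M^{\frp_{I_i}}(\nu)$ all satisfy $\nu\in\scI_{i,r-2f}$ (this is precisely the content of the ideal filtration of $\mathcal B_{a,r}(\mathbf u)$ by powers of $\langle E\rangle$, transported through Theorem~\ref{thmA} — the layer $\langle E^f\rangle/\langle E^{f+1}\rangle$ corresponds to partitions of $r-2f$), so $\Hom(M^{\frp_{I_i}}(\hat\lambda),-)$ and $\Ext^1(M^{\frp_{I_i}}(\hat\lambda),-)$ both vanish on it by the usual parabolic BGG reciprocity/orthogonality between $M^{\frp_{I_i}}(\hat\lambda)$ and Vermas of strictly smaller ``defect''; here is where condition~\eqref{simple111} enters, since saturation of the $\scI_{i,j}$ is what guarantees that no weight $\preceq\hat\lambda$ sneaks in from a lower layer to spoil the vanishing. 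Hence the long exact sequence collapses to
$$
\Hom_{\mathcal O^{\frp_{I_i}}}(M^{\frp_{I_i}}(\hat\lambda),M_{I_i,r}/M_{I_i,r}\langle E^f\rangle)\cong\Hom_{\mathcal O^{\frp_{I_i}}}(M^{\frp_{I_i}}(\hat\lambda),M_{I_i,r}/M_{I_i,r}\langle E^{f+1}\rangle),
$$
and the right-hand side is $C(f,\lambda')$ by Theorem~\ref{main4}. These isomorphisms are $\mathcal B_{a,r}(\mathbf u)$-equivariant because all the maps in the six-term sequence are, the ideal $\langle E^f\rangle$ acts compatibly on everything, and Theorem~\ref{thmA} identifies the endomorphism algebra as $\mathcal B_{a,r}^{\mathrm{op}}(\mathbf u)$. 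Combining with Theorem~\ref{main111} completes part~(1).

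For the consequences: part~(2) follows by taking $(f,\lambda')\in\bar\Lambda_{a,r}$, noting $D(f,\lambda')=C(f,\lambda')/\Rad\phi$ where $\phi$ is the cellular form, observing that under the isomorphism of part~(1) the form on $C(f,\lambda')$ matches the form $\phi_{\hat\lambda}$ on $S(\hat\lambda)=\Hom(M^{\frp_{I_i}}(\hat\lambda),M_{I_i,r})$ (both arise from the GL-cellular structure, cf.\ \cite{GL} and \cite[\S4]{AST}), so the radicals correspond and $D(\hat\lambda)\cong D(f,\lambda')$; simplicity and pairwise non-isomorphism on the relevant index set transfer as well, using that $\bar\Lambda_{a,r}$ indexes exactly the non-vanishing $D(f,\lambda')$ under $\omega_0\neq0$ (as in \eqref{simple123}) and that $n_{\hat\lambda}\neq0$ iff $D(\hat\lambda)\neq0$ by \cite[Theorem~4.11]{AST}. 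Part~(3) is then immediate: \eqref{decten1} reads $M_{I_i,r}=\bigoplus_\mu T^{\frp_i}(\mu)^{\oplus n_\mu}$, part~(2) shows the $\mu$ with $n_\mu\neq0$ are exactly the $\hat\lambda$ for $(f,\lambda')\in\bar\Lambda_{a,r}$, and \cite[Theorem~4.11]{AST} together with the standard cellular-algebra identity $n_\mu=\dim D(\mu)$ gives the multiplicity $\dim D(f,\lambda')$. Part~(4) is then just \eqref{dec123}, i.e.\ \cite[Corollary~5.10]{RS}, rewritten via the dictionary $\lambda\leftrightarrow\hat\lambda$, $(f,\lambda')\leftrightarrow C(f,\lambda')$ established in parts~(1)--(2): for $((f,\lambda'),(\ell,\mu'))\in\Lambda_{a,r}\times\bar\Lambda_{a,r}$,
$$
[C(f,\lambda'):D(\ell,\mu')]=[C(\hat\lambda):D(\hat\mu)]=(T^{\frp_{I_i}}(\hat\mu):M^{\frp_{I_i}}(\hat\lambda)).
$$
I expect the main obstacle to be the vanishing of $\Hom$ and $\Ext^1$ from $M^{\frp_{I_i}}(\hat\lambda)$ into the layer $M_{I_i,r}\langle E^f\rangle/M_{I_i,r}\langle E^{f+1}\rangle$: making precise that this layer's Verma flag lives entirely in ``lower'' weights, and that condition~\eqref{simple111} is exactly what rules out an unwanted contribution at a weight $\nu\preceq\hat\lambda$, is the delicate step — everything else is bookkeeping with the cellular structures and the equivariance of the connecting maps.
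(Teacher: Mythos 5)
Your reduction of parts (2)--(4) to part (1) follows the paper's general outline, but your proof of part (1) has a genuine error at its central step. You place $\hat\lambda$ in $\scI_{i,r}\setminus\scI_{i,r-2f}$ for $(f,\lambda)\in\Lambda_{a,r}$; this is false. Since $\lambda\in\Lambda_a^+(r-2f)$, the bijection \eqref{bijecmap} gives $\hat\lambda=\iota_{r-2f}(0,\lambda)\in\scI_{i,r-2f}$, and the correct statement is $\hat\lambda\in\scI_{i,r}\setminus\scI_{i,r-2f-2}$. Consequently Theorem~\ref{main111} with parameter $f$ does not apply to $\hat\lambda$, and the layer comparison you build on top of it fails: by Theorem~\ref{main123} every singular vector $v_{\t,\xi,d}=v_\lambda E^f w_\lambda n_{\lambda'}d(\t)X^\xi d$ of weight $\hat\lambda$ in $M_{I_i,r}/M_{I_i,r}\langle E^{f+1}\rangle$ lies in the image of $E^f$, i.e.\ precisely in the layer $M_{I_i,r}\langle E^f\rangle/M_{I_i,r}\langle E^{f+1}\rangle$, so $\Hom_{\mathcal O^{\frp_{I_i}}}\bigl(M^{\frp_{I_i}}(\hat\lambda),\,M_{I_i,r}\langle E^f\rangle/M_{I_i,r}\langle E^{f+1}\rangle\bigr)$ is all of $C(f,\lambda')$, not zero. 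Your claimed vanishing of $\Hom$ and $\Ext^1$ into that layer is therefore false, the asserted isomorphism between $\Hom$ into $M_{I_i,r}/M_{I_i,r}\langle E^f\rangle$ and $\Hom$ into $M_{I_i,r}/M_{I_i,r}\langle E^{f+1}\rangle$ is false, and the induction on $f$ collapses (it is also unnecessary).

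The repair is much shorter than your argument and is exactly the paper's one-line proof: apply Theorem~\ref{main111} with the parameter $f+1$ in place of $f$ --- legitimate because $\hat\lambda\in\scI_{i,r}\setminus\scI_{i,r-2(f+1)}$, and trivial in the boundary case $f=\lfloor r/2\rfloor$ where $\langle E^{f+1}\rangle=0$ --- to obtain $\Hom_{\mathcal O^{\frp_{I_i}}}(M^{\frp_{I_i}}(\hat\lambda),M_{I_i,r})\cong\Hom_{\mathcal O^{\frp_{I_i}}}(M^{\frp_{I_i}}(\hat\lambda),M_{I_i,r}/M_{I_i,r}\langle E^{f+1}\rangle)$ directly, and then quote Theorem~\ref{main4}. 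Separately, in part (2) your assertion that the invariant form on $S(\hat\lambda)$ ``matches'' the cellular form on $C(f,\lambda')$ under the isomorphism of part (1) is not justified as written: the two forms come from a priori unrelated (weakly) cellular structures, and an isomorphism of modules does not by itself identify the forms or their radicals until one knows both forms are nonzero for corresponding labels. The paper instead pins down the label by comparing decomposition numbers through part (1), deducing $(\ell,\nu')\unrhd(f,\lambda)$ and $\hat{\lambda'}\le\hat\nu$, and ruling out $\ell>f$ via the action of $E^\ell$ together with $\omega_0\neq0$ and the fact that $C(\ell,\nu')$ is generated by $C(\ell,\nu')E^\ell$. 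These latter points are fixable; the substantive flaw is the misidentification of the ideal power in part (1).
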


The dimension of $D(f, \lambda')$ can be  determined using Theorem~\ref{first}(4). Specifically,  this dimension  can be explicitly calculated  using the parabolic Kazhdan-Lusztig polynomials of types $B_n$, $C_n$, and $D_n$~\cite{So}. 
 
 Let  $\Phi^+$ denote the set of positive roots associated with $\mathfrak g$. Define $\Phi_{I_i}=\Phi\cap \mathbb Z I_i$, and let $\rho$ represent   half the  sum of all positive roots. To illustrate that condition \eqref{simple111} is well-justified, we need the following assumption, which ensures that $M^{\frp_{I_i}} (\lambda_{I_i, \mathbf c})$ is simple~\cite[Theorem~9.12]{Hum}.
\begin{Assumptions}\label{simple11} Assume that $\langle \lambda_{I_i, \mathbf c} +\rho, \beta^\vee \rangle \not\in \mathbb Z_{>0}  $ for all $\beta\in \Phi^+\setminus \Phi_{I_i}$,  where   $i\in \{1, 2\}$ with the condition that   $i\neq 1$ if $\Phi= B_n$.   \end{Assumptions}

The following result will be proved in Section 6, as an appendix to the paper. 

. 

\begin{THEOREM} (W. Xiao) \label{saturated} 
Under Assumption~\ref{simple11},
{$\scI_{i, j}$ is saturated with respect to the partial ordering $\preceq$,  for all $0\le j\le r$.}
\end{THEOREM}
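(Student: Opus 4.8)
\textbf{Proof proposal for Theorem~\ref{saturated}.}

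The plan is to show directly that each $\scI_{i,j}$ is closed under $\preceq$ by tracing how parabolic Verma subquotients of $M_{I_i,j}$ arise and how the order $\preceq$ propagates through them. First I would recall from \eqref{sciI} that $\mu\in\scI_{i,j}$ means $M^{\frp_{I_i}}(\mu)$ occurs in a parabolic Verma flag of $M_{I_i,j}=M^{\frp_{I_i}}(\lambda_{I_i,\mathbf c})\otimes V^{\otimes j}$. Because $M^{\frp_{I_i}}(\lambda_{I_i,\mathbf c})$ is simple by Assumption~\ref{simple11} (via \cite[Theorem~9.12]{Hum}), $M_{I_i,j}$ is tilting, and I would use the standard fact that tensoring a module having a parabolic Verma flag with the finite-dimensional module $V$ again has such a flag whose subquotients are obtained by adding the weights of $V$; iterating, $\scI_{i,j}$ is precisely the set of weights reachable from $\lambda_{I_i,\mathbf c}$ by adding $j$ weights of $V$ and staying $\frp_{I_i}$-dominant, subject to the non-vanishing of the relevant translation functors. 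The key structural input is therefore a description of $\scI_{i,j}$ as a union of linkage-class orbits inside a box region determined by the partition $\lambda_{I_i,\mathbf c}$ and the gap condition $p_t-p_{t-1}\ge 2r$ from Assumption~\ref{keyassu}.

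The heart of the argument is a linkage/BGG-reciprocity step. Suppose $\mu\in\scI_{i,j}$ and $\nu\preceq\mu$; I must show $\nu\in\scI_{i,j}$. By the definition of $\preceq$ in \eqref{preceq} it suffices to treat a single step, i.e.\ $\nu=\gamma^{l-1}\preceq\gamma^l=\mu$ where $L(\nu)$ is a composition factor of $M^{\frp_{I_i}}(\mu)$. The plan is to combine two facts: (a) $\nu$ lies in the same $W$-linkage class (integral or otherwise, with respect to the dot-action of the relevant Weyl group) as $\mu$, and (b) $\nu\le\mu$ in the dominance order, with $\mu-\nu\in\mathbb N\Pi$. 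Then I would argue that the ``box'' that cuts out $\scI_{i,j}$ inside the linkage class is itself an order ideal for $\le$: concretely, writing weights in the $\epsilon$-coordinates, membership in $\scI_{i,j}$ is controlled by the multiset of coordinate values lying within prescribed intervals of length governed by $j$ and the $c_t$'s, together with $\frp_{I_i}$-dominance; subtracting a positive sum of simple roots can only move coordinates inward, hence preserves membership. This is where the gap hypothesis $p_t-p_{t-1}\ge 2j$ is essential: it guarantees the relevant intervals in distinct blocks do not interact, so that a composition factor of $M^{\frp_{I_i}}(\mu)$ cannot escape the region.

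The main obstacle I anticipate is making step (b)--(the order-ideal claim)--precise and uniform across the three root systems $B_n$, $C_n$, $D_n$, and in particular handling the type $D_n$ subtlety (the $\pm$ ambiguity in the last coordinate and the two choices $I_1,I_2$ of \eqref{i1i2d}) and the excluded case $\Phi=B_n$ with $i=1$. I would dispose of type $D_n$ by a careful case split on the parity/sign pattern of the last coordinate, using that $I_2=I_1\cup\{\alpha_n\}$ only tightens dominance; the $B_n$, $i=2$ restriction enters exactly to keep the half-integer shifts consistent. A secondary technical point is ruling out that a translation functor applied in building the Verma flag of $M_{I_i,j}$ kills a summand that would otherwise witness $\nu\in\scI_{i,j}$; here I would invoke that $M^{\frp_{I_i}}(\lambda_{I_i,\mathbf c})$ is simple and scalar-type, so the relevant wall-crossing functors are exact on the relevant blocks and the combinatorics of adding weights of $V$ is ``multiplicity-free enough'' in the regime of Assumption~\ref{keyassu}.

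\begin{rem}
The argument is purely about the parabolic BGG category $\mathcal O$ and does not use the cyclotomic Brauer algebra; it justifies condition~\eqref{simple111} under the hypothesis (Assumption~\ref{simple11}) that already guarantees simplicity of $M^{\frp_{I_i}}(\lambda_{I_i,\mathbf c})$, which is exactly the standing hypothesis of the rest of the paper.
\end{rem}
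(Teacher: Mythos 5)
Your overall skeleton matches the paper's: describe $\scI_{i,j}$ explicitly as the intersection of $\Lambda^{\frp_{I_i}}$ with a box $\lambda_{I_i,\mathbf c}+\mathcal X_j$, $\mathcal X_j=\{(a_1,\dots,a_n)\in\mathbb Z^n\mid\sum_i|a_i|\le j\}$ (with a parity constraint in types $C_n,D_n$), and then show this region absorbs all composition factors of the parabolic Verma modules it contains. But the mechanism you propose for the absorption step is wrong, and that step is the heart of the proof. You claim the box is an order ideal for the dominance order: ``subtracting a positive sum of simple roots can only move coordinates inward, hence preserves membership.'' This is false: $(1,0,\dots,0)$ minus $2(\epsilon_1-\epsilon_2)$ is $(-1,2,0,\dots,0)$, whose coordinate absolute values sum to $3>1$. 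So ``same linkage class and $\nu\le\mu$'' does not put $\nu$ back in the box. What is actually true (Lemma~\ref{1lem4}) is that the box is stable under the single dot-reflections $s_\beta\cdot$ with $\beta\in\Phi^+\setminus\Phi_{I_i}$ and $\langle\mu+\rho,\beta^\vee\rangle\in\bbZ_{>0}$, and the inequality making this work (e.g.\ $|a_l-c_l+c_k|+|a_k-c_k+c_l|\le|a_k|+|a_l|$ for $\beta=\epsilon_k-\epsilon_l$) depends essentially on the sign condition $\langle\lambda_{I_i,\mathbf c}+\rho,\beta^\vee\rangle\in\bbZ_{\le0}$ from Assumption~\ref{simple11}/\eqref{tilass} --- which your argument never invokes at the crucial point; a companion statement (Lemma~\ref{1lem6}) handles the $W_{I_i}$-part needed to land back in $\Lambda^{\frp_{I_i}}$ (Lemma~\ref{lem6}).

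Even granting single-reflection stability, you still need to know that $[M^{\frp_{I_i}}(\mu):L(\nu)]\neq0$ forces $\nu$ to be reachable from $\mu$ by a chain $\nu=\mu^k<\cdots<\mu^0=\mu$ with each $\mu^i=(w_is_{\beta_i})\cdot\mu^{i-1}$ and $\langle\mu^{i-1}+\rho,\beta_i^\vee\rangle\in\bbZ_{>0}$; linkage plus dominance is strictly weaker, and the paper supplies this via the Jantzen-coefficient/radical-filtration results of \cite{HX,XZ} (Proposition~\ref{HX1} and Lemma~\ref{lem7}). Your proposal contains no substitute for this input, and your ``secondary technical point'' about wall-crossing functors does not address it. Finally, the identification of $\scK_j$ (hence $\scI_{i,j}$) with the box is itself a nontrivial induction (Lemmas~\ref{1lem21} and~\ref{1lem31}), and it is there --- through the zero weight of $V$ and whether $\eps_n\in I$ --- that the type $B_n$ case becomes delicate, not through ``half-integer shifts.'' As written, the proposal asserts the conclusion of the two key lemmas without a valid argument for either.
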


Rui and Song will compute the decomposition numbers of  $\mathcal B_{a, r}((-1)^a\mathbf u)$ with arbitrary parameters $$(-1)^a \mathbf u=((-1)^a u_1, (-1)^a u_2, \cdots, (-1)^a u_a)$$ such that  $\omega$ is $(-1)^a\mathbf u$-admissible. The influential paper~\cite{ES1} motivates the approach, where Erig and Stroppel embed   the Brauer algebra \cite{Bra} (i.e. the level one cyclotomic Brauer algebra) into a level two cyclotomic Brauer algebra. 

Rui and  Song  will  embed the  $\mathcal B_{a, r}((-1)^a\mathbf u)$ with arbitrary parameters $(-1)^a\mathbf u $
into another cyclotomic Brauer algebra $\mathcal B_{2a, r}(\tilde {\mathbf u})$ as an idempotent truncation. The parameters $\tilde{\mathbf u}$  is given by 
$$\tilde{ \mathbf u}=(u_1, u_2, \ldots, u_a, u_{a+1}, \ldots u_{2a})$$   where $u_{a+1}, u_{a+2}, \ldots$,  $ u_{2a}$ are appropriately  chosen parameters.
They further  prove that the algebra    $\mathcal B_{2a, r}(\tilde {\mathbf u})$ is  isomorphic to the endomorphism algebra of a suitable  
tilting module in the parabolic category $\mathcal O$ for an appropriate  parabolic subalgebra of $\mathfrak{so}_{2n}$. Consequently,  the decomposition numbers of $\mathcal B_{2a, r}(\tilde {\mathbf u})$, and thereby those  of $\mathcal B_{a, r}((-1)^a\mathbf u)$, can, in principal,  be computed using \eqref{dec123}. 

To obtain explicit information about  these decomposition numbers, they carefully analyze the condition for which $\scI_{1, r}$ is saturated with respect to  the partial ordering $\preceq$. This analysis enables them to establish the result in Theorem~\ref{first}(4) for $\mathcal{B}{2a, r}(\tilde{\mathbf{u}})$, and consequently, derive explicit information about the decomposition numbers for $\mathcal{B}_{a, r}((-1)^a \mathbf{u})$ with arbitrary parameters. This is achieved using the parabolic Kazhdan-Lusztig polynomials of type $D_n$, associated with a parabolic subgroup of type $A$.

 Of course, they assume that $\omega_0\neq 0$ for $\mathcal B_{a, r}((-1)^a\mathbf u)$, too.
Certainly, these results  depend on  Theorem~\ref{main4}, Theorem~\ref{first} and the classification of singular vectors for 
the $\mathfrak{so}_{2n}$-module $M_{I_1, r}/M_{I_1, r}\langle E^{f+1}\rangle$ in Section~4.
  Details will be given in the forthcoming sequel~\cite{RS-de}.

The cyclotomic 
Brauer category was introduced in ~\cite{RS}. It serves as the needed analog  of the degenerate cyclotomic Hecke algebra. To study representations of the cyclotomic Brauer category,   Song and two of us  introduced the notion of  a \textsf{weakly triangular category}, where   the path algebra of such a category  is equipped with  an \textsf{upper-finite weakly triangular decomposition}~\cite{GRS1}.  We note that an equivalent notion, called the \textit{triangular basis}  was later proposed  
in the third version of \cite{BS}  five months after \cite{GRS1} appeared on the Arxiv.

Let $A$ denote the path algebra associated with  the  cyclotomic Brauer category, and let $A^\Delta$-mod  denote  the full subcategory of   locally finite-dimensional left $A$-modules where  each object admits a finite standard flag. It was proved in \cite{GRS1} that  $$K_0(A^\Delta\text{-mod})\otimes_{\mathbb Z} \mathbb C$$ can be viewed  as the $\frg^\theta$-module $M$, where $M$ is an integral highest weight $\mathfrak g$-module with  $\mathfrak g=\mathfrak {sl}_\infty$ and $(\mathfrak g, \mathfrak g^\theta)$ forming a symmetric pair. This result can be regarded   as a  counterpart of a weaker version of Ariki's renowned  work on the cyclotomic Hecke algebras. 

Inspired  by \cite{Ari},   
we conjecture that the elements of  $\imath$-canonical basis in \cite{BW1, B} for $M$ correspond to  projective covers of  simple $A$-modules, while the elements of dual  $\imath$-canonical basis  correspond to  simple $A$-modules. 
As the cyclotomic Brauer algebras $\mathcal B_{a, r}(\mathbf u)$ are isomorphic to the centralized subalgebras of $A$ for all non-negative integers $r$, 
we hope that the finding  on decomposition numbers of $\mathcal B_{a, r}(\mathbf u)$ with arbitrary parameters will support the completion of this  project. 

The paper is organized as follows.  Section 2 reviews  some elementary results on cyclotomic Brauer   and degenerate cyclotomic Hecke algebras. Section~3 is about the parabolic  category $\mathcal O$ in types $B_n$, $C_n$, and $D_n$, where  we establish   Theorem~B. Section~4  classifies singular vectors in certain quotient modules of $M_{I_i, r}$, while Section~5 proves Theorem~C and Theorem~D. 
Section~6 includes an appendix by  Wei Xiao with a proof of Theorem~E. This result confirms that condition~\eqref{simple111} is well-justified.

\section{The cyclotomic Brauer algebra }
\subsection{Cyclotomic Brauer algebras}  
\begin{Defn} \label{cba1}~\cite[Definition~2.13]{AMR} Let  $a, r$ denote  two positive integers. The  cyclotomic Brauer algebra  $\mathcal B_{a, r}(\mathbf u)$ 
is an  associative algebra generated by elements  $E_i, S_i, X_j$, for  $1\!\le\! i\!\le\! r\!-\!1$, and $1\!\le\! j\!\leq\! r$,
	subject to the relations
	\begin{multicols}{2}
		\begin{enumerate}
			\item [(1)] $S_i^2=1$,  
			\item[(2)] $S_iS_j=S_jS_i$, for $|i-j|>1$,
			\item[(3)] $S_iS_{i+1}S_i\!=\!S_{i+1}S_iS_{i+1}$, 
			\item[(4)] $S_iX_j=X_jS_i$, for $j\neq i,i+1$,
			\item[(5)]  $E_1X_1^kE_1=\omega_k E_1, \forall k\in \mathbb N$,
			\item[(6)] $S_iE_j=E_jS_i$, for  $|i-j|>1$,
            \item[(7)] $E_iE_j=E_jE_i$, for  $|i-j|>1$,
			\item[(8)]  $E_iX_j =X_j E_i$, for  $j\neq i,i+1$,
			\item [(9)] $X_iX_j=X_jX_i$,
			
			\item[(10)]  $S_iX_i-X_{i+1}S_i=E_i-1$,
			\item[(11)]  $X_i S_i-S_i X_{i+1}=E_i-1 $, 
			\item[(12)] $E_i S_i=E_i=S_iE_i$,
			\item[(13)] $S_iE_{i+1}E_i=S_{i+1}E_i$,
			\item[(14)] $E_i E_{i+1}S_i =E_i S_{i+1}$, 
			\item[(15)] $E_i E_{i+1}E_i =E_{i+1}$,
			\item[(16)] $ E_{i+1}E_i E_{i+1} =E_i$, 
			\item [(17)]  $E_i(X_i+X_{i+1})=(X_i+X_{i+1})E_i=0$,
 			\item[(18)]  $(X_1-u_1)(X_1-u_2)\cdots (X_1-u_a)=0$,
		\end{enumerate}
	\end{multicols} where $\omega_i$ and $u_j$ are scalars in $\mathbb C$ for all $i\in \mathbb N$ and $1\le j\le a$. \end{Defn} 
When $a=1$, this algebra  is the Brauer algebra as defined  in~\cite{Bra}. The decomposition numbers for  the Brauer algebra over $\mathbb C$  were computed in \cite{CDVM, CDVM1}, and  a conceptual explanation (up to a permutation of cell modules) in the framework of Lie theory  was given in \cite{ES1}. 
 
 Throughout this paper, we always  assume $a>1$.  The following result is well-known. 
 \begin{Lemma}\label{anti-inv} There is a $\mathbb C$-linear anti-involution $\tau: \mathcal B_{a, r}(\mathbf u)\rightarrow \mathcal B_{a, r}(\mathbf u)$ fixing  generators $ S_i, E_i$ and $X_j$, for all $1\le i\le r-1$ and $1\le j\le r$. 
    \end{Lemma}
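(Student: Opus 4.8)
The plan is to verify the defining relations (1)--(18) of Definition~\ref{cba1} survive under the map $\tau$ that sends each generator $S_i, E_i, X_j$ to itself and reverses the order of products. Since $\mathcal B_{a,r}(\mathbf u)$ is presented by generators and relations, it suffices to check that for every relation $w_1 = w_2$ in the list, the reversed words satisfy $\tau(w_1) = \tau(w_2)$; then $\tau$ descends to a well-defined $\mathbb C$-algebra anti-homomorphism, and since it is clearly an involution on generators it is an anti-involution on all of $\mathcal B_{a,r}(\mathbf u)$.

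First I would dispose of the many relations that are manifestly symmetric under reversal: the commutation relations (2), (4), (6), (7), (8), (9), and the palindromic braid relation (3) $S_iS_{i+1}S_i = S_{i+1}S_iS_{i+1}$ are each fixed termwise by $\tau$, as is (1) $S_i^2 = 1$. Relation (5) $E_1X_1^kE_1 = \omega_k E_1$ reverses to itself since $E_1$ and $X_1^k$ each commute with nothing relevant here but the word $E_1 X_1^k E_1$ is a palindrome; similarly (12) $E_iS_i = E_i = S_iE_i$ reverses to $S_iE_i = E_i = E_iS_i$, the same pair of equations, and (17) and (18) are palindromic in the relevant generators. The cases needing a genuine (but short) computation are the ``crossing'' relations: (10) $S_iX_i - X_{i+1}S_i = E_i - 1$ reverses to $X_iS_i - S_iX_{i+1} = E_i - 1$, which is exactly relation (11); so (10) and (11) are swapped by $\tau$ and the pair is preserved. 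Likewise (13) $S_iE_{i+1}E_i = S_{i+1}E_i$ reverses to $E_iE_{i+1}S_i = E_iS_{i+1}$, which is relation (14); and (15) $E_iE_{i+1}E_i = E_{i+1}$ reverses to $E_iE_{i+1}E_i = E_{i+1}$ (palindrome), with (16) similarly self-reversing.

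Thus every relation is mapped by $\tau$ to another relation (or to itself), so the assignment extends to an algebra anti-homomorphism $\tau \colon \mathcal B_{a,r}(\mathbf u) \to \mathcal B_{a,r}(\mathbf u)^{\mathrm{op}}$, i.e. a $\mathbb C$-linear anti-homomorphism of $\mathcal B_{a,r}(\mathbf u)$ to itself. Finally, $\tau^2$ fixes all generators and $\tau^2$ is an algebra homomorphism, hence $\tau^2 = \mathrm{id}$, so $\tau$ is an anti-involution. I do not expect any real obstacle here — the only point requiring the slightest care is bookkeeping the pairs $(10)\leftrightarrow(11)$ and $(13)\leftrightarrow(14)$ correctly, and confirming that relations like (5), (15), (16), (17), (18) really are palindromic in the generators they involve; none of this is difficult, which is why the lemma is stated as ``well-known''.
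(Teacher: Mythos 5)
Your proof is correct: the paper states this lemma without proof as ``well-known,'' and your relation-by-relation check of Definition~\ref{cba1} under word reversal (noting that (10)$\leftrightarrow$(11) and (13)$\leftrightarrow$(14) are swapped while all other relations are palindromic or commutation relations) is exactly the standard argument intended. The concluding step that $\tau^2$ is an algebra endomorphism fixing the generators, hence the identity, is also sound.
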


According to  
\cite[Definition~3.6,~Lemma~3.8]{AMR}, the family of scalars 
$\omega= (\omega_i)\in \mathbb C^\mathbb N$ is called \textsf{$\mathbf u$-admissible} if   \begin{equation}\label{uadm} 
 u-\frac{1}{2}+\sum_{i=0}^\infty \frac {\omega_i} {u^i} =(u-\frac{1}{2}(-1)^a)\prod_{i=1}^a \frac{u+u_i}{u-u_i}.\end{equation}
It is proven in \cite[Theorem~5.5]{AMR} that
$\mathcal B_{a, r}(\mathbf u)$ reaches maximal dimension $a^r(2r-1)!!$ if and only if  
 $\omega$  is $\mathbf u$-admissible. Moreover,  
from ~\cite{G09}, we know  the representation theory of $\mathcal B_{a, r}(\mathbf u)$ is fully governed  under the $\mathbf u$-admissible condition. This approach has been applied to classify finite-dimensional simple modules of affine Birman-Murakami-Wenzl algebras over an algebraically closed field~\cite{R}. See~\cite[Remark~3.11]{R} for the result on the affine Brauer algebra.

From this point on,  we always  assume that  \textsf {$\omega$ is  $\mathbf u$-admissible}.

\subsection{Degenerate cyclotomic Hecke algebras} The  degenerate cyclotomic Hecke algebra $\mathscr H_{a, r}(\mathbf u)$ with the parameters $\mathbf u=(u_1, u_2, \ldots, u_a)$ is the   associative algebra generated by elements 
 $s_i$, $x_j$ for  $1\le i\le r-1$, and $1\le j\le r$, subject to the relations: 
\begin{multicols}{2}
		\begin{enumerate}
			\item [(1)] $s_i^2=1$,  
			\item[(2)] $s_is_j=s_js_i$ for  $|i-j|>1$,
			\item[(3)] $s_is_{i+1}s_i\!=\!s_{i+1}s_is_{i+1}$,
			\item[(4)] $s_ix_j=x_js_i$, for  $j\neq i,i+1$,
			\item [(5)] $x_ix_j=x_jx_i$, 
			\item[(6)]  $s_ix_i-x_{i+1}s_i=-1$,
			\item[(7)]  $x_i s_i-s_i x_{i+1}=-1 $,
			\item[(8)]  $(x_1-u_1)(x_1-u_2)\cdots (x_1-u_a)=0$.
		\end{enumerate}
	\end{multicols} 

 Let $\langle E_1\rangle $ be  the two-sided ideal of $\mathcal B_{a, r}(\mathbf u)$ generated by $E_1$. 
It follows from \cite{AMR} that   \begin{equation}\label{cycHiso}\mathcal B_{a, r}(\mathbf u)/\langle E_1\rangle \cong \mathscr H_{a, r}(\mathbf u),\end{equation}
as $\mathbb C$-algebra isomorphism. 
The required isomorphism sends   $\bar S_i$  and  $\bar X_j$ in $\mathcal B_{a, r}(\mathbf u)/\langle E_1\rangle $ to $s_i$ and $x_j$, respectively.

  We adopt the  standard terminology for   compositions,  
  $a$-multipartitions, Young diagrams, tableaux, and  standard tableaux, and related concepts as outlined in   \cite{Ma} and \cite{Kle}. 
  So, $\Lambda^+_a(r)$ denotes the set of all $a$-multipartitions of $r$, and $Y(\lambda)$ (resp., $\Std(\lambda)$) denotes  the Young diagram (resp.,   the set of all standard $\lambda$-tableaux) for every 
   $\lambda\in \Lambda^+_a(r)$. The set $\Lambda_a^+(r)$ is a partially ordered set under the dominance order $\trianglerighteq$ such that $\lambda\trianglerighteq \mu$ indicates 
   $$
   \sum_{t=1}^{s-1}|\lambda^{(t)}|+\sum_{j=1}^h\lambda_h^{(s)}\geq \sum_{t=1}^{s-1}|\mu^{(t)}|+\sum_{j=1}^h\mu_h^{(s)}
   $$
   for all $1\leq s\leq a$ and all $h\geq 0$, where $|\lambda^{(t)}|:=\sum_{j}\lambda_j^{(t)}$.
   There are two special standard  $\lambda$-tableaux $\t^\lambda$ and $\t_\lambda$. 
 For example, if $\lambda=((3,2), (3,1))$, then \begin{equation}\label{tla}
\t^{\lambda}=\left( \ \ \young(123,45),\  \young(678,9)\ \ \right) \quad \text{ and \ }
\t_{\lambda}=\left(\ \ \young(579,68), \ \young(134,2)\ \ \right).\end{equation}

Let $\mathfrak S_r$  be the  symmetric group in $r$ letters $\{1, 2, \cdots, r\}$. 
Then $\mathfrak S_r$  acts on the right of a $\lambda$-tableau by permuting its entries. For example,
\begin{equation}\label{tlaw}
\t^{\lambda}w=\left( \ \ \young(312,45),\  \young(678,9)\ \ \right),\end{equation}
 if  $w=s_1s_2$ and $\lambda=((3,2), (3,1))$.
We write $d(\s)=w$  if $\t^\lambda w=\s$ for any $\lambda$-tableau $\s$. In particular, 
denote $d(\t_\lambda)$ by $w_\lambda$.

For any $\lambda=(\lambda^{(1)}, \lambda^{(2)}, \ldots, \lambda^{(a)})$, 
define \begin{equation}\label{blam} [\lambda]=[b_0, b_1, \cdots, b_a],\end{equation}  where   $b_0=0$ and $b_i=\sum_{j=1}^i |\lambda^{(j)}|$.
We  use   $\mathfrak S_{[\lambda]}$ to denote  $\mathfrak S_{b_1-b_0}\times \cdots \times \mathfrak S_{b_a-b_{a-1}}$, 
 and refer to it as  the Young subgroup with respect to the composition $(b_1-b_0, \ldots, b_a-b_{a-1})$ of  $r$. 
 Let $w_{[\lambda]}\in \mathfrak S_r$  be  defined as
\begin{equation}\label{wll}(b_{i-1}+l)w_{[\lambda]}=r-b_i+l, \text{ for all $i$ with $b_{i-1}<b_i$, $1\le l\le b_i-b_{i-1}$.}\end{equation}
For example, if  $[\lambda]=[0, 4, 8, 9]$, then 
$$w_{[\lambda]}=\begin{pmatrix}  1 & 2 &3 &4 &5 &6 &7 &8 &  9\\
6& 7& 8& 9&  2& 3& 4& 5&  1\\ \end{pmatrix}.$$ 

Define  $w_{(i)}$ such that  $\bft^i  w_{(i)}= \bft_i$, where 
$\bft^i$  denotes the $i$th subtableau of $\bft^\lambda$, and  $\bft_i$ denotes the $i$th subtableau $\bft_\lambda w_{[\lambda]}^{-1}$. Similarly,  define   $\tilde w_{(i)}$ such that  $\tilde \bft^i \tilde w_{(i)}=\tilde \bft_i$. where 
$\tilde \bft^i$ denotes the $i$th subtableau of $\bft^\lambda w_{[\lambda]}$, and  $\tilde \bft_i$ denotes  the $i$th subtableau of  $\bft_\lambda$.  By \cite[(1.4)]{DR},  $ w_{(i)}w_{[\lambda]}=w_{[\lambda]}\tilde w_{(a-i+1)}$, and hence 
\begin{equation}\label{wlaex} w_\lambda=w_{(1)} w_{(2)}\cdots w_{(a)} w_{[\lambda]}=w_{[\lambda]} \tilde w_{(a)} \tilde w_{(a-1)}\cdots \tilde w_{(1)}.\end{equation}

The row stabilizer $\mathfrak S_\lambda$ of $\t^\lambda$ is    the
Young  subgroup $$\mathfrak S_{\lambda}=\mathfrak S_{\lambda^{(1)}}\times \mathfrak S_{\lambda^{(2)}} \times \cdots \times \mathfrak S_{\lambda^{(a)}}  ,$$ where  $\mathfrak S_{\lambda^{(i)}} $ is the row stabilizer of $\t^i$. 
It can also be viewed as the Young subgroup concerning the composition  $\lambda^{(1)}\vee \lambda^{(2)}\cdots \vee \lambda^{(a)}$, obtained from $\lambda$ by concatenation. 
Define \begin{equation}\label{xym} x_{\lambda}= \sum_{w\in \mathfrak S_{\lambda}} w   , \text{ and } \  
 y_{\lambda}=\sum_{w\in \mathfrak S_{\lambda}} (-1)^{l(w)} w   
 \end{equation}
where 
$l(w)$ is the length of $w$. 
For any  $u_1, u_2, \cdots, u_a\in \mathbb C$, and  any $\lambda\in \Lambda^+_a(r)$, define   \begin{equation}\label{piu} \pi_{[\lambda]}=\prod_{i=1}^{a-1} \pi_{b_i}(u_{i+1}), \text{  and   } \tilde{\pi}_{[\lambda]}=\prod_{i=1}^{a-1} \pi_{b_i}(u_{a-i})
,\end{equation}
	where ${b_i}$ is given in \eqref{blam}, $\pi_0(u)=1$, and $\pi_c(u)= (x_1-u)(x_2-u)\cdots (x_c-u)$  if $c$ is a positive integer.  It is known that 
    \begin{equation} \label{pic} \pi_c(u) s_i=s_i\pi_c(u), \text{for any $i\neq c.$}\end{equation}
Let   $ m_\lambda=\pi_{[\lambda]} x_{ \lambda}$,  and   $n_\lambda=\tilde{\pi}_{[\lambda]} y_{ \lambda}$.

\begin{Theorem}\label{basisofhecke}\cite[Theorem~6.3]{AMR}\cite[Theorem~2.1]{RS1} Let $\mathscr H_{a, r}(\mathbf u)$ be the  degenerate cyclotomic Hecke algebra with the parameters $\mathbf u=(u_1, u_2, \ldots, u_a)$. 
\begin{itemize}\item[(1)] $\{m_{\s\t}\mid \s,\t\in\mathscr{T}^{std}(\lambda),\lambda\in \Lambda_a^+(r)\}$ 
  is a cellular basis of $\mathscr H_{a, r}(\mathbf u)$ in the sense of   \cite[Definition~1.1]{GL}, where $m_{\s\t}=d(\s)^{-1} m_\lambda d(\t)$
\item[(2)]  $\{n_{\s\t}\mid \s,\t\in\mathscr{T}^{std}(\lambda),\lambda\in \Lambda_a^+(r)\}$ is a cellular basis of $\mathscr H_{a, r}(\mathbf u)$, where $n_{\s\t}=d(\s)^{-1} n_\lambda d(\t)$.\end{itemize} The required anti-involution is the $\mathbb C$-linear anti-involution that  fixes the  generators $x_1$ and $s_i$ for all $ 1\le i\le r-1$.
\end{Theorem}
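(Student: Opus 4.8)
Since Theorem~\ref{basisofhecke} is \cite[Theorem~6.3]{AMR} and \cite[Theorem~2.1]{RS1}, I only sketch the plan. In each of the two cases the task is to verify the two Graham--Lehrer axioms \cite{GL}: that the anti-involution $\tau$ --- which fixes every $s_i$ and every $x_j$, since $x_{i+1}=s_ix_is_i+s_i$ forces all $x_j$ fixed once $x_1$ and the $s_i$ are --- interchanges the two tableaux, i.e.\ $\tau(m_{\s\t})=m_{\t\s}$ (resp.\ $\tau(n_{\s\t})=n_{\t\s}$); and that right multiplication by $\mathscr H_{a,r}(\mathbf u)$ is triangular modulo the span of the proposed basis elements of strictly larger shape, with coefficients independent of the leading tableau. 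Linear independence of the proposed set, together with the count $\dim\mathscr H_{a,r}(\mathbf u)=a^r r!=\sum_{\lambda\in\Lambda_a^+(r)}|\Std(\lambda)|^2$ --- the dimension coming from the standard basis $\{x_1^{a_1}\cdots x_r^{a_r}w:0\le a_j\le a-1,\ w\in\mathfrak S_r\}$ --- then upgrades it to a basis.

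For part~(1): $\tau$-invariance of $m_\lambda=\pi_{[\lambda]}x_\lambda$ is immediate, since $\tau$ fixes the polynomial $\pi_{[\lambda]}$ and fixes $x_\lambda=\sum_{w\in\mathfrak S_\lambda}w$ ($\mathfrak S_\lambda$ being closed under inverses), and $\pi_{[\lambda]}$ commutes with $\mathfrak S_\lambda$: each factor $\pi_{b_i}(u_{i+1})=(x_1-u_{i+1})\cdots(x_{b_i}-u_{i+1})$ is symmetric in $x_1,\dots,x_{b_i}$, while each simple component of $\mathfrak S_\lambda$ either permutes a subset of $\{1,\dots,b_i\}$ or permutes letters $>b_i$, which commute with $x_1,\dots,x_{b_i}$. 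For linear independence I would expand $m_{\s\t}=d(\s)^{-1}\pi_{[\lambda]}x_\lambda d(\t)$ in the standard basis: with respect to an order refining $x$-degree and then Bruhat order, the leading term is $x^{\mathbf c}\,d(\s)^{-1}w_{0,\lambda}d(\t)$, where $\mathbf c$ is read off from $[\lambda]$ and $w_{0,\lambda}$ is the longest element of $\mathfrak S_\lambda$; these leading terms are pairwise distinct, and the corrections produced by $s_ix_i-x_{i+1}s_i=-1$ are strictly lower, so the transition matrix is triangular with unit diagonal, giving linear independence and hence, by the count, a basis.

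The main obstacle is the triangular multiplication axiom. I would reduce it to right multiplication by a single generator. For a Coxeter generator $s_i$ this is Murphy's straightening applied blockwise inside $\mathfrak S_\lambda$: the non-standard part of $x_\lambda d(\t)s_i$ is rewritten, via the Garnir relations, as a combination of $m_{\mathfrak u\mathfrak v}$ of strictly larger shape, with coefficients depending only on $\t$ and $i$. For a polynomial generator $x_i$ the new point is the behaviour of $x_\lambda$ against a power of $x_j$: commuting a factor $x_j$ leftward past $x_\lambda$ using $s_ix_i-x_{i+1}s_i=-1$ produces terms supported on the same rows, which recombine into $m$-type elements with coefficients independent of $\s$, together with terms of strictly larger shape; the relation $(x_1-u_1)\cdots(x_1-u_a)=0$, transported to the relevant position by conjugating by a permutation, bounds the exponents and makes the process terminate. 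The same computation shows that the span of the $m_{\mathfrak u\mathfrak v}$ of shape $\gdom\lambda$ is a two-sided ideal. All the work is in this straightening step; the rest is formal.

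For part~(2) the quickest route is to transport (1) through the $\mathbb C$-algebra isomorphism $\theta\colon\mathscr H_{a,r}(\mathbf u)\to\mathscr H_{a,r}(-\mathbf u^{\mathrm{rev}})$ defined on generators by $s_i\mapsto-s_i$, $x_j\mapsto-x_j$; this respects every defining relation, the cyclotomic relation being insensitive to the ordering of the parameters. Since $\theta(w)=(-1)^{\ell(w)}w$ for $w\in\mathfrak S_r$ one gets $\theta(x_\lambda)=y_\lambda$, and $\theta(\pi_{[\lambda]})$ is, up to an overall sign depending only on $\lambda$, the element $\tilde\pi_{[\lambda]}$ formed from the reversed and negated parameters; hence $\theta(m_{\s\t})=\pm\,n_{\s\t}$ in the target algebra, with the sign symmetric in $\s\leftrightarrow\t$ and with the shape and the pair of tableaux unchanged. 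Because $\theta\tau=\tau'\theta$ on generators ($\tau'$ the corresponding anti-involution of the target), $\theta$ transports the cellular datum of~(1) onto that of~(2); and since $\mathbf u\mapsto-\mathbf u^{\mathrm{rev}}$ is a bijection of parameter sequences, this proves~(2) for all $\mathbf u$, with the same poset $\Lambda_a^+(r)$ under dominance. One could instead just repeat the proof of~(1) with $(x_\lambda,\pi_{[\lambda]})$ replaced by $(y_\lambda,\tilde\pi_{[\lambda]})$, the antisymmetrizer contributing only signs to the straightening.
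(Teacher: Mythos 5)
The paper contains no proof of this theorem: it is imported verbatim from \cite[Theorem~6.3]{AMR} and \cite[Theorem~2.1]{RS1}, so the only meaningful comparison is with the arguments in those references. Your sketch of part~(1) follows the same Murphy--Dipper--James--Mathas route that \cite{AMR} carries out in the degenerate setting: your verification that $\tau$ fixes every $x_j$ (via $x_{i+1}=s_ix_is_i+s_i$) and that $\pi_{[\lambda]}$ commutes with $\mathfrak S_\lambda$ (symmetric polynomials in $x_j,x_{j+1}$ commute with $s_j$, and $s_{b_i}\notin\mathfrak S_\lambda$) is correct, as is the strategy of Garnir straightening plus the count $\dim\mathscr H_{a,r}(\mathbf u)=a^rr!=\sum_{\lambda}|\Std(\lambda)|^2$. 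The one thin spot is linear independence: the assertion that the leading terms $x^{\mathbf c}d(\s)^{-1}w_{0,\lambda}d(\t)$ are pairwise distinct across shapes is precisely the coset-decomposition lemma underlying Murphy's basis and needs to be stated and proved, not just claimed; the references sidestep this by proving spanning first and then invoking the dimension count. Your part~(2) is a genuinely different and cleaner route than redoing the straightening with $y_\lambda$ in place of $x_\lambda$: the map $\theta\colon s_i\mapsto-s_i$, $x_j\mapsto-x_j$ onto $\mathscr H_{a,r}(-\mathbf u^{\mathrm{rev}})$ does preserve all defining relations (the cyclotomic relation picks up only $(-1)^a$ and a permutation of commuting factors), sends $x_\lambda$ to $y_\lambda$ and $\pi_{[\lambda]}$ to $\pm\tilde\pi_{[\lambda]}$ of the target, and intertwines the two anti-involutions; the residual sign $(-1)^{\ell(d(\s))+\ell(d(\t))}$ times a shape-dependent constant is absorbed by rescaling each tableau, so the cellular datum transports with the same poset. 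This buys a two-line deduction of (2) from (1) valid for all parameter sequences, at the cost of having to check the isomorphism $\theta$ carefully --- which you did correctly.
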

  Following \cite{GL}, let  $C(\lambda)$ denote  the cell module of $\mathscr H_{a, r}(\mathbf u) $ concerning the cellular basis in Theorem~\ref{basisofhecke}(1). There is an invariant form, say $\phi_\lambda$ defined on $C(\lambda)$.
  Define $D(\lambda)=C(\lambda)/\text{Rad} \phi_\lambda$.


Suppose that  $u_1, u_2, \ldots, u_a$  are in the same orbit in the sense that $u_i-u_j\in\mathbb Z$ for all $1\leq i< j \leq a$.  By \cite[Theorem 5.4]{Kle}, $D(\lambda)\neq 0$ if and only if $\lambda$ is $\mathbf u$-restricted in the sense  \cite[(3.14)]{Kle}. 
Re-arranging $u_1, u_2, \ldots, u_a$, we can assume 
   $u_i-u_j\in\mathbb N$ for all $1\leq i\leq j \leq a$. By   \cite[{Example 3.2}, Theorem~5.4]{Kle},  $\lambda$ is $\mathbf u$-restricted if and only if 
  \begin{equation}\label{Kles}  \lambda^{(i)}_{u_i-u_{i+1}+j} \le\lambda^{(i+1)}_j\quad 
  \text{for all $j \ge 1$, $a-1\ge  i\ge 1$}.\end{equation}

  When $\mathbf u=( u_1,  u_2, \ldots, u_a)$ is a disjoint union of certain orbits. Write $\mathbf u=\mathbf u_1\cup\ldots \cup\mathbf u_b $ for some $b$ such that   $\mathbf u_i$ and $\mathbf u_j$ are in different  orbits for all $1\le i<j\le b$. Write  $\mathbf u_j=(u_{j_1},\ldots,u_{j_{a_j}})$.
  By the  Morita equivalence theorem \cite[Theorem 1.1, Proposition 4.11(ii)]{DM} for the degenerate cyclotomic Hecke algebra, 
  \begin{equation}\label{difort} \text{$D^\lambda\neq 0$ if and only if each $\lambda_{j}=(\lambda^{(j_1)}, \lambda^{(j_2)}, \ldots, \lambda^{(j_{a_j})})$ is $\mathbf u_j$-restricted }\end{equation} for all $1\le j\le b$, where 
$\lambda=(\lambda^{(1)}, \lambda^{(2)}, \ldots, \lambda^{(a)})$.  See remarks after \cite[Theorem~8.5]{AMR} in which
Ariki, Mathas and Rui  stated that there is a Morita equivalence theorem for degenerate cyclotomic
Hecke algebra, which is   similar to those for cyclotomic Hecke algebra in \cite{DM}.

  Similarly, let $\tilde C(\lambda)$ be the cell module defined via the cellular basis in Theorem~\ref{basisofhecke}(2), and let  \begin{equation}\label{tilD} \tilde D(\lambda)=\tilde C(\lambda)/\text{Rad} \tilde \phi_\lambda,\end{equation} where $\tilde\phi_\lambda$ is the invariant form defined on $\tilde C(\lambda)$. Then  all  non-zero $\tilde D(\lambda)$ also form a complete set of pair-wise non-isomorphic simple $\mathscr H_{a, r}(\mathbf u)$-modules. It follows from \cite[Theorems~5.3, 5.9]{RS1} that 
  \begin{equation}\label{kles1} D(\lambda)\cong \tilde D(\sigma(\lambda))
  \end{equation}
where    $\sigma$  is known as the generalized Mullineux involution.
See \cite[Remark~5.10]{RS1} for an explicit explanation. This involution was obtained in \cite{JC} for the non-degenerate cyclotomic Hecke algebras.

  For each $\lambda\in \Lambda_a^+(r)$, 
the classical Specht module is  $S^\lambda:=m_\lambda w_\lambda n_{\lambda'}\mathscr H_{a, r}(\mathbf u)$, where   $\lambda'$ is  the conjugate of $\lambda$, defined as in Theorem~\ref{main4}.
Then  \begin{equation}\label{spectiso} \tilde C(\lambda')\cong S^\lambda \quad \text{for any $\lambda\in \Lambda_a^+(r)$.}\end{equation} This result was proved in \cite[Theorem~2.9]{DR} for non-degenerate cyclotomic Hecke algebras. The degenerate case can be handled similarly. 

\subsection{A weakly cellular basis of $\mathcal B_{a, r}(\mathbf u)$}  
 For any positive integers $a, r$, define \begin{equation}\label{lambdaa}  \Lambda_{a, r}=\{(f,\lambda)\mid 0\leq f\leq \lfloor r/2 \rfloor, \lambda \in\Lambda_a^+(r-2f)\}. \end{equation}
  There is a  partial order  $\trianglerighteq$ on  the set $ \Lambda_{a, r}$ such that $$\text{$(f,\lambda)\trianglerighteq (h, \mu)$ if $f>h$ or $h=f$ and $\lambda\trianglerighteq  \mu$.}$$ 
 For any  $(f, \lambda) \in \Lambda_{a, r}$, define $\mathbf N_{ a}=\{0,1,\ldots, a-1\}$, and $\delta(f,\lambda)=\Std(\lambda)\times \mathbb N_a^{f}\times   \mathcal D^f_{r}$,  where 
\begin{itemize}\item [(1)] $\mathbb N_{a}^{f}=\{\xi\in \mathbf N_a^r\mid \xi_i \neq 0 \text{ only if } i=r-1,r-3,\cdots,r-2f+1\}$,  \item[(2)] 
$\mathcal D_{r}^f=\{d\in\mathfrak S_r\mid \t^{\tau}d=(\t_1,\t_2)\in \mathscr T^{row, 1}(\tau)\}$,  where $\tau=((r-2f ), (2^f))$, and  $\mathscr T^{row, 1}(\tau)$ is  the set of row standard $\tau$-tableaux such that 
the first column of $\t_2$ is increasing from top to bottom.\end{itemize}

From this point on, unless otherwise stated,   we also use $n_{\s\t}$ to denote the corresponding element in $\B_{a, r}(\mathbf u)$.  More explicitly, it   is obtained from the element  in Theorem~\ref{basisofhecke}   by using $X_i$ and $S_j$ instead of $x_i$ and $s_j$, respectively.

For any $(\s,\xi,e),(\t,\eta,d)\in\delta(f,\lambda)$, define 
 \begin{equation}\label{bncell} C_{(\s,\xi, e),(\t,\eta, d)}=e^{-1}X^{\xi}E^f n_{\s\t}X^{\eta}d,\end{equation}  
	 where $X^\eta=\prod_{i=1}^f X_{r-2i+1}^{\eta_{r-2i+1}} $, 
     $E^0=1$, and $ E^f=E^{f-1} E_{r-2f+1}$ if $f>0$. 
The following result follows from \cite[Theorem~7.17]{AMR}, where Ariki, Mathas and Rui used $m_{\s\t}$ for all admissible $\s$ and $ \t$.

\begin{Theorem}\cite[Theorem~7.17]{AMR}\label{cellular-1} The set $$\{C_{(\s,\xi,e),(\t,\eta,d)}\,|\,
	(\s,\xi,e),(\t,\eta,d)\in\delta(f,\lambda),
	\forall (f,\lambda)\in\Lambda_{a, r}\}$$  is a weakly cellular basis of  $\mathcal B_{a, r}(\mathbf u)$ 
in the sense of \cite{G09}\footnote{The cellular basis of $\mathcal B_{a, r}(\mathbf u)$ is indeed a weakly cellular basis in the sense of \cite{G09}.}.  \end{Theorem}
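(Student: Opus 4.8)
The plan is to follow the proof of \cite[Theorem~7.17]{AMR} establishing the $m$-version of the statement, substituting at every step the cellular basis $\{n_{\s\t}\}$ of $\mathscr H_{a,r-2f}(\mathbf u)$ from Theorem~\ref{basisofhecke}(2) for the cellular basis $\{m_{\s\t}\}$ of Theorem~\ref{basisofhecke}(1). Recall from \cite{AMR} that, as an algebra with anti-involution, $\mathcal B_{a,r}(\mathbf u)$ carries a filtration whose layer indexed by $f\in\{0,1,\ldots,\lfloor r/2\rfloor\}$ is modelled---via the iterated-inflation construction underlying \cite{G09}---on the cell datum of $\mathscr H_{a,r-2f}(\mathbf u)$, inflated along the dot data $X^\xi$ ($\xi\in\mathbb N_a^f$) and the coset representatives $d\in\mathcal D_r^f$; in this picture the $(f,\lambda)$-part of a cellular basis of $\mathcal B_{a,r}(\mathbf u)$ is obtained by sandwiching a cellular basis of $\mathscr H_{a,r-2f}(\mathbf u)$ at $\lambda$ between $e^{-1}X^\xi E^f$ on the left and $X^\eta d$ on the right. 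Since this construction consumes only a cellular basis of each Hecke stratum, feeding in $\{n_{\s\t}\}$ in place of $\{m_{\s\t}\}$ yields exactly the elements $C_{(\s,\xi,e),(\t,\eta,d)}=e^{-1}X^\xi E^f n_{\s\t}X^\eta d$ of \eqref{bncell}, indexed by $\bigsqcup_{(f,\lambda)\in\Lambda_{a,r}}\delta(f,\lambda)\times\delta(f,\lambda)$, carrying the same poset $(\Lambda_{a,r},\trianglerighteq)$ as the $m$-version.

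First I would show that these elements form a $\mathbb C$-basis. There are $a^r(2r-1)!!$ of them, which is $\dim\mathcal B_{a,r}(\mathbf u)$ since $\omega$ is $\mathbf u$-admissible \cite[Theorem~5.5]{AMR}, so it is enough to prove linear independence; this proceeds exactly as in \cite{AMR}, using the two-sided ideal filtration of $\mathcal B_{a,r}(\mathbf u)$ attached to $(\Lambda_{a,r},\trianglerighteq)$ and reducing, stratum by stratum, to linear independence of $\{n_{\s\t}\}$ in $\mathscr H_{a,r-2f}(\mathbf u)$, which is Theorem~\ref{basisofhecke}(2). Next, the anti-involution $\tau$ of Lemma~\ref{anti-inv} serves as the cellular anti-involution: since $\tau$ fixes $S_i,E_i,X_j$, it fixes each $X^\xi$, $X^\eta$ and $E^f$, reverses group elements ($\tau(d)=d^{-1}$, $\tau(e^{-1})=e$), and restricts to the Hecke anti-involution of Theorem~\ref{basisofhecke}, giving $\tau(n_{\s\t})=n_{\t\s}$; moreover each $E_i$ appearing in $E^f=E_{r-1}E_{r-3}\cdots E_{r-2f+1}$ commutes with $S_j$ ($j\le r-2f-1$) and $X_j$ ($j\le r-2f$) by relations $(6)$ and $(8)$ of Definition~\ref{cba1}, hence $E^f$ commutes with the element $n_{\t\s}$, and therefore
\[
\tau\bigl(C_{(\s,\xi,e),(\t,\eta,d)}\bigr)=d^{-1}X^\eta\,n_{\t\s}\,E^f\,X^\xi e=d^{-1}X^\eta E^f n_{\t\s}X^\xi e=C_{(\t,\eta,d),(\s,\xi,e)} .
\]

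It then remains to establish the weak cellular straightening laws: for each generator $g\in\{S_i,E_i,X_i\}$ and each $(\s,\xi,e),(\t,\eta,d)\in\delta(f,\lambda)$,
\[
C_{(\s,\xi,e),(\t,\eta,d)}\,g\equiv\sum_{(\t',\eta',d')\in\delta(f,\lambda)}r_{(\t',\eta',d'),(\t,\eta,d)}(g)\,C_{(\s,\xi,e),(\t',\eta',d')}\pmod{\mathcal B_{a,r}(\mathbf u)^{\gdom(f,\lambda)}},
\]
the coefficients being independent of $(\s,\xi,e)$, together with the left-handed analogue obtained by applying $\tau$; here $\mathcal B_{a,r}(\mathbf u)^{\gdom(f,\lambda)}$ is the span of the $C$'s with label strictly greater than $(f,\lambda)$. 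These computations have the same shape as in \cite{AMR}: the action of $X_i,S_i$ with indices $\le r-2f$ is absorbed into the Hecke factor $n_{\s\t}$ and governed by the cellularity of $\{n_{\s\t}\}$; the action of the remaining $X_i,S_i$ rearranges and rescales the data $d$ and $X^\eta$ through relations $(10)$--$(17)$, either staying inside the layer or, whenever a fresh cap $E_i$ is produced, dropping into a strictly larger layer; and $E_iE_{i\pm1}E_i$-configurations are removed by relations $(15)$ and $(16)$. I expect the main obstacle to be exactly this last bookkeeping: verifying that when $g=S_i$ or $g=E_i$ changes the cap pattern, the output re-expands in the $C$-basis with coefficients that do not depend on the source datum $(\s,\xi,e)$, and that every correction term genuinely lands in $\mathcal B_{a,r}(\mathbf u)^{\gdom(f,\lambda)}$. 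Here the relations $E_i(X_i+X_{i+1})=0$ and $E_1X_1^kE_1=\omega_kE_1$, with $\mathbf u$-admissibility of $\omega$ ensuring the $\omega_k$ are the scalars forced by \eqref{uadm}, are what make the reduction close up, and this is the one place at which the weak---rather than strict---form of cellularity of \cite{G09} is actually needed.
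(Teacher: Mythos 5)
Your proposal follows exactly the route the paper takes: the paper gives no independent argument for this theorem, but simply invokes \cite[Theorem~7.17]{AMR} and remarks that the construction there is insensitive to which cellular basis of the degenerate cyclotomic Hecke strata one inflates, so substituting $\{n_{\s\t}\}$ for $\{m_{\s\t}\}$ yields the stated basis. Your counting/linear-independence and straightening discussion is the right fleshing-out of that remark.

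One step of your sketch is inaccurate, though harmlessly so. You claim the exact identity $\tau\bigl(C_{(\s,\xi,e),(\t,\eta,d)}\bigr)=C_{(\t,\eta,d),(\s,\xi,e)}$ via $\tau(n_{\s\t})=n_{\t\s}$. That identity holds in $\mathscr H_{a,r}(\mathbf u)$, but for the lifts to $\mathcal B_{a,r}(\mathbf u)$ one only gets $\tau(n_{\s\t})=d(\t)^{-1}\,y_\lambda\,\tilde\pi_{[\lambda]}\,d(\s)$, and $y_\lambda$ commutes with $\tilde\pi_{[\lambda]}$ only modulo $\langle E_1\rangle$: for instance, in $\mathcal B_{a,r}(\mathbf u)$ one computes $S_iX_iX_{i+1}=X_iX_{i+1}S_i+X_iE_i-E_iX_i$, so symmetric polynomials in the $X_j$ no longer commute with the corresponding $S_i$ on the nose. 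Hence $\tau(C_{(\s,\xi,e),(\t,\eta,d)})\equiv C_{(\t,\eta,d),(\s,\xi,e)}$ only modulo the span of basis elements indexed by strictly larger $(f,\lambda)$ (the correction terms carry an extra $E_j$ with $j\le r-2f-1$ and land in $\langle E^{f+1}\rangle$). This congruence is exactly the weakened anti-involution axiom of \cite{G09}, and it is the genuine reason the basis is only \emph{weakly} cellular --- not, as you suggest at the end, the closing-up of the straightening reduction via $E_1X_1^kE_1=\omega_kE_1$. Since the theorem claims only weak cellularity, your argument still proves what is needed once this equality is downgraded to a congruence.
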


For each  $(f, \lambda)\in \Lambda_{a, r}$, let $\phi_{f, \lambda}$ be the invariant form defined on $C(f, \lambda)$, where $C(f, \lambda)$ is  the right cell module with respect to the weakly cellular basis  described  in Theorem~\ref{cellular-1}. Define $$D(f, \lambda)=C(f, \lambda)/\text{Rad} \phi_{f, \lambda}.$$ 

\begin{Theorem}\label{bncell1}
    \label{simplecla}  Suppose $(f, \lambda)\in \Lambda_{a, r}$ and $\omega_0\neq 0$. Then 
    \begin{itemize} \item[(1)] $D(f, \lambda)\neq 0$ if and only if $\tilde D(\lambda)\neq 0$.
   \item [(2)] $D(f, \lambda)\neq 0$ if and only if $\sigma^{-1} (\lambda)$ is  $\mathbf u$-restricted in the sense of 
    \eqref{difort}, where $\sigma$ is the generalized Mullineaux involution in \eqref{kles1}. 
    \end{itemize} 
\end{Theorem}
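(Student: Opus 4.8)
The plan is to reduce the statement about the cyclotomic Brauer algebra $\mathcal B_{a,r}(\mathbf u)$ to the corresponding, already-available statements about the degenerate cyclotomic Hecke algebra $\mathscr H_{a,r}(\mathbf u)$, using the structure of the weakly cellular basis in Theorem~\ref{cellular-1} together with the isomorphism \eqref{cycHiso}. The key point is that the right cell module $C(f,\lambda)$ of $\mathcal B_{a,r}(\mathbf u)$ attached to $(f,\lambda)\in\Lambda_{a,r}$ is built, via \eqref{bncell}, from the Hecke-algebra element $n_{\s\t}$ of $\mathscr H_{a,r-2f}(\mathbf u)$ tensored up with the ``cap'' data $E^f$ and the extra variables $X^\eta$; so the invariant form $\phi_{f,\lambda}$ on $C(f,\lambda)$ should factor, up to a scalar depending only on $\omega_0$ and $f$, through the invariant form $\tilde\phi_\lambda$ on the Hecke-algebra cell module $\tilde C(\lambda)$ attached to the cellular basis of Theorem~\ref{basisofhecke}(2). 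This is exactly the kind of ``localization/truncation at the idempotent $E^f$'' argument that underlies the cellular structure of Brauer-type algebras.

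The steps, in order, are as follows. First I would spell out $C(f,\lambda)$ concretely as the span of the $C_{(\s^\lambda,\xi^0,e^0),(\t,\eta,d)}$ modulo higher terms, and identify the ``leading'' part indexed by $\Std(\lambda)$ with $\tilde C(\lambda)$ viewed inside $\mathscr H_{a,r-2f}(\mathbf u)$ via the embedding sending $n_{\s\t}\mapsto E^f n_{\s\t}$; here one uses relations (5), (12)--(17) of Definition~\ref{cba1} — in particular $E_1X_1^kE_1=\omega_k E_1$ and $E_iE_{i+1}E_i=E_{i+1}$ — to see that multiplying two such basis elements and projecting back to the $(f,\lambda)$-cell produces a factor $\omega_0^f$ (coming from closing up the $f$ nested caps) times the corresponding product in $\mathscr H$. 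Second, I would conclude that $\phi_{f,\lambda}(v,w) = \omega_0^{f}\,\tilde\phi_\lambda(\bar v,\bar w)$ on the leading parts, so that, since $\omega_0\neq 0$, $\Rad\phi_{f,\lambda}$ corresponds precisely to $\Rad\tilde\phi_\lambda$; hence $D(f,\lambda)\neq 0 \iff \tilde D(\lambda)\neq 0$, which is part (1). Third, for part (2) I would simply combine part (1) with the chain of equivalences already established in the excerpt: $\tilde D(\lambda)\neq 0 \iff D(\sigma^{-1}(\lambda))\neq 0$ by \eqref{kles1} (read backwards), and $D(\mu)\neq 0 \iff \mu$ is $\mathbf u$-restricted in the sense of \eqref{difort} by the Morita-equivalence criterion stated around \eqref{difort}; applying this with $\mu=\sigma^{-1}(\lambda)$ gives $\tilde D(\lambda)\neq 0 \iff \sigma^{-1}(\lambda)$ is $\mathbf u$-restricted, which together with part (1) yields (2).

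The main obstacle I expect is the first step: making rigorous the claim that the invariant form on $C(f,\lambda)$ ``is'' the Hecke form on $\tilde C(\lambda)$ up to the nonzero scalar $\omega_0^f$. This requires carefully tracking how the straightening/reduction rules for the weakly cellular basis of Theorem~\ref{cellular-1} interact with the factor $E^f$ — one must check that when the relevant product $C_{(\s,\xi,e),(\t,\eta,d)}^{\ \tau}\,C_{(\s',\xi',e'),(\t',\eta',d')}$ is expanded, the coefficient of a top-cell basis element $C_{(\s^\lambda,\cdot,\cdot),(\cdot,\cdot,\cdot)}$ is governed entirely by the $E^f \cdots E^f$ ``cup-cap'' pairing, which contracts to $\omega_0^{f}E^f$ by relations (5) and (15)--(16), while the remaining Hecke-algebra part is precisely $n_{\s\t}$-times-$n_{\s'\t'}$ reduced inside $\mathscr H_{a,r-2f}(\mathbf u)$. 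One also has to handle the extra dot-variables $X^\eta$ on the capped strands and check, using relation (17) ($E_i(X_i+X_{i+1})=0$) and (5), that they do not introduce new dependence beyond $\omega_0$; the fact that $\omega$ is $\mathbf u$-admissible guarantees the $\omega_k$ are determined and in particular that $\omega_0$ has a fixed nonzero value whenever it is nonzero, so no degeneracy creeps in. Once this bookkeeping is done, parts (1) and (2) follow formally.
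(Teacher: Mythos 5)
Your part (2) is exactly the paper's argument: combine (1) with \eqref{kles1} and \eqref{difort}. For part (1), however, the paper does not give a proof at all --- it simply cites \cite[Theorem~3.12]{RSi1} --- so what you are really doing is re-proving that cited result. Your strategy (contract the $f$ caps against the $f$ cups and reduce to the Hecke algebra via \eqref{cycHiso}) is the right one, and your computation on the leading block (where $\xi=\eta=0$ and $d=e=1$) is correct: since $E^f$ commutes with $n_{\s\t}$ for $\s,\t\in\Std(\lambda)$ with $\lambda\in\Lambda^+_a(r-2f)$, and $(E^f)^2=\omega_0^fE^f$ by Definition~\ref{cba1}(5), one gets $\phi_{f,\lambda}=\omega_0^f\,\tilde\phi_\lambda$ on that block, which proves $\tilde D(\lambda)\neq0\Rightarrow D(f,\lambda)\neq0$ when $\omega_0\neq0$.

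The gap is in the converse implication. From the identity on the leading block you cannot conclude that $\Rad\phi_{f,\lambda}$ ``corresponds precisely'' to $\Rad\tilde\phi_\lambda$: the module $C(f,\lambda)$ has dimension $|\Std(\lambda)|\cdot a^f\cdot|\mathcal D_r^f|$, and the vanishing of $\tilde\phi_\lambda$ says nothing a priori about the Gram entries indexed by pairs with nontrivial $\eta,\eta',d,d'$. Moreover, your description of those entries is not correct as stated: the contraction $E^fX^\eta d\,\tau(d')X^{\eta'}E^f$ does \emph{not} in general collapse to $\omega_0^fE^f$; by \eqref{AMR} (i.e.\ \cite[Lemma~8.3]{AMR}) it equals $E^fh$ for some $h\in\mathcal B_{a,r-2f}(\mathbf u)$ which typically involves the higher parameters $\omega_k$, $k>0$, and elements of the ideal $\langle E_1\rangle\subset\mathcal B_{a,r-2f}(\mathbf u)$. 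What rescues the argument is that $E^f\langle E_1\rangle$ lands in the higher cells $\langle E^{f+1}\rangle$, so that modulo higher cells every Gram entry takes the form $\tilde\phi_\lambda(\bar v\,h',\bar w)$ for some $h'\in\mathscr H_{a,r-2f}(\mathbf u)$, and the invariance of $\tilde\phi_\lambda$ then forces $\phi_{f,\lambda}\equiv0$ whenever $\tilde\phi_\lambda\equiv0$. To close the argument you must invoke \eqref{AMR} and this invariance explicitly (or simply cite \cite[Theorem~3.12]{RSi1} as the paper does); as written, the step from the leading-block identity to the equality of radicals is a non sequitur.
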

\begin{proof} The statement (1) is a special case of \cite[Theorem~3.12]{RSi1}, 
and (2) follows from  (1) and \eqref{difort}.
\end{proof}

When $\mathcal B_{a, r}(\mathbf u)$ is the cyclotomic Brauer algebra in Theorem~\ref{thmA},  $\omega_0= N$ if $\mathfrak g= \mathfrak{so}_N$ and $\omega_0=-N$ if $\mathfrak g=\mathfrak{sp}_{N}$. This is the reason why we assume $\omega_0\neq0$ in Theorem~\ref{bncell1}.  
The following result holds no matter whether $\omega_0=0$ or not.

\begin{Prop}\label{bas}
    \label{wallbcell} For each $(f, \lambda)\in \Lambda_{a, r}$,  let
	$S^{f,\lambda}=E^f m_{\lambda}w_{\lambda} n_{\lambda'} \B_{a, r}(\mathbf u) \pmod {\langle E^{f+1}\rangle }$, where 
 $\langle E^{f+1}\rangle $ is the two-sided ideal of $\mathcal B_{a, r}(\mathbf u)$ generated by $E^{f+1}$. Then $$S^{f,\lambda}\cong C(f,\lambda')$$ as right $\B_{a, r}(\mathbf u)$-modules. Moreover,   $\{E^f m_{\lambda}w_{\lambda} n_{\lambda'}d(\t)X^{\xi}d \pmod { \langle E^{f+1}\rangle } \mid (\t, \xi ,d)\in \delta(f, \lambda')\}$ forms  a basis of $S^{f,\lambda}$.
	\end{Prop}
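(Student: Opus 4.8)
The plan is to show $S^{f,\lambda}$ is isomorphic to $C(f,\lambda')$ by exhibiting a spanning set of the claimed size and then matching it with the weakly cellular basis of $C(f,\lambda')$ from Theorem~\ref{cellular-1}. First I would unravel the definitions: $C(f,\lambda')$ is the right cell module attached to $(f,\lambda')\in\Lambda_{a,r}$ with the weakly cellular basis $\{C_{(\s,\xi,e),(\t,\eta,d)} \bmod \B^{\gedom(f,\lambda')}\}$ indexed, on the ``row'' side, by $(\t,\eta,d)\in\delta(f,\lambda')$; as a $\B_{a,r}(\mathbf u)$-module it is generated by the image of $E^f n_{\lambda'}$ (more precisely $e^{-1}X^\xi E^f n_{\s\lambda'}$ for the distinguished $\s$), and its dimension is $|\delta(f,\lambda')|=|\Std(\lambda')|\cdot a^f\cdot|\mathcal D_r^f|$. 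On the other side, $S^{f,\lambda}=E^f m_\lambda w_\lambda n_{\lambda'}\B_{a,r}(\mathbf u)\bmod\langle E^{f+1}\rangle$ is cyclic, generated by the image of $E^f m_\lambda w_\lambda n_{\lambda'}$. So the natural candidate map is the $\B_{a,r}(\mathbf u)$-module surjection $C(f,\lambda')\twoheadrightarrow S^{f,\lambda}$ (or its reverse) sending the generator $E^f n_{\lambda'}$-side element to $E^f m_\lambda w_\lambda n_{\lambda'}$, using that $m_\lambda w_\lambda n_{\lambda'}$ is, up to passing to the Hecke quotient, essentially $n_{\lambda'}$ acted on by a unit-type element.

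The key input is the Hecke-algebra identity \eqref{spectiso}: $\tilde C(\lambda')\cong S^\lambda = m_\lambda w_\lambda n_{\lambda'}\mathscr H_{a,r-2f}(\mathbf u)$, together with the identification $\tilde C(\lambda')$ with the cell module built from the $n_{\s\t}$ cellular basis (Theorem~\ref{basisofhecke}(2)). The strategy is: (i) Work modulo $\langle E^{f+1}\rangle$. By the weakly cellular structure of Theorem~\ref{cellular-1}, the quotient $\langle E^f\rangle / \langle E^{f+1}\rangle$ has a basis given by the $C_{(\s,\xi,e),(\t,\eta,d)}$ with this fixed $f$ and $\lambda\in\Lambda_a^+(r-2f)$ varying, and left-multiplication structure shows $E^f \cdot(\text{stuff})$ lands in the span of such basis elements. (ii) Show $E^f m_\lambda w_\lambda n_{\lambda'}\B_{a,r}(\mathbf u)$, modulo $\langle E^{f+1}\rangle$, is spanned by $\{E^f m_\lambda w_\lambda n_{\lambda'} d(\t) X^\xi d\}$ over $(\t,\xi,d)\in\delta(f,\lambda')$: here $d(\t)$ ranges over $\Std(\lambda')$, $X^\xi$ over $\mathbb N_a^f$, and $d$ over $\mathcal D_r^f$. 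This is the ``moreover'' claim; it reduces, after pushing $E^f$ to the left and absorbing commutation relations (2),(6),(7),(8) of Definition~\ref{cba1}, to the fact that $m_\lambda w_\lambda n_{\lambda'}\mathscr H_{a,r-2f}(\mathbf u)$ is spanned by $m_\lambda w_\lambda n_{\lambda'} d(\t)$ over $\t\in\Std(\lambda')$, which is exactly \eqref{spectiso} combined with the cellular basis $\{n_{\s\t}\}$. (iii) Count: the spanning set has size $\le|\Std(\lambda')|\cdot a^f\cdot|\mathcal D_r^f|=\dim C(f,\lambda')$. (iv) Produce the surjection $C(f,\lambda')\twoheadrightarrow S^{f,\lambda}$ sending cell-basis generators to the corresponding $E^f m_\lambda w_\lambda n_{\lambda'} d(\t) X^\xi d$; this is well-defined because the defining relations of the cell module (the way $\B_{a,r}(\mathbf u)$ acts on the $(\t,\eta,d)$-indices modulo higher cells) are respected — this uses that $m_\lambda w_\lambda n_{\lambda'}$ and $n_{\lambda'}$ generate the same cyclic $\mathscr H$-module by \eqref{spectiso} and that $E^f$-conjugation does not disturb the ``higher cell'' filtration. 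Comparing dimensions (the surjection is onto a module whose spanning set has size $\le\dim C(f,\lambda')$) forces both to be bases and the map to be an isomorphism.

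The main obstacle I anticipate is step (iv): verifying that the assignment on generators actually extends to a $\B_{a,r}(\mathbf u)$-module \emph{homomorphism}, i.e., that all the relations defining $C(f,\lambda')$ as a quotient of the right ideal are satisfied by the elements $E^f m_\lambda w_\lambda n_{\lambda'} d(\t) X^\xi d$ modulo $\langle E^{f+1}\rangle$. The subtlety is that $m_\lambda w_\lambda n_{\lambda'}$ is not itself one of the standard cellular-basis building blocks $E^f n_{\s\t}$; one must check that right-multiplying it by generators $S_i, E_i, X_j$ of $\B_{a,r}(\mathbf u)$ produces, modulo $\langle E^{f+1}\rangle$ and modulo the span of strictly-dominant cells, the same linear combinations of $\{E^f m_\lambda w_\lambda n_{\lambda'} d(\t)X^\xi d\}$ that the cell-module action prescribes on $\{C_{(\s,\lambda',e),(\t,\eta,d)}\}$. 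This is where \eqref{spectiso} does the real work — it guarantees that inside the Hecke quotient $m_\lambda w_\lambda n_{\lambda'}$ generates a module isomorphic to the cell module $\tilde C(\lambda')$, so the combinatorics of the right action on the $\t$-index is forced to agree; the $X^\xi$ and $d$ indices are handled by the standard manipulations with $E_i$'s (relations (5), (15)--(18) of Definition~\ref{cba1}) exactly as in the proof of Theorem~\ref{cellular-1} in \cite{AMR}, \S 7. A secondary, more routine obstacle is bookkeeping with the anti-involution $\tau$ of Lemma~\ref{anti-inv} to pass between left and right cell modules cleanly, and making sure the ``row-standard, first-column-increasing'' condition cutting out $\mathcal D_r^f$ is preserved under the identification — but these are mechanical once the homomorphism in (iv) is in place.
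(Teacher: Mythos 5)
Your overall strategy is the right one, and it is essentially what the paper intends: the paper itself gives no argument here beyond citing the analogous \cite[Proposition~3.9]{RSu}, and the standard proof of that result is exactly the reduction you describe — rewrite $E^f\B_{a,r}(\mathbf u)$ modulo $\langle E^{f+1}\rangle$ as $E^f\,\mathscr H_{a,r-2f}(\mathbf u)\,X^\xi d$ using the relations of Definition~\ref{cba1} and \eqref{cycHiso}, and then invoke the Specht-module isomorphism \eqref{spectiso} together with the cellular basis of Theorem~\ref{basisofhecke}(2) to control the Hecke-algebra factor. Your steps (i) and (ii) (spanning) are sound.

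The genuine flaw is in how you close the argument in steps (iii)--(iv). A surjection $C(f,\lambda')\twoheadrightarrow S^{f,\lambda}$ together with the fact that $S^{f,\lambda}$ admits a spanning set of size at most $\dim C(f,\lambda')$ does \emph{not} force the map to be an isomorphism: that data is equally consistent with $S^{f,\lambda}$ being a proper quotient, in which case the displayed spanning set would be linearly dependent rather than a basis. What is missing is the lower bound $\dim S^{f,\lambda}\ge|\delta(f,\lambda')|$, i.e.\ a direct proof that the elements $E^f m_\lambda w_\lambda n_{\lambda'}d(\t)X^\xi d$ are linearly independent modulo $\langle E^{f+1}\rangle$. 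This is provable with tools you already have in hand, but it must be done explicitly: by \eqref{spectiso} the elements $m_\lambda w_\lambda n_{\lambda'}d(\t)$, $\t\in\Std(\lambda')$, form a basis of the right ideal $S^\lambda\subseteq\mathscr H_{a,r-2f}(\mathbf u)$, so their expansions in the cellular basis $\{n_{\s\u}\}$ of Theorem~\ref{basisofhecke}(2) are linearly independent; multiplying on the left by $E^f$ and on the right by $X^\xi d$ and expanding in the weakly cellular basis of Theorem~\ref{cellular-1} then produces, for distinct triples $(\t,\xi,d)\in\delta(f,\lambda')$, combinations supported on pairwise disjoint sets of basis elements $C_{(\s,0,1),(\t,\xi,d)}$ of $\langle E^f\rangle/\langle E^{f+1}\rangle$ (plus terms in strictly higher cells), whence independence. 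Once that is in place the surjection in your step (iv) is between spaces of equal dimension and the isomorphism follows; alternatively, with independence established, the isomorphism can be read off directly from \eqref{spectiso} and the identification $E^f\B_{a,r}(\mathbf u)E^f=E^f\B_{a,r-2f}(\mathbf u)$ of \eqref{AMR}, without constructing the surjection from $C(f,\lambda')$ at all. As written, your proof establishes only that $S^{f,\lambda}$ is a quotient of $C(f,\lambda')$ spanned by the stated set, not the claimed isomorphism and basis.
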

\begin{proof}
    The result can be proven using  arguments similar to those used in the proof of  \cite[Proposition~3.9]{RSu}.
    We leave the details to the reader.\end{proof}
Motivated by  Proposition~\ref{bas}, we will classify singular vectors in certain quotient modules of $M_{I_i, r}$ in Section~4. 
   
\section{parabolic category $\mathcal O$ in types  $B_n, C_n$ and $D_n$} 

\subsection{The symplectic and orthogonal Lie algebras}

Throughout, let  $V$ denote   the $N$-dimensional  complex space. The general linear Lie algebra
$\mathfrak{gl}_N$ is defined as $\End_{\mathbb C}(V) $
 with  the Lie bracket $[\  ,   \ ]$,    defined by $[x, y]=xy-yx$ for all $x, y\in \mathfrak {gl}_N$.   
Define  \begin{equation}\label{defofg}
\mathfrak g=\{g\in\mathfrak{gl}_N \mid (gx,y)+(x,gy)=0 \text{ for all $x,y\in V$}\}, 
\end{equation}
where $ (\ ,\ ): V\otimes V\rightarrow \mathbb C$ is   the non-degenerate bilinear  form  on $V\otimes V$ that satisfies
$$(x,y)=\varepsilon(y,x),$$ with  $\epsilon\in \{-1, 1\}$.
When $\epsilon=1$,  $\mathfrak g$ is  the \textsf{orthogonal Lie algebra} $\mathfrak{so}_N$. When $\epsilon=-1$,  $\mfg$ is   the
\textsf{symplectic Lie algebra} $ \mathfrak {sp}_N$, and in this case,  
$N$ has to be even. We denote $\epsilon$ by $\epsilon_{\mathfrak{g}}$ to emphasize the specific Lie algebra.
The natural $\frg$-module $V$  has  a basis  \begin{equation}\label{vbasis}\{v_i\mid i\in \underline N\}\end{equation} such that  
\begin{equation} \label{invri1} (v_i,v_j)=\delta_{i,-j}=\varepsilon_{\mathfrak g}(v_j,v_i), \quad i\ge 0,\end{equation} 
where 
$$\underline N=\begin{cases} (-n, -(n-1), \cdots, -1,  1, \cdots, (n-1), n)   
& \text{if  $N=2n $,}\\ (-n, -(n-1), \cdots, -1, 0, 1, \cdots, (n-1), n)   & \text{if  $N=2n+1 $.}\\ \end{cases} $$
Then $V$ is self  dual with dual basis $\{v_i^\ast\mid i\in \underline N\}$ such that $v_i^\ast (v_j)=\delta_{i, j}$.
Thanks to \eqref{invri1}, \begin{equation}\label{dualb} v_i^\ast=\begin{cases} v_{-i} &\text{if $\frg\neq \mathfrak{sp}_{2n}$,} \\
\text{sgn}(i)v_{-i} & \text{if  $\frg=\mathfrak{sp}_{2n}$,}\\
\end{cases} 
\end{equation}
Let $e_{i, j} $ denote the  matrix unit such that $e_{i, j} v_k=\delta_{j, k} v_i$, and define     
\begin{equation}\label{fij} f_{i,j}=e_{i,j}-\theta_{i,j} e_{-j,-i},\end{equation} where 
$\theta_{i,j}= 1$ if $\mathfrak{g}=\mathfrak {so}_N$, and $\theta_{i,j}=\text{sgn}(i)\text{sgn}(j)$ if $\mathfrak{g}=\mathfrak {sp}_N $. The Lie algebra   $\mathfrak g$  has  basis: 
\begin{equation} \label{rtvec1}\begin{cases} 
 \{f_{i,i}\mid 1\le i\le n\}\cup\{f_{\pm i, \pm j}\mid 1\le i<j\le n\}\cup \{f_{0, \pm i}\mid 1\le i\le n\} & \text{if $\mathfrak g=\mathfrak{so}_{2n+1}$,}\\ 
\{f_{i, i}, f_{-i, i}, f_{i, -i}\mid 1\le i\le n\}\cup \{f_{\pm i, \pm j}\mid 1\le i<j\le n\} & \text{if $\mathfrak g=\mathfrak{sp}_{2n} $,}\\ 
\{f_{i, i}\mid 1\le i\le n\}\cup \{f_{\pm i, \pm j}\mid 1\le i<j\le n\} &\text{if $\mathfrak g=\mathfrak{so}_{2n} $.}\\ \end{cases}
\end{equation}
There is a  standard  triangular decomposition   $$\mathfrak g=\mathfrak n^{-}\oplus \mathfrak h\oplus \mathfrak n^+,$$ where  $\mathfrak h:=   \bigoplus_{i=1}^n \mathbb C h_i$ is the standard Cartan subalgebra  with   
$ h_i=f_{i,i}$, and  $\mathfrak n^+$ has   basis: 
\begin{equation}\label{posrv}
\begin{cases}  \{f_{ i,\pm j}\mid 1\le i<j\le n \}\cup \{f_{0,-i}\mid 1\le i\le n \} & \text{if $\mathfrak{g}=\mathfrak {so}_{2n+1} $,}\\
\{f_{i,-i}\mid 1\le i\le n \}\cup\{f_{ i,\pm j}\mid 1\le i<j\le n \} &\text{if $\mathfrak{g}=\mathfrak {sp}_{2n} $,}\\  \{f_{ i,\pm j}\mid 1\le i<j\le n\} &\text{if $\mathfrak{g}=\mathfrak {so}_{2n}$.}\end{cases} 
\end{equation}
Let $\mathfrak h^*$ be the linear dual  of $\mathfrak h$ with the dual basis $\{\epsilon_i\mid 1\le i\le n\}$ such that  $\epsilon_i(h_j)=\delta_{i, j}$ for all  $1\le i, j\le n$.
The  simple root system   is   $\Pi=\{\alpha_1, \alpha_2, \ldots, \alpha_{n-1}, \alpha_n\}$, where 
 \begin{equation}\label{aln} \alpha_i=\epsilon_i-\epsilon_{i+1},  1\le i\le n-1, \ \text{and} \ \alpha_n=\begin{cases} \epsilon_n &\text{if $\mathfrak g=\mathfrak{so}_{2n+1}  $, }\\
2\epsilon_n &\text{if $\mathfrak g=\mathfrak{sp}_{2n}   $, }\\
\epsilon_{n-1}+\epsilon_n &\text{if $\mathfrak g=\mathfrak{so}_{2n}   $. }\\
\end{cases}
\end{equation}
The root system is $\Phi =\Phi^+ \cup \Phi^-$, where $\Phi^-=-\Phi^+$, and  the  set of positive roots  $\Phi^+$ is  \begin{equation}\label{posrt} \begin{cases} 
\{\varepsilon_i\pm \varepsilon_j\mid 1\leq i<j\leq n\}\cup \{\varepsilon_i\mid 1\leq i\leq n\} & \text{if  $\frg=\frso_{2n+1}  $,} \\
\{\varepsilon_i\pm \varepsilon_j\mid 1\leq i<j\leq n\}\cup \{2\varepsilon_i\mid 1\leq i\leq n\} & \text{if   $\frg=\mathfrak {sp}_{2n}  $,} \\ 
\{\varepsilon_i\pm \varepsilon_j\mid 1\leq i<j\leq n\} &\text{if $\frg=\mathfrak {so}_{2n}   $.}\\
\end{cases}\end{equation}
It is known that  $\Phi$ is of type  $B_n$ (resp.,  $C_n $,  $D_n$)  if $\frg$ is $\mathfrak {so}_{2n+1}$ (resp., $ \mathfrak {sp}_{2n}, \mathfrak{so}_{2n}$).  

An  element $\lambda\in \frh^\ast$  is called a \emph{dominant integral  weight} if  $$\langle \lambda, \alpha^\vee\rangle=\frac{2(\lambda, \alpha)} {(\alpha, \alpha)}\in \mathbb N, \ \ \text{$\forall \alpha\in \Pi$,}$$  where $\alpha^\vee=\frac{2\alpha}{(\alpha, \alpha)}$ is the coroot of $\alpha$ and 
$(\ , \ )$ is the  symmetric bilinear form   on $\mathfrak h^*$ such that $(\epsilon_i, \epsilon_j)=\delta_{i, j}$. 

Let $C$ be  the  quadratic Casimir element in  $\frg$, 
and define 
$$\Omega=\frac {1}{2} ( \Delta(C)-C\otimes 1-1\otimes C)$$ where $\Delta:\U(\mfg)\rightarrow \U(\mfg)\otimes \U(\mfg)$ is the co-multiplication, and $\U(\mathfrak g)$ is the universal enveloping algebra associated with $\frg$.  Then 
\begin{equation} \label{omega1}\Omega=\frac{1}{2} \sum_{i, j\in \underline N} f_{i, j}\otimes f_{j, i}, \end{equation} as shown in  \cite[(2.11)]{DRV}.  

\subsection{Parabolic category $\mathcal O^\frp $ }\label{para}
Let $\frp$ be a  parabolic subalgebra  of  $\frg$   containing  the Borel subalgebra $\frb=\frh\oplus \frn^+$.  
Write $\frp=\frl\oplus\fru$, where $\fru$ is the  nil-radical of $\frp$, and $\frl$ is its  Levi subalgebra.
There exists a unique subset $I\subset\Pi$ such that $\frp=\frp_I$.
Denote $\Phi_{I}= \Phi\cap \mathbb Z I$, and  $\Phi_{I}^+=\Phi^+\cap \mathbb Z I$. 
For any  $\lambda\in\Lambda^{\frp_{I}}$, where $\Lambda^{\frp_I}$ is defined as in \eqref{pdom},  there exists   a unique  irreducible $\mathfrak l$-module $F( \lambda)$, which  can be considered as a $\frp$-module by letting $\mathfrak u$ acting trivially.   The corresponding   \textit{parabolic Verma module} is  
$$
M^\frp(\lambda):=\mathbf U(\frg)\otimes_{\mathbf U(\frp)} F( \lambda).$$ 
Let $L(\lambda)$ be the simple head of $M^\frp(\lambda)$.
Throughout, we fix the following notations. 
\begin{Defn}\label{assum12} Let  $q_1,q_2,\ldots,q_k$ be positive integers  such that $\sum_{j=1}^{k}q_j=n$. 
Denote  \begin{itemize} 
\item [(1)] $I_1$ and $I_2$ by two subsets of $\Pi$ as in \eqref{i1i2d} such that  $ p_j=\sum_{l=1}^jq_l$,   $1\le j\le k$. 
\item[(2)] $\lambda_{I_i, \mathbf c}\in \Lambda^{\mathfrak p_{I_i}}$ as in \eqref{deltac} for any  	$\mathbf c=(c_1, c_2, \ldots, c_k)\in  \mathbb C^k$, where   $c_k=0$ if   
 $i=2$. 
 \item [(3)]  $\mathbf p_j=\{p_{j-1}+1, p_{j-1}+2,\ldots, p_j\}$ for $1\leq j\leq k$, and $p_0=0$.  
\end{itemize} 
\end{Defn}
 In all cases,  $\text{dim}_{\mathbb C } F(\lambda_{I_i, \mathbf c})=1$.

\subsection{Tensor modules in $\mathcal O^{\frp_{I_i}}$}  For any $r\in \mathbb N$,  let 
$M_{I_i, r}\in \mathcal O^{\frp_{I_i}}$ be defined as in \eqref{mir}. 
Following \cite[(4.9)-(4.10)]{RS}\label{uij}, we  define 
 \begin{equation}\label{ujso}\bar u_j=\begin{cases}  c_j-p_{j-1}+n  &\text{if $1\le j\le k$,}\\ 
0 &\text{if $j=k+1$},\\
-c_{2k-j+2} +p_{2k-j+2}-n &\text{if $k+2\le j\le 2k+1$,}\\
\end{cases}
\end{equation} if $\Phi$ is $B_n$, and    \begin{equation}\label{ujsp}\bar u_j=\begin{cases} \epsilon_\frg (c_j-p_{j-1}+ n-\frac12 \epsilon_\frg) &\text{if $1\le j\le k$,}\\ 
\epsilon_\frg (-c_{2k-j+1} +p_{2k-j+1}-n+\frac 12\epsilon_\frg) &\text{if $k+1\le j\le 2k$,}\\
\end{cases}
\end{equation}
if $\Phi$ is either $C_n$ or $D_n$. 
From this point on, we always assume that  \begin{equation}\label{higge} \mathbf{m}_i \text{  is the highest weight vector of $M^{\frp_{I_i}} (\lambda_{I_i, \mathbf c})$, up to a scalar.}\end{equation} 

\begin{Prop}\label{polyofx}\cite[Lemmas~4.11-4.12]{RS}  
	\begin{itemize} \item [(1)] Suppose  $\Phi\in \{C_n, D_n\}$. There is  a parabolic Verma flag   
			$$		0=N_0\subset N_1\subset N_2\subset \cdots\subset N_{2k}= M^{\frp_{I_i}} (\lambda_{I_i, \mathbf c})   \otimes V$$ of $ M^{\frp_{I_i}} (\lambda_{I_i, \mathbf c}) \otimes V$    
		such that $$N_j/N_{j-1}\cong \begin{cases} 
		M^{\mathfrak p_i}(\lambda_{I_i, \mathbf c}+\varepsilon_{p_{j-1}+1}) & \text{if $1\leq j\leq k$,} \\
		\delta_{i, 1} M^{\mathfrak p_i}(\lambda_{I_i, \mathbf c}-\varepsilon_{p_{k}}) & \text  {if $j=k+1$,}\\		
		M^{\mathfrak p_i}( \lambda_{I_i, \mathbf c}-\varepsilon_{p_{2k+1-j}})  & \text{if $k+2\leq j\leq 2k$.}\\
		\end{cases}  $$
		Moreover, $\mathbf{m}_i\otimes v_j\in N_t$,  and $\mathbf{m}_i\otimes v_{-j}\in N_{2k+1-t}$ if  $j\in \mathbf p_t$ for some $t\le k$.

		\item [(2)] Suppose  $\Phi=B_n$. There is  a parabolic Verma flag 		 $$ 0=N_0\subset N_1\subset N_2\subset \cdots\subset N_{2k+1}=  M^{\frp_{I_i}} (\lambda_{I_i, \mathbf c})   \otimes V$$  of 
		 $ M^{\frp_{I_i}} (\lambda_{I_i, \mathbf c})   \otimes V$ such that 
		$$N_j/N_{j-1}\cong \begin{cases} 
		M^{\mathfrak p_i}(\lambda_{I_i, \mathbf c} +\varepsilon_{p_{j-1}+1}) & \text{if $1\leq j\leq k$,} \\
		\delta_{i, 1}  M^{\mathfrak p_i}(  \lambda_{I_i, \mathbf c}-\delta_{j, k+2}\varepsilon_{p_{k}}) &\text{if $k+1\le j\le k+2$, } \\	
		M^{\mathfrak p_i}(\lambda_{I_i, \mathbf c}-\varepsilon_{p_{2k+2-j}})  & \text{if $k+3\leq j\leq 2k+1$.}\\
		\end{cases}
		$$
Moreover,  $\mathbf{m}_i\otimes v_j\in N_t$,  and $\mathbf{m}_i\otimes v_{-j}\in N_{2k+2-t}$ if $j\in \mathbf p_t$ for some $t\le k$. 
  \end{itemize}
	In both  cases, $\prod_{j=1}^l (X_1-u_j)$ acts on 
$N_{l+c\delta_{i, 2}(1+\delta_{\frg, \mathfrak{so}_{2n+1}})  }$  trivially for all admissible $l$, where
\begin{equation}\label{ujjj}
u_{j}=\begin{cases}   \bar u_j &\text{if  $i=1$, $1\le j\le 2k+\delta_{\frg, \mathfrak{so}_{2k+1}}$,}\\
 \bar u_j &\text{if  $i=2$, $1\le j\le k$,}\\
 \bar u_{j+1+\delta_{\frg, \mathfrak{so}_{2n+1}}} &\text{if  $i=2$, $k+1\le j\le 2k-1$, }\\ \end{cases}
\ \ \text{  and }\ \  c=\begin{cases} 0 &\text{if $l\le k-1$,}\\
1 &\text{otherwise.}\\
\end{cases} 
\end{equation}
In  particular, 
	$f_i(X_1)$ acts on  $M^{\frp_{I_i}} (\lambda_{I_i, \mathbf c})   \otimes V   $  trivially, where 
 \begin{equation}\label{f1f2} f_1(X_1) =\prod_{j=1}^{2k+\delta_{\mathfrak g, \mathfrak{so}_{2n+1}}} (X_1-u_j) \text{  and $f_2(X_1) =\prod_{j=1}^{2k-1} (X_1-u_j)$.}\end{equation} \end{Prop}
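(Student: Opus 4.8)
The plan is to deduce the whole proposition from two ingredients: the structure of $V|_{\frp_{I_i}}$ as a filtered $\frp_{I_i}$-module, together with a Casimir eigenvalue computation. The first move is the tensor identity: for any $\frg$-module of the form $\mathbf U(\frg)\otimes_{\mathbf U(\frp_{I_i})}E$ there is a natural isomorphism $(\mathbf U(\frg)\otimes_{\mathbf U(\frp_{I_i})}E)\otimes V\cong \mathbf U(\frg)\otimes_{\mathbf U(\frp_{I_i})}(E\otimes V|_{\frp_{I_i}})$ of $\frg$-modules (see \cite{Hum}), under which $(1\otimes e)\otimes v$ corresponds to $1\otimes(e\otimes v)$. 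Applying this with $E=F(\lambda_{I_i,\mathbf c})$, which is one-dimensional by Definition~\ref{assum12}, and using that $\mathbf U(\frg)\otimes_{\mathbf U(\frp_{I_i})}(-)$ is exact, one reduces the existence of the parabolic Verma flag and the membership assertions for $\mathbf m_i\otimes v_{\pm j}$ to the construction of a filtration $0=W_0\subset W_1\subset\cdots\subset W_m=V$ of $V|_{\frp_{I_i}}$ by $\frp_{I_i}$-submodules with $W_s/W_{s-1}\cong F(\nu_s)$ for finite-dimensional $\frl$-modules: tensoring with $F(\lambda_{I_i,\mathbf c})$ and inducing yields the $N_\bullet$ with $N_s/N_{s-1}\cong M^{\frp_{I_i}}(\lambda_{I_i,\mathbf c}+\nu_s)$, and $\mathbf m_i\otimes v\in N_s$ exactly when $v\in W_s$.

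Next I would write down that filtration. Every weight space of $V$ is spanned by a single $v_j$ of \eqref{vbasis}, so I would group these vectors into $\frl$-submodules: for $1\le t\le k$ the ``positive block'' $W^+_t=\operatorname{span}\{v_j\mid j\in\mathbf p_t\}$, the natural module of the $t$-th $\mathfrak{gl}$-factor of $\frl$ with highest weight $\varepsilon_{p_{t-1}+1}$; the ``negative block'' $W^-_t=\operatorname{span}\{v_{-j}\mid j\in\mathbf p_t\}$, its dual, with highest weight $-\varepsilon_{p_t}$; and, if $\frg=\mathfrak{so}_{2n+1}$ and $i=1$, the line $\mathbb C v_0$. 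When $i=2$ the simple root $\alpha_n$ lies in $I_2$, so the last Levi factor is $\mathfrak{sp}_{2q_k}$, $\mathfrak{so}_{2q_k}$ or $\mathfrak{so}_{2q_k+1}$, and $W^+_k$, $W^-_k$ (and $\mathbb C v_0$ in the odd orthogonal case) merge into its single irreducible natural module of highest weight $\varepsilon_{p_{k-1}+1}$; this is precisely where $c_k=0$ is used, to keep $F(\lambda_{I_i,\mathbf c})$ one-dimensional. I would then choose $W_\bullet$ to list as successive submodules the positive blocks $W^+_1,\dots,W^+_k$ (the last replaced by the merged block when $i=2$), then $\mathbb C v_0$ (only if $\frg=\mathfrak{so}_{2n+1}$, $i=1$), then the negative blocks in the order $W^-_k,W^-_{k-1},\dots,W^-_1$ (starting from $W^-_{k-1}$ when $i=2$, so that the $\delta_{i,1}$-subquotients in (1)--(2) vanish and the flag effectively shortens). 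The $\frp_{I_i}$-invariance of each partial sum is a finite check using that $\fru$ strictly raises weights: one verifies root vector by root vector, from the lists \eqref{posrv}, that $\fru\cdot W^+_t\subseteq W^+_1+\cdots+W^+_{t-1}$, that $\fru\cdot v_0\in W^+_1+\cdots+W^+_k$, and that $\fru\cdot W^-_t\subseteq(W^+_1+\cdots+W^+_k)+\mathbb C v_0+(W^-_{t+1}+\cdots+W^-_k)$. This produces the stated subquotients $M^{\frp_{I_i}}(\lambda_{I_i,\mathbf c}\pm\varepsilon_p)$ in the stated positions; since $v_j\in W^+_1+\cdots+W^+_t$ and $v_{-j}\in(W^+_\bullet)+\mathbb C v_0+(W^-_k+\cdots+W^-_t)$ for $j\in\mathbf p_t$, transporting through the tensor identity gives $\mathbf m_i\otimes v_j\in N_t$ and $\mathbf m_i\otimes v_{-j}\in N_{2k+1-t}$ (resp.\ $N_{2k+2-t}$ for $\Phi=B_n$).

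For the action of $X_1$ I would use that $X_1$ acts on $M^{\frp_{I_i}}(\lambda_{I_i,\mathbf c})\otimes V$ as the operator $\Omega$ of \eqref{omega1} acting between the parabolic Verma factor and the copy of $V$ (in the normalization of \cite{RS}). Being a $\frg$-endomorphism, $\Omega$ acts by a scalar on each parabolic Verma subquotient $N_j/N_{j-1}\cong M^{\frp_{I_i}}(\mu_j)$, and this scalar is computed either directly by evaluating $\Omega$ on the highest weight vector $\mathbf m_i\otimes v_{\pm(p_{t-1}+1)}$ of $N_j/N_{j-1}$ via \eqref{omega1}, or, equivalently, from the Casimir eigenvalues (using that $C$ acts on $M^{\frp_{I_i}}(\lambda_{I_i,\mathbf c})$ and on $V$ by scalars); a short computation with $\mu_j=\lambda_{I_i,\mathbf c}\pm\varepsilon_p$ identifies it with $u_j$ as defined in \eqref{ujso}--\eqref{ujjj}. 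An induction along the flag — if $(X_1-u_j)$ kills $N_j/N_{j-1}$ for $1\le j\le l$, then $\prod_{j=1}^{l}(X_1-u_j)$ kills $N_l$ — then yields the penultimate assertion, the offset $c\,\delta_{i,2}(1+\delta_{\frg,\mathfrak{so}_{2n+1}})$ recording that for $i=2$ the $k$-th step of the flag absorbs what would be two (resp.\ three, for $\mathfrak{so}_{2n+1}$) steps when $i=1$; taking $l$ maximal gives that $f_i(X_1)$ annihilates $M^{\frp_{I_i}}(\lambda_{I_i,\mathbf c})\otimes V$.

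The step I expect to be the main obstacle is the bookkeeping in the $i=2$ case: one must check that collapsing $W^+_k$, $W^-_k$ and (for $\mathfrak{so}_{2n+1}$) the line $\mathbb C v_0$ into a single parabolic Verma subquotient produces exactly the flag lengths in (1) and (2) with the asserted $\delta_{i,1}$ vanishings, and that the indices in \eqref{ujjj} — including the shift $\bar u_{j+1+\delta_{\frg,\mathfrak{so}_{2n+1}}}$ and the offset $c\,\delta_{i,2}(1+\delta_{\frg,\mathfrak{so}_{2n+1}})$ — line up with this collapsed flag. The $\frp_{I_i}$-stability verifications and the Casimir eigenvalue computation themselves are routine, but they must be carried out uniformly across types $B_n$, $C_n$ and $D_n$ and across both choices of $i$.
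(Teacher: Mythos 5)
The paper does not prove this proposition itself but imports it from \cite[Lemmas~4.11--4.12]{RS}, and your argument --- the tensor identity reducing everything to a $\frp_{I_i}$-stable filtration of $V$ by blocks $W^{\pm}_t$ (merged at $t=k$ when $i=2$, with $\mathbb C v_0$ inserted in type $B_n$), followed by the Casimir/$\Omega$ eigenvalue computation on each parabolic Verma subquotient --- is exactly the route taken there. The outline is correct, including the ordering of the blocks needed for $\fru$-stability and the source of the index shifts in \eqref{ujjj}; the remaining work is only the bookkeeping you already identify.
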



It follows from Theorem~\ref{thmA} that $M_{I_i, r}$ is a $(\U(\mfg), \mathcal B_{a, r}(\mathbf u))$-bimodule. 
Furthermore,  from    \cite{RS},    $E^f $ acts on $M_{I_i, r}$ using 
\begin{equation}\label{eiact} E^f:=
(\text{Id}_{M_{I_i,  r-2f}} \otimes \alpha^{\otimes f})\circ(\text{Id}_{M_{I_i, r-2f}} \otimes \beta^{\otimes f})\end{equation} for any $0\le f\le \lfloor r/2\rfloor$,
where $\alpha: \mathbb C\rightarrow V^{\otimes 2}$ is  the co-evaluation map, and  $\beta: V^{\otimes 2}\rightarrow \mathbb C$ is  the evaluation map. These maps satisfy 
\begin{equation} \label{alp123} \alpha(1)=\sum_{i\in \underline N} v_{i}\otimes v_i^\ast, \quad 
\beta (u\otimes v)=(u, v),\end{equation}
for all $u, v\in V$, where $(\ , \ )$ denotes  the non-degenerate  bilinear form satisfying \eqref{invri1}, and $v_i^\ast$ represents  the dual basis element  in \eqref{dualb}.

For any $M\in \mathcal O^{\frp_{I_i}}$, we denote $[M: L(\lambda)]$ the multiplicity of the  simple $\frg$-module $L(\lambda)$ in a composition series of $M$. 

From this point to the end of this section, we keep  condition~\eqref{simple111}.  
Consequently,  $\scI_{i, j}$ is saturated for any $0\le j\le r$.  Notably, this condition is well-justified by Theorem~\ref{saturated}. For details, see the Appendix by Wei Xiao. 

\begin{Lemma} \label{ext2} For any  {$\nu\in\Lambda^{\mathfrak p_{I_i}}$},  $[M_{I_i, r} \langle E^f \rangle : L(\nu)]=0$ unless $\nu\in \mathscr {I}_{i, r-2f}$. 
\end{Lemma}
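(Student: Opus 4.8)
The plan is to understand $M_{I_i,r}\langle E^f\rangle$ as the image of the action of $E^f\in\mathcal B_{a,r}(\mathbf u)$ on $M_{I_i,r}$, and to exploit the bimodule structure from Theorem~\ref{thmA} together with the explicit description of the $E^f$-action in \eqref{eiact}. Concretely, since $E^f=E_{r-1}E_{r-3}\cdots E_{r-2f+1}$ acts by first applying $f$ evaluation maps $\beta$ on the last $2f$ tensor factors and then $f$ co-evaluation maps $\alpha$, the image $M_{I_i,r}\langle E^f\rangle$ is a quotient of $M_{I_i,r-2f}\otimes V^{\otimes 2f}$ followed by an embedding back into $M_{I_i,r}$; more usefully, as a $\mathfrak g$-module it is a homomorphic image of $M_{I_i,r-2f}=M^{\frp_{I_i}}(\lambda_{I_i,\mathbf c})\otimes V^{\otimes(r-2f)}$. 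Indeed, $(\mathrm{Id}\otimes\beta^{\otimes f})$ maps $M_{I_i,r}$ onto $M_{I_i,r-2f}$ (surjectively, since $\beta$ is surjective and tensoring is right exact), and then $(\mathrm{Id}\otimes\alpha^{\otimes f})$ maps $M_{I_i,r-2f}$ into $M_{I_i,r}$ with image exactly $M_{I_i,r}\langle E^f\rangle$. Hence $M_{I_i,r}\langle E^f\rangle$ is a quotient $\mathfrak g$-module of $M_{I_i,r-2f}$.

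Granting that, the proof is immediate from the definition of $\scI_{i,r-2f}$ in \eqref{sciI} and the fact that $M_{I_i,r-2f}$ is a tilting module with a parabolic Verma flag: its composition factors $L(\nu)$ all satisfy $\nu\preceq\mu$ for some $\mu$ with $(M_{I_i,r-2f}:M^{\frp_{I_i}}(\mu))\neq0$, i.e. for some $\mu\in\scI_{i,r-2f}$, and by condition~\eqref{simple111} (equivalently, Theorem~\ref{saturated}) $\scI_{i,r-2f}$ is saturated with respect to $\preceq$, so $\nu\in\scI_{i,r-2f}$. Since a quotient module has a subset of the composition factors of the original module, the same conclusion holds for $M_{I_i,r}\langle E^f\rangle$. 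Therefore $[M_{I_i,r}\langle E^f\rangle:L(\nu)]=0$ whenever $\nu\notin\scI_{i,r-2f}$, as claimed. A small bookkeeping point: one should observe that every composition factor $L(\nu)$ of a module with finite parabolic Verma flag lies in $\bigcup_{\mu}\{\nu:\nu\preceq\mu\}$ over those $\mu$ appearing in the flag, which is exactly the content needed to pass from "multiplicities of parabolic Verma modules" in $\scI$ to "composition factors"; this is just the observation that $L(\nu)$ occurs in $M^{\frp_{I_i}}(\mu)$ only if $\nu\preceq\mu$ by the very definition \eqref{preceq} of $\preceq$.

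The main obstacle is establishing cleanly that $M_{I_i,r}\langle E^f\rangle$ really is a quotient of $M_{I_i,r-2f}$ as a $\mathfrak g$-module — one must check that the composite $(\mathrm{Id}\otimes\alpha^{\otimes f})\circ(\mathrm{Id}\otimes\beta^{\otimes f})$ has image precisely the span of the $\mathcal B_{a,r}(\mathbf u)$-action of $E^f$ (which is the definition of $M_{I_i,r}\langle E^f\rangle$ in the notation of Theorem~\ref{main111}), and that the first factor is $\mathfrak g$-equivariant and surjective onto $M_{I_i,r-2f}$. Both are formal consequences of \eqref{eiact}--\eqref{alp123} and the fact that $\alpha,\beta$ are $\mathfrak g$-module maps, but the argument should be spelled out. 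Once this identification is in place, everything else is a direct application of saturatedness; no delicate estimates are needed.
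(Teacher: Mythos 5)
Your argument is essentially the paper's own proof: realize $M_{I_i,r}\langle E^f\rangle$ via $\mathfrak g$-morphisms out of $M_{I_i,r-2f}$, conclude that its composition factors are among those of $M_{I_i,r-2f}$, and finish using the parabolic Verma flag of $M_{I_i,r-2f}$ together with saturatedness of $\scI_{i,r-2f}$ under condition~\eqref{simple111}. The one imprecision is your claim that the image of $(\mathrm{Id}\otimes\alpha^{\otimes f})\circ(\mathrm{Id}\otimes\beta^{\otimes f})$ is exactly $M_{I_i,r}\langle E^f\rangle$: that composite only produces $M_{I_i,r}E^f$, whereas the two-sided ideal gives $M_{I_i,r}\langle E^f\rangle=M_{I_i,r}E^f\mathcal B_{a,r}(\mathbf u)$, which may be strictly larger. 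The paper repairs this exactly as one would expect, by considering all composites $b\circ(\mathrm{Id}_{M_{I_i,r-2f}}\otimes\alpha^{\otimes f})$ with $b\in\mathcal B_{a,r}(\mathbf u)^{\mathrm{op}}$ — each is still a morphism in $\Hom_{\mathcal O^{\frp_{I_i}}}(M_{I_i,r-2f},M_{I_i,r})$ — so the ideal is contained in a sum of $\frg$-quotients of $M_{I_i,r-2f}$ and the composition-factor conclusion is unaffected.
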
 
\begin{proof} Notably,   $ \text{Id}_{M_{I_i, r-2f} } \otimes \alpha^{\otimes f}$ 
can be considered as a morphism in  
$\text{Hom}_{\mathcal O^{\frp_{I_i}}} ( M_{I_i, r-2f}, M_{I_i, r} )$. By Theorem~\ref{thmA}, any element in  $ \mathcal B_{a, r}(\mathbf u)^{op} \circ (\text{Id}_{M_{I_i, \otimes r-2f}} \otimes \alpha^{\otimes f}) $
can also be viewed as morphism in $\text{Hom}_{\mathcal O^{\frp_{I_i}}} ( M_{I_i, r-2f}, M_{I_i, r}  )$. This implies that the  composition factor of the image of such a morphism   has to be a composition factor of $M_{I_i, r-2f}$.
Since 
	$$M_{I_i, r} \langle E^f \rangle \subseteq (\mathcal B_{a, r}(\mathbf u)^{op} \circ\text{Id}_{M_{I_i,r-2f}} \otimes \alpha^{\otimes f})M_{I_i, r-2f}
 ,$$   any composition factor $L(\nu)$ of $M_{I_i, r} \langle E^f \rangle $ has to be a composition factor of $M_{I_i, r-2f}$, forcing $\nu\in \mathscr{I}_{i, r-2f}$  by condition ~\eqref{simple111}. 
 \end{proof}

\begin{Lemma} \label{ext1} Suppose $\mu\in\Lambda^{\mathfrak p_{I_i}}$ and $X\in \mathcal O^{\frp_{I_i}}$. If  $\text{Ext}^1_{\mathcal O^{\frp_{I_i}} } (M^{\frp_{I_i}}  (\mu), X)\neq 0$, then $X$ has a composition factor $L(\nu)$ satisfying $\mu\prec \nu$.
\end{Lemma}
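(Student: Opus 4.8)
The plan is to extract information about the $\mathrm{Ext}^1$ group from a short exact sequence that relates the parabolic Verma module to its standard-module socle/top behaviour. First I would use the standard fact that in $\mathcal O^{\frp_{I_i}}$ one has, for any $\mu$, the structural short exact sequence coming from the BGG resolution or, more simply, the presentation of $M^{\frp_{I_i}}(\mu)$: writing $M = M^{\frp_{I_i}}(\mu)$, there is a surjection $M^{\frp_{I_i}}(\mu) \twoheadrightarrow L(\mu)$ with kernel $R$ filtered by $L(\nu)$'s for various $\nu$. But the cleaner route is to use that $\mathrm{Ext}^1_{\mathcal O^{\frp_{I_i}}}(M^{\frp_{I_i}}(\mu), X) \neq 0$ forces a non-split extension $0 \to X \to Y \to M^{\frp_{I_i}}(\mu) \to 0$, and then to analyze $Y$.

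The key steps, in order: (1) Pick a non-split extension $0 \to X \to Y \to M^{\frp_{I_i}}(\mu) \to 0$ representing a nonzero class. If \emph{every} composition factor $L(\nu)$ of $X$ satisfied $\nu \not\succ \mu$ — i.e. either $\nu \preceq \mu$ or $\nu$ incomparable — I would derive a contradiction with non-splitness. (2) The mechanism: by BGG reciprocity / the standard linkage analysis, $\mathrm{Ext}^1_{\mathcal O^{\frp_{I_i}}}(M^{\frp_{I_i}}(\mu), L(\nu)) \neq 0$ implies $\mu \prec \nu$ in the order $\preceq$ of \eqref{preceq}. This is because $\mathrm{Ext}^1(M^{\frp_{I_i}}(\mu), L(\nu)) \hookrightarrow \mathrm{Ext}^1(M^{\frp_{I_i}}(\mu), M^{\frp_{I_i}}(\nu'))$-type reasoning, or more directly: a non-split self-extension-type argument shows $L(\nu)$ must appear "above" $M^{\frp_{I_i}}(\mu)$, and since $M^{\frp_{I_i}}(\mu)$ has simple top $L(\mu)$ and all other composition factors $L(\gamma)$ with $\gamma \prec \mu$, the only way to glue $L(\nu)$ on top non-trivially is for the relevant BGG/translation data to force $\mu \prec \nu$. (3) Reduce from $X$ to its composition factors: apply $\mathrm{Hom}(M^{\frp_{I_i}}(\mu), -)$ to a composition series of $X$ and use the long exact sequence together with projectivity-type vanishing ($\mathrm{Ext}^1$ into a Verma factor controlled by linkage) to conclude that $\mathrm{Ext}^1(M^{\frp_{I_i}}(\mu), X) \neq 0$ forces $\mathrm{Ext}^1(M^{\frp_{I_i}}(\mu), L(\nu)) \neq 0$ for some composition factor $L(\nu)$ of $X$; then step (2) gives $\mu \prec \nu$.

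The main obstacle will be step (2): pinning down precisely why $\mathrm{Ext}^1_{\mathcal O^{\frp_{I_i}}}(M^{\frp_{I_i}}(\mu), L(\nu)) \neq 0$ implies $\mu \prec \nu$ in the specific order $\preceq$ defined via \eqref{preceq}, rather than merely $\mu \neq \nu$ with $\mu, \nu$ linked. The resolution is to recall that $M^{\frp_{I_i}}(\mu)$ has a projective cover $P^{\frp_{I_i}}(\mu)$ in $\mathcal O^{\frp_{I_i}}$ whose kernel $\mathrm{rad}$-layer is filtered by $M^{\frp_{I_i}}(\gamma)$ with $\gamma \succ \mu$ (the defining property of the order being engineered in \eqref{preceq} exactly so that parabolic Verma modules above $\mu$ are the ones whose simple heads are "reachable upward"), so $\mathrm{Ext}^1(M^{\frp_{I_i}}(\mu), L(\nu)) \neq 0$ means $L(\nu)$ is a composition factor of some $M^{\frp_{I_i}}(\gamma)$ with $\gamma \succ \mu$, whence $\nu \preceq \gamma$; combined with $\nu \neq \mu$ this should be massaged into $\mu \prec \nu$ using that $L(\nu)$ appears in $M^{\frp_{I_i}}(\gamma)$ forces $\nu \preceq \gamma$, and the extension being on the $L(\mu)$-top of $M^{\frp_{I_i}}(\mu)$ rules out $\nu \preceq \mu$. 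I would also double-check whether condition \eqref{simple111} (saturatedness of $\scI_{i,j}$) is needed here; I expect this lemma is purely a statement about $\mathcal O^{\frp_{I_i}}$ and does not require \eqref{simple111}, so I would state it without that hypothesis, but if the intended proof routes through $M_{I_i,r}$ then \eqref{simple111} enters via Lemma~\ref{ext2}.
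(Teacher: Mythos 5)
Your overall route is the same as the paper's: pass to the projective cover $P_{I_i}(\mu)$ of $L(\mu)$, use that the kernel of $P_{I_i}(\mu)\twoheadrightarrow M^{\frp_{I_i}}(\mu)$ has a parabolic Verma flag, invoke BGG reciprocity, and reduce from $X$ to its composition factors via the long exact sequence. Your step (3) and your observation that condition~\eqref{simple111} is not needed for this lemma are both correct.

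The gap is in step (2), exactly at the point you flag as needing ``massaging''. From $\text{Ext}^1_{\mathcal O^{\frp_{I_i}}}(M^{\frp_{I_i}}(\mu), L(\nu))\neq 0$ you conclude only that $L(\nu)$ is a composition factor of \emph{some} flag factor $M^{\frp_{I_i}}(\gamma)$ of the kernel with $\mu\prec\gamma$; that yields $\nu\preceq\gamma$ and $\mu\prec\gamma$, and since $\preceq$ is not a total order these two facts, even together with ruling out $\nu\preceq\mu$, do not imply $\mu\prec\nu$. What you actually have is stronger, and it is what closes the argument: writing $0\rightarrow M\rightarrow P_{I_i}(\mu)\rightarrow M^{\frp_{I_i}}(\mu)\rightarrow 0$, the long exact sequence gives $\Hom_{\mathcal O^{\frp_{I_i}}}(M, L(\nu))\neq 0$, i.e.\ $L(\nu)$ lies in the \emph{head} of $M$, not merely among its composition factors. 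Since $M$ has a parabolic Verma flag and each $M^{\frp_{I_i}}(\xi)$ has simple head $L(\xi)$, a short induction on the flag length shows $(M: M^{\frp_{I_i}}(\nu))\neq 0$, hence $(P_{I_i}(\mu): M^{\frp_{I_i}}(\nu))\neq 0$. BGG reciprocity \cite[Theorem~9.8]{Hum} then gives $[M^{\frp_{I_i}}(\nu): L(\mu)]=(P_{I_i}(\mu): M^{\frp_{I_i}}(\nu))\neq 0$, which is precisely the one-step relation defining \eqref{preceq}, so $\mu\preceq\nu$; and $\nu>\mu$ in the dominance order forces $\nu\neq\mu$, whence $\mu\prec\nu$. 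Once this is in place, your step (1) (choosing a non-split extension and analyzing $Y$) is not needed.
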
	
\begin{proof}First, we assume that $X$ is simple  in $\mathcal O^{\frp_{I_i}}$. Then $X=L(\nu)$ for some $\nu\in\Lambda^{\mathfrak p_{I_i}}$.  There is a 
	 short exact sequence $$0\rightarrow M \rightarrow P_{I_i} (\mu)\rightarrow M^{\frp_{I_i}}(\mu)\rightarrow 0$$ where $P_{I_i}(\mu)$ is the projective cover of $L(\mu)$.  Applying  $\text{Hom}_{\mathcal O^{\frp_{I_i}}} (-, L(\nu))$ to  the short exact sequence, and noting that $\text{Ext}^1_{\mathcal O^{\frp_{I_i}}} (P_{I_i}  (\mu), L(\nu))=0$, and 
  $\text{Ext}^1_{\mathcal O^{\frp_{I_i}}} (M^{\frp_{I_i}}  (\mu), X)\neq 0$,  we  obtain   
	$$\text{Hom}_{\mathcal O^{\frp_{I_i}}} 	(M, L(\nu))\neq 0.$$ 
 From~\cite[Theorem~9.8]{Hum},   $M$ has a parabolic Verma flag such that each subquotient is of form $M^{\frp_{I_i}}(\xi)$ satisfying $\xi > \mu$, where  $\geq$ is the dominance order defined on $\mathfrak h^*$. 
 
 If the length of the parabolic Verma flag  is $1$, then $M=M^{\frp_{I_i}}(\nu)$. Otherwise, there is a short exact sequence 
 $$0\rightarrow M_1\rightarrow M\rightarrow M^{\frp_{I_i}}(\gamma)\rightarrow 0$$
where $M_1$ has a parabolic Verma flag of  shorter  length. If $\gamma=\nu$,   $(M: M^{\frp_{I_i}} (\nu))\neq 0$. Otherwise, 
applying the functor $\Hom_{\mathcal O^{\frp_{I_i}}} (-, L(\nu))$ to the short exact sequence, we obtain 
$$0\rightarrow \Hom_{\mathcal O^{\frp_{I_i}}} (M, L(\nu))\rightarrow \Hom_{\mathcal O^{\frp_{I_i}}} (M_1, L(\nu)),$$ which makes $\Hom_{\mathcal O^{\frp_{I_i}}} (M_1, L(\nu))\neq 0$.  By the induction assumption on the length of a parabolic Verma flag of  $M_1$, we have  $(M_1: M^{\frp_{I_i}} (\nu))\neq 0$. 

In all cases,  
 $(M: M^{\frp_{I_i}} (\nu))\neq 0$. From \cite[Theorem~9.8]{Hum},    
	$$[M^{\frp_{I_i}} (\nu): L(\mu)]=(P_{I_i}(\mu):  M^{\frp_{I_i}}(\nu))\neq 0,$$ forcing $\mu\prec \nu$.  
 
 Suppose  $X$ is not simple. Then   there is a short exact sequence 
	\begin{equation}\label{ssho} 0\rightarrow X_1\rightarrow X\rightarrow L(\nu)\rightarrow 0\end{equation} for some $\nu\in\Lambda^{\mathfrak p_{I_i}}$.  
	Applying  $\text{Hom}_{\mathcal O^{\frp_{I_i}}} (M^{\frp_{I_i}} (\mu), -)$ to \eqref{ssho}, and noting that $$\text{Ext}^1_{\mathcal O^{\frp_{I_i}}} (M^{\frp_{I_i}}  (\mu), X)\neq 0,$$   we conclude that 
	either $\text{Ext}^1_{\mathcal O^{\frp_{I_i}}} (M^{\frp_{I_i}}  (\mu), L(\nu))\neq 0$    or  $\text{Ext}^1_{\mathcal O^{\frp_{I_i}}} (M^{\frp_{I_i}}  (\mu), X_1)\neq 0$. In the first case, we have already established  the result. In the second case,   
	the result follows from  standard arguments using  the  inductive assumption on the length of a composition series of $X$. \end{proof}

\textbf{Proof of Theorem~\ref{main111}:}  If $	\Hom_{\mathcal O^{\frp_{I_i}}} ( M^{\frp_{I_i}}(\mu),  M_{I_i, r} \langle E^f \rangle )\neq 0$,  then $L(\mu)$ has to be  a composition factor of  $M_{I_i, r} \langle E^f \rangle$.   If $\text{Ext}^1_{\mathcal O^{\frp_{I_i}} } (M^{\frp_{I_i}}  (\mu), M_{I_i, r} \langle E^f \rangle )\neq 0$, by 
Lemma~\ref{ext1}, $M_{I_i, r}\langle E^f\rangle $ has a composition factor 
$\nu$ such that $\mu\prec \nu$. In all cases, since we keep  condition \eqref{simple111}, by  Lemmas~\ref{ext2}--\ref{ext1},   $\mu\in \mathscr I_{i, r-2f}$,   a contradiction.
 So 
	\begin{equation}\label{key1} \Hom_{\mathcal O^{\frp_{I_i}}} ( M^{\frp_{I_i}}(\mu),  M_{I_i, r} \langle E^f \rangle )=\text{Ext}^1_{\mathcal O^{\frp_{I_i}} } (M^{\frp_{I_i}}  (\mu), M_{I_i, r} \langle E^f \rangle )=0.\end{equation} 
	Now, applying the functor  $\text{Hom}_{\mathcal O^{\frp_{I_i}}} ( M^{\frp_{I_i}} (\mu), -)$ to  the following  short exact sequence
	$$0\rightarrow  M_{I_i, r} \langle E^f \rangle \rightarrow M_{I_i, r}\rightarrow M_{I_i, r}/  M_{I_i, r} \langle E^f \rangle \rightarrow  0$$
	of $(\U(\frg), \mathcal B_{a, r}(\mathbf u))$-bimodules, 
	we have  Theorem~\ref{main111}, as required. \qed

\section{Classification of singular vectors in $M_{I_i, r}/M_{I_i, r} \langle E^f\rangle $} 
This section aims to classify singular vectors in $M_{I_i, r}/M_{I_i, r} \langle E^f\rangle $ for any $0{<} f\le \lfloor r/2\rfloor$, where  $I_1$ and  $I_2$ are  defined as in Definition~\ref{assum12}. Importantly,  we will need   Theorem~\ref{main111} to compute the dimensional of $\Hom_{\mathcal O^{\frp_{I_i}}}(M^{\frp_{I_i}}(\mu), M_{I_i, r}/M_{I_i, r} \langle E^f\rangle) $. This is the only place  that we need   condition~\eqref{simple111}. 

For any   integer $j$ and positive integer  $l$, we denote $(j)^l$ by   $\overset{l}  {\overbrace {j, j, \ldots, j}}$.  If $l=0$, we denote  $(j)^l=\emptyset$. The following definition is well-defined since we keep  Assumption~\ref{keyassu}. This implies that $p_t-p_{t-1}\ge 2r$, $1\le t\le k$,  where $p_j$'s are defined as in Definition~\ref{assum12}. 
\begin{Defn}\label{vt}  For any $\lambda\in\Lambda_{a}^+(r-2f)$, 
 define $\mathbf i_\lambda=(\mathbf i_{\lambda^{(1)}}, \mathbf i_{\lambda^{(2)}},\cdots, \mathbf i_{\lambda^{(a)}})\in \underline{N}^{r-2f}$ such that 
$$\mathbf i_{\lambda^{(j)}}=\begin{cases} (( p_{j-1}+1)^{\lambda^{(j)}_{1}}, (p_{j-1}+2)^{\lambda^{(j)}_{2}},\cdots, ( p_{j})^{\lambda^{(j)}_r})  &\text{if $1\le j\le k$,}\\  ((- p_{2k-j+\delta_{i, 1}})^{\lambda^{(j)}_{1}}, 
  \cdots, ( -p_{2k-j+\delta_{i, 1}}+r-1)^{\lambda^{(j)}_r})  &\text{if $k+1\le j\le a$.
}\\   
\end{cases}
   $$ 
\end{Defn}
 
  For any  $\mathbf i=(i_1, i_2, \ldots, i_{r-2f})\in \underline{N}^{r-2f}$, define \begin{equation}\label{vboldi} v_\mathbf i=v_{i_1}\otimes v_{i_2}\otimes\cdots \otimes v_{i_{r-2f}}\end{equation} where $\{v_j\mid j\in \underline N\}$ is the  basis of the natural $\frg$-module $V$ in \eqref{vbasis}. Then, the weight of  $v_{\mathbf i_\lambda}$ is 
  \begin{equation}\label{weightp} \tilde \lambda =\sum_{j=1}^k \sum_{l=1}^r \lambda_l^{(j)} \varepsilon_{p_{j-1}+l}-
\sum_{j=k+1}^a  \sum_{l=1}^r \lambda_l^{(j)} \varepsilon_{p_{2k-j+\delta_{i, 1}} -l+1}.  
\end{equation}  
  For any $(f, \lambda)\in \Lambda_{a, r}$, define \begin{equation}\label{vlam} v_{\lambda}=\mathbf m_i\otimes v_{\mathbf i_\lambda} \otimes (v_1\otimes v_{-1})^{\otimes f}, \end{equation}  
where $\mathbf m_i$ is defined as in \eqref{higge}. Then the weight of $v_\lambda$ is 
\begin{equation}\label{hatlambda}\hat \lambda:=\lambda_{I_i, \mathbf c}+\tilde \lambda    
\end{equation}
where $\lambda_{I_i, \mathbf c}$ is defined in Definition~\ref{assum12}(2). For any $ \lambda,  \mu\in \Lambda_a^+(r-2f)$,  by 
 \eqref{weightp}  we have \begin{equation}\label{lm} \lambda \trianglerighteq \mu\ \  \text{if and only if} \ \   \hat \lambda\geq \hat \mu.
\end{equation}
The following definition of  $ v_{\t, \xi, d}$ is motivated by the basis of $S^{f, \lambda}$ in Proposition~\ref{bas}.
\begin{Lemma} \label{vtd} For any $(\t, \xi, d)\in \delta(f, \lambda')$, define $ v_{\t, \xi, d}= v_\lambda 
E^f w_\lambda n_{\lambda'}d(\t) X^\xi d$.
Then  $v_{\t, \xi, d}$ has  weight $\hat\lambda$.

\end{Lemma}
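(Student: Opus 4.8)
The plan is to derive the statement directly from the bimodule structure on $M_{I_i, r}$ supplied by Theorem~\ref{thmA}, which reduces the lemma to a formal observation that the right action of $\mathcal B_{a, r}(\mathbf u)$ preserves $\mathfrak h$-weights; essentially no computation is needed.

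First I would record that $v_\lambda$ is itself an $\mathfrak h$-weight vector of weight $\hat\lambda$. By \eqref{higge} the tensor factor $\mathbf m_i$ has weight $\lambda_{I_i, \mathbf c}$; by \eqref{weightp} the factor $v_{\mathbf i_\lambda}$ has weight $\tilde\lambda$; and each factor $v_1\otimes v_{-1}$ has weight $\varepsilon_1+(-\varepsilon_1)=0$. Adding the contributions of the three slots in \eqref{vlam} gives weight $\lambda_{I_i, \mathbf c}+\tilde\lambda=\hat\lambda$, matching \eqref{hatlambda}. Next I would check that
\[
b:=E^f\, w_\lambda\, n_{\lambda'}\, d(\t)\, X^\xi\, d
\]
is a single element of $\mathcal B_{a, r}(\mathbf u)$: indeed $E^f=E_{r-1}E_{r-3}\cdots E_{r-2f+1}$; the permutations $w_\lambda$, $d(\t)$, $d$ lie in $\mathfrak S_r$ and hence are words in the generators $S_j$; $n_{\lambda'}=\tilde\pi_{[\lambda']}\,y_{\lambda'}$ is a polynomial expression in the $X_l$ and $S_j$ by \eqref{piu} and \eqref{xym}; and $X^\xi=\prod_{i=1}^{f}X_{r-2i+1}^{\xi_{r-2i+1}}$ is a monomial in the $X_l$. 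Thus $v_{\t,\xi,d}=v_\lambda\, b$ is the image of $v_\lambda$ under the right action of the one algebra element $b$.

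The heart of the matter is that the right action of $\mathcal B_{a, r}(\mathbf u)$ on $M_{I_i, r}$ preserves $\mathfrak h$-weight spaces. Since $M_{I_i, r}$ is a $(\U(\mathfrak g),\mathcal B_{a, r}(\mathbf u))$-bimodule by Theorem~\ref{thmA}, right multiplication by $b$ commutes with the left $\mathfrak g$-action, in particular with that of every $h\in\mathfrak h$, so
\[
h\cdot(v_\lambda\, b)=(h\cdot v_\lambda)\, b=\hat\lambda(h)\,(v_\lambda\, b)\qquad(h\in\mathfrak h).
\]
Hence $v_{\t,\xi,d}=v_\lambda\, b$ lies in the $\hat\lambda$-weight space of $M_{I_i, r}$, and consequently in the $\hat\lambda$-weight space of any $\mathfrak g$-module quotient such as $M_{I_i, r}/M_{I_i, r}\langle E^{f+1}\rangle$ or $M_{I_i, r}/M_{I_i, r}\langle E^{f}\rangle$; that is, it has weight $\hat\lambda$. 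One may alternatively bypass the bimodule language and argue by inspection of generators: $S_j$ acts by permuting two tensor factors of $V$, $E_j$ acts as $\alpha\circ\beta$ on two adjacent factors (cf.\ \eqref{eiact} and \eqref{alp123}), and $X_l$ acts through the $\mathfrak g$-invariant element $\Omega$ of \eqref{omega1} up to an additive scalar, each of which is a morphism of $\mathfrak g$-modules and hence weight-preserving, so $b$ is too. The argument is thus essentially formal; the only point requiring any care is the verification in the previous paragraph that $b$ really is one element of $\mathcal B_{a, r}(\mathbf u)$, so that the weight-preservation of the whole algebra action applies, and this is the sole (very minor) obstacle.
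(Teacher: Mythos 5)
Your argument is correct and is exactly the paper's: the paper likewise invokes Theorem~\ref{thmA} to view $M_{I_i,r}$ as a $(\mathfrak g,\mathcal B_{a,r}(\mathbf u))$-bimodule, so that right multiplication by the algebra element $E^f w_\lambda n_{\lambda'}d(\t)X^\xi d$ commutes with the $\mathfrak h$-action and hence preserves the weight $\hat\lambda$ of $v_\lambda$ (which is computed in \eqref{hatlambda} just before the lemma). Your write-up merely spells out the details the paper leaves implicit.
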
 
\begin{proof} By Theorem~\ref{thmA}, $M_{I_i, r}$ is a $(\mathfrak g, \mathcal B_{a, r}(\mathbf u))$-bimodule. Consequently,  $ v_{\t, \xi, d}$ and  $v_\lambda$ have the same weight, which completes the proof. 
\end{proof}

\begin{Lemma}\label{liehwv} Let $V$ be the  natural $\mathfrak{gl}_n$-module  with basis $\{v_j\mid 1\le j\le n\}$.  Then the linear dual  $W$ of $V$ has  dual basis 
$\{v^\ast_j\mid 1\le j\le n\}$ defined by $v^\ast_j (v_l)=\delta_{j, l}$. If $n\ge r$, then
there exists   a bijection between the set of  dominant weights of $V^{\otimes r}$ (resp., $W^{\otimes r}$)  and $\Lambda_1^+(r)$. Furthermore,  
the $\mathbb C$-space of highest weight vectors in $V^{\otimes r}$ (resp.,  $W^{\otimes r}$) with the highest weight $\lambda:=\sum_{i=1}^r \lambda_i\varepsilon_i$ (resp.,  $\lambda^\ast:=-\sum_{i=1}^r \lambda_i \varepsilon_{n-i+1}$) has   basis 
	$\{ v_{\mathbf i_\lambda} w_\lambda n_{\lambda'} d(\t)\mid \t\in \Std(\lambda')\}$ (resp., $\{ v^\ast_{\mathbf j_\lambda} w_\lambda n_{\lambda'} d(\t)\mid \t\in \Std(\lambda')\}$ where $\mathbf i_\lambda=((1)^{\lambda_1}, \ldots, (r)^{\lambda_r})$ and $\mathbf j_\lambda=((n)^{\lambda_1}, \ldots, (n+r-1)^{\lambda_r})$. 
\end{Lemma}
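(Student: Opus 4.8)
The plan is to reduce the statement to classical Schur--Weyl duality for $\mathfrak{gl}_n$ combined with the explicit description of Specht modules for the symmetric group that is already recalled in Section~2. First I would recall that, by Schur--Weyl duality, $V^{\otimes r}$ decomposes as a $(\mathfrak{gl}_n, \mathbb C\mathfrak S_r)$-bimodule into $\bigoplus_{\lambda\in\Lambda_1^+(r)} L(\lambda)\otimes \mathrm{Sp}^\lambda$, where $L(\lambda)$ is the irreducible polynomial $\mathfrak{gl}_n$-module of highest weight $\lambda=\sum_i\lambda_i\varepsilon_i$ and $\mathrm{Sp}^\lambda$ is the Specht module; here the hypothesis $n\ge r$ guarantees that $L(\lambda)\ne 0$ for every $\lambda\in\Lambda_1^+(r)$ and that distinct $\lambda$'s give distinct highest weights, which yields the asserted bijection between dominant weights of $V^{\otimes r}$ and $\Lambda_1^+(r)$. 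Taking the $\lambda$-weight space of highest weight vectors then identifies the space of $\mathfrak{gl}_n$-highest weight vectors in $V^{\otimes r}$ of weight $\lambda$ with $\mathrm{Sp}^\lambda$ as a right $\mathfrak S_r$-module (equivalently a right-multiplication on $\mathbb C\mathfrak S_r$), and in particular its dimension is the number of standard $\lambda$-tableaux.

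Next I would produce the explicit basis. The vector $v_{\mathbf i_\lambda}$ with $\mathbf i_\lambda=((1)^{\lambda_1},\ldots,(r)^{\lambda_r})$ is the ``row-reading'' tensor for the row-superstandard tableau; since $V^{\otimes r}\cong \mathbb C\mathfrak S_r$ as a right $\mathfrak S_r$-module after projecting to the appropriate weight space (the weight space of $v_{\mathbf i_\lambda}$ under $\mathfrak S_{\lambda}$ is spanned by $v_{\mathbf i_\lambda}$-orbit, permuted freely), the element $v_{\mathbf i_\lambda}\, x_\lambda$ spans the image of $x_\lambda$ and is a highest weight vector of weight $\lambda$ for $\mathfrak{gl}_n$. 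Now I invoke the classical fact, essentially the $a=1$ case of Theorem~\ref{basisofhecke}(2) together with \eqref{spectiso}, that $\{ x_\lambda w_\lambda n_{\lambda'} d(\t) \mid \t\in\Std(\lambda')\}$ is a basis of the Specht module $S^\lambda = \tilde C(\lambda')$ for $\mathbb C\mathfrak S_r$; transporting this through the isomorphism with the highest weight space gives that $\{ v_{\mathbf i_\lambda} x_\lambda w_\lambda n_{\lambda'} d(\t)\mid \t\in\Std(\lambda')\}$, i.e. $\{ v_{\mathbf i_\lambda} w_\lambda n_{\lambda'} d(\t)\mid \t\in\Std(\lambda')\}$ after absorbing $x_\lambda$ (note $v_{\mathbf i_\lambda}x_\lambda$ and $v_{\mathbf i_\lambda}$ differ only by a nonzero scalar since the $\mathfrak S_\lambda$-stabilizer of $v_{\mathbf i_\lambda}$ acts trivially), is a basis of the highest weight space. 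One has to be mildly careful about which normalization of $m_\lambda$, $n_\lambda$ is in force (the $\pi_{[\lambda]}$ factors in \eqref{piu} are trivial for $a=1$), but this is routine.

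For the dual statement about $W^{\otimes r}$, I would argue identically after noting that $W$ is the natural module for $\mathfrak{gl}_n$ with respect to the opposite Borel, or equivalently apply the functor ``twist by the longest element $w_0$ of $\mathfrak S_n$'': the highest weight vectors of $W^{\otimes r}$ of weight $-\sum_i\lambda_i\varepsilon_{n-i+1}$ correspond under Schur--Weyl duality to $\mathrm{Sp}^\lambda$ again, and $\mathbf j_\lambda=((n)^{\lambda_1},\ldots,(n+r-1)^{\lambda_r})$ is precisely the row-reading multi-index in the dual basis $\{v_j^\ast\}$ adapted to this opposite order; hence $\{ v^\ast_{\mathbf j_\lambda} w_\lambda n_{\lambda'} d(\t)\mid \t\in\Std(\lambda')\}$ is the corresponding basis. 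The main obstacle I anticipate is not conceptual but bookkeeping: one must check that the specific element $w_\lambda n_{\lambda'}$ used here (coming from the Dipper--Rui conventions of \eqref{wlaex}--\eqref{xym}) really does generate the Specht module inside $\mathbb C\mathfrak S_r$ in the form asserted, and that applying it to $v_{\mathbf i_\lambda}$ lands in the highest weight space rather than being killed — this amounts to verifying $\mathfrak n^+\cdot (v_{\mathbf i_\lambda} w_\lambda n_{\lambda'})=0$, which follows because $v_{\mathbf i_\lambda}$ uses only the first $r$ basis vectors and $r\le n$, so the relevant raising operators act as the symmetric group raising operators already annihilating $\mathrm{Sp}^\lambda$-realizing elements. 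Everything else is a direct translation of the $\mathfrak S_r$-module statement through Schur--Weyl duality.
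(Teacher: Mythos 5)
Your strategy is sound and, unlike the paper's own proof --- which is essentially a two-line deferral to the quantum analogue in \cite[Proposition~4.10, Lemma~4.11]{RS-wall} followed by ``the classical case is similar'' --- it is self-contained: Schur--Weyl duality gives the bijection with $\Lambda_1^+(r)$ (using $n\ge r$ so that every partition of $r$ has at most $n$ rows), identifies the space of highest weight vectors of weight $\lambda$ with the Specht module as a right $\mathfrak S_r$-module, and the weight-space isomorphism $x_\lambda h\mapsto v_{\mathbf i_\lambda}h$ transports the basis $\{x_\lambda w_\lambda n_{\lambda'}d(\t)\mid \t\in\Std(\lambda')\}$ of $S^\lambda=\tilde C(\lambda')$ (the $a=1$ case of Theorem~\ref{basisofhecke}(2) and \eqref{spectiso}) onto the asserted set. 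The dimension count then closes the argument. This is a legitimate and arguably preferable alternative to the citation.

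Two points need repair before this is a proof. First, $v_{\mathbf i_\lambda}x_\lambda$ is \emph{not} a highest weight vector in general: for $\lambda=(1,1)$ one has $e_{12}\bigl(v_1\otimes v_2+v_2\otimes v_1\bigr)=2\,v_1\otimes v_1\neq 0$. It is only a weight vector of weight $\lambda$, and fortunately nothing downstream uses the stronger claim. Second, and more seriously, the one step you correctly isolate as essential --- that $\mathfrak n^+$ annihilates $v_{\mathbf i_\lambda}w_\lambda n_{\lambda'}$ --- is justified only by the phrase that the raising operators ``act as the symmetric group raising operators,'' which is not an argument: the operators $e_{i,i+1}$ with $i<r$ genuinely change the weight and do not factor through the $\mathfrak S_r$-action. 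The correct (and short) verification is that $v_{\mathbf i_\lambda}w_\lambda$ is the column-reading tensor of $\t_\lambda$, so the positions permuted by $\mathfrak S_{\lambda'}$ are exactly the columns of $\lambda$; hence $v_{\mathbf i_\lambda}w_\lambda n_{\lambda'}$ is, up to reordering tensor positions, a tensor product over the columns of the wedges $v_1\wedge v_2\wedge\cdots\wedge v_{\ell}$ ($\ell$ the column length). Each wedge is killed by every $e_{i,i+1}$ because the substitution $v_{i+1}\mapsto v_i$ creates a repeated entry, and a tensor product of highest weight vectors is a highest weight vector; in particular the vector is nonzero, so the image of $S^\lambda$ is a copy of the Specht module inside the highest-weight space and the dimension count applies. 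The same wedge computation in the dual basis handles $W^{\otimes r}$. With these insertions your argument is complete.
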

\begin{proof}
    By setting   either $r=0$ or $s=0$ in  \cite[Proposition~4.10, Lemma~4.11]{RS-wall}, we have the corresponding result for $\U_q(\mathfrak {gl}_n)$, where  $\U_q(\mathfrak {gl}_n)$ is the quantum general linear group. For $\mathfrak{gl}_n$, one can handle it similarly.  \end{proof}
Restricting $V^{\otimes r}$ and $W^{\otimes r} $ to $\mathfrak{sl}_n$, the results concerning the highest weight vectors in Lemma~\ref{liehwv} remain valid.   
     Let $V$ be the natural  $\mathfrak g$-module, where  $\frg\in \{\mathfrak {so}_{2n+1}, \mathfrak{sp}_{2n},   
 \mathfrak {so_{2n}} \}$. Then, we have the following isomorphism of  $\mathfrak{sl}_n$-modules   \begin{equation}\label{isoo} \bigoplus_{i=1}^n \mathbb C v_{-i}\cong W.\end{equation}
   The required  isomorphism sends $v_{-i}$ to $v_i^\ast$, as described in Lemma~\ref{liehwv}.

\begin{Prop}\label{hiofcyche}   
 For any $(\t, \xi, d)\in \delta(f, \lambda')$, $\bar{ v_{\t, \xi, d}}\in M_{I_i, r}/M_{I_i, r} \langle E^{f+1} \rangle  $ is annihilated by any element in    the positive part  $\n^+$ of $\frg$.  
\end{Prop}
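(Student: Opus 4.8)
The plan is to show that $\bar v_{\t,\xi,d}$ is a singular vector by exhibiting it as an element of the image of a carefully chosen module map, so that the positive part $\n^+$ acts by zero automatically. The key observation is that by Lemma~\ref{vtd}, $v_{\t,\xi,d}=v_\lambda E^f w_\lambda n_{\lambda'}d(\t)X^\xi d$, so modulo $M_{I_i,r}\langle E^{f+1}\rangle$ it equals $\bar v_\lambda$ acted on by the image of $E^f m_{\lambda'} w_{\lambda'} n_{(\lambda')'}\cdots$—that is, up to reindexing by a basis element of the module $S^{f,\lambda'}\cong C(f,\lambda'')$ from Proposition~\ref{bas}. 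Since $M_{I_i,r}$ is a $(\U(\frg),\mathcal B_{a,r}(\mathbf u))$-bimodule, the right $\mathcal B_{a,r}(\mathbf u)$-action commutes with the left $\frg$-action, and in particular with the action of $\n^+$. Therefore it suffices to prove the single statement that $\bar v_\lambda'$ (for the appropriate reindexing) — equivalently the generator $\bar v_\lambda E^f m_{\lambda'}w_{\lambda'}n_{(\lambda')'}$ — is annihilated by $\n^+$; everything else in $\delta(f,\lambda')$ is obtained from this one vector by right multiplication by $\mathcal B_{a,r}(\mathbf u)$, which preserves the property of being a singular vector.

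First I would reduce to understanding the action of $\n^+$ on $v_\lambda E^f w_\lambda n_{\lambda'}$ inside $M_{I_i,r}$, working modulo $M_{I_i,r}\langle E^{f+1}\rangle$. Recall $v_\lambda=\mathbf m_i\otimes v_{\mathbf i_\lambda}\otimes(v_1\otimes v_{-1})^{\otimes f}$. The factor $(v_1\otimes v_{-1})^{\otimes f}$ together with $E^f$ should be analyzed using the explicit formula \eqref{eiact}: $E^f$ is a composition of evaluation and co-evaluation maps on the last $2f$ tensor factors, and the image of $E^f$ lands in a copy of $M_{I_i,r-2f}$ inside $M_{I_i,r}$. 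Crucially, $E^f(\mathbf m_i\otimes v_{\mathbf i_\lambda}\otimes(v_1\otimes v_{-1})^{\otimes f})$ is, up to nonzero scalar, $\mathbf m_i\otimes v_{\mathbf i_\lambda}$ tensored with $\alpha^{\otimes f}(1)=(\sum_j v_j\otimes v_j^\ast)^{\otimes f}$ on the last $2f$ slots, because $\beta(v_1\otimes v_{-1})=(v_1,v_{-1})=1\neq 0$ by \eqref{invri1}. The vector $\sum_{j\in\underline N}v_j\otimes v_j^\ast$ is $\frg$-invariant (it is the image of $1$ under the co-evaluation, a $\frg$-module map), so $\n^+$ acts on those slots trivially.

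Next I would handle the remaining factors $\mathbf m_i\otimes v_{\mathbf i_\lambda}$ with the operator $w_\lambda n_{\lambda'}$ applied on the tensor slots $1$ through $r-2f$. Here $\mathbf m_i$ is the highest weight vector of the simple parabolic Verma module $M^{\frp_{I_i}}(\lambda_{I_i,\mathbf c})$, so $\frn^+$ kills $\mathbf m_i$; it remains to see why $\frn^+$ kills the combination $v_{\mathbf i_\lambda}w_\lambda n_{\lambda'}$ modulo the ideal, where the entries of $\mathbf i_\lambda$ split into a "positive block" in $\{1,\dots,p_k\}$ (positions $1\le j\le k$) and a "negative block" in $\{-p_{k},\dots\}$ (positions $k+1\le j\le a$). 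By Definition~\ref{vt} and the assumption $p_t-p_{t-1}\ge 2r$, the positive-block entries and negative-block entries involve disjoint ranges of indices, each range of size $\ge r$, so the $\frsl_n$-action on $v_{\mathbf i_\lambda}$ decomposes according to the blocks. Within the positive blocks I invoke Lemma~\ref{liehwv}: $v_{\mathbf i_{\lambda^{(j)}}}$ times the relevant slice of $w_\lambda n_{\lambda'}$ is a highest weight vector for the $\frgl$ of that block. Within the negative blocks I use the isomorphism \eqref{isoo} identifying $\bigoplus\mathbb C v_{-i}\cong W$ and again Lemma~\ref{liehwv} for the dual module $W^{\otimes r}$. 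The action of $\frn^+$ on $V$ mixes the positive and negative indices (and, for $B_n$, $C_n$, involves the zero index and the long simple root), and this is where the quotient by $M_{I_i,r}\langle E^{f+1}\rangle$ is essential: any "leftover" term produced by applying a root vector that pairs a $v_j$ with a $v_{-j}$ creates an extra contracted pair, i.e. lands in $M_{I_i,r}\langle E^{f+1}\rangle$ and hence vanishes modulo the ideal. I would make this precise by checking that for each simple root vector $f_{i,j}\in\n^+$ (from \eqref{posrv}), applying it to $\mathbf m_i\otimes v_{\mathbf i_\lambda}\otimes\alpha^{\otimes f}(1)$ gives a sum of three kinds of terms: (i) terms where $f_{i,j}$ hits $\mathbf m_i$, which vanish since $\mathbf m_i$ is highest weight; (ii) terms where $f_{i,j}$ acts within a single block of $v_{\mathbf i_\lambda}$, which vanish after applying $w_\lambda n_{\lambda'}$ by Lemma~\ref{liehwv}; and (iii) terms creating a new $\frg$-contraction among the tensor factors, which lie in $M_{I_i,r}\langle E^{f+1}\rangle$.

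The main obstacle I anticipate is step (iii): rigorously identifying the "new contraction" terms with elements of the ideal $M_{I_i,r}\langle E^{f+1}\rangle$, especially keeping track of the reindexing permutations $w_\lambda$, $w_{[\lambda']}$ and the tableau maps $d(\t)$, and handling the type-specific features (the extra weight $\pm\varepsilon_{p_k}$ strand in Proposition~\ref{polyofx} when $i=1$, and the zero-index basis vector $v_0$ for $\frso_{2n+1}$). I would organize this by first establishing the claim for the "bottom" vector $v_\lambda E^f w_\lambda n_{\lambda'}$ with $(\t,\xi,d)=(\t_{\lambda'},\underline 0,1)$, using the description of $E^f$ as contraction maps and the $\frg$-invariance of $\alpha(1)$ to literally identify the problematic terms with co-evaluation applied one more time; then deducing the general case for arbitrary $(\t,\xi,d)\in\delta(f,\lambda')$ from bimodule-commutativity as explained above, since $d(\t)X^\xi d\in\mathcal B_{a,r}(\mathbf u)$ acts on the right.
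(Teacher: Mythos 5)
Your overall architecture matches the paper's: reduce via the bimodule structure to the single vector $\bar{v_\lambda E^f w_\lambda n_{\lambda'}}$, treat the $f>0$ case by the $\frg$-equivariance of the co-evaluation $\alpha^{\otimes f}$ (the paper does exactly this, via the induced map $\bar\psi$ from $M_{I_i,r-2f}/M_{I_i,r-2f}\langle E_{r-2f-1}\rangle$), and use Lemma~\ref{liehwv} together with \eqref{isoo} for the simple root vectors lying in the Levi. Those parts are fine.

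The genuine gap is in your step (iii), which is the heart of the argument. When a simple root vector $f_{p_l,p_l+1}$ with $\alpha_{p_l}\notin I_i$ acts on $v_{\mathbf i_\lambda}$, the resulting terms do \emph{not} create a new contracted pair: the action replaces a tensor factor $v_{p_l+1}$ by $v_{p_l}$ (moving it from the $(l+1)$-st positive block into the $l$-th), or $v_{-p_l}$ by $v_{-(p_l+1)}$, and no pairing $v_j\otimes v_{-j}$ summed over $j$ appears. So the mechanism you propose for why these terms lie in $M_{I_i,r}\langle E^{f+1}\rangle$ does not apply, and the step would fail as written. The actual reason these cross-block terms (the sums $A$ and $B$ in the paper's notation) vanish modulo the ideal is entirely different: after symmetrizing over the first row of the relevant component and commuting past $w_{[\lambda]}$, the offending tensor factor is brought to the first slot, and then the factor $\tilde\pi_{[\lambda']}$ sitting inside $n_{\lambda'}$ contributes, modulo $\langle E^1\rangle$, a product $(X_1-u_1)\cdots(X_1-u_l)$ which annihilates $\mathbf m_i\otimes v_{p_l}$ because $\mathbf m_i\otimes v_{p_l}\in N_l$ (Proposition~\ref{polyofx}). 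Your proposal never invokes $\tilde\pi_{[\lambda']}$ or the eigenvalue structure of $X_1$ on the parabolic Verma flag of $M^{\frp_{I_i}}(\lambda_{I_i,\mathbf c})\otimes V$, and without that input there is no way to place the cross-block terms in the ideal. This is the missing idea, and it is also why the statement is about $v_\lambda E^f w_\lambda n_{\lambda'}$ specifically rather than about an arbitrary vector of weight $\hat\lambda$.
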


\begin{proof} By Theorem~\ref{thmA},  $M_{I_i, r}$ is a $(\mathfrak g, \B_{a, r}(\mathbf u))$-bimodule, and so is  $M_{I_i, r} /M_{I_i, r}\langle E^{f+1}\rangle $. Therefore, it suffices  to prove that $\bar{v_\lambda E^f w_\lambda n_{\lambda'}}$ is annihilated by the root  vectors in $\mathfrak n^+$ corresponding to the simple roots in  \eqref{posrt}, where $v_\lambda$ is defined as in \eqref{vlam}.

\Case {1. $f=0$ and $i=2$}   The root vector in $\n^+$  corresponding to $\alpha_n$  is  
  $f_{n, -n}$ (respectively, $ f_{n-1, -n}$, and  $f_{0, -n}$) if $\Phi$ is $C_n$ (resp., $ D_n, B_n$). 
By Definition~\ref{vt}, 
$v_{-n}$ does not appear as a tensor factor of    $v_{\mathbf i_\lambda}$ if  $\Phi$ is either  $B_n$ or $C_n$. When $\Phi=D_n$,   neither $v_{n}$ nor $v_{-n+1}$  appears as a tensor factor of    $v_{\mathbf i_\lambda}$. 
 Thus, $v_\lambda$   is annihilated by  such a root vector, and 
so is ${v_\lambda w_\lambda n_{\lambda'}}$.

It remains to consider the root vectors $f_{j, j+1}$  corresponding  to  $\alpha_j$,  $1\le j\le n-1$. There are two cases to discuss. 

(1)  $f_{j,j+1}\in\mathfrak l$. 

By slightly abusing of notations, we consider  $\tilde \pi_{[\lambda']}$ and $ y_{\lambda'}$ in $\B_{a, r}(\mathbf u)$, obtained from those in 
$\mathscr H_{a, r}(\mathbf u)$ by using $X_t$ and $S_j$ instead of $x_t$ and $s_j$, respectively. Since $i=2$, we have  $a=2k-1$ by  \eqref{defa}.
From \eqref{cycHiso}, 
\begin{equation} \label{kkk1} \tilde \pi_{[\lambda']} y_{\lambda'}\equiv y_{\lambda'}\tilde \pi_{[\lambda']} \pmod {\langle E^1\rangle }, 
\end{equation} 
Let   $\mu^{(t)}$ be  the conjugate of $\lambda^{(t)}$. 
We have 
$$\begin{aligned} v_\lambda w_\lambda n_{\lambda'} &  = \mathbf m_i \otimes v_{\mathbf i_\lambda}  w_{(1)}w_{(2)}\cdots w_{(a)} w_{[\lambda]}\tilde \pi_{[\lambda']} y_{\lambda'}  \ \  \text{by \eqref{wlaex}} \\
&  \equiv \mathbf m_i \otimes v_{\mathbf i_\lambda}  w_{(1)}w_{(2)}\cdots w_{(a)} w_{[\lambda]} y_{\lambda'}\tilde \pi_{[\lambda']} \pmod {M_{I_i, r}  \langle E^1\rangle} \ \ \text{by \eqref{kkk1}} \\ 
& \equiv \mathbf m_i \otimes v_{\mathbf i_\lambda}  w_{(1)}w_{(2)}\cdots w_{(a)} y_{\mu^{(1)}\vee \mu^{(2)}\vee \cdots \vee \mu^{(a)}} 
w_{{[\lambda] }} \tilde \pi_{[\lambda']}
\pmod {M_{I_i, r} \langle E^1\rangle} \  \text{by \eqref{wll}}.   \\
\end{aligned} $$

Since  $f_{j,j+1}\in\mathfrak n^+\cap \mathfrak l$, and the special linear Lie algebra $\mathfrak{sl}_n$ is a subalgebra of $\mathfrak{gl}_n$,   by  Lemma~\ref{liehwv} for  $\mathfrak{sl}_n$, and \eqref{isoo}, we have $$f_{j, j+1} (\mathbf m_i \otimes v_{\mathbf i_\lambda}  w_{(1)}w_{(2)}\cdots w_{(a)} y_{\mu^{(1)}\vee 
\mu^{(2)}\vee \cdots \vee \mu^{(a)}})=0 ,$$ forcing   $f_{j, j+1} \overline{\mathbf m_i \otimes v_{\lambda}  w_\lambda n_{\lambda'}}=\bar 0 $.

(2) $f_{j, j+1}\not\in \frl$. 

Then $j=p_l$ for some $1\le l\le k-1$.  We claim  $$f_{p_l,p_l+1}v_{\lambda} w_\lambda n_{\lambda'}\in M_{I_i, r}\langle E^1\rangle.$$  This   is trivial if  neither   $v_{p_l+1}$ nor $v_{-p_l}$  appears as a tensor factor of    $v_{\mathbf i_\lambda}$.  In this case, we have  $f_{p_l,p_l+1}v_{\lambda} w_\lambda n_{\lambda'}=0$. Otherwise, by Definition~\ref{vt}, 	at least one of  $v_{p_l+1}$ or  $v_{-p_l}$ must  appear, which forces  at least  one of 
 $\lambda^{(l+1)}$ and  $\lambda^{(a-l+1)}$ to be non-empty.  
 
 To verify the claim,  
 we write $$ f_{p_j,p_j+1} v_\lambda w_\lambda n_{\lambda'} =(1-\delta_{\lambda^{(j+1)}, \emptyset})A+(\delta_{\lambda^{(a-j+1)}, \emptyset} -1) B,$$ 
where \begin{equation}\label{abc} \begin{aligned} A&=\sum_{c=1}^{ \lambda^{(l+1)}_1} \mathbf m_i\otimes  v_{\mathbf i_{\lambda^{(1)}}}\otimes \cdots\otimes v_{\mathbf i_{\lambda^{(l)}}}\otimes v_{\mathbf i_c}\otimes v
	_{\mathbf i_{\lambda^{(l+2)}}}\otimes \cdots\otimes v_{\mathbf i_{\lambda^{(a)}}} w_\lambda n_{\lambda'}\\
 B&=\sum_{b=1}^{ \lambda^{(a-l+1)}_1} \mathbf m_i\otimes  v_{\mathbf i_{\lambda^{(1)}}}\otimes \cdots\otimes v_{\mathbf i_{\lambda^{(a-l)}}}\otimes v_{\mathbf i_b}\otimes v
	_{\mathbf i_{\lambda^{(a-l+2)}}}\otimes \cdots\otimes v_{\mathbf i_{\lambda^{(a)}}} w_\lambda n_{\lambda'}.\\
  \end{aligned}\end{equation} 
 Here    $\mathbf i_c$  is obtained from $\mathbf i_{\lambda^{(l+1)}}$ by replacing   $p_l+1$  with $p_l$ at $(b_l+c)$-th position, and the $\mathbf i_b$ is obtained from $\mathbf i_{\lambda^{(a-l+1)}}$ by replacing $-p_l$ with
 $-(p_l+1)$  at $(b_{a-l}+b)$-th position, where $b_l$ is defined in \eqref{blam}.
 Thus, it suffices to verify $A, B\in M_{I_i, r} \langle E^{1}\rangle $. 
  We provide a detailed proof for  $A$ and   a  brief for  $B$ since the 
  arguments are similar. 
  
   Suppose   $\lambda^{(l+1)}\neq \emptyset$. Let  $\mathbf a$  be  obtained from $\mathbf i_\lambda$ by 
 replacing $\mathbf i_{\lambda^{(l+1)}}$ with $ \mathbf i_1$, where $\mathbf i_1$ is defined as in  the expression of $A$ in  \eqref{abc}. It is well-known (see e.g. \cite[(2.13)]{RS}) that each $s_t\in \mathfrak S_r$ acts on $V^{\otimes r} $ using a sign permutation  if $\Phi$ is of type $C_n$, and a permutation if $\Phi$ is of type  $B_n$ or $ D_n$.  Thus,  we have  \begin{equation}\label{k1} A=(-1)^{\delta_{\frg, \mathfrak{sp}_{2n}}} \mathbf m_i\otimes  v_{\mathbf a} \sum_{p=1}^{\lambda^{(l+1)}_1} (b_l+1, b_l+p)w_\lambda n_{\lambda'}. \end{equation}
 Define  $$h=\sum_{p=1}^{\lambda^{(l
+1)}_1}  (b_l+1, b_l+p) w_{\lambda^{(1)}} \cdots w_{\lambda^{(a)}}\in \mathbb C \mathfrak S_{[\lambda]}.$$ Thanks to \eqref{wll}, $h w_{[\lambda]}=w_{[\lambda]} h_1$ for some  $h_1\in \mathbb C \mathfrak S_{[\lambda']}$. Using \eqref{kkk1} and \eqref{wll}, we have  (up to a sign)  \begin{equation}\label{k12} A 
\equiv \mathbf m_i \otimes v_{\mathbf i} (1, r-b_{l+1}+1)^2  \tilde \pi_{[\lambda']} h_1 y_{\lambda'}  
 \pmod {M_{I_i, r}  \langle E^1\rangle }\end{equation}
 where  $\mathbf i=(\mathbf i_{\lambda^{(a)}},\ldots, \mathbf i_{\lambda^{(l+2)}}, \mathbf i_1, 
 \mathbf i_{\lambda^{(l)}}, \ldots, \mathbf i_{\lambda^{(1)} })$.  
Labeling  $\mathbf m_i$  at the $0$-th position,
the tensor factor of $\mathbf m_i \otimes v_{\mathbf i} (1, r-b_{l+1}+1)$ at the $1$-th  position is $v_{p_l}$ 
 Since $\lambda^{(l+1)}\neq \emptyset$, $r-b_{l+1}< r-b_l$. By  \eqref{pic} we have 
$$(1, r-b_{l+1}+1) \tilde \pi_{[\lambda']} \equiv (X_1-u_1) (X_1-u_2)\cdots (X_1-u_l) h_2 \pmod {\langle E^1\rangle }$$
for some $h_2\in \B_{a, r}(\mathbf u)$. As $\textbf{m}_i\otimes v_{{p_l}}\in N_l$ (defined as in Proposition~\ref{polyofx}),
by Proposition~\ref{polyofx},  $\mathbf m_i\otimes v_{\mathbf i} (1, r-b_{l+1}+1)$   is annihilated by 
$ (X_1-u_1) (X_1-u_2) \cdots (X_1-u_l)$, which makes 
$A\in M_{I_i, r} \langle E^{1}\rangle $. 

For $\lambda^{(a+1-l)}\neq \emptyset$, we replace $l$ with   $a-l$ in the arguments above.  Consequently, we obtain 
 the corresponding expression  for $B$ by substituting  $a-l$ for  $l$ in \eqref{k1}. The corresponding $h$ is 
 $$h=\sum_{p=1}^{\lambda^{(a-l+1)}_1}  (b_{a-l}+1, b_{a-l} +p) w_{\lambda^{(1)}} \cdots w_{\lambda^{(a)}}.$$
 We still have $h w_{[\lambda]}=w_{[\lambda]} h_1$, where $h_1\in \mathbb C\mathfrak S_{[\lambda']}$. Therefore, the  resulting analog to   \eqref{k12} holds with   $l$ replaced by $a-l$.   
 
 In this case, the tensor factor of $\mathbf m_i \otimes v_{\mathbf i} (1, r-b_{a-l+1}+1)$ at the $1$-th  position is $v_{-p_l-1}$. Since $\lambda^{(a-l+1)}\neq \emptyset$, $r-b_{a-l+1}< r-b_{a-l}$. Consequently,  there exists  $h_2\in \B_{a, r}(\mathbf u)$ such that 
$$(1, r-b_{a-l+1}+1) \tilde \pi_{[\lambda']} \equiv (X_1-u_1) (X_1-u_2)\cdots (X_1-u_{a-l }) h_2 \pmod {\langle E^1\rangle }.$$
Since $p_l+1\in \mathbf p_{l+1}$, we have  $\mathbf m_i\otimes v_{-p_l-1} \in N_{2k-l+\delta_{\mathfrak g, \mathfrak{so}_{2n+1} }}$.  
By Proposition~\ref{polyofx}, $${\mathbf m_i\otimes v_{\mathbf i}} (1, r-b_{a-l+1}+1) \prod_{i=1}^{a-l} (X_1-u_i)=0,$$
 which makes  $B\in M_{I_i, r}  \langle E^1\rangle $. 
 This completes the proof for $f=0$ and $i=2$.

 \Case{2. $f=0$ and $i=1$}  In this case, we have $\Phi\neq B_n$ and $\alpha_n\not\in I_1$. By  \eqref{defa}, $a=2k$. 

Since the arguments used in the proof of (1) and (2) in Case~1  depend only on   whether the simple root is in $I_2$ or not,    one can verify that 
$\bar{v_\lambda E^f w_\lambda n_{\lambda'}}$ is annihilated by the root  vectors corresponding to the simple roots in  \eqref{posrt}, similarly.  The difference here is that we need to use arguments from  the proof of (2) specifically to handle  the root vector corresponding to $\alpha_n$ since $\alpha_n\not\in I_1$. We leave details to the reader.

 \Case {3. $f>0$} Since $E^{f}$ acts on the tenor factors of $M_{I_i, r}$ labeled by $r-2f+1, r-2f+2, \ldots, r-1, r$, by abusing of notion, we  have    
  $$\alpha^{\otimes f} (1^{\otimes f})=(v_1\otimes v_{-1})^{\otimes f} E^f ,$$ where $\alpha$ is defined as in \eqref{alp123}. 
  This gives rises to   a $\frg$-homomorphism  $$\psi:= Id_{M_{I_i, {r-2f}}}\otimes \alpha^{\otimes f}: M_{I_i, {r-2f}}\rightarrow M_{I_i, r}\langle E^f\rangle \hookrightarrow M_{I_i, r}.$$ 
  The restriction of $\psi$
   to $M_{I_i, r-2f} \langle E_{r-2f-1} \rangle $ maps  $M_{I_i, r-2f} \langle E_{r-2f-1} \rangle $ to $M_{I_i, r}  \langle E^{f+1}\rangle$. It induces a $\frg$-homomorphism 
  $$\bar{\psi}  : M_{I_i, r-2f}/ M_{I_i, r-2f}\langle E_{r-2f-1}\rangle  \rightarrow M_{I_i, r}\langle E^f\rangle /M_{I_i, r} \langle E^{f+1}\rangle\hookrightarrow M_{I_i, r}/M_{I_i, r} \langle E^{f+1}\rangle, $$ which maps   $\bar{\mathbf m_i \otimes v_{\mathbf i_\lambda}  w_\lambda n_{\lambda'}} $ to 
  $\bar {v_\lambda E^f w_\lambda n_{\lambda'}}$. By previous results established in Case 1,  $\bar{\mathbf m_i \otimes v_{\mathbf i_\lambda}  w_\lambda n_{\lambda'}} $ is annihilated by any element in $\n^+$. Consequently,       
  $\bar {v_\lambda E^f w_\lambda n_{\lambda'}}$ is also annihilated by any element in $\mathfrak n^+$.  
  \end{proof}

We establish some preliminary results before proving that all the elements in Proposition~\ref{hiofcyche} are linearly independent, as stated in Theorem~\ref{main123}. 

For any $\beta=-\sum_{\gamma\in \Pi} b_\gamma \gamma\in -\mathbb N\Pi $, define 
 \begin{equation} \label{inequality2}  |\beta|_j=\begin{cases} \sum_{\gamma\in \Pi\setminus I_i} b_\gamma & \text{if $j=1$,}\\ 
\text{max}
\{t\mid t\in c_\beta\cup \{0\} 
\} &\text{if $j=2$,}\\
\end{cases}
\end{equation}  
where \begin{equation}\label{cbta} c_\beta=\left\{\sum_{\alpha\in\Phi^+\setminus \Phi_{I_i}} a_\alpha\mid \beta=-\sum_{\alpha\in \Phi^+\setminus \Phi_{I_i}} a_\alpha \alpha, \text{ and } a_\alpha\in \mathbb N \right \}.  \end{equation}

\begin{Lemma}
    \label{inequality1}For any  $\alpha, \beta\in -\mathbb N\Pi$,   
      $|\alpha+\beta|_1=|\alpha|_1+|\beta|_1$, and  $|\beta|_1\ge |\beta|_2$. 
\end{Lemma}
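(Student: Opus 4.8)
\textbf{Proof plan for Lemma~\ref{inequality1}.}

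The plan is to prove the two assertions separately, both by unwinding the definitions in \eqref{inequality2} and \eqref{cbta}.

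First, the additivity $|\alpha+\beta|_1=|\alpha|_1+|\beta|_1$. Write $\alpha=-\sum_{\gamma\in\Pi}a_\gamma\gamma$ and $\beta=-\sum_{\gamma\in\Pi}b_\gamma\gamma$ with all $a_\gamma,b_\gamma\in\mathbb N$; since $\{\gamma\mid\gamma\in\Pi\}$ is linearly independent, the expression of $\alpha+\beta$ in the basis $\Pi$ has coefficients $-(a_\gamma+b_\gamma)$, and these are again non-negative, so $\alpha+\beta\in-\mathbb N\Pi$ and the formula $|\alpha+\beta|_1=\sum_{\gamma\in\Pi\setminus I_i}(a_\gamma+b_\gamma)=|\alpha|_1+|\beta|_1$ is immediate. (If one wishes, one can alternatively phrase $|\cdot|_1$ as the value of the linear functional on $\mathbb Z\Pi$ dual to the fundamental weights indexed by $\Pi\setminus I_i$, applied to $-\beta$, which makes additivity obvious; but the direct computation suffices.)

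Second, the inequality $|\beta|_1\ge|\beta|_2$. If $c_\beta=\emptyset$ then $|\beta|_2=0\le|\beta|_1$ and we are done, so assume $c_\beta\neq\emptyset$ and pick a decomposition $\beta=-\sum_{\alpha\in\Phi^+\setminus\Phi_{I_i}}m_\alpha\alpha$ with $m_\alpha\in\mathbb N$ realizing the maximum, i.e. $|\beta|_2=\sum_{\alpha\in\Phi^+\setminus\Phi_{I_i}}m_\alpha$. The key observation is that every positive root $\alpha\in\Phi^+\setminus\Phi_{I_i}$, when written in the simple root basis as $\alpha=\sum_{\gamma\in\Pi}n_\gamma^{(\alpha)}\gamma$ with $n_\gamma^{(\alpha)}\in\mathbb N$, satisfies $\sum_{\gamma\in\Pi\setminus I_i}n_\gamma^{(\alpha)}\ge 1$: indeed if all the $n_\gamma^{(\alpha)}$ for $\gamma\in\Pi\setminus I_i$ vanished, then $\alpha\in\mathbb N I_i\cap\Phi^+\subseteq\Phi_{I_i}^+$, contradicting $\alpha\notin\Phi_{I_i}$. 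Now substitute this into $\beta=-\sum_\alpha m_\alpha\alpha$ and collect coefficients: the coefficient of $\beta$ along $\gamma\in\Pi\setminus I_i$ is $-\sum_\alpha m_\alpha n_\gamma^{(\alpha)}$, so
\begin{equation}\label{ineqproof}
|\beta|_1=\sum_{\gamma\in\Pi\setminus I_i}\sum_{\alpha\in\Phi^+\setminus\Phi_{I_i}} m_\alpha n_\gamma^{(\alpha)}=\sum_{\alpha\in\Phi^+\setminus\Phi_{I_i}} m_\alpha\Big(\sum_{\gamma\in\Pi\setminus I_i} n_\gamma^{(\alpha)}\Big)\ge\sum_{\alpha\in\Phi^+\setminus\Phi_{I_i}} m_\alpha=|\beta|_2,
\end{equation}
using $m_\alpha\ge 0$ and $\sum_{\gamma\in\Pi\setminus I_i}n_\gamma^{(\alpha)}\ge 1$ in the last inequality. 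This is exactly the claim.

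The only point requiring a little care — and the one I would flag as the main (mild) obstacle — is the verification that a positive root not lying in $\Phi_{I_i}$ must have positive total coefficient on the simple roots outside $I_i$; this rests on the standard fact that $\Phi\cap\mathbb N I_i=\Phi_{I_i}^+$ for a subset $I_i\subseteq\Pi$, which is immediate from the theory of parabolic subsystems (every element of $\Phi^+$ is a $\mathbb N$-combination of $\Pi$, and one lying entirely in $\mathbb N I_i$ is by definition in $\Phi_{I_i}$). Everything else is bookkeeping with non-negative integers and the linear independence of $\Pi$.
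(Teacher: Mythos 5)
Your proof is correct and follows essentially the same route as the paper: additivity is read off directly from the definition of $|\cdot|_1$ in the simple-root basis, and the inequality $|\beta|_1\ge|\beta|_2$ is obtained by expanding each $\alpha\in\Phi^+\setminus\Phi_{I_i}$ in simple roots and observing that its total coefficient on $\Pi\setminus I_i$ is at least $1$. Your write-up is if anything slightly more careful than the paper's (you sum over all of $\Pi\setminus I_i$ rather than a single chosen simple root), but the argument is the same.
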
 
\begin{proof} The first equality follows from \eqref{inequality2}. When
 $c_\beta=\emptyset$, the second result is trivial. If $c_\beta\neq \emptyset$, then $|\beta|_2=j$ for some positive integer $j$.  We can write 
 $\beta=-\sum_{\gamma\in \Phi^+\setminus \Phi_{I_i}} b_\gamma\gamma$ for some $b_\gamma\in \mathbb N$ such that 
 $\sum_{\gamma\in \Phi^+\setminus \Phi_{I_i}} b_\gamma =j$.  
For such a $\gamma$,  $\gamma=\sum_{\eta\in \Pi} c_{\gamma, \eta} \eta$, such that $c_{\gamma, \eta_0}\neq 0$ for some  $\eta_0\in \Pi\setminus I_i$, forcing   $|\beta|_1\ge \sum_{\gamma\in \Phi^+\setminus \Phi_{I_i} }  b_\gamma c_{\gamma, \eta_0 }\ge \sum_{\gamma\in \Phi^+\setminus \Phi_{I_i}} b_\gamma=|\beta|_2$.  \end{proof}

Following~\cite[Definition~4.4]{RS}, we define 
 \begin{equation}\label{bii} \mathcal  B_{I_i}=\{f_{ -j, k}, f_{-j, -k}\mid 1\le j<k \le n, \text{ and } \varepsilon_j\pm\varepsilon_k\in \Phi^+\setminus \Phi_{I_i}\}\cup T_\Phi,\end{equation}
where $I_i$ is defined as  in Definition~\ref{assum12}, and $$
T_\Phi=\begin{cases} \emptyset, &\text{if $\Phi=D_n$,}\\
\{f_{0,j}\mid 1\le j\le n, \text{ and } \varepsilon_j\in \Phi^+\setminus \Phi_{I_i} \} & 
\text{if  $\Phi=B_n$,}\\
\{f_{-j,j}\mid 1\le j\le n, \text{ and } 2\varepsilon_j\in \Phi^+\setminus \Phi_{I_i}\} &
\text{if $\Phi= C_n$.}\\
\end{cases} 
$$
It is known that  $\mathcal  B_{I_i}$ forms  a basis for  $\mathfrak u^-_{I_i}$.
For any  $\mathbf l=(l_1, l_{1}, \ldots, l_b)\in \mathbb N^b$ and any positive integer $b$, 
we denote $$f_{{\mathbf i, \mathbf j}}^\mathbf l :=f_{i_1,j_1}^{l_1}f_{i_{2},j_{2}}^{l_{2}}\cdots f_{i_b,j_b}^{l_b}$$  if
 $f_{i_lj_l}\in \mathcal B_{I_i}$, $1\le l\le b$. Here $\mathbf i=(i_1, i_{2}, \ldots, i_b)$ and 
 $\mathbf j=(j_1, j_{2}, \ldots, j_b)$.   If $b=0$, we set $ f_{{\mathbf i, \mathbf j}}^\mathbf l=1$.
  Fix a total order  $\prec$  on  $\mathcal B_{I_i}$, and 
let $$\mathcal M_{I_i}=\{f_{{\mathbf i, \mathbf j}}^\mathbf l\mid  f_{i_{l+1},j_{l+1}}\prec f_{i_{l}, j_{l}} \text{ for  $1\le l\le b-1$, and  $\mathbf l\in \mathbb N^b,  b\in \mathbb N$}\}.$$
It follows from \cite[Lemma~4.5]{RS}  that  $M_{I_i, r}$ has  basis  
\begin{equation} \label{tsmodule} \mathcal S_{i,r}=\{f_{{\mathbf i, \mathbf j}}^\mathbf l \textbf{m}_i\otimes v_{\mathbf k}\mid f_{{\mathbf i, \mathbf j}}^\mathbf l\in \mathcal M_{I_i},  \mathbf k\in \underline N^r\},\end{equation}  where  $\textbf{m}_i$ is defined as in \eqref{higge}.
For any $j\in \mathbb N$, let 
\begin{equation} \label{grade1} M_{I_i, r}^{\le j}:=\mathbb C\text{-span} \{f_{\mathbf i, \mathbf j}^\mathbf l \textbf{m}_i\otimes v_{\mathbf k}\mid f_{\mathbf i, \mathbf j}^\mathbf l\in \mathcal M_{I_i}, \mathbf k\in \underline N^r, |\mathbf l|\le j \},\end{equation} where $|\mathbf l|:=\sum_tl_t$. Similarly  $M_{I_i, r}^{<j}$ is defined analogously by replacing the condition         $ |\mathbf{l}| \leq j$  with $ |\mathbf{l}| < j $.

\begin{Defn} \label{ppt} For any  $\mathbf i\in \underline N^r$, define $\text{deg} v_{\mathbf i} =\sum_{j=1}^r \text{deg} v_{i_j}$, where $\text{deg} v_{i_j} =t-1$  and 
 $\text{deg} v_{-i_j} =a-t$ if $i_j\in \mathbf p_t$, and $\text{deg} v_0=\frac12 (a-1)$. In the latter case, $\Phi=B_n$, and $i=2$.   \end{Defn} 

\begin{Lemma}
    \label{filt1} 
     For any $\lambda\in \Lambda_a^+(r-2f)$,    $ M_{I_i, r, \hat \lambda}\subseteq  M_{I_i, r}^{\le \text{deg} v_{\mathbf i_\lambda} +f(a-1)} $,  where $M_{I_i, r, \hat \lambda}$ is the $\hat\lambda$-weight space of $M_{I_i, r}$. 
    \end{Lemma}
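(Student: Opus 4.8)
The plan is to reduce to the basis $\mathcal S_{i,r}$ of $M_{I_i,r}$ exhibited in \eqref{tsmodule} and to argue on individual basis elements. Every element of $\mathcal S_{i,r}$ is a weight vector, so the weight space $M_{I_i,r,\hat\lambda}$ is spanned by those $f_{\mathbf i,\mathbf j}^{\mathbf l}\,\mathbf m_i\otimes v_{\mathbf k}\in\mathcal S_{i,r}$ of weight $\hat\lambda$; by \eqref{grade1} each such element lies in $M_{I_i,r}^{\le|\mathbf l|}$, and since $M_{I_i,r}^{\le j}\subseteq M_{I_i,r}^{\le j'}$ whenever $j\le j'$, it suffices to show that $|\mathbf l|\le\text{deg} v_{\mathbf i_\lambda}+f(a-1)$ whenever $f_{\mathbf i,\mathbf j}^{\mathbf l}\,\mathbf m_i\otimes v_{\mathbf k}$ has weight $\hat\lambda$.

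The first step is weight bookkeeping. By \eqref{bii}, every $f_{i_l,j_l}\in\mathcal B_{I_i}$ is a root vector for a root in $-(\Phi^+\setminus\Phi_{I_i})$, so $f_{\mathbf i,\mathbf j}^{\mathbf l}\,\mathbf m_i\otimes v_{\mathbf k}$ has weight $\lambda_{I_i,\mathbf c}+\mu_{\mathbf k}-\sum_l l_l\gamma_l$, where $\mu_{\mathbf k}$ denotes the weight of $v_{\mathbf k}$ and the $\gamma_l$ lie in $\Phi^+\setminus\Phi_{I_i}$. Comparing with $\hat\lambda=\lambda_{I_i,\mathbf c}+\tilde\lambda$ (see \eqref{hatlambda}) forces $\tilde\lambda-\mu_{\mathbf k}=-\sum_l l_l\gamma_l\in-\mathbb N(\Phi^+\setminus\Phi_{I_i})\subseteq-\mathbb N\Pi$. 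Introduce the $\mathbb Q$-linear functional $\operatorname{ht}_{I_i}\colon\mathfrak h^*\to\mathbb Q$ sending a weight to the sum of the coefficients of the simple roots in $\Pi\setminus I_i$ in its expansion in the basis $\Pi$; by \eqref{inequality2} and Lemma~\ref{inequality1} one has $|\beta|_1=\operatorname{ht}_{I_i}(-\beta)$ for all $\beta\in-\mathbb N\Pi$. As each $\gamma_l\notin\mathbb Z I_i$ has a coefficient $\ge1$ on some simple root of $\Pi\setminus I_i$, we get $\operatorname{ht}_{I_i}(\gamma_l)\ge1$, whence
$$|\mathbf l|=\sum_l l_l\le\sum_l l_l\operatorname{ht}_{I_i}(\gamma_l)=\operatorname{ht}_{I_i}\left(\sum_l l_l\gamma_l\right)=\operatorname{ht}_{I_i}(\mu_{\mathbf k}-\tilde\lambda)=|\tilde\lambda-\mu_{\mathbf k}|_1 .$$
Thus the lemma is reduced to the inequality $|\tilde\lambda-\mu_{\mathbf k}|_1\le\text{deg} v_{\mathbf i_\lambda}+f(a-1)$.

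The crux, which I expect to be the main obstacle, is the following atomic identity: for every $m\in\underline N$,
$$\operatorname{ht}_{I_i}\bigl(\operatorname{wt}(v_m)\bigr)+\text{deg} v_m=\tfrac{a-1}{2},$$
where $\operatorname{wt}(v_m)=\operatorname{sgn}(m)\,\varepsilon_{|m|}$ (and $\operatorname{wt}(v_0)=0$), and $\text{deg} v_m$ is as in Definition~\ref{ppt}. To verify it I would use $\Pi\setminus I_1=\{\alpha_{p_1},\dots,\alpha_{p_{k-1}},\alpha_n\}$ and $\Pi\setminus I_2=\{\alpha_{p_1},\dots,\alpha_{p_{k-1}}\}$ from \eqref{i1i2d}, together with the standard expansions of $\varepsilon_m$ in the simple root basis for types $B_n$, $C_n$, $D_n$: the coefficient of $\alpha_{p_t}$ in $\varepsilon_m$ is $1$ if $m\le p_t$ and $0$ otherwise (this simple formula being valid because $p_{k-1}\le n-2$, as $p_k-p_{k-1}\ge2r\ge2$), while the coefficient of $\alpha_n$ in $\varepsilon_m$ is $\tfrac12$ in types $C_n$ and $D_n$. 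Hence, for $m\in\mathbf p_s$ with $m>0$, $\operatorname{ht}_{I_i}(\varepsilon_m)=k-s$ when $i=2$ and $k-s+\tfrac12$ when $i=1$ (recall $i=1$ forces $\Phi\neq B_n$, so $\alpha_n$ is genuinely present); combined with $\text{deg} v_m=s-1$, $\text{deg} v_{-m}=a-s$, $\text{deg} v_0=\tfrac{a-1}{2}$, and $a=2k$ for $i=1$, $a=2k-1$ for $i=2$ (see \eqref{defa}), the atomic identity follows by a brief case distinction on the sign of $m$.

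To finish, summing the atomic identity over the $r$ tensor factors of an arbitrary $\mathbf k\in\underline N^r$ gives $\operatorname{ht}_{I_i}(\mu_{\mathbf k})=\tfrac{a-1}{2}r-\text{deg} v_{\mathbf k}$; applying the same to $\mathbf i_\lambda\in\underline N^{r-2f}$, noting that $v_{\mathbf i_\lambda}$ has weight $\tilde\lambda$ by \eqref{weightp}, gives $\operatorname{ht}_{I_i}(\tilde\lambda)=\tfrac{a-1}{2}(r-2f)-\text{deg} v_{\mathbf i_\lambda}$. Using the $\mathbb Q$-linearity of $\operatorname{ht}_{I_i}$ and the fact that $\text{deg} v_{\mathbf k}\ge0$ (each $\text{deg} v_m$ lies between $0$ and $a-1$), we then obtain
$$|\tilde\lambda-\mu_{\mathbf k}|_1=\operatorname{ht}_{I_i}(\mu_{\mathbf k})-\operatorname{ht}_{I_i}(\tilde\lambda)=\text{deg} v_{\mathbf i_\lambda}+f(a-1)-\text{deg} v_{\mathbf k}\le\text{deg} v_{\mathbf i_\lambda}+f(a-1),$$
which, together with $|\mathbf l|\le|\tilde\lambda-\mu_{\mathbf k}|_1$, yields $M_{I_i,r,\hat\lambda}\subseteq M_{I_i,r}^{\le\text{deg} v_{\mathbf i_\lambda}+f(a-1)}$, as claimed. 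The only genuinely delicate point is checking the atomic identity uniformly across the three root systems while keeping the type-dependent coefficient of $\alpha_n$ and the signs straight; everything else is formal.
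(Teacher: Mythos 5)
Your proof is correct and follows essentially the same route as the paper's: reduce to the basis $\mathcal S_{i,r}$, bound $|\mathbf l|$ by $|\tilde\lambda-\text{wt}(v_{\mathbf k})|_1$ using that each root in $\Phi^+\setminus\Phi_{I_i}$ has a positive coefficient on some simple root outside $I_i$, and then evaluate that quantity as $\deg v_{\mathbf i_\lambda}+f(a-1)-\deg v_{\mathbf k}$. Your "atomic identity" $\operatorname{ht}_{I_i}(\text{wt}(v_m))+\deg v_m=\tfrac{a-1}{2}$ is just a linearized repackaging of the paper's identities \eqref{iden33}--\eqref{iden3} (the paper instead shifts by $-r\varepsilon_1$ to stay in $-\mathbb N\Pi$ and uses additivity of $|\cdot|_1$), so the difference is cosmetic.
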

    
\begin{proof}  Suppose  $f_{{\mathbf i, \mathbf j}}^\mathbf l\textbf{m}_i\otimes v_{\mathbf k}\in \mathcal S_{i, r}$. Then each $f_{i_s, j_s}$ is a root vector in $\U(\frg)^-$ with respect to a positive root, say $\beta_s\in \Phi^+\setminus \Phi_{I_i}$ for  $ 1\le s\le b$, and $b\in \mathbb N$. If  the weight of $f_{{\mathbf i, \mathbf j}}^\mathbf l\textbf{m}_i\otimes v_{\mathbf k}$ is $\hat\lambda$, then   $-\sum_{s=1}^b l_s \beta_s=\tilde \lambda-\text{wt} v_{\mathbf k}$. By \eqref{inequality2}, and   Lemma~\ref{inequality1}, we have 
\begin{equation}\label{degine1} |\mathbf l| \le 
    |\tilde\lambda -\text{wt} (v_{\mathbf k})|_2 \le |\tilde\lambda -\text{wt} (v_{\mathbf k})|_1.\end{equation}
Since 
    $\text{wt}(v_{\mathbf k}) -r\varepsilon_1\in -\mathbb N\Pi$ for any admissible $\mathbf k$,  it follows from Lemma~\ref{inequality1} that  
    \begin{equation} \label{define2} |\tilde \lambda-\text{wt} (v_\mathbf k)|_1=|\tilde \lambda-r\varepsilon_1|_1 - |\text{wt}(v_{\mathbf k})-r\varepsilon_1|_1.\end{equation} 
 When $\Phi=B_n$ and  $i=2$,  we have \begin{equation}\label{iden33}|-\epsilon_1|_1= \frac12 (a-1)=\text{deg}~ v_0, 
\end{equation} 
where $a$ is defined as in \eqref{defa}. 
    If $t\in \mathbf p_j$ for some $j$, it follows from Definition~\ref{ppt} and \eqref{inequality2} that   \begin{equation} \label{iden3} |-(\varepsilon_1+\varepsilon_t)|_1=\text{deg} v_{-t}=a-j \text{ and 
$|-(\varepsilon_1-\varepsilon_t)|_1=\text{deg} v_{t}=j-1$.}\end{equation} 
Write $x=|\tilde \lambda-r\varepsilon_1|_1 $.  Then,  
 $$\begin{aligned} x \overset{\eqref{weightp}} =&  |\sum_{j=1}^k \sum_{s=1}^r \lambda_s^{(j)} (\varepsilon_{p_{j-1}+s}-\varepsilon_1)  -\sum_{j=k+1}^a \sum_{s=1}^r \lambda_s^{(j)} (\varepsilon_{p_{2k-j+\delta_{i, 1}-s+1}}+\varepsilon_1) -2f\epsilon_1|_1\\
\overset{(1)} =&  \sum_{j=1}^k \sum_{s=1}^r \lambda_s^{(j)} |(\varepsilon_{p_{j-1}+s}-\varepsilon_1)|_1  +\sum_{j=k+1}^a \sum_{s=1}^r \lambda_s^{(j)} |(-\varepsilon_{p_{2k-j+\delta_{i, 1}-s+1}}-\varepsilon_1)|_1 +f|-2\varepsilon_1|_1\\
\overset{(2)}=& \sum_{j=1}^k \sum_{s=1}^r \lambda_s^{(j)} \text{deg}\  v_{p_{j-1}+s}  +\sum_{j=k+1}^a \sum_{s=1}^r \lambda_s^{(j)}\text{deg}\  v_{-(p_{2k-j+\delta_{i, 1}-s+1})} +f|-2\varepsilon_1|_1\\
\overset{(3)}=& \text{deg}\ v_{\mathbf i_\lambda}+f|-2\varepsilon_1|_1=\text{deg}\ v_{\mathbf i_\lambda}+f(a-1).\\ \end{aligned} 
 $$
Here (1) follows from  Lemma~\ref{inequality1},  (2) is a consequence of  \eqref{iden3}, and (3) follows from Definition~\ref{vt} and \eqref{iden3}. On the other hand, we have \begin{equation}\label{sst} |\text{wt} (v_{\mathbf k})-r\varepsilon_1|_1 =\sum_{t=1}^r |\text{wt} (v_{k_t})-\varepsilon_1|_1=\sum_{t=1}^r \text{deg}\ v_{k_t}=\text{deg}\ v_{\mathbf k},\end{equation} where the second equality  follows from   \eqref{iden33}--\eqref{iden3}.  Thanks to \eqref{degine1}--\eqref{define2}, we have 
$$|\mathbf l|\le |\tilde \lambda-r\varepsilon_1|_1-|\text{wt}(v_{\mathbf k}) -r\varepsilon_1|_1=\text{deg} \ v_{\mathbf i_\lambda} +f(a-1)-\text{deg} \ v_{\mathbf k} \le \text{deg}v_{\mathbf i_\lambda}+f(a-1) .$$
 Thus,  the required inclusion follows immediately
from 
\eqref{grade1}.  
\end{proof}

\begin{Lemma}\label{eM filtered de }\cite[Lemma~4.6]{RS}
    Suppose that   $h,l \in \underline N$ and $j\in\mathbb N$.
     Then  $$f_{ h,  l}(M^{\mathfrak p_{I_i}}(\lambda_{I_i, \mathbf c})^{\leq j} )\subseteq M^{\mathfrak p_{I_i}}(\lambda_{I_i, \mathbf c})^{\leq x}, $$  where 
     $x=j+1$ if  $ f_{h,l}\in \mathfrak u_{I_i}^-$,  and $j$ if $ f_{h,l}\notin \mathfrak u_{I_i}^-$.
\end{Lemma}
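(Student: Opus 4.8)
The plan is to reduce everything to the PBW filtration of $\U(\mathfrak u_{I_i}^-)$, as in \cite{RS}. Write $M:=M^{\mathfrak p_{I_i}}(\lambda_{I_i,\mathbf c})$. Since $\dim_{\mathbb C}F(\lambda_{I_i,\mathbf c})=1$, the PBW theorem gives $M=\U(\mathfrak u_{I_i}^-)\mathbf m_i$, and since $\mathcal M_{I_i}$ consists of the ordered monomials in the basis $\mathcal B_{I_i}$ of $\mathfrak u_{I_i}^-$ from \eqref{bii}, the subspace $M^{\le j}$ (the $r=0$ instance of \eqref{grade1}) is precisely $\U(\mathfrak u_{I_i}^-)^{\le j}\mathbf m_i$, where $\U(\mathfrak u_{I_i}^-)^{\le j}$ is the standard total-degree PBW filtration. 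I would first record two facts to be used repeatedly: $\U(\mathfrak u_{I_i}^-)^{\le 1}\cdot\U(\mathfrak u_{I_i}^-)^{\le j}\subseteq\U(\mathfrak u_{I_i}^-)^{\le j+1}$, and $\mathfrak p_{I_i}\mathbf m_i\subseteq\mathbb C\mathbf m_i=M^{\le 0}$ (the nilradical $\mathfrak u^+_{I_i}$ annihilates $\mathbf m_i$, and the Levi $\mathfrak l_{I_i}$ acts on the one-dimensional $F(\lambda_{I_i,\mathbf c})$ by a character).

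The case $f_{h,l}\in\mathfrak u_{I_i}^-$ is then immediate: $f_{h,l}\in\U(\mathfrak u_{I_i}^-)^{\le 1}$, hence $f_{h,l}M^{\le j}\subseteq\U(\mathfrak u_{I_i}^-)^{\le 1}\U(\mathfrak u_{I_i}^-)^{\le j}\mathbf m_i\subseteq\U(\mathfrak u_{I_i}^-)^{\le j+1}\mathbf m_i=M^{\le j+1}$, giving $x=j+1$.

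For the case $f_{h,l}\notin\mathfrak u_{I_i}^-$: since $f_{h,l}$ is a single root vector or a Cartan element and $\mathfrak g=\mathfrak u_{I_i}^-\oplus\mathfrak p_{I_i}$, this forces $f_{h,l}\in\mathfrak p_{I_i}$, and I must show $f_{h,l}M^{\le j}\subseteq M^{\le j}$. I would argue by induction on $j$. The base case $j=0$ is exactly $\mathfrak p_{I_i}\mathbf m_i\subseteq\mathbb C\mathbf m_i$. For $j\ge 1$, by linearity it suffices to treat a single PBW basis vector $u\mathbf m_i$, where $u=f_\beta u'$ with $f_\beta\in\mathcal B_{I_i}\subseteq\mathfrak u_{I_i}^-$ its largest factor and $u'\in\mathcal M_{I_i}$ of total degree $\le j-1$; then
$$f_{h,l}u\mathbf m_i=f_\beta\,(f_{h,l}u'\mathbf m_i)+[f_{h,l},f_\beta]\,u'\mathbf m_i.$$
By the inductive hypothesis $f_{h,l}u'\mathbf m_i\in M^{\le j-1}$, so the first summand lies in $f_\beta M^{\le j-1}\subseteq M^{\le j}$ by the previous case. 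For the second summand, write $[f_{h,l},f_\beta]=y_-+y_{\mathfrak p}$ with $y_-\in\mathfrak u_{I_i}^-$ and $y_{\mathfrak p}\in\mathfrak p_{I_i}$; then $y_-u'\mathbf m_i\in\mathfrak u_{I_i}^- M^{\le j-1}\subseteq M^{\le j}$, and $y_{\mathfrak p}u'\mathbf m_i\in M^{\le j-1}$ by the inductive hypothesis applied at degree $\le j-1<j$. Summing, $f_{h,l}u\mathbf m_i\in M^{\le j}$, which closes the induction.

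I do not expect a serious obstacle: this is the standard PBW straightening argument, and the statement is already \cite[Lemma~4.6]{RS}, so it is enough to indicate the proof at this level of detail. The only point requiring care is the bookkeeping of the nested induction — the $f_{h,l}\in\mathfrak u_{I_i}^-$ case is invoked inside the inductive step of the $f_{h,l}\in\mathfrak p_{I_i}$ case, and the inductive hypothesis of the latter is used only at strictly smaller degree, so there is no circularity — together with checking that the filtration $M^{\le j}$ used in Section~4 is indeed the $r=0$ specialization of \eqref{grade1}.
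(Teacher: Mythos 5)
Your argument is correct and complete: identifying $M^{\le j}$ with $\U(\mathfrak u_{I_i}^-)^{\le j}\mathbf m_i$, handling $f_{h,l}\in\mathfrak u_{I_i}^-$ by the degree-$1$ multiplication property of the PBW filtration, and handling $f_{h,l}\in\mathfrak p_{I_i}$ by induction on $j$ via the commutator identity is exactly the standard straightening argument behind \cite[Lemma~4.6]{RS}, which the paper simply cites without reproving. The only point worth making explicit is that every $f_{h,l}$ is an $\mathfrak h$-weight vector, so the dichotomy $f_{h,l}\in\mathfrak u_{I_i}^-$ versus $f_{h,l}\in\mathfrak p_{I_i}$ is genuine; with that noted, your bookkeeping of the nested induction is sound.
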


Suppose that  $y_1,y_2 $ are two PBW monomials  in  $\U(\mathfrak u^-_{I_i})$. Following \cite[p537, line -8]{RS},  we write  $y_1 \approx y_2$ if $y_1$ can be obtained from   $y_2$ by permuting its  factors. From \cite[(4.30)]{RS},   
 \begin{equation}\label{RS430}
    y_1\mathbf m_i=y_2 \mathbf m_i ,
\end{equation} up to a linear combination of terms with  lower degree  if $y_1 \approx y_2$.

We say that an element $w\in \mathcal S_{i, r}$ is a term of an element $v\in M_{I_i, r}$ if  when   $v$ is expressed as a linear combination of elements in $\mathcal S_{i, r}$, $w$ appears with a non-zero coefficient. 
\begin{Lemma}\label{high1} 
    If $f_{{\mathbf i, \mathbf j}}^{\mathbf l}\mathbf{m}_i\otimes v_{\mathbf k}\in \mathcal S_{i, r}$, and $(t_1, \ldots, t_r)\in \mathbb N^r$, then 
 \begin{equation}\label{incbasic} f_{{\mathbf i, \mathbf j}}^\mathbf l\mathbf{m}_i\otimes v_{\mathbf k}\prod_{s=1}^r X_s^{t_s}\in M_{I_i, r}^{\le |\mathbf l|+\sum_{s=1}^r t_s}.\end{equation} In particular, when  $r=1$, we have  
    
    \begin{itemize}
    \item [(1)]   $|\mathbf l'|\le |\mathbf l| +t_1$ if     
    $f_{{\mathbf i', \mathbf j'}}^{\mathbf l'}\mathbf{m}_i\otimes v_{\mathbf k'}$  is a term of  $f_{{\mathbf i, \mathbf j}}^\mathbf l\mathbf{m}_i\otimes v_{\mathbf k} X_1^{t_1}$. The   equality holds  if and only if $f_{{\mathbf i', \mathbf j'}}^{\mathbf l'}=\widetilde {f_{{\mathbf i, \mathbf j}}^{\mathbf l} y}$ for some $y=\overset{\rightarrow} \prod_{j=1}^{t_1}f_{a_j, b_j}\in \U(\mathfrak u_{I_i}^-)$   such that $\overset{\rightarrow}\prod_{j=1}^{t_1}  f_{b_j, a_j} v_{\mathbf k}=\pm v_{\mathbf k'}$.
    Here $\widetilde {f_{{\mathbf i, \mathbf j}}^{\mathbf l} y}$  is the unique element in $\mathcal M_{I_i}$
    satisfying $f_{{\mathbf i, \mathbf j}}^{\mathbf l} y\approx \widetilde {f_{{\mathbf i, \mathbf j}}^{\mathbf l} y}$.
    \item[(2)]
    $f_{{\mathbf i, \mathbf j}}^\mathbf l\mathbf{m}_i\otimes v_{\mathbf k}X_1^{t_1} \in M_{I_i, r}^{\le |\mathbf l|+t_1-1   }$, if   $\text{deg}v_{\mathbf k}<t_1$.      
    \end{itemize} \end{Lemma}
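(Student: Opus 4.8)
The plan is to understand precisely how the generator $X_s$ of $\mathcal B_{a,r}(\mathbf u)$ acts on a monomial $f_{\mathbf i,\mathbf j}^{\mathbf l}\mathbf m_i\otimes v_{\mathbf k}$, and to track the degree (in the sense of the filtration \eqref{grade1}) of every term produced. Recall from \cite{RS} that on the tensor module $M_{I_i,r}$, the action of $X_s$ is built from the $\mathfrak g$-homomorphism $\Omega$ in \eqref{omega1} acting on the appropriate pair of tensor slots (between $M_{I_i,r-1}$, or an earlier slot, and the $s$-th copy of $V$), i.e.\ $X_s$ is essentially a sum of Jucys--Murphy-type contributions $\tfrac12\sum_{p,q\in\underline N} f_{p,q}\otimes f_{q,p}$ applied across the relevant slots. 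Consequently $f_{\mathbf i,\mathbf j}^{\mathbf l}\mathbf m_i\otimes v_{\mathbf k}\prod_s X_s^{t_s}$ is a $\mathbb C$-linear combination of elements of the form $\bigl(y\,f_{\mathbf i,\mathbf j}^{\mathbf l}\bigr)\mathbf m_i\otimes v_{\mathbf k'}$ where $y$ is a product of at most $\sum_s t_s$ root vectors $f_{a,b}\in\U(\mathfrak g)^-$ (one per application of $\Omega$, with the matching $f_{b,a}$ acting on a tensor slot of $v_{\mathbf k}$, turning $v_{\mathbf k}$ into $\pm v_{\mathbf k'}$). Here I would be slightly careful: some of these $f_{a,b}$ may not lie in $\mathfrak u_{I_i}^-$ — they may lie in $\mathfrak l_{I_i}$ or even in $\mathfrak n^+\cap\mathfrak l$ — but by Lemma~\ref{eM filtered de } each such factor raises the degree of $M^{\mathfrak p_{I_i}}(\lambda_{I_i,\mathbf c})^{\le j}$ by at most $1$, and by $0$ if it is not in $\mathfrak u_{I_i}^-$. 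Re-expanding the product $y\,f_{\mathbf i,\mathbf j}^{\mathbf l}$ back into the PBW basis $\mathcal M_{I_i}$ (using \eqref{RS430} and Lemma~\ref{eM filtered de }) only produces terms of degree $\le|\mathbf l|+(\text{number of $\mathfrak u_{I_i}^-$-factors in }y)\le|\mathbf l|+\sum_s t_s$, which gives \eqref{incbasic}.

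For part (1), I would apply the case $r=1$, $\mathbf k=(k_1)$ of the above and read off when the top degree $|\mathbf l|+t_1$ is actually attained. A term $f_{\mathbf i',\mathbf j'}^{\mathbf l'}\mathbf m_i\otimes v_{\mathbf k'}$ with $|\mathbf l'|=|\mathbf l|+t_1$ can only arise if, in each of the $t_1$ applications of $\Omega$, the $\mathfrak g^-$-factor $f_{a_j,b_j}$ that lands on $\mathbf m_i$ side actually lies in $\mathfrak u_{I_i}^-$ (otherwise, by Lemma~\ref{eM filtered de }, we lose at least one degree), and moreover no degree is lost when re-expanding $f_{\mathbf i,\mathbf j}^{\mathbf l}\,\overset{\rightarrow}{\prod}_j f_{a_j,b_j}$ into $\mathcal M_{I_i}$; by \eqref{RS430} this forces the re-expansion to be, up to lower-degree terms, the single PBW monomial $\widetilde{f_{\mathbf i,\mathbf j}^{\mathbf l}y}$ with $y=\overset{\rightarrow}{\prod}_{j=1}^{t_1}f_{a_j,b_j}$. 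Simultaneously, the matched factors $f_{b_j,a_j}$ acting on $v_{k_1}$ must take $v_{k_1}$ to a nonzero multiple of $v_{\mathbf k'}$, i.e.\ $\overset{\rightarrow}{\prod}_{j=1}^{t_1}f_{b_j,a_j}v_{\mathbf k}=\pm v_{\mathbf k'}$. The converse — that such a $y$ does produce a top-degree term with nonzero coefficient — follows because the distinct contributions to the coefficient of $\widetilde{f_{\mathbf i,\mathbf j}^{\mathbf l}y}\otimes v_{\mathbf k'}$ cannot cancel: they are indexed by the slot-histories of the $\Omega$'s, and modulo lower degree each contributes the same sign, so the leading coefficient is a nonzero integer. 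Part (2) is the special case where $\deg v_{k_1}<t_1$: since each application of $\Omega$ that contributes a genuine $\mathfrak u_{I_i}^-$-factor must also strictly lower the "$\deg$" of the $V$-slot (the lowering operators $f_{b,a}$ with $f_{a,b}\in\mathfrak u_{I_i}^-$ strictly decrease $\deg v_{k}$ by the bookkeeping of Definition~\ref{ppt} and \eqref{iden3}), one cannot apply $t_1$ such factors to $v_{k_1}$ if $\deg v_{k_1}<t_1$ — at least one of the $t_1$ applications of $X_1$ must use a non-$\mathfrak u_{I_i}^-$ factor, costing a degree; hence the output lies in $M_{I_i,r}^{\le|\mathbf l|+t_1-1}$.

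The main obstacle I anticipate is the bookkeeping in part (1): getting a clean, reversible characterization of exactly which pairs $(\mathbf l',\mathbf k')$ attain the extremal degree, and verifying that the leading coefficient is genuinely nonzero rather than an accidental cancellation of the several $\Omega$-slot contributions. This requires being precise about the formula for the $X_s$-action from \cite{RS} and carefully combining \eqref{RS430} (permuting PBW factors costs only lower-degree terms) with Lemma~\ref{eM filtered de } (each non-$\mathfrak u_{I_i}^-$ factor is "free" in degree). Everything else — \eqref{incbasic} and part (2) — is a direct degree count once the action of $X_s$ is written out.
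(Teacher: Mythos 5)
Your proposal is correct and follows essentially the same route as the paper: the paper likewise expands the $X_s$-action via $\Omega$ from \eqref{omega1}, deduces \eqref{incbasic} and part (1) directly from Lemma~\ref{eM filtered de } together with \eqref{RS430}, and proves part (2) by the same weight/degree count, showing via $|\cdot|_1$-additivity that $t_1$ factors from $\mathfrak u_{I_i}^-$ would force $\mathrm{deg}\,v_{\mathbf k}\ge t_1$, a contradiction. Your extra care about non-cancellation of leading coefficients in (1) is not actually needed for the statement as written (which presupposes the term occurs), but it does no harm.
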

\begin{proof} 
From \cite[(3.17)]{RS},  $X_j$ acts on 
$(M^{\frp_{I_i}}(\lambda_{I, c})\otimes V^{\otimes j-1})\otimes V$ using  $\epsilon_\frg (\Omega+\frac12(N-\epsilon_\frg))$, where $\Omega$ is defined as in \eqref{omega1}. Thus, \eqref{incbasic} follows immediately from Lemma~\ref{eM filtered de }. (1) follows from Lemma~\ref{eM filtered de } and \eqref{RS430}.

    If (2) were false, we would have  $ f_{\mathbf i, \mathbf j}^\mathbf l \textbf{m}_i\otimes v_{\mathbf k}X_1^{t_1} \notin M_{I_i, r} ^{\le |\mathbf l|+t_1-1}$. Then,    there exists  
$\overset{\rightarrow} \prod_{j=1}^{t_1} f_{g_j, h_j}\in \U(\mathfrak u_{I_i}^-)$ such that $\overset{\rightarrow} \prod_{j=1}^{t_1} f_{h_j, g_j} v_{\mathbf k}=\pm v_{\mathbf k'}$ for some $\mathbf k'\in \underline{N}$. Thus, we have $$\text{wt} (v_{\mathbf k'})-\varepsilon_1 +\sum_{j=1}^{t_1} \text{wt} (f_{g_j, h_j})=\text{wt}(v_{\mathbf k})-\varepsilon_1. $$ Using  \eqref{iden33}--\eqref{iden3} and noting that $r=1$, we have    
$$|\sum_{j=1}^{t_1} \text{wt} (f_{g_j, h_j})|_1=
|\text{wt} (v_{\mathbf k})-\varepsilon_1      |_1- |\text{wt} (v_{\mathbf k'})-\varepsilon_1|_1  =\text{deg} v_{\mathbf k}-\text{deg} v_{\mathbf k'}.$$ 
Since   $f_{g_j, h_j}\in \U(\mathfrak u_{I_i}^-)$, for all $1\le j\le t_1$, we have $$\text{deg} \ v_{\mathbf k}\ge |\sum_{j=1}^{t_1} \text{wt} (f_{g_j, h_j})|_1=\sum_{j=1}^{t_1} |\text{wt} (f_{g_j, h_j})|_1\ge t_1,$$ which leads to a contradiction. This completes the proof of    (2). 
\end{proof}

\begin{Lemma}\label{lc1} For any   $\lambda\in \Lambda_a^+(r-2f)$, let $\prod_{j=1}^{r-2f} X_j^{a_j}$ be the unique term in $\tilde \pi_{[\lambda']}$ such that $\sum_{j=1}^{r-2f} a_j$ is maximal.  Then \begin{equation}\label{lckey} l_c\in \begin{cases}  \mathbf p_{a_c+1} &\text{if $a_c<k$,}\\ 
-\mathbf p_{2k+\delta_{i, 1}-a_c-1} &\text{if 
$a_c\ge k$,}\\
\end{cases} \end{equation} where  $l_1, l_2, \ldots, l_{r-2f}$ are defined by     $\mathbf l:=\mathbf i_\lambda w_\lambda=(l_1, l_2, \ldots, l_{r-2f}) $. 
\end{Lemma}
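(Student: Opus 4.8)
The plan is to reduce the statement to two essentially independent combinatorial computations and then to match them: first identify the unique maximal term of $\tilde\pi_{[\lambda']}$, then describe the tuple $\mathbf l=\mathbf i_\lambda w_\lambda$ entry by entry.

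First I would compute the maximal term of $\tilde\pi_{[\lambda']}$. Since the $X_j$ commute (relation~(9) of Definition~\ref{cba1}), $\tilde\pi_{[\lambda']}=\prod_{i=1}^{a-1}\pi_{b_i'}(u_{a-i})$ is a polynomial in commuting variables, where, by \eqref{blam} applied to $\lambda'$ together with the definition of the conjugate multipartition, $b_i'=\sum_{j=1}^i|(\lambda')^{(j)}|=\sum_{l=a-i+1}^a|\lambda^{(l)}|$. Each factor $\pi_c(u)=(X_1-u)\cdots(X_c-u)$ has the squarefree monomial $X_1\cdots X_c$ as its unique term of top degree, so the unique term of $\tilde\pi_{[\lambda']}$ of maximal total degree is $\prod_{i=1}^{a-1}(X_1\cdots X_{b_i'})=\prod_{j=1}^{r-2f}X_j^{a_j}$ with $a_j=\#\{\,i\in\{1,\ldots,a-1\}\mid b_i'\ge j\,\}$. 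As $b_1'\le\cdots\le b_{a-1}'$, this gives $a_c=a-t$ exactly when $b_{t-1}'<c\le b_t'$, for the unique $t\in\{1,\ldots,a\}$ with this property (here $b_0'=0$ and $b_a'=r-2f$).

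Next I would describe $\mathbf l=\mathbf i_\lambda w_\lambda$. By Definition~\ref{vt}, the tuple $\mathbf i_\lambda$ is obtained by attaching to each box of $Y(\lambda)$ in row $s$ of component $j$ the integer $p_{j-1}+s$ if $j\le k$ and the integer $-(p_{2k-j+\delta_{i,1}}-s+1)$ if $j>k$, and then listing these values in the order prescribed by $\t^\lambda$ (component by component, row by row); this is well defined since every box of a given row gets the same value. As $\t^\lambda w_\lambda=\t_\lambda$, the tuple $\mathbf l$ is the same filled diagram listed in the order prescribed by $\t_\lambda$; equivalently, $l_c$ is the value attached to the box $\mathfrak b_c$ carrying the entry $c$ in $\t_\lambda$ (if one prefers to avoid tableaux, this identity can be read off from \eqref{wlaex}--\eqref{wll}). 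Since $\t_\lambda$ fills its components in the order $a,a-1,\ldots,1$, with component $a-t+1$ receiving the consecutive entries $b_{t-1}'+1,\ldots,b_t'$, the box $\mathfrak b_c$ lies in component $a-t+1$ exactly when $b_{t-1}'<c\le b_t'$, i.e.\ exactly when $a_c=a-t$. Thus, writing $j_c$ and $s_c$ for the component and row of $\mathfrak b_c$, we have $j_c=a-t+1=a_c+1$, and
\[
l_c=\begin{cases} p_{a_c}+s_c & \text{if } j_c\le k,\\[2pt] -(p_{2k-j_c+\delta_{i,1}}-s_c+1) & \text{if } j_c>k.\end{cases}
\]

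Finally I would match the two. If $j_c\le k$ then $a_c=j_c-1<k$; moreover $1\le s_c\le r-2f<2r\le q_{j_c}=p_{j_c}-p_{j_c-1}$ by Assumption~\ref{keyassu}, so $p_{a_c}<l_c\le p_{a_c+1}$, i.e.\ $l_c\in\mathbf p_{a_c+1}$. If $j_c>k$ then $a_c=j_c-1\ge k$; here $k<j_c\le a$ forces $1\le 2k-j_c+\delta_{i,1}\le k$ (with $a=2k,\ \delta_{i,1}=1$ when $i=1$ and $a=2k-1,\ \delta_{i,1}=0$ when $i=2$), and $1\le s_c\le r-2f<q_{2k-j_c+\delta_{i,1}}$, so $p_{2k-j_c+\delta_{i,1}-1}<p_{2k-j_c+\delta_{i,1}}-s_c+1\le p_{2k-j_c+\delta_{i,1}}$, i.e.\ $l_c\in-\mathbf p_{2k-j_c+\delta_{i,1}}=-\mathbf p_{2k+\delta_{i,1}-a_c-1}$ using $j_c=a_c+1$. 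This is precisely \eqref{lckey}. The hard part is none of these computations individually but the bookkeeping: pinning down the conventions for the right $\mathfrak S_{r-2f}$-action on tuples and for $\t^\lambda,\t_\lambda$, checking the identity $l_c=(\mathbf i_\lambda)_{\t^\lambda(\mathfrak b_c)}$ under them, and keeping the cases $i=1$ and $i=2$ aligned throughout — though the single identity $j_c=a_c+1$, combined with $2k-j_c+\delta_{i,1}=2k+\delta_{i,1}-a_c-1$, absorbs every index shift appearing in \eqref{lckey}, so no new input beyond Definition~\ref{vt} and the first step is needed.
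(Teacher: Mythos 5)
Your proposal is correct and follows essentially the same route as the paper: both identify $a_c=a-t$ from the block structure $b'_t=b_a-b_{a-t}$ of $[\lambda']$ via the squarefree top term of each factor $\pi_c(u)$, and both then locate $l_c$ in the appropriate component of $\mathbf i_\lambda$ by tracking the permutation $w_\lambda$ (you read this off directly from $\t_\lambda$, the paper via the factorization $w_\lambda=w_{[\lambda]}\tilde w_{(a)}\cdots\tilde w_{(1)}$, which amounts to the same bookkeeping). Your key identity $l_c=(\mathbf i_\lambda)_{\t^\lambda(\mathfrak b_c)}$ and the resulting formula $j_c=a_c+1$ are consistent with the paper's conventions, as one can confirm against Example~\ref{rem1}.
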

\begin{proof}
Since $\lambda'$ represents the conjugate of $\lambda$, we have $[\lambda'] = [b_a - b_a, b_a - b_{a-1}, \ldots, b_a - b_0]$ if  $[\lambda] = [b_0, b_1, \ldots, b_a]$, as in \eqref{blam}. Here   $b_0 = 0$ and $b_a = r - 2f$. 
For each $c$, $1\le c\le r-2f$, there exists a unique $j$ such that \begin{equation}\label{accond} b_a - b_{a-j} \geq c > b_a - b_{a-j+1}, \text{ and } a_c = a - j. \end{equation} The last equality in \eqref{accond} follows from   \eqref{piu}.
Denote 
\begin{equation}\label{ilam123} 
    \mathbf{i}_\lambda = (i_1, i_2, \ldots, i_{b_1}, \ldots, i_{b_{a-1} + 1}, \ldots, i_{b_a}),\end{equation} where $\mathbf{i}_\lambda$ is defined as in Definition~\ref{vt}.
    Then we have:
\begin{equation} \label{ibt}
i_{b_t + l} \in \begin{cases}
\mathbf{p}_{t+1} &\text{if $t < k$,} \\
-\mathbf{p}_{2k + \delta_{i, 1} - t - 1} &\text{if $k  \leq t < a$,} \\
\end{cases}
\end{equation}
for all $1 \leq l \leq b_{t+1} - b_t$.

Let $\mathbf l=\mathbf{i}_\lambda w_{\lambda}$ and $\mathbf{l}' = 
\mathbf{i}_\lambda w_{[\lambda']}$,
where $w_\lambda$ and $w_{[\lambda']}$ are defined as in \eqref{wlaex}. If $b_a - b_{t + 1} < s \leq b_a - b_{t}$, then
$l'_s = i_{b_{t} + s - b_a + b_{t + 1}}$.
From \eqref{ibt}, it follows that
\begin{equation}\label{lll1} l'_s \in \begin{cases}
\mathbf{p}_{a_c + 1} &\text{if $a_c < k$,} \\
-\mathbf{p}_{2k + \delta_{i, 1} - a_c - 1} &\text{if $a_c\ge k$.} \\
\end{cases}\end{equation}
Here $i$ is either $1$ or $2$.
Since  $\mathbf{l} = \mathbf{l'} \tilde{w}_{(a)} \cdots \tilde{w}_{(1)}$ , we have that for any $b_a - b_{t + 1} < s \leq b_a - b_{t}$ 
\begin{equation}\label{lll12} l_s \in \begin{cases}
\mathbf{p}_{t + 1} &\text{if $t < k$,} \\
-\mathbf{p}_{2k + \delta_{i, 1} - t - 1} &\text{if $  t\ge k$,} \\
\end{cases}\end{equation}
where $\tilde w_{(j)}$ is defined as in \eqref{wlaex}. Now, \eqref{lckey}  follows immediately from \eqref{accond}, and 
\eqref{lll12}.
\end{proof}

From this point to the end of this section, we fix  $a_j, l_j$, $1\le j\le r-2f$ as those in Lemma~\ref{lc1}. 
For any $1\le c\le r-2f$ {such that $a_c\geq k$}, 
denote
\begin{equation}\label{AB} \begin{cases} A_c & = f_{-(z_c+p_{k-1}), -l_c+\sum_{{s}=0}^{a_c-k-1} q_{2k-a_c+\delta_{i, 1}+s}    } \\
B_c & =\overleftarrow {\prod}_{t=0}^{a_c-k-1} f_{-l_c+\sum_{{s}=0}^{t} q_{2k-a_c+\delta_{i, 1}+s}, -l_c+\sum_{s=0}^{t-1} q_{2k-a_c-\delta_{i, 1}+s}}\\
\end{cases} \end{equation}  
where 
$z_c=1+l_c+p_{2k-(a_c+1)+\delta_{i, 1}}$, and   $i$ is either $1$ or $2$, and $q_1, q_2,\ldots, q_k$ are defined as in Definition~\ref{assum12}. 

\begin{Defn}\label{defj} 
For any $\lambda \in \Lambda_a^+(r-2f)$, we  define    $\mathbf j=(j_1, j_2, \ldots, j_{r-2f})$, and  $y_{{l_c}, a_c, c}$,  $1\le c\le r-2f$ such that 
 \begin{equation}\label{jc} j_c=\begin{cases}
        l_c-  p_{a_c}+ b_{a_c}  & \text{if $a_c<k$,}\\
       1+ l_c+ p_{2k-a_c-1+\delta_{i, 1}} + b_{a_c}  & \text{if $a_c\ge k$,}\\        \end{cases}  \end{equation}
   and     
\begin{equation}\label{ylc} y_{l_c, a_c, c}=\begin{cases} 
1 &\text{if $a_c=0,$}\\
f_{l_c-\sum_{t=1}^{a_c-1}  q_{a_c-t+1},j_c }\times\overleftarrow{ \prod}_{s=1}^{a_c-1} f_{l_c-\sum_{t=1}^{s-1} q_{a_c-t+1} , l_c-\sum_{t=1}^s  q_{a_c-t+1}} &\text{if $0<a_c<k$,}\\ 
f_{-j_c, -(z_c+p_{1})}\times \overleftarrow{\prod}_{s=1}^{k-2} f_{-(z_c+p_{k-s-1}), -(z_c+p_{k-s})} \times A_c \times B_c &\text{if $a_c\ge k$,}  
\end{cases} \end{equation}
where  $[\lambda]=[b_0, b_1, \ldots, b_a]$, and $z_c$, $A_c$,  and $B_c$ are defined as in \eqref{AB}. 
\end{Defn}

\begin{Defn}\label{defrho} For any $\xi\in \mathbb N_a^{f}$  and any integer $s$ such that $1\leq s\leq f$, denote
\begin{multicols}{2}
\begin{enumerate}
\item [(1)]$\xi_{r, s}=\xi_{r-2f+2s-1}$,
\item[(2)]  $z=r-f+s$,
\item[(3)]$A=\overleftarrow{\prod}_{t=1}^{\xi_{r, s}-k} 
f_{p_{k-t-\delta_{i, 2}}+z, p_{k-t-1-\delta_{i, 2}}+z}$,
\item[(4)] $B=\overleftarrow {\prod}_{t=1}^{k-1} f_{p_t+z-f, p_{t-1}+z-f}$.
\end{enumerate}
\end{multicols}
Define $\mathbf j^\xi=(j_1^\xi, j_2^\xi, \ldots, j_{2f}^\xi)$, and $y_{\xi, 1}, y_{\xi, 2}, \ldots, y_{\xi, f}$, where 
\begin{equation}\label{yxis} y_{\xi, s}=\begin{cases}
1 &\text{if $\xi_{r, s}=0$,}\\
\overleftarrow{\prod}_{t=1}^{ \xi_{r, s}} f_{p_{t}+z-f, p_{t-1}+z-f} &\text{if $\xi_{r, s}\le k-1$, }\\
A  \cdot f_{-p_{k-1}{-z+f}, p_{k-1-\delta_{i, 2}}+z
 }\cdot B  &\text{if $k\le \xi_{r, s}$,}
\end{cases}\end{equation} and 
\begin{equation}\label{jxi}  j_{2s-l}^\xi= \begin{cases}
   r-2f+s &\text{if  $l=0$,}\\ 
    -p_{ \xi_{r, s}}-r+2f-s &\text{if $l=1$, and  $0\le\xi_{r, s}\le k-1$,}\\    
    r-f+s+p_{2k-1-\delta_{i,2}-\xi_{r, s}} &\text{if $l=1$, and   $ k\le \xi_{r, s}$.}\\    
    \end{cases}\end{equation}
 \end{Defn}

Since we keep Assumption~\ref{keyassu}, $\mathbf j^\xi\in \underline {N}^{2f}$. 

\begin{Lemma}\label{lc2}
    Suppose $(f, \lambda)\in\Lambda_{a, r}$, and $\xi\in \mathbb N_a^{f}$, and $1\le   c_1, c_2\le r-2f$, and $1\le c\le 2f$.  Then 
    \begin{multicols}
        {2}
    
    \begin{itemize}\item[(1)] $j_{c_1}=j_{c_2}$ if and only if $l_{c_1}=l_{c_2}$,
    \item [(2)] $1\le j_{c_1}\le r$,
    \item [(3)] $j_{c_1}^\xi\neq j_{c_2}^\xi$ if  $c_1<c_2$, 
    \item [(4)] $j_{c}^\xi\neq j_{c_2}$,  
    \item [(5)] $1\le j_c^\xi\le r$  if 
    $\xi_{r-2l+1}=a-1,  1\le l\le f$,
    \item [(6)] $\text{deg}\ v_{\mathbf i_\lambda}=\text{deg}\ v_{\mathbf l}=\sum_{t=1}^{r-2f} a_t$, 
    \end{itemize} \end{multicols}
    where $j_c$, $j_c^\xi$ and $\mathbf l$ are defined as in \eqref{jc}, \eqref{jxi} and Lemma~\ref{lc1}, respectively. 


\end{Lemma}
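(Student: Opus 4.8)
The plan is to reduce all six assertions to a single normal form for $j_c$, after which each part becomes a short inspection. First I would record that, by the description of $\mathbf l=\mathbf i_\lambda w_\lambda$ in the proof of Lemma~\ref{lc1}, for every $c$ the entry $l_c$ is one of the values occurring in the block $\mathbf i_{\lambda^{(a_c+1)}}$ of $\mathbf i_\lambda$; writing $s_c$ for the associated row index (so $1\le s_c\le\ell(\lambda^{(a_c+1)})$, the number of non-zero parts of $\lambda^{(a_c+1)}$), a direct substitution into \eqref{jc} --- matching the shift $p_{a_c}$ in the branch $a_c<k$, and the shift $p_{2k-a_c-1+\delta_{i,1}}$ in the branch $a_c\ge k$, against the corresponding shift in Definition~\ref{vt} --- collapses both branches to the single identity
\[
j_c=b_{a_c}+s_c ,
\]
where $[\lambda]=[b_0,b_1,\dots,b_a]$ as usual. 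Since $b_{a_c}+\ell(\lambda^{(a_c+1)})\le b_{a_c}+|\lambda^{(a_c+1)}|=b_{a_c+1}\le b_a=r-2f$, this already proves (2), and gives the sharper bound $1\le j_c\le r-2f$ that I reuse in (4).

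For (6) I note that $\mathbf l$ is a rearrangement of $\mathbf i_\lambda$, so $\text{deg}\, v_{\mathbf l}=\text{deg}\, v_{\mathbf i_\lambda}$; and from $l_c\in\mathbf p_{a_c+1}$ when $a_c<k$, respectively $l_c\in-\mathbf p_{2k+\delta_{i,1}-a_c-1}$ when $a_c\ge k$, Definition~\ref{ppt} gives $\text{deg}\, v_{l_c}=a_c$ in all cases --- the second case using $a-(2k+\delta_{i,1}-a_c-1)=a_c$, valid for both $a=2k$ ($i=1$) and $a=2k-1$ ($i=2$) --- so summing over $c$ yields (6). For (1), the pair $(a_c,s_c)$ determines $l_c$ (the sets $\mathbf p_1,\dots,\mathbf p_k$ are pairwise disjoint, and $m:=2k+\delta_{i,1}-a_c-1$ is strictly monotone in $a_c$) and also determines $j_c=b_{a_c}+s_c$, which moreover lies in the half-open interval $(b_{a_c},b_{a_c+1}]$; as these intervals, over the $a'$ with $\lambda^{(a'+1)}\ne\emptyset$, partition $(0,r-2f]$, the equality $j_{c_1}=j_{c_2}$ forces $a_{c_1}=a_{c_2}$, hence $s_{c_1}=s_{c_2}$, hence $l_{c_1}=l_{c_2}$; the converse direction is immediate.

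For (3), (4), (5) I would split $j^\xi_c$ by the parity of $c$ using \eqref{jxi}. The even entries $j^\xi_{2s}=r-2f+s$ lie in $[r-2f+1,r-f]$ and are pairwise distinct; an odd entry $j^\xi_{2s-1}$ is negative when $\xi_{r,s}\le k-1$ and is $\ge r-f+1$ when $\xi_{r,s}\ge k$. Since every $j_{c_2}$ lies in $[1,r-2f]$ by the normal form, this yields (4) at once, and it reduces (3) to separating the odd entries among themselves; there I invoke Assumption~\ref{keyassu}: a coincidence would force $p_{\xi_{r,s}}+s=p_{\xi_{r,s'}}+s'$ (or the analogue with $p_{2k-1-\delta_{i,2}-\xi_{r,s}}$), whence either $\xi_{r,s}=\xi_{r,s'}$ and then $s=s'$, or else a difference of two distinct $p_t$'s with indices $\le k$ --- which is $\ge 2r$ --- is cancelled by $|s-s'|\le f-1<r$, which is impossible. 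For (5), the hypothesis $\xi_{r-2l+1}=a-1$ for $1\le l\le f$ says precisely that $\xi_{r,s}=a-1$, which is $\ge k$, for all $s$; since $2k-1-\delta_{i,2}-(a-1)=0$ for both values of $i$, this collapses $j^\xi_{2s-1}$ to $r-f+s\in[r-f+1,r]$ and leaves $j^\xi_{2s}=r-2f+s\in[r-2f+1,r-f]$, so all entries lie in $[1,r]$ (using $2f\le r$).

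The only real obstacle is establishing the uniform normal form $j_c=b_{a_c}+s_c$ itself: aligning the index shifts $p_{a_c}$, $p_{2k-a_c-1+\delta_{i,1}}$ and $-p_{2k-j+\delta_{i,1}}$ across Definitions~\ref{vt} and~\ref{defj} and the proof of Lemma~\ref{lc1}, and keeping the cases $i=1$ and $i=2$ in step. Once that bookkeeping is in place, (1)--(6) are short inspections, with the gap hypothesis of Assumption~\ref{keyassu} needed only to disentangle the odd-indexed $j^\xi_c$ in (3).
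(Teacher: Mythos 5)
Your proof is correct, and it takes the same route as the paper, whose own proof of this lemma is a one-line deferral to Definition~\ref{defrho} and Lemma~\ref{lc1}; you have simply carried out the inspection in full. In particular, your normal form $j_c=b_{a_c}+s_c$ (with $s_c$ the row index) is exactly the bookkeeping the paper leaves implicit, it checks out against Example~\ref{rem1}, and your use of the gap condition $p_t-p_{t-1}\ge 2r$ from Assumption~\ref{keyassu} to separate the odd-indexed entries of $\mathbf j^\xi$ in part (3) is the intended role of that hypothesis.
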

\begin{proof}
    (1)-(5) follow  from Definition~\ref{defrho} and (6) follows  from Lemma~\ref{lc1}. \end{proof}

\begin{example}\label{rem1} Suppose $i=1$,  $a=2k=4$, and $(q_1, q_2, r)=(20, 21, 10)$,  $\xi=(0^6, 1,0,3,0)$
and $\lambda=((0), (2), (2,1), (1))\in \Lambda_4^+(6)$. Then $\lambda'=((1), (2,1), (1,1), (0))$. The  term in $\tilde \pi_{[\lambda']}$ with the highest degree is $X_1^3X_2^2X_3^2 X_4^2X_5X_6$. We have 
\begin{multicols}{2}\item  $\t^\lambda=(\emptyset,\  \young(12),\  \young(34,5), \ \young(6))$, 
\item  $\mathbf i_\lambda=(21, 21, -41, -41, -40, -20)$, \item  $\mathbf l=\mathbf i_\lambda w_\lambda=(-20, -41, -40, -41, 21, 21)$,\item 
  $(a_1, a_2, \ldots, a_6)=(3,2,2,2,1,1)$, \item $\mathbf j=(6,3,4,3,1,1)$, \item $\mathbf j^\xi=(-27, 7, 10, 8)$, \item $y_{l_1, a_1, 1}=f_{-6, -21}f_{-21, 41} f_{41, 20}$, \item $y_{l_2, a_2, 2}=
    y_{l_4, a_4, 4}    =f_{-3, -21}f_{-21, 41}$, \item  $y_{l_3, a_3, 3}=f_{-4, -20}f_{-20, 40} $,\item  $y_{l_5, a_5, 5}=y_{l_6, a_6, 6}=f_{21, 1}$, \item $y_{\xi, 1}=f_{27, 7}$,
    \item  $y_{\xi, 2}=f_{30, 10} f_{-28, 30} f_{28, 8}$.\end{multicols} 
\end{example}

\begin{Lemma}\label{key21}
    Suppose $(y, \xi, w)\in \mathfrak S_{\lambda'}\times \mathbb N_a^{f}\times H_f$, where $(f, \lambda)\in \Lambda_{a, r}$, and $H_f$ 
is  the subgroup of $\mathfrak S_{r}$  generated by 
$\{s_{r-1}, s_{r-2}s_{r-1}s_{r-3}s_{r-2}, \ldots, s_{r-2f+2}s_{r-2f+1}s_{r-2f+3}s_{r-2f+2} \}$. Denote 
 $\mathbf j^{\lambda, \xi}=(\mathbf j, \mathbf j^{\xi})$.
For any $(\s, d), (\t, e)\in $ $\Std(\lambda')\times \mathcal D_r^f$, we have 
   \begin{itemize} \item[(1)]  $\mathbf j yd(\t)=\mathbf jd(\s)$ if and only if $\t=\s$ and $y=1$.
     \item[(2)]    Suppose $f\neq 0$. Then  $ \mathbf j^{\lambda, \xi}   d(\s)d=\mathbf j^{\lambda, \xi} wyd(\t) e$ if and only if $w=y=1$ and $(
\s, d)=(\t, {e})$.  \end{itemize}
    \end{Lemma}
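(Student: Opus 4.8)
The plan is to read both equalities inside the right permutation action of $\mathfrak S_r$ on index sequences, where $(\mathbf i w)_c=i_{cw^{-1}}$, so that $\mathbf j g=\mathbf j g'$ holds precisely when $g'g^{-1}\in\mathrm{Stab}(\mathbf j)$, and likewise for $\mathbf j^{\lambda,\xi}$. Thus the lemma becomes a statement about stabilizers and distinguished coset representatives. First I would compute the stabilizers. Since $q_t=p_t-p_{t-1}\ge 2r$ by Assumption~\ref{keyassu}, each component of $\lambda$ has fewer than $q_t$ rows, so the values $\mathbf i_\lambda$ (Definition~\ref{vt}) assigns to distinct rows of $\t^\lambda$ are pairwise distinct (consecutive inside $\mathbf p_t$ resp.\ $-\mathbf p_t$ within a component, disjoint ranges across components); hence $\mathrm{Stab}_{\mathfrak S_{r-2f}}(\mathbf i_\lambda)=\mathfrak S_\lambda$, the row stabilizer of $\t^\lambda$. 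As $j_c$ is a function of $l_c$ with $j_{c_1}=j_{c_2}\iff l_{c_1}=l_{c_2}$ (Lemma~\ref{lc2}(1)) and $\mathbf l=\mathbf i_\lambda w_\lambda$ (Lemma~\ref{lc1}), the sequence $\mathbf j$ has the same pattern of repeated entries as $\mathbf i_\lambda w_\lambda$, so $\mathrm{Stab}_{\mathfrak S_{r-2f}}(\mathbf j)=w_\lambda^{-1}\mathfrak S_\lambda w_\lambda$, the row stabilizer of $\t_\lambda=\t^\lambda w_\lambda$. By Lemma~\ref{lc2}(3),(4) the entries of $\mathbf j^\xi$ are pairwise distinct and disjoint from those of $\mathbf j$, so $\mathrm{Stab}_{\mathfrak S_r}(\mathbf j^{\lambda,\xi})=w_\lambda^{-1}\mathfrak S_\lambda w_\lambda$, regarded inside $\mathfrak S_r$ as fixing the last $2f$ positions.

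\textbf{Proof of (1).} The equality $\mathbf j\,yd(\t)=\mathbf j\,d(\s)$ is then equivalent to $yd(\t)d(\s)^{-1}\in w_\lambda^{-1}\mathfrak S_\lambda w_\lambda$, i.e.\ to $w_\lambda yd(\t)$ and $w_\lambda d(\s)$ lying in the same right $\mathfrak S_\lambda$-coset. Here I would invoke the combinatorics underlying the cyclotomic Specht module $S^\lambda=m_\lambda w_\lambda n_{\lambda'}\B_{a,r}(\mathbf u)$ (\cite{DR}, see also \cite{AMR}): for $\t\in\Std(\lambda')$, $w_\lambda d(\t)$ is the minimal-length element of $\mathfrak S_\lambda w_\lambda d(\t)$, these cosets are pairwise distinct as $\t$ runs over $\Std(\lambda')$, and for $1\ne y\in\mathfrak S_{\lambda'}$ the element $w_\lambda yd(\t)$ lies in none of them; this rests on $\mathfrak S_\lambda\cap w_\lambda\mathfrak S_{\lambda'}w_\lambda^{-1}=\{1\}$ together with the distinguished-coset-representative properties of $d(\t)$ and $w_\lambda$, and it genuinely uses the standardness (not merely row-standardness) of $\t$. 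Consequently $w_\lambda yd(\t)=w_\lambda d(\s)$, so $yd(\t)=d(\s)$, and since distinct standard $\lambda'$-tableaux lie in distinct right $\mathfrak S_{\lambda'}$-cosets this forces $\t=\s$ and $y=1$. The converse is immediate.

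\textbf{Proof of (2).} Assume $f>0$. Note that $y,d(\t),d(\s)$ act on $\{1,\dots,r-2f\}$ while $H_f$ acts only on $\{r-2f+1,\dots,r\}$; a check of its generators shows $H_f=\mathfrak S_2\wr\mathfrak S_f$ is the full stabilizer of the pairing $\{\{r-2f+1,r-2f+2\},\dots,\{r-1,r\}\}$, and, directly from its definition, $\mathcal D_r^f$ is the set of distinguished (minimal-length) right-coset representatives of $\mathfrak S_{r-2f}\times H_f$ in $\mathfrak S_r$. Using $\mathrm{Stab}(\mathbf j^{\lambda,\xi})=w_\lambda^{-1}\mathfrak S_\lambda w_\lambda\subseteq\mathfrak S_{r-2f}$, the hypothesis $\mathbf j^{\lambda,\xi}d(\s)d=\mathbf j^{\lambda,\xi}wyd(\t)e$ reads $d(\s)d=z\,w\,yd(\t)\,e$ for some $z\in w_\lambda^{-1}\mathfrak S_\lambda w_\lambda$; since $\mathfrak S_{r-2f}$ and $H_f$ commute, I would move $z,y,d(\t),d(\s)$ past $w$ and rewrite this as $d=w\,g'\,e$ with $w\in H_f$ and $g'=d(\s)^{-1}zyd(\t)\in\mathfrak S_{r-2f}$. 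Hence $d$ and $e$ represent the same right $(\mathfrak S_{r-2f}\times H_f)$-coset, so $d=e$ by uniqueness of the distinguished representative, and then $wg'=1$ with $w,g'$ of disjoint support gives $w=1$ and $g'=1$. Cancelling $d$ from $\mathbf j^{\lambda,\xi}d(\s)d=\mathbf j^{\lambda,\xi}yd(\t)d$, and using that $y,d(\t),d(\s)$ fix the last $2f$ positions, reduces to $\mathbf j\,d(\s)=\mathbf j\,yd(\t)$, so part (1) yields $\s=\t$ and $y=1$, whence $(\s,d)=(\t,e)$. The converse is trivial.

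\textbf{Expected difficulty.} The only non-bookkeeping input is the coset-distinctness statement in the proof of (1): it is the combinatorial heart of the construction of $S^\lambda$, and the care needed is to cite it in exactly the required form ($w_\lambda yd(\t)\in\mathfrak S_\lambda w_\lambda d(\s)$ with $\s,\t$ standard implies $y=1$ and $\t=\s$) and to check that it is the hypothesis $q_t\ge 2r$ that makes the stabilizer identifications in Step~1 clean. Everything in the proof of (2) is then a formal consequence, modulo verifying the commuting-subgroups picture and the distinguished-representative property of $\mathcal D_r^f$.
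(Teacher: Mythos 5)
Your proof is correct and follows essentially the same route as the paper's: part (1) reduces via Lemma~\ref{lc2}(1) to the stabilizer of $\mathbf j$ being the conjugated row stabilizer $w_\lambda^{-1}\mathfrak S_\lambda w_\lambda$ and then to the standard double-coset combinatorics of $\mathfrak S_\lambda$, $w_\lambda$ and $\mathfrak S_{\lambda'}$, while part (2) uses Lemma~\ref{lc2}(3)--(4) to split the equation into the first $r-2f$ and the last $2f$ positions and the transversal property of $\mathcal D_r^f$ to force $d=e$ and $w=1$ before invoking (1). The only difference is one of detail: you make explicit the coset-distinctness statement ($w_\lambda y d(\t)\in\mathfrak S_\lambda w_\lambda d(\s)$ with $\s,\t$ standard forces $y=1$, $\t=\s$) that the paper compresses into the single step ``forcing $\t^\lambda w_\lambda yd(\t)d(\s)^{-1}=\t^\lambda w_\lambda$''.
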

\begin{proof}
    Clearly,  the  ``if part'' of both  statements hold. Conversely, we have $\mathbf j=\mathbf j yd(\t)d(\s)^{-1} $. By  Lemma~\ref{lc2}{(1)}, we have $\mathbf i_\lambda w_\lambda =\mathbf i_\lambda w_\lambda yd(\t)d(\s)^{-1}$, forcing   
    $\t^\lambda w_\lambda yd(\t)d(s)^{-1} =\t^\lambda w_\lambda$. Therefore,   $yd(\t)=d(\s)$. Since  $y\in \mathfrak S_{\lambda'}$ and $\s, \t\in \Std(\lambda')$, it follows that  $y=1$ and $\s=\t$, proving the ``only if " part of (1).    

    If $ \mathbf j^{\lambda, \xi} =\mathbf j^{\lambda, \xi} wyd(\t) {e} d^{-1}d(\s)^{-1}$, by   Lemma~\ref{lc2}(4),
$ed^{-1}=bc$ for some $b\in \mathfrak S_{r-2f}$ and 
some $c$ in the subgroup $\mathfrak S_{2f}'$ of $\mathfrak S_r$  generated by $\{ 
s_{r-2f+1}, s_{r-2f+2}, \ldots, s_{r-1}\}$. Since $b, y, d(\t), d(\s)\in \mathfrak S_{r-2f}$ and $w, c\in \mathfrak S_{2f}'$, we have \begin{equation}\label{jjj1}  \mathbf j=\mathbf j yd(\t)b d(\s)^{-1} \text{and $(\underset{r-2f}{ \underbrace{0,0, \cdots, 0}} , \mathbf j^\xi) =(\underset{r-2f}{ \underbrace{0,0, \cdots, 0}} , \mathbf j^\xi)
 wc$.}\end{equation} By Lemma~\ref{lc2} (3), $c=w^{-1}\in H_f$. Thus $e=b w^{-1}d$, implying that $d=e$, $b=w=c=1$. Now, the first equation in \eqref{jjj1} simplifies to 
$\mathbf j=\mathbf j y d(\t)d(s)^{-1}$. By (1),  we have  $y=1$ and $\t=\s$. This completes the proof of the ``only if " part of (2).   
\end{proof}

\begin{Lemma}
    For any $(\lambda, \xi)\in \Lambda_a^+(r-2f)\times  \mathbb N_a^{f}$, define $y_{\lambda, \xi}=\overrightarrow{\prod}_{c=1}^{r-2f} y_{l_c, a_c, c} \overrightarrow{\prod}_{s=1}^f y_{\xi, s}$.
    Then $$\widetilde{ y_{\lambda, \xi}} \mathbf {m}_i\otimes v_{\mathbf j^{\lambda, \xi}}\in M_{I_i, r}^{\le \sum_{c=1}^{r-2f}a_c+\sum_{s=1}^f \xi_{r-2s+1}}\setminus   M_{I_i, r}^{< \sum_{c=1}^{r-2f}a_c+\sum_{s=1}^f \xi_{r-2s+1}}    
    ,$$ where $\widetilde{ y_{\lambda, \xi}}$ is defined as in Lemma~\ref{high1}.  
\end{Lemma}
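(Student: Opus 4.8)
The plan is to show that $\widetilde{y_{\lambda,\xi}}\,\mathbf{m}_i\otimes v_{\mathbf{j}^{\lambda,\xi}}$ has the asserted degree by tracking, factor by factor, how the PBW monomial $y_{\lambda,\xi}$ built in Definition~\ref{defj} and Definition~\ref{defrho} arises from acting on $\mathbf{m}_i\otimes v_{\mathbf{l}}$ and $\mathbf{m}_i\otimes v_{\mathbf j^\xi}$, and then invoke Lemma~\ref{high1}(1). Concretely, I would first recall from Lemma~\ref{lc1} and Lemma~\ref{lc2}(6) that $\operatorname{deg} v_{\mathbf{i}_\lambda}=\operatorname{deg} v_{\mathbf{l}}=\sum_{c=1}^{r-2f}a_c$, and that $\mathbf{l}=\mathbf{i}_\lambda w_\lambda$ has entries in the blocks $\mathbf{p}_{a_c+1}$ (if $a_c<k$) or $-\mathbf{p}_{2k+\delta_{i,1}-a_c-1}$ (if $a_c\ge k$). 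Each chain $y_{l_c,a_c,c}$ is designed precisely so that the opposite chain $\overrightarrow{\prod} f_{b_j,a_j}$ carries $v_{l_c}$ down to $v_{j_c}$ — i.e.\ it moves the index $l_c$, which sits in block $p_{a_c+1}$ (or the mirror block), step by step across the intermediate simple-root walls until it lands on $j_c$ as computed in \eqref{jc} — and that each $f_{a_j,b_j}$ in the chain lies in $\mathfrak{u}_{I_i}^-$ (it crosses a root outside $\Phi_{I_i}$). That last point is exactly what makes the bound in Lemma~\ref{high1}(1) an equality rather than a strict inequality: the chain has length $a_c$, contributes $a_c$ to $|\mathbf{l}'|$, and matches $a_c=\operatorname{deg} v_{l_c}$.

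The second step is the parallel analysis for the $\xi$-part: the factors $y_{\xi,s}$ of Definition~\ref{defrho} act, via their opposites, on the tensor slots indexed by $r-2f+1,\dots,r$ (coming from the $(v_1\otimes v_{-1})^{\otimes f}$ tail), carrying the index of $v_{j^\xi_{2s}}=v_{r-2f+s}$ (or its partner) across $\xi_{r,s}=\xi_{r-2s+1}$ walls down to the value recorded in \eqref{jxi}; again each elementary factor lies in $\mathfrak{u}_{I_i}^-$, so the $s$-th chain contributes exactly $\xi_{r-2s+1}$ to the degree, summing to $\sum_{s=1}^f\xi_{r-2s+1}$. Combining both halves, $y_{\lambda,\xi}=\overrightarrow{\prod}_c y_{l_c,a_c,c}\,\overrightarrow{\prod}_s y_{\xi,s}$ is a product of $\sum_c a_c+\sum_s\xi_{r-2s+1}$ factors, all in $\mathfrak{u}_{I_i}^-$, so $\widetilde{y_{\lambda,\xi}}\in\mathcal M_{I_i}$ has the same total degree $|\mathbf l|$, whence $\widetilde{y_{\lambda,\xi}}\,\mathbf m_i\otimes v_{\mathbf j^{\lambda,\xi}}\in M_{I_i,r}^{\le \sum a_c+\sum\xi_{r-2s+1}}$ by \eqref{grade1}.

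For the complementary containment — that this element does \emph{not} lie in $M_{I_i,r}^{<\sum a_c+\sum\xi_{r-2s+1}}$ — I would argue that the PBW monomial $\widetilde{y_{\lambda,\xi}}$ itself appears as a term (with nonzero coefficient) in the expansion of $\widetilde{y_{\lambda,\xi}}\,\mathbf m_i\otimes v_{\mathbf j^{\lambda,\xi}}$ against the basis $\mathcal S_{i,r}$ of \eqref{tsmodule}. This is because acting by $\widetilde{y_{\lambda,\xi}}$ (a PBW monomial of degree $|\mathbf l|$) on $\mathbf m_i\otimes v_{\mathbf j^{\lambda,\xi}}$ produces, among other terms, the basis element $\widetilde{y_{\lambda,\xi}}\,\mathbf m_i\otimes v_{\mathbf j^{\lambda,\xi}}$ verbatim; the lower-degree corrections from \eqref{RS430} and from Lemma~\ref{eM filtered de } only ever decrease $|\mathbf l|$ — here I would use Lemma~\ref{lc2}(1),(3),(4) to guarantee that no two chains interfere, so that the top-degree term survives uncancelled. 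Since a basis element of $\mathcal S_{i,r}$ with $|\mathbf l|=\sum a_c+\sum\xi_{r-2s+1}$ occurs, the vector is not in $M_{I_i,r}^{<\sum a_c+\sum\xi_{r-2s+1}}$.

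The main obstacle I anticipate is the bookkeeping in the second step of the first paragraph: one must verify carefully that every elementary factor $f_{a_j,b_j}$ appearing in each chain $y_{l_c,a_c,c}$ and $y_{\xi,s}$ genuinely lies in $\mathfrak{u}_{I_i}^-$ — i.e.\ that the corresponding root is in $\Phi^+\setminus\Phi_{I_i}$ — which is where the precise definition of $I_i$ in \eqref{i1i2d}, the block structure $\mathbf p_t$, and the role of $\delta_{i,1}$ versus $\delta_{i,2}$ all enter. For the chains $A_c$, $B_c$ and the $k\le\xi_{r,s}$ case of $y_{\xi,s}$, which involve crossing the "middle" wall (the root $\alpha_n$, present in $I_2$ but not in $I_1$), this requires a short case analysis on $i$ and on $\Phi\in\{B_n,C_n,D_n\}$, parallel to the three cases in the proof of Proposition~\ref{hiofcyche}; once that is in place, equality in Lemma~\ref{high1}(1) applies to each chain and the degrees add up exactly.
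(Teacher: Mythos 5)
Your proof is correct and in substance the same as the paper's, which treats the statement as immediate: by construction every elementary factor of $y_{\lambda,\xi}$ is (up to sign, using $f_{u,v}=\pm f_{-v,-u}$) an element of $\mathcal B_{I_i}\subset\mathfrak u_{I_i}^-$, and there are exactly $\sum_{c}a_c+\sum_{s}\xi_{r-2s+1}$ of them, so the claim follows directly from the definition of the filtration \eqref{grade1} together with \eqref{RS430}. Your third paragraph is superfluous: $\widetilde{y_{\lambda,\xi}}\,\mathbf m_i\otimes v_{\mathbf j^{\lambda,\xi}}$ is by definition a single element of the basis $\mathcal S_{i,r}$ whose exponent sum $|\mathbf l|$ equals that count, so it cannot lie in the span $M_{I_i,r}^{<\sum_c a_c+\sum_s\xi_{r-2s+1}}$ of the basis vectors of strictly smaller $|\mathbf l|$ --- no expansion, appeal to Lemma~\ref{high1}, or cancellation analysis is required (that machinery belongs to the following lemma).
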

\begin{proof} The result follows immediately from the definition of $y_{\lambda, \xi}$, and \eqref{grade1}, and
\eqref{RS430}.
\end{proof}

\begin{Lemma}\label{k41} Suppose  $(\t, \xi , d), (\s, \xi, d')\in \delta(f, \lambda')$ such that   $\xi_{r-2j+1}=a-1$,   $1\le j\le f$.  Then, (up to a sign only in type $C_n$)
$\widetilde {y_{\lambda, \xi}} \mathbf{m}_i\otimes v_{\mathbf j^{\lambda, \xi }} d(\t) d$ is a term in $v_{\s, \xi, d' }$ satisfying 
\begin{equation}\label{incl1} \widetilde {y_{\lambda, \xi}} \mathbf {m}_i\otimes v_{\mathbf j^{\lambda, \xi }} d(\t) \in M_{I_i, r}^{\le \sum_{c=1}^{r-2f} a_c+f(a-1)}\setminus M_{I_i, r}^{< \sum_{c=1}^{r-2f} a_c+f(a-1)}\end{equation} if and only if $(\t, d)=(\s, d')$.
\end{Lemma}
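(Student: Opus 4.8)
The plan is to pass to the associated graded module for the degree filtration $\{M_{I_i,r}^{\le j}\}$ of \eqref{grade1} and to identify the leading term of $v_{\s,\xi,d'}=v_\lambda E^f w_\lambda n_{\lambda'}d(\s)X^{\xi}d'$. First I would record that $w_\lambda$, $y_{\lambda'}$, $d(\s)$, $d'$ and the generators of $H_f$ act on $V^{\otimes r}$ by (sign) permutations of tensor factors (and fix $\mathbf m_i$), hence preserve the filtration degree, while $\tilde\pi_{[\lambda']}$ has top $X$-degree $\sum_j a_j$ with unique top term $\prod_{j=1}^{r-2f}X_j^{a_j}$ by Lemma~\ref{lc1}, and $X^{\xi}$ is a monomial of $X$-degree $\sum_i\xi_{r-2i+1}=f(a-1)$. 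By Lemma~\ref{high1}\,\eqref{incbasic} applied twice it follows that $v_{\s,\xi,d'}\in M_{I_i,r}^{\le D}$, where $D:=\sum_{c=1}^{r-2f}a_c+f(a-1)$. Since the lemma immediately preceding gives $\widetilde{y_{\lambda,\xi}}\mathbf m_i\otimes v_{\mathbf j^{\lambda,\xi}}\in M_{I_i,r}^{\le D}\setminus M_{I_i,r}^{<D}$, the monomial $\widetilde{y_{\lambda,\xi}}$ has degree exactly $D$, so $\widetilde{y_{\lambda,\xi}}\mathbf m_i\otimes v_{\mathbf j^{\lambda,\xi}}d(\t)d$ is (up to a sign, occurring only in type $C_n$) an element of $\mathcal S_{i,r}$ of degree exactly $D$; thus \eqref{incl1} holds automatically and the whole statement reduces to computing the image of $v_{\s,\xi,d'}$ in $M_{I_i,r}^{\le D}/M_{I_i,r}^{<D}$ and reading off the coefficient of that basis element.

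Next I would compute this leading term step by step, modulo $M_{I_i,r}^{<D}$, following the pattern in the proof of Proposition~\ref{hiofcyche}. Applying $E^f$ replaces the last $2f$ tensor factors of $v_\lambda$ by $\alpha^{\otimes f}(1)=\bigotimes_{s=1}^f\big(\sum_{m\in\underline N}v_m\otimes v_m^{\ast}\big)$ (cf.\ \eqref{alp123}, \eqref{dualb}); $w_\lambda$ turns the first $r-2f$ factors from $v_{\mathbf i_\lambda}$ into $\pm v_{\mathbf l}$ with $\mathbf l=\mathbf i_\lambda w_\lambda$ as in Lemma~\ref{lc1}; and $\tilde\pi_{[\lambda']}$ may be replaced by its top term $\prod_jX_j^{a_j}$, the lower terms going into $M_{I_i,r}^{<D}$ by \eqref{incbasic}. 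The key point is that, by Lemma~\ref{high1}(1)--(2) and \eqref{RS430}, the degree-$a_j$ part of $X_j^{a_j}$ contributes the unique length-$a_j$ chain of generators of $\mathfrak u_{I_i}^-$ that is applicable to $\mathbf m_i$ and compatible with the factor $v_{l_j}$ in slot $j$; because $l_j\in\mathbf p_{a_j+1}$ or $l_j\in-\mathbf p_{2k+\delta_{i,1}-a_j-1}$ by \eqref{lckey}, this forced chain is exactly $y_{l_c,a_c,c}$ of \eqref{ylc}, and it moves slot $j$ from $v_{l_j}$ to $v_{j_j}$ with $\mathbf j$ as in \eqref{jc}. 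Then $y_{\lambda'}=\sum_{y\in\mathfrak S_{\lambda'}}(-1)^{l(y)}y$ and $d(\s)$ permute the first $r-2f$ factors, and finally, for each $s$ the degree-$(a-1)$ part of $X_{r-2s+1}^{a-1}$ forces, via Lemma~\ref{high1}(2), the coevaluation factor $v_m$ in slot $r-2s+1$ to satisfy $\operatorname{deg}v_m\ge a-1$ — hence $m\in-\mathbf p_1$ — and in fact to be the precise start of the chain $y_{\xi,s}$ of \eqref{yxis}; this collapses the coevaluation summation, moves that slot in accordance with \eqref{jxi}, and — since $\alpha^{\otimes f}(1)$ is invariant up to sign under the pair-swaps and within-pair flips generating $H_f$ — leaves a residual signed sum over $w\in H_f$. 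After reordering all $\mathfrak u_{I_i}^-$-factors via \eqref{RS430} they assemble into $\widetilde{y_{\lambda,\xi}}$ of the preceding lemma, and applying $d'$ gives
\[
v_{\s,\xi,d'}\equiv\pm\sum_{y\in\mathfrak S_{\lambda'}}\sum_{w\in H_f}(-1)^{l(y)}\,\widetilde{y_{\lambda,\xi}}\,\mathbf m_i\otimes v_{\mathbf j^{\lambda,\xi}wyd(\s)d'}\pmod{M_{I_i,r}^{<D}},
\]
with all signs equal to $1$ unless $\Phi=C_n$, and each summand (up to a sign) a single element of $\mathcal S_{i,r}$.

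Finally I would extract the coefficient. By Lemma~\ref{key21}(2) — or Lemma~\ref{key21}(1) in the degenerate case $f=0$ — the sequences $\mathbf j^{\lambda,\xi}wyd(\s)d'$ are pairwise distinct as $(y,w)$ ranges over $\mathfrak S_{\lambda'}\times H_f$, so there is no cancellation among the summands regardless of the (type-$C_n$) signs, and $\widetilde{y_{\lambda,\xi}}\mathbf m_i\otimes v_{\mathbf j^{\lambda,\xi}}d(\t)d$ occurs among them if and only if $\mathbf j^{\lambda,\xi}d(\t)d=\mathbf j^{\lambda,\xi}wyd(\s)d'$ for some $(y,w)$, which by Lemma~\ref{key21} forces $w=y=1$ and $(\t,d)=(\s,d')$; conversely, taking $w=y=1$ exhibits the term, with coefficient $\pm1$ (a sign, only in type $C_n$). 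Together with \eqref{incl1}, which is automatic under $\xi_{r-2j+1}=a-1$, this is exactly the asserted equivalence.

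I expect the main obstacle to be the penultimate step of the second paragraph: controlling the interaction of $X^{\xi}$ with the coevaluation tensor. One has to verify that at the top degree $D$ the only surviving contributions are those in which each $X_{r-2s+1}^{a-1}$ spends its entire degree budget on slot $r-2s+1$ (not on an earlier slot), that the coevaluation summations collapse to the single indices dictated by the chains of Definition~\ref{defrho}, and that the leftover symmetry is precisely $H_f$ and nothing larger — so that the coefficient genuinely picks out $(\t,d)=(\s,d')$. This is where the sharpened estimate Lemma~\ref{high1}(2) and the explicit chains $y_{l_c,a_c,c}$, $y_{\xi,s}$ are indispensable, and where the sign bookkeeping (trivial in types $B_n$ and $D_n$, a genuine $\pm$ in type $C_n$) must be carried out carefully.
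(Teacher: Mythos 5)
Your proposal is correct and follows essentially the same route as the paper: reduce modulo the filtration $M_{I_i,r}^{<D}$ using \eqref{incbasic}, identify via Lemma~\ref{high1} that the only top-degree terms of $v_{\s,\xi,d'}$ are (up to type-$C_n$ signs) $\widetilde{y_{\lambda,\xi}}\mathbf m_i\otimes v_{\mathbf j^{\lambda,\xi}wyd(\s)d'}$ with $(y,w)\in\mathfrak S_{\lambda'}\times H_f$, and then invoke Lemma~\ref{key21} to force $w=y=1$ and $(\t,d)=(\s,d')$. Your write-up is somewhat more explicit than the paper's (in tracking the forced chains $y_{l_c,a_c,c}$, $y_{\xi,s}$ and the residual $\mathfrak S_{\lambda'}\times H_f$ sum), but the decomposition and the key lemmas used are the same.
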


\begin{proof} Recall $v_\lambda$ in \eqref{vlam}, and  $\mathbf l$ in Lemma~\ref{lc1}. By  \eqref{incbasic},
\begin{equation}\label{keykey1} v_{\lambda}  w_\lambda \tilde\pi_{[\lambda']} E^f X^\xi \equiv \mathbf{m}_i\otimes v_{\mathbf l} \otimes (v_1\otimes v_{-1})^{\otimes f}  E^f \overset{\rightarrow }\prod_{i=1}^{r-2f} Y_i^{a_i}  Y^\xi \pmod {M_{I_i, r}^{\le \sum_{c=1}^{r-2f} a_c +f(a-1)}}\end{equation}    where    \begin{equation}\label{YY}
    \text{$Y_1=X_1$, and  $Y_j=S_{j-1}Y_{j-1}S_{j-1}$, and 
$Y^\xi=\overset{\rightarrow}\prod_{j=f}^1 Y_{r-2j+1}^{\xi_{r-2j+1}}$.}
\end{equation} 

If $f=0$, then $d=d'=1$ and $ \mathbb N_a^{f}=\emptyset$, which makes     $Y^\xi=1$. From  Lemma~\ref{high1},   $$\mathbf{m}_i\otimes v_{\mathbf l}\overset{\rightarrow }\prod_{j=1}^r Y_j^{a_j} 
\in M_{I_i, r}^{\le \sum_{c=1}^{r} a_c}.$$ Thanks to Lemma~\ref{high1}(1), $\widetilde{y_{\lambda, \emptyset}} \mathbf{m}_i\otimes v_{\mathbf h} $ 
    is a term  of  $\mathbf{m}_i\otimes v_{\mathbf l}\overset{\rightarrow} \prod_{j=1}^r Y_j^{a_j} $ such that 
    $$\widetilde{y_{\lambda, \emptyset}} \mathbf{m}_i\otimes v_{\mathbf h} \in  M_{I_i, r}^{\le \sum_{c=1}^{r} a_c}   \setminus   M_{I_i, r}^{< \sum_{c=1}^{r} a_c}    
     $$   
    if and only if $\mathbf h=\mathbf j^{\lambda, \emptyset} $.
    
     Suppose  $f\neq 0$.  Using Lemma~\ref{high1}(1), and \eqref{eiact}, we see that    $\widetilde {y_{\lambda, \xi} } \mathbf{m}_i\otimes v_{\mathbf h} $ 
is a term of $\mathbf{m}_i\otimes v_{\mathbf l} \otimes (v_1\otimes v_{-1})^{\otimes f}  E^f\overset{\rightarrow} \prod_{j=1}^{r-2f}  Y_j^{a_j}  Y^\xi$ 
such that $$\widetilde {y_{\lambda, \xi} } \mathbf{m}_i\otimes v_{\mathbf h}
\in M_{I_i, r}^{\le \sum_{c=1}^{r-2f} a_c+f(a-1)}\setminus M_{I_i, r}^{< \sum_{c=1}^{r-2f} a_c+f(a-1)}$$
if and only if $\mathbf h=\mathbf j^{\lambda, \xi} w $ (up to a sign only in type $C_n$ case), for some $w\in {H_f}$. In any case,  \eqref{incl1} follows from Lemma~\ref{key21}, \eqref{keykey1} and the definition of $v_{{\s}, \xi, d'}$.
\end{proof}

\begin{Theorem}\label{main123}
	Suppose $\mu\in \Lambda_a^+(r-2f)$ for $0\le f\le \lfloor r/2\rfloor$. Under condition~\eqref{simple111},  
 $\{\overline {v_{\t, \xi, d}}\mid (\t, \xi, d)\in \delta(f, \mu') \}$ forms a  basis for 
 the $\mathbb C$-space $V_{\hat\mu}$ of all singular vectors in $M_{I_i, r}/M_{I_i, r}\langle E^{f+1}\rangle $   with the highest weight $\hat\mu$,  defined as \eqref{hatlambda}.
 \end{Theorem}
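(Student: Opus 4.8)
\textbf{Proof plan for Theorem~\ref{main123}.}
The plan is to prove the two halves of the statement separately: first that every $\overline{v_{\t,\xi,d}}$ is a singular vector of weight $\hat\mu$, and then that this family is linearly independent and exhausts $V_{\hat\mu}$. The first half is already in hand: by Lemma~\ref{vtd} each $v_{\t,\xi,d}$ (hence its image $\overline{v_{\t,\xi,d}}$) has weight $\hat\mu$, and by Proposition~\ref{hiofcyche} the image $\overline{v_{\t,\xi,d}}$ is killed by $\mathfrak n^+$; together these say $\overline{v_{\t,\xi,d}}\in V_{\hat\mu}$, so the spanning set is contained in $V_{\hat\mu}$. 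What remains is (a) linear independence of $\{\overline{v_{\t,\xi,d}}\mid(\t,\xi,d)\in\delta(f,\mu')\}$ in $M_{I_i,r}/M_{I_i,r}\langle E^{f+1}\rangle$, and (b) the reverse inclusion $V_{\hat\mu}\subseteq\mathbb C\text{-span}\{\overline{v_{\t,\xi,d}}\}$; equivalently, once (a) is known, it suffices to show $\dim_{\mathbb C}V_{\hat\mu}\le\#\delta(f,\mu')$.

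For (a), the strategy is a triangularity/leading-term argument with respect to the filtration $M_{I_i,r}^{\le j}$ of \eqref{grade1}. Suppose a nontrivial linear combination $\sum c_{\t,\xi,d}\,\overline{v_{\t,\xi,d}}=0$ in the quotient; lift it to $M_{I_i,r}$, so $\sum c_{\t,\xi,d}\,v_{\t,\xi,d}\in M_{I_i,r}\langle E^{f+1}\rangle$. By Lemma~\ref{ext2} and condition~\eqref{simple111}, $M_{I_i,r}\langle E^{f+1}\rangle$ has no composition factor $L(\nu)$ with $\nu\in\mathscr I_{i,r-2f}\setminus\mathscr I_{i,r-2f-2}$; in particular, if $\hat\mu\notin\mathscr I_{i,r-2f-2}$ (which is the relevant range, since $v_\mu$ lives in the $E^f$-part but not the $E^{f+1}$-part) the $\hat\mu$-weight space of $M_{I_i,r}\langle E^{f+1}\rangle$ cannot carry a highest-weight vector of weight $\hat\mu$, so the combination is already zero in $M_{I_i,r}$. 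It then remains to prove linear independence of $\{v_{\t,\xi,d}\}$ inside $M_{I_i,r}$ itself. Here I would concentrate on the terms $(\t,\xi,d)$ with $\xi_{r-2j+1}=a-1$ for $1\le j\le f$, i.e. the maximal-$\xi$ stratum: by Lemma~\ref{k41}, for such $(\t,\xi,d)$ the element $\widetilde{y_{\lambda,\xi}}\,\mathbf m_i\otimes v_{\mathbf j^{\lambda,\xi}}d(\t)d$ appears with nonzero coefficient in $v_{\t,\xi,d}$ and lies in the top filtration layer $M_{I_i,r}^{\le\sum a_c+f(a-1)}\setminus M_{I_i,r}^{<\sum a_c+f(a-1)}$, while Lemma~\ref{key21}(2) guarantees that the basis vectors $\widetilde{y_{\mu',\xi}}\,\mathbf m_i\otimes v_{\mathbf j^{\mu',\xi}}d(\t)d$ attached to distinct $(\t,d)$ are distinct elements of $\mathcal S_{i,r}$. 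This pins down the coefficients $c_{\t,\xi,d}$ with maximal $\xi$ to be zero; one then subtracts and descends on $\sum_s\xi_{r-2s+1}$ (equivalently on the filtration degree via Lemma~\ref{high1}), repeating the leading-term extraction with the general-$\xi$ versions of Lemma~\ref{k41} and Lemma~\ref{key21}, to kill all $c_{\t,\xi,d}$. The bookkeeping of which $\mathcal S_{i,r}$-terms survive modulo lower-degree corrections is where \eqref{RS430}, Lemma~\ref{filt1}, Lemma~\ref{high1} and Lemma~\ref{lc2} all get used, and this is the part that must be done carefully.

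For (b) I would use the category $\mathcal O$ side and Theorem~\ref{main4} together with Theorem~\ref{main111}. Singular vectors of weight $\hat\mu$ in $M_{I_i,r}/M_{I_i,r}\langle E^{f+1}\rangle$ are exactly $\Hom_{\mathcal O^{\frp_{I_i}}}(M^{\frp_{I_i}}(\hat\mu),\,M_{I_i,r}/M_{I_i,r}\langle E^{f+1}\rangle)$, since a $\mathfrak g$-homomorphism out of a parabolic Verma module is determined by the image of its highest weight vector, which must be a singular vector of weight $\hat\mu$, and conversely every such singular vector extends to a homomorphism because $M^{\frp_{I_i}}(\hat\mu)$ is universal among highest-weight modules of that weight in $\mathcal O^{\frp_{I_i}}$ (the required $\mathfrak l$-integrality of $\hat\mu$ holds by \eqref{hatlambda} and $\mu\in\Lambda_a^+(r-2f)$). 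By Theorem~\ref{main4}, this Hom-space is isomorphic as a right $\mathcal B_{a,r}(\mathbf u)$-module to $C(f,\mu')$, whose dimension is $\#\delta(f,\mu')$ by Theorem~\ref{cellular-1}. Hence $\dim_{\mathbb C}V_{\hat\mu}=\#\delta(f,\mu')$, which together with (a) forces $\{\overline{v_{\t,\xi,d}}\mid(\t,\xi,d)\in\delta(f,\mu')\}$ to be a basis. (One must note that Theorem~\ref{main4}, and through it Theorem~\ref{main111}, is where condition~\eqref{simple111} enters, consistent with the hypothesis of the present theorem.)

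\textbf{Main obstacle.} The delicate point is step (a): controlling the leading term of $v_{\t,\xi,d}$ under the PBW filtration and verifying that the leading terms attached to distinct $(\t,\xi,d)$ are genuinely distinct and not cancelled by lower-order corrections arising from \eqref{RS430} and from the action of the $Y_j$'s in \eqref{YY}. Everything hinges on the precise combinatorics packaged in Definitions~\ref{defj}--\ref{defrho} and Lemmas~\ref{lc1}--\ref{k41}, so the proof amounts to assembling those lemmas in the right order; the conceptual content is the triangularity, but the work is the careful induction on $\sum_s\xi_{r-2s+1}$ and on filtration degree.
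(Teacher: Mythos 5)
Your first half (that each $\overline{v_{\t,\xi,d}}$ lies in $V_{\hat\mu}$, via Lemma~\ref{vtd} and Proposition~\ref{hiofcyche}) and your general idea of a leading-term/filtration argument match the paper. But there are two genuine gaps in how you close the argument.

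First, your dimension count in step (b) is circular. You invoke Theorem~\ref{main4} to identify $\Hom_{\mathcal O^{\frp_{I_i}}}(M^{\frp_{I_i}}(\mu), M_{I_i,r}/M_{I_i,r}\langle E^{f+1}\rangle)$ with $C(f,\mu')$, but Theorem~\ref{main4} is proved in Section~5 \emph{from} Theorem~\ref{main123} (both in the $f=0$ case, via the map $\psi$ on $V_{\hat\lambda}$, and in the $f\neq 0$ case, where the surjectivity of $\bar\gamma$ uses Theorem~\ref{main123}). The paper instead gets the equality $|\delta(f,\mu')|=\dim\Hom_{\mathcal O^{\frp_{I_i}}}(M^{\frp_{I_i}}(\mu),M_{I_i,r})$ directly from the proof of \cite[Theorem~5.4]{RS} (the identification of $\End_{\mathcal O^{\frp_{I_i}}}(M_{I_i,r})$ with $\mathcal B_{a,r}^{\mathrm{op}}(\mathbf u)$ and its cellular structure), and then applies Theorem~\ref{main111} plus the universal property of parabolic Verma modules to transfer this to $V_{\hat\mu}$. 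You need that external input, not Theorem~\ref{main4}.

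Second, your reduction of linear independence in the quotient to linear independence in $M_{I_i,r}$ does not work. If $w=\sum c_{\t,\xi,d}\,v_{\t,\xi,d}\in M_{I_i,r}\langle E^{f+1}\rangle$, then $w$ is a weight vector of weight $\hat\mu$ in the ideal, but it is \emph{not} known to be a singular vector of $M_{I_i,r}$: Proposition~\ref{hiofcyche} only gives $\mathfrak n^+ w\subseteq M_{I_i,r}\langle E^{f+1}\rangle$, which is automatic once $w$ lies in the ideal. So the (correct) fact that the ideal has no composition factor $L(\hat\mu)$ does not force $w=0$; the $\hat\mu$-weight space of $M_{I_i,r}\langle E^{f+1}\rangle$ is in general nonzero, and a non-singular weight vector can perfectly well live there. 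Consequently you cannot avoid analyzing the ideal itself: the paper spans $M_{I_i,r,\hat\mu}\langle E^{f+1}\rangle$ by the weakly cellular basis elements $y\,d_1^{-1}X^{\xi}n_{\t_1\t_2}E^cX^{\gamma}d_2$ with $c>f$ (equation \eqref{fbasis}), and then uses the degree bounds of Lemma~\ref{filt1}, Lemma~\ref{lc2}(6), Definition~\ref{ppt} and the constraint \eqref{ksub} coming from $E^c$, together with $q_1\ge 2r$, to show that the leading term $\widetilde{y_{\mu,\tilde\xi}}\,\mathbf m_i\otimes v_{\mathbf j^{\mu,\tilde\xi}}\sigma$ extracted via Lemmas~\ref{k41} and~\ref{key21} cannot occur as a term of any such element. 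That final confrontation between the leading term and the cellular basis of $\langle E^{f+1}\rangle$ is the heart of the proof and is missing from your plan.
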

\begin{proof}
Since we maintain Assumption~\ref{keyassu},  we have $p_t-p_{t-1}\ge 2r$, $1\le t\le k$, and $M^{\frp_{I_i}}(\lambda_{I_i, \mathbf c })$ is simple (and hence tilting). By  the proof of\cite[Theorem 5.4]{RS}, we have   \begin{equation}
    \label{card}  | \delta(f, \mu')| =\dim\Hom_{\mathcal O^{\frp_{I_i}}} (M^{\frp_{I_i}}(\mu), M_{I_i, r}), 
\end{equation}
where $|\delta(f, \mu')|$ the cardinality   of $\delta(f, \mu')$. Furthermore, 
there is a bijective map
\begin{equation}\label{bijecmap} \iota_j: \Lambda_{a, j} \rightarrow \scI_{i, j},~~ \lambda\mapsto \hat \lambda,\    \text{for any $j, 0\leq j\leq r$,}\end{equation}
where $\hat\lambda$ is defined as in \eqref{hatlambda}, and $\mathscr{I}_{i, j}$ is given in \eqref{sciI}. Therefore, $\hat \mu\in \scI_{i, r}\setminus \scI_{i, r-2f-2}$. 
By Theorem~\ref{main111} and the  universal property of parabolic Verma modules, we have $\mathbb C$-linear isomorphisms 
\begin{equation} \label{iso1} \Hom_{\mathcal O^{\frp_{I_i}}} (M^{\frp_{I_i}}(\mu), M_{I_i, r})  \cong\Hom_{\mathcal O^{\frp_{I_i}}} (M^{\frp_{I_i}}(\mu), M_{I_i, r}/M_{I_i, r}\langle E^{f+1}\rangle)\cong V_{\hat \mu},\end{equation} if $\mu\in \Lambda_a^+(r-2f)$.  
This is the only place that we need condition~\eqref{simple111} in Section~4 so that we can use Theorem~\ref{main111} to count the dimension of $V_{\hat\mu}$.

By Proposition~\ref{hiofcyche}, \eqref{card}, \eqref{iso1}, it suffices to  prove that $\{\overline {v_{\t, \xi, d}}\mid (\t, \xi, d)\in \delta(f, \mu') \}$ is linear independent over $\mathbb C$. 
Suppose $\sum_{(\t, \xi, d)\in \delta(f, \mu')}   a_{\t, \xi, d} \overline{v_{\t, \xi, d}}=\bar 0$. Then \begin{equation} \label{llid}\sum_{(\t, \xi, d)\in \delta(f, \mu')} {a_{\t, \xi, d} }{v_{\t, \xi, d}}\in M_{I_i, r, {\hat \mu}}\langle E^{f+1}\rangle ,\end{equation} where $M_{I_i, r, {\hat \mu}}$ is the $\hat\mu$-weight space of $M_{I_i, r}$. We claim   
    ${a_{\t, \xi, d} }=0$ for all $(\t, \xi, d)$. Otherwise, we define the following non-empty set \begin{equation} \label{S1}S=\left\{\xi\mid a_{\t, \xi, d}\neq 0 \text{ for some $(\t, \xi, d)\in \delta(f, \mu') $ and $\sum_{s=1}^f \xi_{r-2s+1}$ is maximal}\right\}.\end{equation}  Pick a fixed $\eta$ such that $a_{\s, \eta, e}\neq 0 $ for some $\s, e$.
     
        \Case{1.  $f=0$} Then $\mathbb N_a^f=\emptyset$,  and  $e=1$. By \eqref{incl1}, (up to a sign only in type $C_n$ case) $\widetilde{y_{\mu, \emptyset}} \mathbf{m}_i \otimes v_{\mathbf j^{\mu, \emptyset}} d(\s)$ is a term of the summation in \eqref{llid} such that 
\begin{equation}\label{ssss1} \widetilde{y_{\mu, \emptyset}} \mathbf{m}_i \otimes v_{\mathbf j^{\mu, \emptyset}} d(\s)\in  M_{I_i, r}^{\le \sum_{c=1}^{r-2f} a_c}   \setminus   M_{I_i, r}^{< \sum_{c=1}^{r-2f} a_c} .\end{equation}

  \Case{2.  $f\neq 0$ and $\eta$ is the  $\xi$ in Lemma~\ref{k41}}  
  By Lemma~\ref{k41}(2), (up to a sign only in type $C_n$ case) $\widetilde{y_{\mu, \xi}}\mathbf{m}_i\otimes v_{\mathbf j^{\mu, \xi}}d(\s)e$ is a term in  the   
  summation in \eqref{llid} such that  \begin{equation}\label{sss2} \widetilde{y_{\mu, \xi}}\mathbf{m}_i\otimes v_{\mathbf j^{\mu, \xi}}d(\s)e\in M_{I_i, r}^{\le \sum_{c=1}^{r-2f} a_c+f(a-1)}   \setminus   M_{I_i, r}^{< \sum_{c=1}^{r-2f} a_c+f(a-1)}.\end{equation}

   \Case{3.  $f\neq 0$ and $\eta$ is not the $\xi$ in Lemma~\ref{k41}} We denote the $\xi$ by $\tilde \xi$ to avoid the $\xi$ in \eqref{llid}. By \eqref{llid},
   we have  
 \begin{equation} \label{llid1} \sum_{(\t, \xi, d)\in \delta(f, \mu')} {a_{\t, \xi, d} }{v_{\t, \xi, d}} e^{-1} d(\s)^{-1} Y^{\tilde \xi-\eta}\in M_{I_i, r, \hat \mu}\langle E^{f+1}\rangle ,\end{equation} where $Y^{\tilde \xi-\eta}$ is given in \eqref{YY}.
  Define $A, B, C$ such that 
   \begin{itemize} \item [(1)]  $(\t, \xi, d)\in A$ if $(\t, \xi, d)\in \delta(f, \mu')$ and $\xi\in S$. 
\item [(2)] $(\t, \xi, d)\in B$ if $(\t, \xi, d)\in A$ and the $j$-th component, say $z_j$ of $(\tilde \xi-\eta) d(\t) d e^{-1} d(\s)^{-1} $, is zero for all $1\le j\le r-2f$.
\item [(3)] $(\t, \xi, d)\in C$ if $(\t, \xi, d)\in B$ and $\eta_{r-2s+1}+\xi_{r-2s+2}+ z_{r-2s+1}+z_{r-2s+2}=a-1$ for all $1\le s\le f$.
\end{itemize} 
We have  
  $$  \begin{aligned}  \text{LHS of  \eqref{llid1}}   & \equiv  \sum_{(\t, \xi, d)\in A } {a_{\t, \xi, d} }v_\mu   w_\mu   E^f \overrightarrow {\prod}_{j=1}^{r-2f}  Y_j^{a_j}  Y^{\xi}  y_{\mu'} d(\t) d e^{-1} d(\s)^{-1}Y^{\tilde \xi-\eta}\ \text{by Lemma~\ref{high1}}\\ 
 &  \equiv  \sum_{(\t, \xi, d)\in A} {a_{\t, \xi, d} }v_\mu   w_\mu     E^f \overrightarrow {\prod}_{j=1}^{r-2f}  Y_j^{a_j}  Y^{\xi} \sum_{w\in \mathfrak S_{\bar{\mu'}}} (-1)^{l(w)} (z_w  Y^{\tilde \xi-\eta} z_w^{-1})  z_w \\ 
&  \equiv  \sum_{(\t, \xi, d)\in B}  {a_{\t, \xi, d} }v_\mu   w_\mu   E^f \overrightarrow {\prod}_{i=1}^{r-2f}  Y_j^{a_j}  Y^{\xi} (z_1  Y^{\tilde \xi-\eta} {z_1^{-1}}) y_{\mu'} {z_1}\  \text{Lemmas~\ref{lc2}(6),~\ref{high1}} \\ 
& \overset{(a)}\equiv    \sum_{(\t, \xi, d)\in C}  {a_{\t, \xi, d} }v_\mu   w_\mu   E^f \overrightarrow {\prod}_{j=1}^{r-2f}  Y_j^{a_j}  Y^{\xi} ({z_1}   Y^{\tilde \xi-\eta} {z_1^{-1}} ) y_{\mu'} {z_1}   \\ 
& \equiv    \sum_{(\t, \xi, d)\in C}  {a_{\t, \xi, d} }v_\mu   w_\mu    E^f \overrightarrow {\prod}_{j=1}^{r-2f}  X_j^{a_j}  X^{\xi} ({z_1}  X^{\tilde \xi-\eta} {z_1^{-1}}) y_{\mu'}{z_1}\ \  \text{Lemma~\ref{high1}}  \\
&  \equiv \sum_{(\t, \xi, d)\in C}\pm  {a_{\t, \xi, d} }v_\mu   w_\mu    E^f {\prod}_{j=1}^{r-2f}  X_j^{a_j}  X^{\tilde \xi } y_{\mu'} d(\t) d e^{-1} d(\s)\ \  \text{Definition~\ref{cba1}(13)}\\ 
& \equiv   \sum_{(\t, \xi, d)\in C}\pm   a_{\t, \xi, d} v_{\t, \tilde \xi, d}  \pmod{M_{I_i, r}^{\le \sum_{c=1}^{r-2f} a_c+f(a-1)}}  \\  \end{aligned} $$
where $a_i, 1\le i\le r-2f$ are as defined in  Lemma~\ref{lc1}, and $z_w= w d(\t) d e^{-1} d(\s)^{-1} $ {for any $w\in \mathfrak S_{\bar{\mu'}}$. In particular, $z_1$ is  $z_w$ for $w=1$.} 
Here (a)  is due to  Lemma~\ref{high1}, and Lemma~\ref{ppt}, which makes   $\text{deg} v_j\otimes v_{-j}=a-1$.

Thus, by \eqref{incl1} 
$\widetilde{y_{\mu, \tilde \xi}}\mathbf{m}_i\otimes v_{\mathbf j^{\mu, \tilde \xi}}$
is a term of  the 
  summation in \eqref{llid1} such that 
\begin{equation}\label{sss3}
\widetilde{y_{\mu, \tilde \xi}}\mathbf{m}_i\otimes v_{\mathbf j^{\mu, \tilde \xi}}\in  M_{I_i, r}^{\le \sum_{c=1}^{r-2f} a_c+f(a-1)}   \setminus   M_{I_i, r}^{< \sum_{c=1}^{r-2f} a_c+f(a-1)}.
\end{equation}

We will  use \eqref{ssss1}--\eqref{sss3} to prove that $S=\emptyset$, and therefore all $a_{\t,  \xi, d}$
in \eqref{llid} are zero. This implies that $\{\overline {v_{\t, \xi, d}}\mid (\t, \xi, d)\in \delta(f, \mu') \}$ is linear independent over $\mathbb C$.

Thanks to Theorem~\ref{cellular-1}, $\langle E^{f+1}\rangle $  has basis 
\begin{equation} \label{fbasis} \{C_{(\t_1,\xi,d_1),(\t_2,\gamma,d_2)}\,|\,
	(\t_1,\xi,d_1),(\t_2,\gamma ,d_2)\in\delta(c,\lambda), c>f, \ 
	\forall (c,\lambda)\in\Lambda_{a, r}\}.\end{equation}
Thus, $M_{I_i, r, \hat \mu } \langle E^{f+1} \rangle   $ is spanned by all  $yz$, where $y\in \mathcal S_{i, r}$ with $\text{wt} (y)=\hat \mu$ and  $$z=C_{(\t_1,\xi,d_1),(\t_2,\gamma, d_2)}=d_1^{-1} X^{\xi} n_{\t_1\t_2} E^c X^{\gamma} d_2.$$ 
 Thanks to   \eqref{llid} and \eqref{ssss1}--\eqref{sss3},   (up to a sign only in type $C_n$ case)   $\widetilde {y_{\mu, \tilde \xi }} \mathbf{m}_i\otimes v_{\mathbf j^{{\mu,  {\tilde \xi}}}} \sigma$ appears as a term in  some $ yz    $ with $\text{wt}(y)=\hat\mu$, where 
 \begin{equation} \label{ssig} \sigma=\begin{cases} 1 &\text{if $f\neq 0$ and $\eta \neq \tilde \xi$,}\\  d(\s)e & \text{otherwise.}\\
 \end{cases}
 \end{equation}
 Here $(\s, \eta, e)$ is the fixed triple chosen earlier such that   $a_{\s, \eta, e}\neq 0$.

 If  
$f_{\mathbf i,\mathbf j}^\mathbf l \mathbf{m}_i\otimes v_{\mathbf k}$ is a term of  $y d_1^{-1} X^{\xi} n_{\t_1\t_2} E^c $, then by \eqref{eiact}, \begin{equation} \label{ksub} \mathbf k_{r-2s+1}=-\mathbf k_{r-2s+2} \quad \text{ for all $1\le s\le c$ }.\end{equation}
Thus, (up to a sign only in type $C_n$ case)  $\widetilde{ y_{\mu, {\tilde \xi} }} \mathbf{m}_i\otimes v_{\mathbf j^{\mu, {\tilde \xi} }}\sigma    $ is a term of $f_{\mathbf i, \mathbf j}^\mathbf l \mathbf{m}_i \otimes v_{\mathbf k} X^\gamma d_2$ for some  $f_{\mathbf i, \mathbf j}^\mathbf l\mathbf{m}_i \otimes v_{\mathbf k} \in \mathcal S_{i, r}$ satisfying  $\text{wt} ( 
f_{\mathbf i, \mathbf j}^\mathbf l \mathbf{m}_i \otimes v_{\mathbf k} )=\hat \mu$, and  \eqref{ksub}. Thanks to Lemma~\ref{high1}, 
\begin{equation} \label{key321}  \widetilde{ y_{\mu, {\tilde \xi} }} \mathbf{m}_i\otimes v_{\mathbf j^{\mu, {\tilde \xi} }}\sigma 
\in M_{I_i, r}^{\le |\mathbf l|+\sum_{s=1}^c \gamma_{r-2s+1}}.  \end{equation}
On the other hand, we have 
$$\begin{aligned}  |\mathbf l| & \le \text{deg}  v_{\mathbf i_\mu}+f(a-1)-\text{deg} v_{\mathbf k}\ \text{ by Lemma~\ref{filt1}} \\
&=\sum_{s=1}^{r-2f} a_s+f(a-1)-\text{deg} v_{\mathbf k} \ \text{ by Lemma~\ref{lc2}(6)} \\
& \le \sum_{s=1}^{r-2f} a_s+f(a-1)-c(a-1) \ \text{ by Definition~\ref{ppt}, \eqref{ksub}}. \\
\end{aligned} $$
Combining this with \eqref{key321}, and using Lemma~\ref{k41},
we obtain  
$$|\mathbf l|=\sum_{s=1}^{r-2f} a_s+f(a-1)-c(a-1)  , \text{ and  $\gamma_{r-2s+1}=a-1$ for all $1\le s\le c$.}$$ By  Lemma~\ref{high1}, we conclude that (up to a sign only in type $C_n$ case)   $\widetilde{ y_{\mu, {\tilde \xi} }} \mathbf{m}_i\otimes v_{\mathbf j^{\mu, {\tilde \xi} }}\sigma    $ appears as  a term of $f_{\mathbf i, \mathbf j}^\mathbf l \mathbf{m}_i \otimes v_{\mathbf k}  Y^\gamma d_2$. 
However, by Lemma~\ref{high1}, 
and \eqref{ksub},  $\widetilde{ y_{\mu, {\tilde\xi} }} \mathbf{m}_i\otimes v_{\mathbf h }$ appears as  a term of 
$f_{\mathbf i, \mathbf j}^\mathbf l\mathbf{m}_i \otimes v_{\mathbf k} Y^\gamma d_2$ only if   
$\mathbf h_t=q_1-q+1$ for some $1\le t\le r$ and $1\le q\le r$. Since we assume $q_1\ge 2r$,  $\mathbf h_t\ge r+1$,
it contradicts to Lemma~\ref{lc2}(2)(5) if we replace $v_{\mathbf h}$ with  $v_{\mathbf j^{\mu, {\tilde \xi} }}\sigma $, forcing  $S=\emptyset$.
\end{proof}

\section{Proof of Theorem~\ref{main4} and Theorem~\ref{first} }
This section aims to provide   an explicit decomposition of $M_{I_i, r}$ into a direct sum of indecomposable tilting modules, and to compute the decomposition numbers of $\mathcal B_{a, r}(\mathbf u)$ under condition \eqref{simple111}.  
\vskip0.3cm


\noindent \textbf{Proof of  Theorem~\ref{main4} and Theorem~\ref{first}(1):} 
Obviously,  Theorem~\ref{first}(1) immediately follows 
from Theorem~\ref{main111} and Theorem~\ref{main4}.
We will prove Theorem~\ref{main4} in two  cases:  $f=0$ and $f\neq 0$. 


\Case{1. $f=0$} By Proposition~\ref{wallbcell} and Theorem~\ref{main123}, there is a $\mathbb C$-linear isomorphism 
$$\psi: V_{\hat \lambda}\rightarrow S^{0, \lambda}, \quad \bar {v_{\t, \emptyset, 1}}\rightarrow m_\lambda w_\lambda n_{\lambda'} d(\t),$$ where $S^{f, \lambda}$ is defined as in Proposition~\ref{bas}, and $V_{\hat \lambda}$ in Theorem~\ref{main123}. 
We aim  to prove that $\psi$ is an isomorphism of   $\mathcal B_{a, r}(\mathbf u)$-modules. If this holds, then   $V_{\hat\lambda}\cong S^{0, \lambda}$, and consequently,   $$\Hom_{\mathcal O^{\frp_{I_i}}}(M^{\frp_{I_i}}(\hat \lambda), M_{I_i, r}/M_{I_i, r}
\langle E^{1}\rangle)\cong S^{0, \lambda}.$$ More explicitly,  
the required isomorphism  sends each $\varphi_{\t, \emptyset, 1}$ to $m_\lambda w_\lambda n_{\lambda'} d(\t)$ where $$\varphi_{\t, \emptyset, 1}\in \Hom_{\mathcal O^{\frp_{I_i}}}(M^{\frp_{I_i}}(\hat \lambda), M_{I_i, r}/M_{I_i, r}\langle E^{1}\rangle)$$
 such that $\varphi_{\t, \emptyset, 1} (\mathbf m_i)= 
 \bar{v_{\t, \emptyset, 1}} 
 $. 
 Now, Theorem~\ref{main4} follows immediately  from Proposition~\ref{bas}.

 For the simplification of notation, we denote $v_{\t, \emptyset, 1}$ by $v_\t$. By \eqref{cycHiso} and Theorem~\ref{basisofhecke}, we have 
\begin{equation}\label{nhh}
n_{\lambda'}d(\t) h\equiv \sum_{\s\in \Std(\lambda')} a_\s n_{\lambda'} d(\s) +\sum_{\nu\in \Lambda_a^+(r), \nu\rhd \lambda'}\sum_{\t', \s'\in \mathscr T^{std}(\nu)}  a_{\t',\s'} n_{\t', \s'}
\pmod {\langle E^1\rangle },
\end{equation}
for some $a_\s, a_{\t', \s'}\in \mathbb C$.
Note that  $\nu\rhd \lambda'$ is  equivalent to \begin{equation}\label{big123} \lambda\rhd \nu'\end{equation} since both of them are $a$-multipartitions of $r$. It is well-known that $m_\lambda \mathscr H_{a, r}(\mathbf u) n_{\nu}=0$ unless $\lambda\unlhd\nu'$. Thus by \eqref{cycHiso}, \begin{equation}\label{ppsi1}  \psi (\bar v_\t) h=\sum_{\s\in \Std(\lambda')} a_s \psi (\bar v_\s). \end{equation} 
To prove  that $\psi$ is a $\mathcal B_{a, r}(\mathbf u)$-homomorphism, by \eqref{nhh}, it suffices to verify \begin{equation}\label{kkks1} \overline{v_\lambda} w_\lambda d(\t')^{-1} n_{\nu}=\bar 0
\end{equation} for any $\t'\in \Std(\nu)$ such that $\nu\rhd \lambda'$. 

We have $[\lambda']\preceq [\nu]$, where $\prec $ is the lexicographic order. Write 
$[\nu]=[b_0, b_1. \ldots, b_a]$ and 
$[\lambda']=[c_0, c_1, \ldots, c_a] $ in the sense of \eqref{blam}. 

\subcase{1. $[\nu]=[\lambda']$}  
Then
$|\lambda^{(j)}|=|\mu^{(j)}|$, and hence $\lambda^{(j)}\unrhd \mu^{(j)} $ for all  $1\le j\le a$, where $\mu^{(j)} $ is the conjugate of $\nu^{(a-j+1)}$. Furthermore, from \eqref{big123},  there is at least one of $l$ such that $\lambda^{(l)}\rhd \mu^{(l)}$. Note that  
$d(\t')$ can be either in $\mathfrak S_{[\lambda']}$ or not. 

In the first case, 
$$\begin{aligned} \bar{ v_\lambda}  w_\lambda d(\t')^{-1} n_{\nu}
& \overset{(1)} ={\pm}\frac{1}{\prod_{j=1}^a \lambda^{(j)}!}  \overline {\mathbf m_i\otimes v_{\mathbf i_\lambda}}x_{\lambda^{(1)}\vee \lambda^{(2)}\vee \cdots\vee \lambda^{(a)}}  w_{\lambda}d(\t')^{-1} y_{\nu^{(1)}\vee \nu^{(2)}\vee \cdots\vee \nu^{(a)}}  \tilde \pi_{[\nu]}\\
& \overset{(2)} ={\pm}\frac{1}{\prod_{j=1}^a \lambda^{(j)}!}  \overline {\mathbf m_i\otimes v_{\mathbf i_\lambda}} w_{[\lambda]}x_{\lambda^{(a)}\vee \cdots\vee \lambda^{(1)}}\prod_{j=a}^1 \tilde w(j)  d(\t')^{-1} y_{\nu^{(1)}\vee \cdots\vee \nu^{(a)}}  \tilde \pi_{[\nu']}\\
& \overset{(3)}=\bar 0.\\ \end{aligned}
$$ 
Here (1) follows from  $v_{\mathbf i_\lambda} ={\pm}\frac{1}{\prod_{j=1}^a \lambda^{(j)}!}   {v_{\mathbf i_\lambda}}x_{\lambda^{(1)}\vee \lambda^{(2)}\vee \cdots\vee \lambda^{(a)}} $, and (2) is a consequence of  \eqref{wlaex}, and (3) follows from  $x_{\lambda^{(l)}}\mathfrak S_{c_l-c_{l-1}} y_{\nu^{(a-l+1)}}=0$ since $\lambda^{(l)} \rhd \mu^{(l)}$ under our assumption. We remark that the $``-"$ may appear only when we consider the symplectic Lie algebra $\mathfrak{sp}_{2n}$.

In the second case, since $d(\t')\not \in \mathfrak S_{[\nu]}$,
there is an $h$, $1 \le h\le r$,  and a $j$,  $1\le j\le a$ such that   \begin{equation}\label{bin1} 
h\le b_j, \text{ and } \ 
(h) d(\t')\ge b_j+1.\end{equation}  

\subcase {2.    $[\lambda']\prec [\nu]$} Then there is a minimal $j$ such that $c_j<b_j$ and $c_i=b_i$ for all $i<j$. 
Thus,  there is a positive integer  $h$ satisfying  \begin{equation}\label{bin2}  h\le c_j+1, \text{ and } (h)d(\t')\ge c_j+1.\end{equation}  

In each of two cases, by \eqref{bin1}--\eqref{bin2}, there is a unique $d$ such that 
\begin{equation} \label{invkey} c_d+1\le (h) d(\t')\le c_{d+1}, \text{  and $d\ge j$.}\end{equation} 
Suppose $\mathbf l=\mathbf i_\lambda w_\lambda$, 
and $\mathbf j=\mathbf l d(\t')^{-1}$.  
We have  
$$
    \begin{aligned} \bar v_{\lambda} w_\lambda d(\t')^{-1} n_\nu &= \overline{\mathbf m_i\otimes v_\mathbf j} (1, h) (1, h)  \tilde \pi_{[\nu]} y_\nu\\
&   =\overline{\mathbf m_i\otimes v_\mathbf j} (1, h) \prod_{s=j}^{a-1} \pi_{b_s}(u_{a-s})  (1, h)  \prod_{s=1}^{j-1}
\pi_{b_s}(u_{a-s})
 y_\nu,
  \end{aligned} 
$$
where the second equality follows from  the inequality  $h\leq b_j$,  \eqref{cycHiso} and \eqref{pic}. To obtain \eqref{kkks1}, we have to discuss two cases as follows. 

\subcase{a. $a-1-d<k$ where $d$ is defined  in \eqref{invkey}} 
By   \eqref{lll12}, we have $j_h=l_{(h)d(t')} \in \mathbf p_{a-d}$ and $\mathbf m_i\otimes v_{j_h}\in N_{a-d}$, where $N_{a-d}$ is defined  in Proposition~\ref{polyofx}. In this case, \eqref{kkks1} follows from  Proposition~\ref{polyofx} since $\prod_{t=1}^{a-d} (X_1-u_t)$ is a factor of  $\prod_{s=j}^{a-1} \pi_{b_s} (u_{a-s})   $ and $b_j>0$.

\subcase{b.  $a-1-d\ge k$}
By  \eqref{lll12}, we have $j_h=l_{(h)d(\t')}\in -\mathbf p_{2+d} $ and $\mathbf m_i\otimes v_{j_h}\in 
N_{2k-1+\delta_{\mathfrak  g, \mathfrak{so}_{2n+1}-d}}$. Since $d\ge j$, we have $a-j\ge a-d\ge k+1$. Thus  \eqref{kkks1} follows from Proposition~\ref{polyofx} since 
$ \prod_{s=1}^{a-j} (X_1-u_s)  $ is a factor of $\prod_{s=j}^{a-1}  \pi_{b_s}(u_{a-s})  $. 

This completes the proof of the result when  $f=0$.  
 
\Case{2. $f\neq 0$} By \cite[Lemma~8.3]{AMR}, \begin{equation}\label{AMR}
    E^f \mathcal B_{a, r}(\mathbf u) E^f=E^f \mathcal B_{a, r-2f}(\mathbf u)
\end{equation} for any $0<f\le [r/2]$. 
Thus, we have right exact tensor functor $?\otimes_{\mathcal B_{a, r-2f}(\mathbf u)} E^f \mathcal B_{a, r}(\mathbf u)$
sending any $\mathcal B_{a, r-2f}(\mathbf u)$-module $N$ to $  N\otimes_{\mathcal B_{a, r-2f}(\mathbf u)} E^f B_{a, r}(\mathbf u)$. Thanks to \cite[Proposition~3.29(b)]{RSi}, we have $$ C(0, \lambda')\otimes_{\mathcal B_{a, r-2f}(\mathbf u) } E^f \mathcal B_{a, r}(\mathbf u)\cong C(f, \lambda') .$$ By the result in Case~1, and Theorem~\ref{main111},  $\text{Hom}_{\mathcal O^{\frp_{I_i}}} (M^{\frp_{I_i}}(\hat \lambda), M_{I_i, r-2f})\cong C(0, \lambda')$, forcing 
 \begin{equation}\label{iso333} \text{Hom}_{\mathcal O^{\frp_{I_i}}} (M^{\frp_{I_i}}(\hat \lambda), M_{I_i, r-2f})\otimes_{\mathcal B_{a, r-2f}(\mathbf u) } E^f \mathcal B_{a, r}(\mathbf u)\cong C(f, \lambda') .\end{equation} 
 Define 
\begin{equation}\label{keypsi} 
    \gamma: \text{Hom}_{\mathcal O^{\frp_{I_i}}} (M^{\frp_{I_i}}(\hat \lambda), M_{I_i, r-2f})\otimes_{\mathcal B_{a, r-2f}(\mathbf u) } E^f \mathcal B_{a, r}(\mathbf u)\rightarrow \text{Hom}_{\mathcal O^{\frp_{I_i}}} (M^{\frp_{I_i}}(\hat \lambda), M_{I_i, r})\end{equation}
such that $\gamma(y\otimes E^f b)=\tau(b) \circ (Id_{M_{I_i, r-2f} } \otimes \alpha^f)\circ y$
 where $\alpha$ is defined as in \eqref{alp123},  $b\in \mathcal B_{a, r}(\mathbf u)$  and $y\in \text{Hom}_{\mathcal O} (M^{\frp_{I_i}}(\hat \lambda), M_{I_i, r-2f})$, and $\tau $ is the anti-involution defined as in Lemma~\ref{anti-inv}. 

 We verify that $\gamma$ is well-defined. 
 Following Theorem~\ref{thmA},  we can  view $E^f$ as a morphism in $\text{End}_{\mathcal O^\frp_{I_i}} (M_{I_i, r})$. Thus, by \eqref{eiact}--\eqref{alp123}  $$ Id_{M_{I_i, r-2f} } \otimes \alpha^f=(\epsilon_\frg N)^{-f} E^f \circ (Id_{M_{I_i, r-2f} } \otimes \alpha^f), 
 $$ and  $$\tau(b) \circ (Id_{M_{I_i, r-2f} } \otimes \alpha^f)\circ y= \tau(b_1) \circ (Id_{M_{I_i, r-2f} } \otimes \alpha^f)\circ y$$ if $E^f b=E^f b_1$. This proves that $\gamma$ is well-defined, 
  and  $\gamma$ is a right $\mathcal B_{a, r}(\mathbf u)$-homomorphism.   By Theorem~\ref{main111}, Theorem~\ref{main123}, and \eqref{keypsi}, there is an epimorphism $\bar \gamma$:  
$$\text{Hom}_{\mathcal O^{\frp_{I_i}}} (M^{\frp_{I_i}}(\hat \lambda), M_{I_i, r-2f})\otimes_{\mathcal B_{a, r-2f}(\mathbf u) } E^f \mathcal B_{a, r}(\mathbf u)\twoheadrightarrow  
\text{Hom}_{\mathcal O^{\frp_{I_i}}} (M^{\frp_{I_i}}(\hat \lambda), M_{I_i, r}/M_{I_i, r}\langle E^f\rangle).$$ 
By comparing the dimensions  using \eqref{iso333},  Theorem~\ref{main111}, and \eqref{card}, we conclude that $\bar\gamma$ is an isomorphism. 
Now, the required isomorphism in Theorem~\ref{main4} follows immediately from 
\eqref{iso333}.\qed

 We aim to  describe the highest weight $\mu$, and the multiplicity $n_\mu$ in \eqref{decten1} under  condition \eqref{simple111}. To do it, we introduce the functor \begin{equation}\label{funcf} 
\mathcal  F:=\Hom_{\mathcal O}(-, M_{I_i, r}): \mathcal O^{\mathfrak p_{I_i}}\rightarrow \text{End}_{\mathcal O}(M_{I_i, r})\text{-mod},\end{equation} where 
$\text{End}_{\mathcal O}(M_{I_i, r})\text{-mod}$ is  the category of left $\text{End}_{\mathcal O}(M_{I_i, r})$-modules.
By Theorem~\ref{thmA}, we can use $\mathcal B_{a, r}(\mathbf u)$-mod, the category of right  $\mathcal B_{a, r}(\mathbf u)$-modules to replace $\text{End}_{\mathcal O}(M_{I_i, r})\text{-mod}$.

\textbf{Proof of Theorem~\ref{first}(2)-(4):} We have $\mu\in \mathscr{I}_{i, r} $ since  
 $n_\mu\neq 0$ and $$(T^{\frp_{I_i}}(\mu): (M^{\frp_{I_i}}(\mu))=1.$$  It follows from  \eqref{bijecmap} that  $\mu=\hat\nu$ for some $(\ell, \nu)\in {\Lambda_{a, r}}$. 
Suppose  \begin{equation}\label{simiso1}  D(\hat \nu)\cong D(f, \lambda) \end{equation} 
as right $\mathcal B_{a, r}(\mathbf u)$-modules for some $(f, \lambda)\in\bar\Lambda_{a, r}$. 
By \eqref{simiso1} and Theorem~\ref{first}(1), we have  \begin{equation}\label{simiso2}\begin{aligned}  
\relax [ C(\ell, \nu'): D(f, \lambda) ]& =[S(\hat \nu): D(\hat \nu)]\neq 0,  \\
 [C(f, \lambda): D(f, \lambda)] & =[S(\hat {\lambda'}): D(\hat \nu)] \neq 0. \end{aligned} 
\end{equation} 
Thus, we have 
   $ (\ell, \nu')\unrhd (f, \lambda)$  and \begin{equation}\label{ineq1} \hat {\lambda'}\leq \hat\nu,\end{equation} which implies $\ell\geq f$. 
   
   We claim that $\ell=f$. Otherwise, we have $\ell>f$, which makes   $C(f, \lambda)E^\ell=0$. We have $\phi(D(\hat \nu)E^\ell)=D(f, \lambda)E^\ell=0 $, where   $\phi$ is  the isomorphism in \eqref{simiso1}. Thus, 
    \begin{equation}\label{zero1}
    D(\hat\nu ) E^\ell=0.\end{equation}
  Since $\omega_0=-2n$ if $\Phi=C_n$, and $2n$ (resp., $2n+1$) if $\Phi=D_n$ (resp., $B_n$), we have $\omega_0\neq 0$.  Note  that  $$ E^\ell m_{\nu}w_{\nu} n_{\nu'}E^\ell=E^\ell E^\ell m_{\nu}w_{\nu} n_{\nu'}=(\omega_0)^\ell E^\ell m_{\nu}w_{\nu} n_{\nu'}. $$ 
  By  Proposition~\ref{bas}, the cell module $C(\ell, \nu')$ is generated by $C(\ell, \nu')E^\ell$. The isomorphism in  Theorem~\ref{first}(1) implies that  $S(\hat \nu)$ is generated by $S(\hat \nu)E^\ell$. Consequently,  $D(\hat \nu)$ is generated by $D(\hat \nu)E^\ell$, forcing   $D(\hat \nu)E^\ell\neq 0$. It contradicts to \eqref{zero1}.  This completes the proof of our claim. 
  
We have $\nu'\unrhd \lambda$. Since $\nu, \lambda\in \Lambda_a^+(r-2f)$, it follows that     $ \lambda'\unrhd  \nu$, which is equivalent to $\hat {\lambda'}\geq \hat \nu$ by 
 \eqref{lm}. Combining \eqref{ineq1}, we have  $\hat {\lambda'}=\hat\nu$, which forces   $\lambda'=\nu$. Now, Theorem~\ref{first}(2) follows.

Clearly, 
    Theorem~\ref{first}(3)  follows from Theorem~\ref{first}(2) except for  the multiplicity of 
    $T^{\frp_{I_i}}(\hat \lambda)$ in $M_{I_i, r}$. Since $\mathcal F(T^{\frp_{I_i}}(\hat\lambda))$ is the project cover of $D(\hat\lambda)$, by 
Theorem~\ref{thmA}  and Theorem~\ref{first}(2), the multiplicity of $T^{\frp_{I_i}}(\hat\lambda )$ is equal to the dimension of 
$\Hom_{\mathcal B_{a, r}(\mathbf u)}(\mathcal B_{a, r}(\mathbf u),  D(f, \lambda'))$. Now, Theorem~\ref{first}(3) follows immediately since 
$\Hom_{\mathcal B_{a, r}(\mathbf u)}(\mathcal B_{a, r}(\mathbf u),  D(f, \lambda'))
\cong  D(f, \lambda')$.  
Finally,   Theorem~\ref{first}(4) follows from Theorem~\ref{first}(1) and \cite[Corollary~5.10]{RS}.\qed

  \section{Appendix: Proof of Theorem~\ref{saturated} by Wei Xiao}
In this section, we focus on the parabolic subalgebra $\frp_I$ associated  with $I\subset \Pi$.  
Throughout, we fix $\lambda\in \Lambda^{\frp_I}$   such that 
\begin{equation}\label{tilass} \langle\lambda+\rho, \beta^\vee\rangle\not\in\bbZ_{>0}, \quad  \forall \beta\in \Phi^+\setminus \Phi_I, \quad \text{and } \langle \lambda+\rho, \alpha^\vee\rangle =1,\ \forall \alpha\in I.\end{equation} 
Under this condition,  
$M^\frp(\lambda)$ is simple and  
$\dim_\mathbb C F(\lambda)=1$, where $F(\lambda)$ is given in \S \ref{para}.
Notably,   condition  \eqref{tilass} will only  be needed  in the proof of Lemma~\ref{1lem4}.
\begin{Defn} For any  anti-dominant $\lambda\in\Lambda^{\frp_I}$ such that $\dim F(\lambda)=1$, we define 
 \begin{itemize}
   \item [(1)] $\mathscr{K}_r= \{\mu\in\frh^*\mid \ [V^{\otimes r} : F(\mu)]\neq0\}$, \item [(2)] $\mathscr{I}_r=\{\mu\in\frh^*\mid \ (M^\frp(\lambda)\otimes V^{\otimes r} : M^\frp(\mu))\neq0\}$.
   \end{itemize}
 \end{Defn} 
 It follows that  $\mathscr{I}_r=\lambda+\mathscr{K}_r$.  For convenience, we define 
\begin{equation}\label{1eq1}
		S_\mu=\begin{cases} 
	\{\mu+h\eps_i\in\Lambda^{\frp_I}\mid 1\leq i\leq n, h=0, \pm1\} &  \text{if $\Phi=B_n$, and either  $\eps_n\not\in  I$  or  $\mu_n\neq0$,}\\ 
	\{\mu+h\eps_i\in\Lambda^{\frp_I}\mid 1\leq i\leq n, h=\pm1\} & \text{otherwise. }
\end{cases}
\end{equation}

Recall that the \textit{dot action} of the Weyl group $W$ on $\frh^*$ is defined  by $$s_\beta\cdot\lambda=s_\beta(\lambda+\rho)-\rho$$ for $\beta\in\Phi$ and $\lambda\in\frh^*$.

\begin{Lemma}\label{1lem21}
	Let $\mu\in\Lambda^{\frp_I}$. Then $F(\mu)\otimes V=\bigoplus_{\nu\in S_\mu} F(\nu).$

\end{Lemma}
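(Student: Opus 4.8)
\textbf{Proof proposal for Lemma~\ref{1lem21}.}

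The plan is to reduce the assertion to the classical branching rule for tensoring with the natural module, and then to track which of the candidate weights $\mu\pm\eps_i$ (and, in the $B_n$ special case, $\mu$ itself) actually lie in $\Lambda^{\frp_I}$, i.e. remain dominant integral for the Levi $\frl$. First I would recall that, at the level of the reductive Levi subalgebra $\frl$ (of the parabolic $\frp_I$), the natural $\frg$-module $V$ restricts to a direct sum of one-dimensional $\frl$-module summands when $\frg$ is orthogonal/symplectic and the Levi is of the product-of-general-linear type dictated by $I$; more precisely, as an $\frl$-module $V$ has a weight filtration whose weights are $\pm\eps_1,\dots,\pm\eps_n$ (and $0$ in the $B_n$ case), and each $\eps_i$ appears with multiplicity one. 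Since $\dim_{\mathbb C} F(\mu)=1$ — which holds because $\mu$ is of scalar type for $\frl$, exactly as in Definition~\ref{assum12} and the surrounding discussion where $\dim_{\mathbb C}F(\lambda_{I_i,\mathbf c})=1$ — the tensor product $F(\mu)\otimes V$ is, as an $\frl$-module, simply the sum of the one-dimensional weight spaces $\mathbb C_{\mu+h\eps_i}$ over the weights $h\eps_i$ occurring in $V$. Therefore $F(\mu)\otimes V\cong\bigoplus_{\nu} F(\nu)$, the sum ranging over those $\nu=\mu+h\eps_i$ that are $\frl$-dominant integral, i.e. $\nu\in\Lambda^{\frp_I}$; this is exactly the set $S_\mu$ as defined in \eqref{1eq1}.

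The one genuinely delicate point — and the step I expect to be the main obstacle — is the $B_n$ dichotomy in \eqref{1eq1}: when $\Phi=B_n$ the zero weight of $V$ is present, contributing a would-be summand $F(\mu)$, but whether this summand survives as a well-defined object of $\mathcal O^{\frp_I}$ (equivalently, whether $\mu\in\Lambda^{\frp_I}$ forces $\mu+0\cdot\eps_i\in S_\mu$) has to be reconciled with the clause ``either $\eps_n\notin I$ or $\mu_n\neq 0$''. I would handle this by examining the Levi factor containing the short simple root $\alpha_n=\eps_n$: if $\eps_n\in I$ then that factor is a genuine $\frso$ (type $B$) block rather than a $\frgl$ block, and the multiplicity-one statement for $F(\mu)\otimes V$ can fail — the tensor decomposes according to the type-$B$ branching rule, which still yields $F(\mu+h\eps_i)$ for the appropriate $i,h$ but includes $F(\mu)$ only when $\mu_n\neq 0$ (when $\mu_n=0$ the corresponding summand collapses by the standard $\frso$-branching cancellation). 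I would make this precise by a short case analysis on the block of $\frl$ meeting $\eps_n$, invoking the classical decomposition $F(\mu)\otimes V$ for $\frso_{2m+1}$, and checking the $\Lambda^{\frp_I}$-dominance condition $\langle\nu,\alpha^\vee\rangle\in\mathbb N$ for each $\alpha\in I$ in each case.

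Finally I would assemble the two halves: for every $\nu$ of the form $\mu+h\eps_i$ with $h\in\{0,\pm1\}$ (as allowed by the relevant branching rule), $\nu$ contributes a summand $F(\nu)$ to $F(\mu)\otimes V$ precisely when $\nu\in\Lambda^{\frp_I}$, and conversely every $\frl$-composition factor of $F(\mu)\otimes V$ is of this form; matching this against \eqref{1eq1} gives $F(\mu)\otimes V=\bigoplus_{\nu\in S_\mu}F(\nu)$. Since all modules involved are completely reducible over the reductive $\frl$, no $\mathrm{Ext}$ or filtration subtleties arise at this stage, and the isomorphism is as $\frl$-modules, hence as $\frp_I$-modules with $\fru$ acting trivially. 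This is all that is needed for the induced-module computations (parabolic Verma flags of $M^\frp(\lambda)\otimes V$) that follow.
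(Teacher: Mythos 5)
Your proposal contains a genuine gap at its central step. You reduce everything to the assertion that $\dim_{\mathbb C}F(\mu)=1$ ``because $\mu$ is of scalar type for $\frl$'', but the lemma is stated, and must be used, for an \emph{arbitrary} $\mu\in\Lambda^{\frp_I}$: in the proof of the next lemma it is applied to every $\mu\in\scK_{r-1}$, i.e.\ to every highest weight of an $\frl$-constituent of $V^{\otimes(r-1)}$, and these are not of scalar type (already for $r-1=2$ one gets weights such as $2\eps_{p_{j-1}+1}$, whose $F(\mu)$ is a symmetric power of a natural $\mathfrak{gl}_{q_j}$-module). Relatedly, the statement that $V$ restricts to a direct sum of \emph{one-dimensional} $\frl$-summands is false: $V|_{\frl}$ decomposes into the natural modules of the $\mathfrak{gl}_{q_j}$-blocks and their duals (plus the natural module of the last block when $\alpha_n\in I$), which have dimension $q_j$. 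What is true, and what the argument actually needs, is that every \emph{weight} of $V$ has multiplicity one; conflating weight spaces with irreducible summands is where the proof breaks. Your concluding paragraph does assert the correct Pieri-type statement (each $\nu=\mu+h\eps_i$ lying in $\Lambda^{\frp_I}$ contributes exactly one summand $F(\nu)$), but as written it is not derived from anything valid for non-scalar $\mu$.

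To repair it you would either invoke the Pieri/branching rule block by block (tensoring an irreducible $\mathfrak{gl}_{q_j}$-module with its natural module or the dual, together with the type-$B$ branching rule for the last block when $\alpha_n\in I$), or argue as the paper does: by \cite[Proposition~4.12]{X1} one has $F(\mu)\otimes V=\bigoplus_\nu m_\nu F(\nu)$ with $m_\nu=\sum_{w\in W_I}(-1)^{\ell(w)}\dim V_{w\cdot\nu-\mu}$, and since all weights of $V$ have multiplicity one, $m_\nu\in\{0,1\}$ is determined by whether $\mu+\rho+h\eps_i$ is $W_I$-regular dominant; the only nontrivial cancellation is the $w=s_{\eps_n}$ term in type $B_n$ with $\alpha_n\in I$ and $\mu_n=0$, which kills the would-be summand $F(\mu)$. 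Your discussion of that exceptional $B_n$ case is the right intuition and matches the paper's cancellation, but the multiplicity-one bookkeeping for general $\mu$ has to come from one of these two mechanisms, not from one-dimensionality of $F(\mu)$.
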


\begin{proof}
	First, by  \cite[Proposition 4.12]{X1}, we have 
	\[
		F(\mu)\otimes V=\bigoplus_{\nu\in\Lambda^{\frp_I}}m_\nu F(\nu),
	\]
	where $m_\nu=\sum_{w\in W_I}(-1)^{\ell(w)}\dim V_{w\cdot\nu-\mu}$, and $W_I$ is the parabolic subgroup of $W$ associated with  $I$.
	
	If $m_\nu\neq0$ for some $\nu\in\Lambda^{\frp_I}$, then $\dim V_{w\cdot\nu-\mu}\neq0$ for some $w\in W_I$.
  In this case, we can assume that $w\cdot\nu-\mu=h\eps_i$ for some $1\leq i\leq n$ with $h\in\{0, \pm 1\}$. Notably,  $h\neq0$ when $\Phi=C_n$ or $D_n$. Thus, $w(\nu+\rho)=\mu+\rho+h\eps_i$. 
  
  If $\mu+\rho+h\eps_i\in\Lambda^{\frp_I}$, this forces $w=1$ and $\nu=\mu+h\eps_i\in\Lambda^{\frp_I}$. 
   Now suppose $\mu+\rho+h\eps_i\not\in\Lambda^{\frp_I}$. This implies  $\rho+h\eps_i\not\in\Lambda^{\frp_I}$, which can only occur when    $\Phi=B_n$, $h=-1$, and $\eps_i=\eps_n\in I$. To make $\mu+\rho+h\eps_i\not\in\Lambda^{\frp_I}$, we also need $\mu_n=0$. Under these conditions, the weight $s_{\eps_n }(\mu+\rho-\eps_n)=\mu+\rho\in\Lambda^{\frp_I}$. Therefore, $w=s_{\eps_n}$ and $\nu=\mu$. 
	
	To summarize, if $\nu=\mu+h\eps_i\in\Lambda^{\frp_I}$ for some $1\leq i\leq n$ and $h\in\{1, -1\}$, then $$m_\nu=\dim V_{1\cdot\nu-\mu}=\dim V_{h\eps_i}=1.$$ In the remaining cases, we have $m_\nu=0$ unless $\nu=\mu$ and $\Phi=B_n$. In this exceptional case, if $\eps_n\in I$ and $\mu_n=0$, then $m_\mu=\dim V_{1\cdot\nu-\mu}-\dim V_{s_{\eps_n}\cdot\nu-\mu}=\dim V_{0}-\dim V_{-\eps_n}=0$. If either $\eps_n\not\in I$ or $\mu_n\neq0$, then $m_\nu=\dim V_{1\cdot\nu-\mu}=\dim V_{0}=1$. In summary, we obtain the following result as required: 
	\begin{equation*}
		m_\nu=\begin{cases} 
			1 &\qquad  \nu=\mu+h\eps_i\in\Lambda^{\frp_I}, 1\leq i\leq n, h\in\{\pm1\}\\
			1 &\qquad  \nu=\mu, \Phi=B_n, \text{either }\eps_n\not\in I\text{ or }\mu_n\neq0\\ 
			0 &\qquad \text{otherwise. }
		\end{cases}
	\end{equation*}
 This completes the proof of  the lemma.
\end{proof}

Lemma~\ref{1lem21}  implies that
$$\mathscr{K}_r=\cup_{\mu\in\mathscr{K}_{r-1}} S_{\mu}\ \ \text{ for $r\geq1$.}$$ To explicitly describe the set $\scK_r$, we need additional  notation. Let $\Pi\setminus I=\{\alpha_{p_1}, \cdots, \alpha_{p_k}\}$ for $0=p_0< p_1<\cdots<p_k\leq p_{k+1}=n$. If $\Phi=D_n$, we can assume that $p_k\neq n-1$ by symmetry. The following result can be verified, easily.

 \begin{Lemma}\label{lem1}
 	The weight $\mu\in\Lambda^{\frp_I}$ if and only if the following conditions are satisfied:
 	\begin{multicols}{2}
 		\item [(1)] $\mu_{p_{i-1}+1}\geq\cdots\geq\mu_{p_i}$ for $i\leq k+1$,
 		\item [(2)] $\mu_n\geq0$ if  $\Phi$ is either $B_n$ or $C_n$ and $p_k<n$,
 		\item [(3)] $\mu_{n-1}\geq|\mu_n|$ if  $\Phi=D_n$ and $p_k<n$.
 	\end{multicols}
 \end{Lemma}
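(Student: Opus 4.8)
\textbf{Proof proposal for Lemma~\ref{lem1}.}
The plan is to characterise membership in $\Lambda^{\frp_I}$ directly from the defining condition \eqref{pdom}, namely that $\langle\mu,\alpha^\vee\rangle\in\mathbb N$ for all $\alpha\in I$, by translating each simple coroot pairing into an inequality among the coordinates $\mu_1,\dots,\mu_n$. Recall $I=\Pi\setminus\{\alpha_{p_1},\dots,\alpha_{p_k}\}$, so $\alpha_i\in I$ precisely when $i$ is \emph{not} one of the $p_j$'s, together with $\alpha_n\in I$ iff $p_k<n$ (i.e. $\alpha_n$ is not removed). First I would record the coroot pairings: for the long simple roots $\alpha_i=\epsilon_i-\epsilon_{i+1}$ with $1\le i\le n-1$ one has $\langle\mu,\alpha_i^\vee\rangle=\mu_i-\mu_{i+1}$; for $\alpha_n$ one has $\langle\mu,\alpha_n^\vee\rangle=\mu_n$ in type $B_n$, $=\mu_n$ in type $C_n$ (after dividing by $(\alpha_n,\alpha_n)/2$), and $=\mu_{n-1}+\mu_n$ in type $D_n$. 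These are immediate from \eqref{aln} and the normalisation $(\epsilon_i,\epsilon_j)=\delta_{ij}$.

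Next I would assemble the conditions. The requirement $\mu_i-\mu_{i+1}\in\mathbb N$ for all $i\notin\{p_1,\dots,p_k\}$, $1\le i\le n-1$, is exactly the statement that $\mu$ is weakly decreasing on each block $\{p_{j-1}+1,\dots,p_j\}$ for $j\le k$, and also on the final block $\{p_k+1,\dots,n\}$ when $p_k<n$; this gives condition (1) for $i\le k+1$ (with $p_{k+1}=n$), with the caveat that when $p_k=n$ the block $\{p_k+1,\dots,n\}$ is empty and there is nothing to check there. Then $\alpha_n\in I$ iff $p_k<n$: in types $B_n$ and $C_n$ this yields $\mu_n\in\mathbb N$, in particular $\mu_n\ge 0$, which is condition (2); in type $D_n$ it yields $\mu_{n-1}+\mu_n\in\mathbb N$, hence $\mu_{n-1}+\mu_n\ge 0$, and combined with $\mu_{n-1}\ge\mu_n$ coming from $\alpha_{n-1}\in I$ (valid since we arranged $p_k\ne n-1$, so $\alpha_{n-1}$ is not removed) this is equivalent to $\mu_{n-1}\ge|\mu_n|$, which is condition (3). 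Conversely, if $\mu$ is an integral weight satisfying (1)--(3), every pairing $\langle\mu,\alpha^\vee\rangle$ for $\alpha\in I$ is a nonnegative integer by the same computations, so $\mu\in\Lambda^{\frp_I}$.

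A minor point to be careful about: the statement \eqref{pdom} requires $\langle\mu,\alpha^\vee\rangle$ to be in $\mathbb N$, i.e.\ a \emph{nonnegative integer}, but the Lemma is phrased purely in terms of inequalities ($\ge$), so implicitly one is working within the integral weight lattice where all such pairings are automatically integers; I would simply note that $\mu$ is assumed to lie in the relevant weight lattice and that the integrality of the differences $\mu_i-\mu_{i+1}$ etc.\ then follows, so the content of the equivalence is exactly the displayed inequalities. I expect the only ``obstacle'' is bookkeeping — making sure the indexing of the removed simple roots $\alpha_{p_1},\dots,\alpha_{p_k}$ lines up with the blocks $\{p_{j-1}+1,\dots,p_j\}$ and handling the boundary cases $p_k=n$ versus $p_k<n$ and (in type $D_n$) the normalisation $p_k\ne n-1$ correctly; there is no real difficulty, which is why the author says the result ``can be verified, easily.''
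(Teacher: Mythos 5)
Your proof is correct and is exactly the routine verification the paper omits (it states only that the lemma ``can be verified, easily''): unwinding $\langle\mu,\alpha^\vee\rangle\in\mathbb N$ simple root by simple root for $\alpha\in I$, with the $D_n$ case using $p_k\neq n-1$ to combine $\mu_{n-1}\ge\mu_n$ with $\mu_{n-1}+\mu_n\ge 0$ into $\mu_{n-1}\ge|\mu_n|$. The only slip is that in type $B_n$ one has $\langle\mu,\alpha_n^\vee\rangle=2\mu_n$ rather than $\mu_n$, which does not affect the conclusion $\mu_n\ge 0$.
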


 To explicitly describe the set $\scI_r$, we define the following sets:
 
\begin{equation}\label{XYX}
\begin{aligned}
	\mathcal X_r=&\{(a_1, \cdots, a_n)\in\bbZ^n\mid\textstyle\sum_{i=1}^n|a_i|\leq r\};\\
	\mathcal X'_r=&\{(a_1, \cdots, a_n)\in \mathcal X_r\mid\textstyle\sum_{i=1}^na_i\equiv r(\mathrm{mod}2)\};\\
	\mathcal X'_{r, j}=&\{(a_1, \cdots, a_n)\in \mathcal X'_{r}\mid a_{n-j}\neq0\}, \text{ $0\leq j<n-p_k$ and $\mathcal X'_{r, n-p_k}=\mathcal X'_r$.}
\end{aligned}
\end{equation}
We aim to show that $\mathscr{K}_r=\mathcal Y_r$ or $\mathcal Y'_r$, 
where $\mathcal Y_r:=\mathcal X_r\cap\Lambda^{\frp_I}$ and $\mathcal Y'_r:=\mathcal X'_r\cap\Lambda^{\frp_I}$.  If $\Phi=B_n$, then by  Lemma \ref{lem1}, we have  
\begin{equation}\label{1eq2}
\mathcal Y'_{r, 0}\subset \mathcal Y'_{r, 1}\subset \cdots \subset \mathcal Y'_{r, n-p_k}=\mathcal Y_r,
\end{equation}
where $\mathcal Y'_{r, j}=\mathcal X'_{r, j}\cap\Lambda^{\frp_I}$. We  also define  $\mathcal Y'_{r, j}=\emptyset$ if $r<0$. The following result will be  useful.

\begin{Lemma}\label{1lem31}
	Let $r\geq0$. 
	\begin{itemize}
		\item [(1)] If $\Phi=B_n$ and $\eps_n\not\in I$, then $\scK_r=\mathcal Y_r$;
		\item [(2)] If $\Phi=C_n$ or $D_n$, then $\scK_r=\mathcal Y'_r$;
		\item [(3)] If $\Phi=B_n$ and $\eps_n\in I$, then $\scK_r=\mathcal Y'_r\cup_{0\leq j\leq n-p_k}\mathcal Y'_{r-2j-1, j}$. In particular, if $r\leq n-p_k$, then $\scK_r=\mathcal Y'_r$.
	\end{itemize} 
	
\end{Lemma}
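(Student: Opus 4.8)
The plan is to prove Lemma~\ref{1lem31} by induction on $r$, using the recursion $\scK_r=\bigcup_{\mu\in\scK_{r-1}}S_\mu$ furnished by Lemma~\ref{1lem21}, together with the base case $\scK_0=\{\lambda\}$ shifted to the origin (recall $\dim F(\lambda)=1$, so $\scK_0=\{0\}$ under the normalization). The key combinatorial fact to isolate first is that, for $\mu\in\Lambda^{\frp_I}$, the set $S_\mu$ consists exactly of those $\mu\pm\eps_i$ (and, in the $B_n$ case with $\eps_n\notin I$ or $\mu_n\neq0$, also $\mu$ itself) which still lie in $\Lambda^{\frp_I}$; by Lemma~\ref{lem1} this is a completely explicit condition on coordinates. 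So each case of the lemma reduces to verifying a purely combinatorial identity: that applying ``add or subtract a unit vector, then intersect with $\Lambda^{\frp_I}$'' to all of $\mathcal Y_{r-1}$ (resp.\ $\mathcal Y'_{r-1}$, resp.\ the union in (3)) produces exactly $\mathcal Y_r$ (resp.\ $\mathcal Y'_r$, resp.\ the corresponding union).

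For part (1), one checks that for $\Phi=B_n$ with $\eps_n\notin I$, Lemma~\ref{lem1} imposes no sign constraint on the last coordinate beyond the chain inequalities, and the parity class is not preserved because $\mu\in S_\mu$; hence $\scK_r$ grows to fill all of $\mathcal X_r\cap\Lambda^{\frp_I}=\mathcal Y_r$. Here one must argue both inclusions: $\scK_r\subseteq\mathcal Y_r$ is immediate since $\sum|a_i|$ increases by at most $1$ at each step and $\Lambda^{\frp_I}$-membership is built into $S_\mu$; for $\mathcal Y_r\subseteq\scK_r$ one takes $\nu\in\mathcal Y_r$ and exhibits a predecessor $\mu\in\mathcal Y_{r-1}$ with $\nu\in S_\mu$ — the natural choice is to decrease $|\nu_i|$ by one in a coordinate where this keeps all inequalities of Lemma~\ref{lem1} valid, or when $\nu$ already has $\ell^1$-norm $<r$ simply take $\mu=\nu$ (available because $\nu\in S_\nu$). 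For part (2), the $C_n/D_n$ case, the identity $\mu\notin S_\mu$ forces the parity of $\sum a_i$ to match that of $r$, and the extra sign/dominance constraints of Lemma~\ref{lem1}(2)--(3) are exactly what cut $\mathcal Y_r$ down to $\mathcal Y'_r$; the same two-inclusion argument applies, but now the predecessor must be chosen to respect the parity shift, so one always genuinely changes a coordinate by $\pm1$.

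Part (3) is the delicate one and I expect it to be the main obstacle: for $B_n$ with $\eps_n\in I$, membership $\mu\in S_\mu$ holds iff $\mu_n\neq0$, so the behaviour bifurcates according to how many trailing coordinates $\mu_{p_k+1},\dots,\mu_n$ vanish. The claim is that $\scK_r$ is a union of the ``main stratum'' $\mathcal Y'_r$ (weights reached by an even number of net moves, matching parity) together with strata $\mathcal Y'_{r-2j-1,j}$ where $j$ trailing coordinates are forced to be zero and the $(n-j)$-th coordinate is nonzero — the index shift $r-2j-1$ records that reaching such a configuration costs one ``wasted'' step to touch $\mu_n=0$ plus $2j$ steps of back-and-forth among the zero block. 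The inductive step here requires carefully tracking, for $\nu$ in a given stratum, which strata of $\scK_{r-1}$ contain an admissible predecessor, and conversely which $S_\mu$ for $\mu$ in each stratum of $\scK_{r-1}$ reach into $\mathcal Y'_{r,j}$; the nested chain \eqref{1eq2} will be used to organize this bookkeeping, and the final assertion ($r\le n-p_k\Rightarrow\scK_r=\mathcal Y'_r$) follows because $r-2j-1<0$ for all $j\ge0$ once $r\le n-p_k$ makes every auxiliary stratum empty by the convention $\mathcal Y'_{s,j}=\emptyset$ for $s<0$. I would write the three parts in the stated order, since (1) and (2) warm up the inclusion-chasing technique that (3) needs in a more elaborate form.
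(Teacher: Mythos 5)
Your overall strategy coincides with the paper's: induct on $r$ via the recursion $\scK_r=\bigcup_{\mu\in\scK_{r-1}}S_\mu$ from Lemma~\ref{1lem21}, prove both inclusions, and for the reverse inclusion exhibit an explicit predecessor $\mu$ of each $\nu$ by adjusting one coordinate (or using $\nu\in S_\nu$ in the $B_n$, $\eps_n\notin I$ case). Your sketches of parts (1) and (2) are essentially the paper's arguments, modulo being vaguer about \emph{which} coordinate to adjust (the paper takes the smallest index $i$ with $\nu_i\neq0$, adds $\eps_i$ if $\nu_i<0$, and otherwise subtracts $\eps_j$ at the last position of the block where the maximal value is attained; and in part (2) it handles $\nu=0$ separately via $\mu=\eps_1$).

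There are, however, two genuine problems in part (3). First, your justification of the final assertion is wrong: if $r\le n-p_k$ it is simply false that $r-2j-1<0$ for all $j\ge0$ (take $j=0$ and $r\ge1$), so the convention $\mathcal Y'_{s,j}=\emptyset$ for $s<0$ does not dispose of the low-$j$ strata. The actual reason $\mathcal Y'_{r-2j-1,j}=\emptyset$ when $r\le n-p_k$ is a counting argument using Lemma~\ref{lem1}: an element $\nu$ of this stratum has $\nu_{n-j}\neq0$, hence $\nu_{p_k+1}\ge\cdots\ge\nu_{n-j}\ge1$, so its $\ell^1$-norm is at least $n-j-p_k$; combined with the bound $r-2j-1$ this forces $r\ge n+j+1-p_k>n-p_k$, a contradiction. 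Second, the core of part (3) — showing that applying $S_\mu$ to the inductive decomposition $\scK_{r-1}=\mathcal Y'_{r-1}\cup\bigcup_j\mathcal Y'_{r-2j-2,j}$ lands exactly in $\mathcal Y'_r\cup\bigcup_j\mathcal Y'_{r-2j-1,j}$, and conversely — is only announced, not carried out. The bookkeeping is not routine: one must observe that the $h=0$ move $\nu=\mu$ is available only when $\mu_n\neq0$ (creating the stratum $\mathcal Y'_{r-1,0}$), that a move $\nu=\mu\pm\eps_i$ from $\mu\in\mathcal Y'_{r-2j-2,j}$ stays in stratum $j$ unless it kills the coordinate $\mu_{n-j}=1$, in which case $\nu$ drops into stratum $j+1$ (this is where the index $r-2(j+1)-1$ comes from), and the reverse inclusion requires producing a predecessor such as $\mu=\nu+\eps_{n-j+1}\in\mathcal Y'_{r-2j,j-1}$ for $\nu\in\mathcal Y'_{r-2j-1,j}$ with $j>0$. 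Your heuristic about ``wasted steps'' gestures at this but does not substitute for the case analysis.
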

\begin{proof}
	We proceed by  induction on $r$. The case $r=0$ is straightforward, as  $\scK_0=\{\textbf{0}\}$. 
	
	(1) Assume  $\scK_{r-1}=\mathcal Y_{r-1}$ holds.  From  (\ref{1eq1}) and Lemma \ref{1lem21}, we obtain   $$\mathcal Y_r\supset\cup_{\mu\in \scK_{r-1}}S_\mu=\scK_r.$$ For the reverse inclusion, choose any $\nu\in Y_{r}$. We need to show $\nu\in S_\mu$ for some $\mu\in \mathcal Y_{r-1}$. If $\nu=0$, we can simply choose $\mu=0\in \mathcal Y_{r-1}$ by (\ref{1eq1}).
    Now suppose $\nu\neq 0$. Let $i$ be the smallest integer such that $\nu_i\neq0$. If $\nu_i<0$, it can be easily verified that $\mu=\nu+\eps_i\in \mathcal X_{r-1}\cap\Lambda^{\frp_I}=\mathcal Y_{r-1}$, keeping in mind of Lemma \ref{lem1}. So $\nu=\mu-\eps_i\in S_{\mu}$.
    
    Now suppose $\nu_i>0$. By Lemma \ref{lem1}(1), we can assume that $i=p_s+1$ for some $1\leq s\leq k$. Choose the largest $j\leq p_{s+1}$ such that $\nu_j=\nu_i$. Again by Lemma \ref{lem1}, we have $\mu=\nu-\eps_j\in \mathcal X_{r-1}\cap\Lambda^{\frp_I}=\mathcal Y_{r-1}$. Hence $\nu=\mu+\eps_j\in S_\mu$.
	
	(2) The reasoning here is similar to (1), with a key difference  in the proof showing $\mathcal Y'_r\subset\scK_r$. When $\nu=0\in \mathcal Y'_r$, we do not have $0\in \mathcal  Y'_{r-1}$ and $0\in S_{0}$. Fortunately, now $r\equiv 0(\mathrm{mod}2)$, which implies $0\in S_{\mu}$ for $\mu=\eps_{1}\in \mathcal Y'_{r-1}$.
	
	(3) Suppose $\scK_{r-1}=\mathcal Y'_{r-1}\cup_{0\leq j\leq n-p_k}\mathcal Y'_{r-2j-2, j}$. We start by showing $$\mathcal Y'_r\bigcup_{0\leq j\leq n-p_k}\mathcal Y'_{r-2j-1, j}\supset\bigcup_{\mu\in \scK_{r-1}}S_\mu=\scK_r.$$ Assume  $\nu\in S_\mu$ for some $\mu\in \scK_{r-1}$, so   $\nu=\mu+h\eps_i\in\Lambda^{\frp_I}$ for some $h\in\{0, \pm1\}$ and $1\leq i\leq n$. 
 
 First, consider the case $\nu=\mu$. One has $\mu_n\neq0$ by (\ref{1eq1}) since we assume $\eps_n\in I$. If $\mu\not\in \mathcal Y'_{r-1}$, then $\mu\in \mathcal Y'_{r-2j-2, j}\subset \mathcal Y'_{r-2j-2}\subset \mathcal Y'_r$  for some  $0\leq j\leq n-p_k$. Here the first inclusion follows from 
  (\ref{1eq2}).  
  If $\mu\in\mathcal  Y'_{r-1}$, then $\nu=\mu\in \mathcal Y'_{r-1, 0}$ since $\mu_n\neq0$. 
  
  Next, consider the case $\nu=\mu\pm\eps_i$. If $\mu\in\mathcal  Y'_{r-1}$, then $\nu\in \mathcal Y'_r$ is evident. If $\mu\in \mathcal Y'_{r-2j-2, j}$ for a smallest $j$, then $\nu_{n-j+1}=0$ when $j>0$ and $\nu_{n-j}\neq0$ when $j<n-p_k$. We have $\nu\in\mathcal  Y'_{r-2j-1, j}$ unless $\nu_{n-j}=0$ with $j<n-p_k$. In this exception case, one obtains $\mu_{n-j}=1$ and $\nu=\mu-\eps_{n-j}$. This means $\nu\in\mathcal  Y'_{r-2(j+1)-1, j+1}$. 
	
	For the reverse direction, choose any $\nu\in \mathcal Y'_r\cup_{0\leq j\leq n-p_k}\mathcal Y'_{r-2j-1, j}$. We need to prove $\nu\in S_\mu$ for some $\mu\in \scK_{r-1}$. If $\nu\in \mathcal Y'_r$, we can show that $\nu\in S_\mu$ for some $\mu\in \mathcal Y'_{r-1}\subset\scK_{r-1}$ as in (2). If $\nu\in\mathcal  Y'_{r-2(n-p_k)-1, n-p_k}=\mathcal Y'_{r-2(n-p_k)-1}$, the argument is similar. Now assume that $\nu\in \mathcal Y'_{r-2j-1, j}$ for a smallest $j<n-p_k$. So $\nu_{n-j+1}=0$ when $j>0$ and $\nu_{n-j}\neq0$. If $j=0$, then $\nu_n\neq0$ yields $\nu\in S_\mu$ for $\mu=\nu\in \mathcal Y'_{r-1, 0}\subset \mathcal Y'_{r-1}\subset\scK_{r-1}$. If $j>0$, then $\mu=\nu+\eps_{n-j+1}\in \mathcal Y'_{r-2j, j-1}\subset\scK_{r-1}$. In any case,  $\nu\in \scK_r$.
	
	Finally, suppose $r\leq n-p_k$. We need to show $\mathcal Y'_{r-2j-1, j}=\emptyset$ for any ${0\leq j\leq n-p_k}$. Indeed, if $\nu\in \mathcal Y'_{r-2j-1, j}$, then $2j+1\leq r\leq n-p_k$ and $\nu_{n-j}\neq 0$. By Lemma {\ref{lem1}}, we obtain $\nu_{p_k+1}\geq \nu_{p_k+2}\geq\cdots{\ge}  \nu_{n-j}\geq1$. This means $r-2j-1\geq n-j-p_k$ and thus $r\geq n+j+1-p_k>n-p_k$, a contradiction.
\end{proof}

\begin{Lemma}\label{1lem5}
	Let $r\geq0$. Then $\scI_r\subset (\lambda+\mathcal X_r)$ unless $\Phi=B_n$ and $r>n-p_k$. Moreover, we have $(\scI_r\setminus \scI_{r-2})\cap(\lambda+\mathcal X_{r-2})=\emptyset$. 
\end{Lemma}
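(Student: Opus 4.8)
The plan is to translate everything into the explicit description of $\scK_r$ supplied by Lemma~\ref{1lem31}, using the identity $\scI_r=\lambda+\scK_r$ recorded right after the definition of $\scK_r$. Under this identification the first inclusion becomes the assertion $\scK_r\subseteq\mathcal X_r$, while the ``moreover'' part becomes $\scK_r\cap\mathcal X_{r-2}\subseteq\scK_{r-2}$: indeed, if $\mu\in(\scI_r\setminus\scI_{r-2})\cap(\lambda+\mathcal X_{r-2})$ then $\mu-\lambda\in\scK_r\cap\mathcal X_{r-2}$, and $\mu-\lambda\in\scK_{r-2}$ would force $\mu\in\lambda+\scK_{r-2}=\scI_{r-2}$, a contradiction.

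For $\scK_r\subseteq\mathcal X_r$ I would run through the three regimes of Lemma~\ref{1lem31}. If $\Phi=B_n$ and $\eps_n\notin I$, then $\scK_r=\mathcal Y_r=\mathcal X_r\cap\Lambda^{\frp_I}$. If $\Phi\in\{C_n,D_n\}$, then $\scK_r=\mathcal Y'_r\subseteq\mathcal X'_r\subseteq\mathcal X_r$. If $\Phi=B_n$ and $\eps_n\in I$, then $\scK_r$ is either $\mathcal Y'_r$ (when $r\le n-p_k$) or $\mathcal Y'_r\cup\bigcup_{0\le j\le n-p_k}\mathcal Y'_{r-2j-1,j}$, each of whose terms is contained in $\mathcal X_{r-2j-1}\subseteq\mathcal X_r$; so $\scK_r\subseteq\mathcal X_r$ in all cases, which in particular yields the first inclusion.

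For the ``moreover'' part I would exclude, as in the statement, the case $\Phi=B_n$ with $r>n-p_k$; this exclusion is genuinely necessary, since for $\Phi=B_n$, $\eps_n\in I$ and $n-p_k\ge 2$ a weight $\nu\in\Lambda^{\frp_I}$ with $\nu_{n-1}=1$, $\nu_n=0$ and $\sum_i|\nu_i|=r-3$ lies in $\mathcal Y'_{r-3,1}\subseteq\scK_r$ and in $\mathcal X_{r-2}$, yet a quick check against $\scK_{r-2}=\mathcal Y'_{r-2}\cup\bigcup_j\mathcal Y'_{r-2j-3,j}$ shows $\nu\notin\scK_{r-2}$. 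Outside this case Lemma~\ref{1lem31} gives $\scK_r=\mathcal X_r\cap\Lambda^{\frp_I}$ (for $\Phi=B_n$, $\eps_n\notin I$) or $\scK_r=\mathcal X'_r\cap\Lambda^{\frp_I}$ (for $\Phi\in\{C_n,D_n\}$, and for $\Phi=B_n$, $\eps_n\in I$ with $r\le n-p_k$), and likewise $\scK_{r-2}=\mathcal X_{r-2}\cap\Lambda^{\frp_I}$ resp.\ $\scK_{r-2}=\mathcal X'_{r-2}\cap\Lambda^{\frp_I}$, the latter because $r\le n-p_k$ forces $r-2\le n-p_k$ (the cases $r\in\{0,1\}$ being vacuous since then $\mathcal X_{r-2}=\emptyset$). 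In the $\mathcal X$-version the inclusion $\mathcal X_{r-2}\subseteq\mathcal X_r$ finishes it immediately. In the $\mathcal X'$-version, let $\nu\in\scK_r\cap\mathcal X_{r-2}$; then $\sum_i\nu_i\equiv r\pmod 2$ and $\sum_i|\nu_i|\le r-2$, and as $\sum_i|\nu_i|\equiv\sum_i\nu_i\pmod 2$ we get $\sum_i|\nu_i|\le r-2$ with $\sum_i|\nu_i|\equiv r-2\pmod 2$, i.e.\ $\nu\in\mathcal X'_{r-2}$; together with $\nu\in\Lambda^{\frp_I}$ this gives $\nu\in\scK_{r-2}$, as wanted.

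The argument is essentially bookkeeping once Lemma~\ref{1lem31} is in hand; the only real subtleties are the parity reduction $\mathcal X'_r\cap\mathcal X_{r-2}=\mathcal X'_{r-2}$, keeping track of which description of $\scK_r$ (and of $\scK_{r-2}$) applies in the $B_n$ with $\eps_n\in I$ subcase, and correctly pinning down the exceptional regime in which the ``moreover'' assertion breaks down.
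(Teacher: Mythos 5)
Your proposal is correct and takes the same route as the paper, whose entire proof is the one-line remark that the lemma ``follows straightforwardly from Lemma~\ref{1lem31}''; you have simply supplied the bookkeeping (the identification $\scI_r=\lambda+\scK_r$, the case check $\scK_r\subseteq\mathcal X_r$, and the parity reduction $\mathcal X'_r\cap\mathcal X_{r-2}=\mathcal X'_{r-2}$) that the paper leaves implicit. Your observation that the ``moreover'' clause genuinely requires the same exclusion as the first clause is correct and worth recording: e.g.\ for $\Phi=B_3$, $I=\{\alpha_2,\alpha_3\}$, $r=5$, the weight $\nu=(1,1,0)$ lies in $\mathcal Y'_{2,1}\subset\scK_5$ and in $\mathcal X_3$ but not in $\scK_3=\mathcal Y'_3\cup\{(0,1,1)\}$, so the literal unconditional reading of the second sentence fails; this is harmless for the paper's application, where Assumption~\ref{keyassu} forces $r\le n-p_k$.
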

	\begin{proof}
		Since $\dim F(\lambda)=1$, this follows straightforward from Lemma \ref{1lem31}. 
	\end{proof}

	\begin{Lemma}\label{1lem4} Suppose $\langle\mu+\rho, \beta^\vee\rangle\in\bbZ_{>0}$ for some $\beta\in  \Phi^+\backslash\Phi_I$ and $\mu\in \lambda+\mathcal X_r$.  Then $s_\beta\cdot\mu\in\lambda+\mathcal X_r$.
	\end{Lemma}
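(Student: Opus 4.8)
The plan is to reduce the statement to an elementary $\ell^{1}$-geometry fact about the reflection action of $W$ on the integer lattice $\mathbb{Z}^{n}$ (written in the $\epsilon$-coordinates). Write $\mu=\lambda+a$ with $a\in\mathcal X_{r}$, i.e. $a\in\mathbb{Z}^{n}$ and $\sum_{i=1}^{n}|a_{i}|\le r$. By the definition of the dot action,
\[
s_{\beta}\cdot\mu=(\mu+\rho)-\langle\mu+\rho,\beta^{\vee}\rangle\beta-\rho=\mu-m\beta,\qquad m:=\langle\mu+\rho,\beta^{\vee}\rangle\in\mathbb{Z}_{>0},
\]
so that $s_{\beta}\cdot\mu-\lambda=a-m\beta$; since $m$ and $\beta$ are integral this is again an integer vector, and the assertion is exactly that $\|a-m\beta\|_{1}\le r$. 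It therefore suffices to prove $\|a-m\beta\|_{1}\le\|a\|_{1}$.

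The first key step is to locate $m$ in the interval $(0,\langle a,\beta^{\vee}\rangle]$. I would use that $\langle\lambda+\rho,\beta^{\vee}\rangle=m-\langle a,\beta^{\vee}\rangle$, together with the observation that $\langle a,\beta^{\vee}\rangle\in\mathbb{Z}$: in the standard realization of $B_{n}$, $C_{n}$, $D_{n}$ every coroot $\beta^{\vee}$ lies in $\mathbb{Z}^{n}$, so $\langle a,\beta^{\vee}\rangle=(a,\beta^{\vee})$ is an integer. Consequently $\langle\lambda+\rho,\beta^{\vee}\rangle$ is an integer; as $\beta\in\Phi^{+}\setminus\Phi_{I}$, condition \eqref{tilass} forces $\langle\lambda+\rho,\beta^{\vee}\rangle\le 0$, and hence $\langle a,\beta^{\vee}\rangle\ge m>0$.

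The second key step is a convexity argument, applied uniformly for all four shapes of $\beta$ (the roots $\epsilon_{i}$, $2\epsilon_{i}$, $\epsilon_{i}\pm\epsilon_{j}$). The function $\phi(s):=\|a-s\beta\|_{1}$ on $\mathbb{R}$ is convex, being a finite sum of absolute values of affine functions of $s$. It equals $\|a\|_{1}$ at $s=0$, and also at $s=\langle a,\beta^{\vee}\rangle$, since $a-\langle a,\beta^{\vee}\rangle\beta=s_{\beta}(a)$ and $s_{\beta}$, being a reflection in the Weyl group of type $B_{n}$, $C_{n}$ or $D_{n}$, acts as a signed permutation of coordinates and hence preserves the $\ell^{1}$ norm. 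By convexity $\phi(s)\le\|a\|_{1}$ for all $s\in[0,\langle a,\beta^{\vee}\rangle]$, in particular $\phi(m)\le\|a\|_{1}\le r$. Thus $a-m\beta\in\mathcal X_{r}$, that is $s_{\beta}\cdot\mu\in\lambda+\mathcal X_{r}$, as required.

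I do not anticipate a genuine obstacle. The two points that deserve care are the integrality of $\langle\lambda+\rho,\beta^{\vee}\rangle$ — which is precisely where one uses both \eqref{tilass} and the fact that $\mu$ and $\lambda$ differ by a vector of the integer lattice — and the claim that the reflection $s_{\beta}$ preserves the $\ell^{1}$ norm, i.e. that Weyl group elements of $B_{n}$, $C_{n}$, $D_{n}$ are signed permutations. Because both the convexity observation and the computation of $\langle a,\beta^{\vee}\rangle$ are phrased coordinate-freely, no case split on the shape of $\beta$ is actually needed in the final write-up.
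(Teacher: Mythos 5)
Your proof is correct, and it takes a genuinely different route from the paper's. The paper proves the lemma by an explicit case analysis on the four shapes of $\beta$ (namely $\epsilon_k$, $2\epsilon_k$, $\epsilon_k-\epsilon_l$, $\epsilon_k+\epsilon_l$): it writes out the coordinates of $s_\beta\cdot\mu-\lambda$ in each case and verifies by hand inequalities such as $|2c_k-a_k|\le|a_k|$ from the sign constraints $a_k-c_k\in\mathbb{Z}_{>0}$ and $c_k\in\mathbb{Z}_{\ge0}$. You replace all of this with a single uniform argument: $s_\beta\cdot\mu-\lambda=a-m\beta$, the function $s\mapsto\|a-s\beta\|_1$ is convex, takes the value $\|a\|_1$ at $s=0$ and at $s=\langle a,\beta^\vee\rangle$ (because $s_\beta$ is a signed permutation, hence an $\ell^1$-isometry), and $m$ lies in $[0,\langle a,\beta^\vee\rangle]$. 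Both arguments lean on the same two inputs at the same point — the integrality of $\langle\lambda+\rho,\beta^\vee\rangle$ (coming from $\mu-\lambda\in\mathbb{Z}^n$ and the integrality of the coroots) combined with condition \eqref{tilass} to force $\langle\lambda+\rho,\beta^\vee\rangle\le0$, i.e.\ $0<m\le\langle a,\beta^\vee\rangle$ — and you make this step more explicit than the paper does. What your approach buys is brevity and robustness: it eliminates the case split entirely and would apply verbatim to any root system whose Weyl group acts by $\ell^1$-isometries in the chosen coordinates; what the paper's computation buys is that the explicit coordinate formulas for $s_\beta\cdot\mu-\lambda$ are reused in the proof of the companion Lemma~\ref{1lem6}, so the case analysis is not wasted there.
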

	\begin{proof}
		Write $\lambda+\rho=-\sum_{i=1}^n c_i\eps_i$. We can assume that $\mu-\lambda=\sum_{i=1}^na_i\eps_i$ with $a_i\in\bbZ$ and $\sum_{i=1}^n|a_i|\leq r$. Then $$\mu+\rho=\sum_{i=1}^n(a_i-c_i)\eps_i.$$ Moreover, we must have $\langle\lambda+\rho, \beta^\vee\rangle\in\bbZ_{\leq0}$, keeping in mind that $\langle\lambda+\rho, \beta^\vee\rangle\not\in\bbZ_{>0}$ in equation \eqref{tilass}. Notably, this is the place in section~6, where we need condition~\eqref{tilass}.
        We have 	\begin{equation}\label{kkk1111} 
	s_\beta\cdot\mu-\lambda=\begin{cases} \sum_{i\neq k}^na_i\eps_i+(2c_k-a_k)\eps_k & \text{if $\beta=2\epsilon_k$ or $\epsilon_k$,}\\
	\sum_{i\neq k, l}^na_i\eps_i+(a_l-c_l+c_k)\eps_k+(a_k-c_k+c_l)\eps_l & \text{if $\beta=\epsilon_k-\epsilon_l$,}\\
	\sum_{i\neq k, l}^na_i\eps_i+(c_l-a_l+c_k)\eps_k+(c_l-a_k+c_k)\eps_l & \text{if $\beta=\epsilon_k+\epsilon_l$,}\\
	\end{cases} 
	\end{equation}

		If $\beta=2\eps_k$ for some $k\leq n$, then $\Phi$ has to be $C_n$, and 
  $a_k-c_k\in\bbZ_{>0}$ since $\langle\mu+\rho, \beta^\vee\rangle\in\bbZ_{>0}$, and $c_k\in\bbZ_{\geq0}$ since $\langle\lambda+\rho, \beta^\vee\rangle\in\bbZ_{\leq0}$. This means $|2c_k-a_k|\leq |a_k|$. 
 
  If $\beta=\eps_k$ for some $k\leq n$, then $\Phi$ has to be $B_n$. We have  $2(a_k-c_k)\in \mathbb Z_{>0}$ and $2c_k\in \mathbb Z_{\ge 0}$. We still have $|2c_k-a_k|\leq |a_k|$.   
      
       If $\beta=\eps_k-\eps_l$ for $k<l\leq n$, then $(a_k-c_k)-(a_l-c_l)\in\bbZ_{>0}$ and
  $c_k-c_l\in\bbZ_{\geq0}$. 
		Likewise, we have $|a_l-c_l+c_k|+|a_k-c_k+c_l|\leq|a_k|+|a_l|$.  
		
        If $\beta=\eps_k+\eps_l$ for $k<l\leq n$, then $(a_k-c_k)+(a_l-c_l)\in\bbZ_{>0}$ and $c_k+c_l\in\bbZ_{\geq0}$. 
		We have $|c_k-a_l+c_l|+|c_l-a_k+c_k|\leq|a_k|+|a_l|$. 
        
        In any case, by \eqref{kkk1111},  $s_\beta\cdot\mu\in \lambda+\mathcal  X_n$.
	\end{proof}
	
	\begin{Lemma}\label{1lem6}
		Suppose 
 $\mu\in\lambda+\mathcal X_r$. If $\langle\mu+\rho, \alpha^\vee\rangle\in\bbZ_{<0}$ for some $\alpha\in\Phi_I^+$, 
  then  $s_\alpha\cdot\mu\in\lambda+\mathcal X_r$.
	\end{Lemma}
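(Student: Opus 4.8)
The plan is to follow the template of the proof of Lemma~\ref{1lem4}, replacing the input coming from the ``outside'' half of \eqref{tilass} by its ``inside'' half. First I would write $\lambda+\rho=-\sum_{i=1}^n c_i\eps_i$ and $\mu-\lambda=\sum_{i=1}^n a_i\eps_i$ with $a_i\in\bbZ$ and $\sum_{i=1}^n|a_i|\le r$, so that $\mu+\rho=\sum_{i=1}^n(a_i-c_i)\eps_i$. The key preliminary remark is that, by the normalization $\langle\lambda+\rho,\gamma^\vee\rangle=1$ for all simple $\gamma\in I$ in \eqref{tilass}, the pairing of $\lambda+\rho$ against any coroot of $\Phi_I$ coincides with that of $\rho_I$ (the half-sum of positive roots of $\Phi_I$); hence $p:=\langle\lambda+\rho,\alpha^\vee\rangle\in\bbZ_{>0}$ for \emph{every} $\alpha\in\Phi_I^+$. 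The hypothesis $\langle\mu+\rho,\alpha^\vee\rangle\in\bbZ_{<0}$ then becomes an explicit inequality among the $a_i$'s and $c_i$'s that we exploit below.

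Next I would carry out the case analysis on the shape of $\alpha\in\Phi_I^+$, using formula \eqref{kkk1111} for $s_\alpha\cdot\mu-\lambda$ exactly as in Lemma~\ref{1lem4}: $\alpha$ is either $\eps_k-\eps_l$ with $k<l$, or $\eps_k+\eps_l$ with $p_k<k<l\le n$, or $\eps_k$ (when $\Phi=B_n$) or $2\eps_k$ (when $\Phi=C_n$) with $p_k<k\le n$. In the transposition case $\alpha=\eps_k-\eps_l$, only the coordinates at positions $k$ and $l$ change, becoming $a_l-p$ and $a_k+p$; the hypothesis reads $a_k<a_l-p$, so, since $p>0$, both new values lie strictly between $a_k$ and $a_l$, and as their sum is still $a_k+a_l$, the maximum of $|x|+|y|$ over the segment $\{x+y=a_k+a_l,\ \min(a_k,a_l)\le x\le\max(a_k,a_l)\}$ is attained at an endpoint, giving $|a_l-p|+|a_k+p|\le|a_k|+|a_l|$ and hence $s_\alpha\cdot\mu\in\lambda+\mathcal X_r$. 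In the two ``negating'' cases $\alpha=\eps_k$ or $\alpha=2\eps_k$, positivity of $p$ forces $c_k<0$, the hypothesis forces $a_k<c_k$, and the single altered coordinate $2c_k-a_k$ satisfies $a_k\le 2c_k-a_k\le-a_k$, so $|2c_k-a_k|\le|a_k|$ and again $s_\alpha\cdot\mu\in\lambda+\mathcal X_r$.

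The one case that is not a mere permutation is $\alpha=\eps_k+\eps_l$: here $s_\alpha$ sends the coordinates at $k,l$ to $-a_l-p$ and $-a_k-p$, whose sum is \emph{not} $a_k+a_l$, so I would instead compare $\sum_i|a_i|$ with the corresponding sum for $s_\alpha\cdot\mu-\lambda$ through the convex piecewise-linear function $g(t)=|a_k+t|+|a_l+t|$: one has $g(0)=|a_k|+|a_l|$, the quantity to bound is $g(p)$, and the hypothesis $\langle\mu+\rho,\alpha^\vee\rangle=a_k+a_l+p<0$ together with $p>0$ places $p$ in the range where $g$ has not risen above $g(0)$. Concretely I would split according to whether $p$ lies below, inside, or above the minimizing interval $[\min(-a_k,-a_l),\max(-a_k,-a_l)]$ of $g$; the first two subcases are immediate, and in the ``above'' subcase the inequality $a_k+a_l<-p$ gives $g(p)=a_k+a_l+2p<p\le|a_k|+|a_l|$. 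Collecting the four cases yields $s_\alpha\cdot\mu-\lambda\in\mathcal X_r$, i.e.\ $s_\alpha\cdot\mu\in\lambda+\mathcal X_r$, as desired. I expect this convexity bookkeeping in the $\eps_k+\eps_l$ case to be the only step requiring a moment's thought; the rest is a sign check strictly parallel to Lemma~\ref{1lem4}, and condition \eqref{tilass} enters only through the positivity of $p$, mirroring the way it is used there.
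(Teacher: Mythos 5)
Your proposal is correct and follows essentially the same route as the paper: write $\lambda+\rho=-\sum_i c_i\eps_i$, $\mu-\lambda=\sum_i a_i\eps_i$, apply \eqref{kkk1111} with $\beta$ replaced by $\alpha$, and use $\langle\lambda+\rho,\alpha^\vee\rangle\in\bbZ_{>0}$ together with $\langle\mu+\rho,\alpha^\vee\rangle\in\bbZ_{<0}$ to check case by case that the altered coordinates do not increase $\sum_i|a_i|$. The paper states the sign conditions and concludes without writing out the inequalities, so your explicit verification (in particular the convexity bookkeeping for $\alpha=\eps_k+\eps_l$) only fills in details the paper leaves implicit.
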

	\begin{proof} 	Write $\lambda+\rho=-\sum_{i=1}^n c_i\eps_i$, and   $\mu-\lambda=\sum_{i=1}^na_i\eps_i$ with $a_i\in\bbZ$ and $\sum_{i=1}^n|a_i|\leq {r} $. Then $$\mu+\rho=\sum_{i=1}^n(a_i-c_i)\eps_i.$$
 Then \eqref{kkk1111} is still hold if we replace $\beta$ by  $\alpha$. 
 Since  $\langle\lambda+\rho, \alpha^\vee\rangle\in\bbZ_{>0}$ and   $\langle\mu+\rho, \alpha^\vee\rangle\in\bbZ_{<0}$,  we have 
  $c_k\in \mathbb Z_{<0}$, and $a_k-c_k\in \mathbb Z_{<0}$ if $\alpha=2\eps_k$ and $2c_k\in \mathbb Z_{<0}$, and $2(a_k-c_k)\in \mathbb Z_{<0}$ if $\alpha=\eps_k$, and $c_k-c_l\in \mathbb Z_{<0}$, and $(a_k-c_k)-(a_l-c_l)\in\bbZ_{<0}$  
  if $\alpha=\eps_k-\eps_l$, and $c_k+c_l\in \mathbb Z_{<0}$, $(a_k-c_k)+(a_l-c_l)\in\bbZ_{<0}$ if $\alpha=\eps_k+\eps_l$.
  
  In any case, $s_\alpha \cdot \mu \in \lambda+ \mathcal X_r$. 
	\end{proof}
	
	\begin{Lemma}\label{lem6}
		Let $\mu, \nu\in\Lambda^{\frp_I}$. Suppose $\nu=(ws_\beta)\cdot\mu\in\Lambda^{\frp_I}$ for some $\beta\in\Phi^+\backslash\Phi_I$ and $w\in W_I$.  Assume that $\langle\mu+\rho, \beta^\vee\rangle\in\bbZ_{>0}$. If $\mu\in\lambda+\mathcal X_r$, then $\nu\in\lambda+\mathcal X_r$.
	\end{Lemma}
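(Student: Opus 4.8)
The plan is to deduce Lemma~\ref{lem6} from Lemmas~\ref{1lem4} and \ref{1lem6} by a factorization of the element $ws_\beta$ into a product of reflections, each of which either does not change the $\lambda+\mathcal X_r$-membership (reflections coming from $W_I$) or is covered by Lemma~\ref{1lem4} (the single "outer" reflection $s_\beta$). The subtlety is that Lemma~\ref{1lem4} requires the positivity hypothesis $\langle\mu+\rho,\beta^\vee\rangle\in\bbZ_{>0}$ at the point to which the reflection is applied, and Lemma~\ref{1lem6} requires negativity $\langle\mu+\rho,\alpha^\vee\rangle\in\bbZ_{<0}$ for reflections $s_\alpha$ with $\alpha\in\Phi_I^+$; so one must track the signs of the relevant pairings as one moves through the word $ws_\beta$.

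First I would reduce to a convenient normal form: write $w=s_{\alpha_{i_1}}\cdots s_{\alpha_{i_m}}$ as a reduced word in the simple reflections of $W_I$, so that
\[
\nu = s_{\alpha_{i_1}}\cdots s_{\alpha_{i_m}} s_\beta\cdot\mu .
\]
Reading the word from right to left, set $\mu^{(0)}=\mu$, $\mu^{(1)}=s_\beta\cdot\mu$, and $\mu^{(j+1)}=s_{\alpha_{i_{m-j+1}}}\cdot\mu^{(j)}$ for $j\ge 1$, so that $\mu^{(m+1)}=\nu$. By Lemma~\ref{1lem4}, $\mu^{(1)}\in\lambda+\mathcal X_r$ (using the hypothesis $\langle\mu+\rho,\beta^\vee\rangle\in\bbZ_{>0}$ and $\mu\in\lambda+\mathcal X_r$). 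For the remaining steps I want to show inductively that each $\mu^{(j)}\in\lambda+\mathcal X_r$: if the reflection $s_\alpha$ applied at step $j$ satisfies $\langle\mu^{(j)}+\rho,\alpha^\vee\rangle\in\bbZ_{<0}$ then Lemma~\ref{1lem6} finishes that step; if instead $\langle\mu^{(j)}+\rho,\alpha^\vee\rangle\in\bbZ_{\ge 0}$, then $s_\alpha\cdot\mu^{(j)}\ge\mu^{(j)}$ (or equals it), and since both $\mu^{(j)}$ and $\mu^{(j+1)}$ lie in $\Lambda^{\frp_I}$, a sign/stabilizer argument using that $F(\lambda)$ is $1$-dimensional (equivalently $\langle\lambda+\rho,\alpha^\vee\rangle=1$ for $\alpha\in I$, from \eqref{tilass}) forces $\mu^{(j+1)}=\mu^{(j)}$, so membership is trivially preserved. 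The key point making the case distinction exhaustive is that $W_I$-reflections applied to weights in $\Lambda^{\frp_I}$ that land back in $\Lambda^{\frp_I}$ either fix the weight or are "downward" in the dot order; I would verify this directly from Lemma~\ref{lem1} exactly as in the proof of Lemma~\ref{1lem21}.

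An alternative, perhaps cleaner, route: show that $\mathcal X_r$ (hence $\lambda+\mathcal X_r$) is $W$-stable under the dot action relative to $\lambda$, i.e. that for \emph{any} $w'\in W$ and $\mu\in\lambda+\mathcal X_r$ with $w'\cdot\mu\in\Lambda^{\frp_I}$ one has $w'\cdot\mu\in\lambda+\mathcal X_r$, provided we only ever apply reflections in the "$\ge0$-then-reflect" direction for $\Phi_I$ and the given-sign direction for $\Phi^+\setminus\Phi_I$. This is essentially the content of the explicit inequalities $|2c_k-a_k|\le|a_k|$, $|a_l-c_l+c_k|+|a_k-c_k+c_l|\le|a_k|+|a_l|$, etc., established in the proofs of Lemmas~\ref{1lem4} and \ref{1lem6}; one checks that the same inequalities hold whenever the relevant pairing has the "right" sign, and the factorization $ws_\beta$ produces exactly such a chain. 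I expect the main obstacle to be the bookkeeping of signs: one must confirm that along the chain $\mu^{(0)},\mu^{(1)},\dots,\mu^{(m+1)}$ each intermediate reflection is applied in a direction for which an inequality of the above type is available, rather than in the "wrong" direction where the coordinate bound could increase. Handling this is where the reducedness of the word for $w$ and the hypothesis $\nu\in\Lambda^{\frp_I}$ (which constrains how the word can act) are both needed; once that is in place, the coordinate inequalities from the earlier lemmas close the argument.
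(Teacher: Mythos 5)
Your overall route is the same as the paper's: write $w$ as a reduced word $s_{\alpha_1}\cdots s_{\alpha_l}$ in $W_I$, pass from $\mu$ to $s_\beta\cdot\mu$ by Lemma~\ref{1lem4}, and then climb the chain by applying Lemma~\ref{1lem6} to one simple reflection at a time. The first step is fine. The gap is in your treatment of the remaining steps. In the branch where $\langle\mu^{(j)}+\rho,\alpha^\vee\rangle\in\bbZ_{\ge 0}$ you invoke a ``stabilizer argument'' that forces $\mu^{(j+1)}=\mu^{(j)}$, resting on the claim that both $\mu^{(j)}$ and $\mu^{(j+1)}$ lie in $\Lambda^{\frp_I}$. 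But only the two endpoints $\mu$ and $\nu$ of the chain are assumed $\frp_I$-dominant; the intermediate weights need not be, so this branch has no foundation. And the case cannot be waved away: if $\langle\mu^{(j)}+\rho,\alpha^\vee\rangle>0$ for $\alpha=\eps_k-\eps_l\in\Phi_I^+$, the reflection pushes the coordinates apart and $\sum_i|a_i|$ can strictly increase (with $a_k=a_l=0$ and $c_l-c_k=1$ the new coordinates are $\pm1$), so membership in $\lambda+\mathcal X_r$ is genuinely not preserved when a $\Phi_I$-reflection is applied in the ``wrong'' direction. (Also, a small slip: a nonnegative pairing gives $s_\alpha\cdot\mu^{(j)}\le\mu^{(j)}$, not $\ge$.)

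What the proof actually requires --- and what the paper supplies --- is that the nonnegative case never occurs: every intermediate pairing $\langle s_{\alpha_j}\cdots s_{\alpha_l}s_\beta\cdot\mu+\rho,\alpha_{j-1}^\vee\rangle$ lies in $\bbZ_{<0}$, so Lemma~\ref{1lem6} applies at every step. This is where $\nu\in\Lambda^{\frp_I}$ and the reducedness of the word enter: since $\alpha_1\in I$ and $\nu$ is $\frp_I$-dominant, $\langle\nu+\rho,\alpha_1^\vee\rangle\ge1$, hence $\langle s_{\alpha_2}\cdots s_{\alpha_l}s_\beta\cdot\mu+\rho,\alpha_1^\vee\rangle\le-1$, and this negativity is propagated down the chain using that $s_{\alpha_1}$ permutes $\Phi^+\setminus\{\alpha_1\}$. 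You correctly identify this sign bookkeeping as the main obstacle and name the right hypotheses, but you do not carry it out, and the fallback you propose in its place does not work; so the crux of the lemma is missing from the proposal.
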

	\begin{proof}  Let $s_{\alpha_1}\cdots s_{\alpha_l}$ be a reduced expression of $w$.  Since $\nu\in \Lambda^{\frp_I}$, $\langle \nu, \alpha_1^\vee \rangle \ge 0$, we have  $$(s_{\alpha_2}\cdots s_{\alpha_l}s_\beta, \alpha_1)<-1.$$ Note that $s_{\alpha_1}\gamma\in \Phi^+$ for any $\alpha_1\ne \gamma\in \Phi^+$. This implies $\langle s_{\alpha_2}\cdots s_{\alpha_l} s_{\beta}\cdot \mu, \alpha_2^\vee\rangle \in \mathbb Z_{\ge 0} $, and hence   $\langle s_{\alpha_3}\cdots s_{\alpha_l} s_{\beta}\cdot \mu, \alpha_2^\vee\rangle \in \mathbb Z_{< 0} $. Similarly, we have $\langle s_{\alpha_j}\cdots s_{\alpha_l} s_\beta\cdot\mu,  \alpha_{{j-1}}^\vee\rangle \in \mathbb Z_{<0}$ for all $4\le j\le l$. Here we set $\alpha_{l+1}=\beta$. Now, we can obtain the result by first applying Lemma~\ref{1lem4} to $\beta$, then
 applying Lemma \ref{1lem6} to $\alpha_l, \cdots, \alpha_1$. 
	\end{proof}

For any object $M\in \mathcal O^\frp$, let $\text{Rad}^i M=\text{Rad}(\text{Rad}^{i-1} M )$ for $i\ge 1$ and $\text{Rad}^0 M=M$, where 
$\text{Rad}M$  is the radical of $M$.  
For any $\mu, \nu\in \Lambda^{\frp_I}$ write $\mu>\nu$ if $\text{Hom}_{\frg} (M^\frp(\nu), M^\frp (\mu))\neq 0$. If  $\mu>\nu$, define 
\begin{equation}\label{psi1} \Psi_{\mu, \nu}=\{ \beta\in \Phi^+\setminus \Phi_I\mid \langle \mu+\rho, \beta^\vee\rangle\in  \bbZ_{>0}, \nu=(w_\beta s_\beta)\cdot \mu \text{ for some $w_\beta\in W_I$ } \}.\end{equation}
Let $K_0(\mathcal O^\frp)$ denote the Grothendieck group of the parabolic category $\mathcal O^\frp$. 
For each $M \in \mathcal O^\frp$, let $[M]$   be the  corresponding element in 
$ K_0(\mathcal O^\frp)$.
 
\begin{Prop}\label{HX1}  \cite[Corollary~5.6]{HX}, \cite[Lemma~3.3]{XZ}. Suppose $\mu\in \Lambda^{\frp_I}$.
	\begin{itemize} \item [(1)] 
		$\sum_{i>0} [\text{Rad}^i M^\frp (\mu)  ]=\sum_{\mu>\xi\in \Lambda^{\frp_I}}  c(\mu, \xi)  [M^\frp (\xi)]$, where  $c(\mu, \xi)$ is called the Jantzen coefficient associated with $(\mu, \xi)$    \item [(2)] If $\mu>\xi$, then $c(\mu, \xi)=\sum_{\beta\in \Psi_{\mu, \xi} } (-1)^{\ell(w_\beta)} $, where $\ell(\ ) $ is the length function on $W$ (and hence on $W_I$). 
	\end{itemize} 	
\end{Prop}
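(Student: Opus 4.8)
The plan is to treat part~(1) as the Jantzen sum formula for parabolic Verma modules combined with the coincidence of the Jantzen and radical filtrations, and part~(2) as a purely combinatorial reorganization of that formula. Throughout I take $c(\mu,\xi)$ to be \emph{defined} as the multiplicity of $[M^\frp(\xi)]$ occurring on the right-hand side of the Jantzen sum formula; then part~(1) is the assertion that one may replace the Jantzen layers by the radical layers, and part~(2) is the evaluation of that multiplicity.

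For part~(1): recall the Jantzen filtration $M^\frp(\mu)=M^\frp(\mu)^0\supseteq M^\frp(\mu)^1\supseteq\cdots$, obtained by deforming the highest weight along a line and reading off orders of vanishing of the contravariant form. Jantzen's deformation argument, in its parabolic form (see \cite[Corollary~5.6]{HX} and the references there), yields an identity $\sum_{i>0}[M^\frp(\mu)^i]=\sum_{\xi}c(\mu,\xi)[M^\frp(\xi)]$ in $K_0(\mathcal O^\frp)$, supported on weights $\xi\in\Lambda^{\frp_I}$ lying below $\mu$ in the linkage order. It then remains to invoke the parabolic analogue of the Jantzen conjectures, after Beilinson--Bernstein and established in the present generality in \cite{HX}, namely $M^\frp(\mu)^i=\text{Rad}^i M^\frp(\mu)$ for every $i\ge 0$; summing over $i>0$ gives part~(1). (That $c(\mu,\xi)\ne0$ forces $\mathrm{Hom}_\frg(M^\frp(\xi),M^\frp(\mu))\ne0$, i.e.\ $\mu>\xi$, is a standard consequence of the explicit formula in part~(2) and of the theory of homomorphisms between parabolic Verma modules within a block.) This coincidence of filtrations is the one genuinely deep ingredient, and the main obstacle: its proof uses the Kazhdan--Lusztig equivalence between a regular integral block of $\mathcal O^\frp$ and a category of perverse sheaves, together with the purity of the weight filtration, which forces the Jantzen $(=$ weight$)$ filtration to be rigid and hence to agree with the radical filtration.

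For part~(2): the parabolic sum formula can be written
\[
\sum_{i>0}[M^\frp(\mu)^i]\;=\;\sum_{\substack{\beta\in\Phi^+\setminus\Phi_I,\\ \langle\mu+\rho,\beta^\vee\rangle\in\bbZ_{>0}}}\overline{\chi}^{\,\frp}(s_\beta\cdot\mu),
\]
where there is no contribution indexed by $\Phi_I^+$ because the corresponding factors of the determinant of the contravariant form are absorbed by the $($nondegenerate$)$ Shapovalov form of the finite-dimensional $\frl$-module $F(\mu)$, and where the straightened character is $\overline{\chi}^{\,\frp}(\nu):=(-1)^{\ell(w)}[M^\frp(w\cdot\nu)]$ if $w\cdot\nu\in\Lambda^{\frp_I}$ for the $($necessarily unique$)$ $w\in W_I$ with that property, and $\overline{\chi}^{\,\frp}(\nu):=0$ if $\nu+\rho$ is $W_I$-singular; uniqueness of $w$ holds because $\xi+\rho$ is $W_I$-regular whenever $\xi\in\Lambda^{\frp_I}$, since $\langle\xi+\rho,\alpha^\vee\rangle\in\bbZ_{>0}$ for all $\alpha\in I$. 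Now fix $\xi\in\Lambda^{\frp_I}$ and collect the coefficient of $[M^\frp(\xi)]$: the term indexed by $\beta$ survives precisely when $s_\beta\cdot\mu$ straightens to $\xi$, i.e.\ when $\xi=(w_\beta s_\beta)\cdot\mu$ for the unique $w_\beta\in W_I$ --- which is exactly the condition defining the set $\Psi_{\mu,\xi}$ in \eqref{psi1} --- and that surviving term carries the sign $(-1)^{\ell(w_\beta)}$. Hence $c(\mu,\xi)=\sum_{\beta\in\Psi_{\mu,\xi}}(-1)^{\ell(w_\beta)}$, which is part~(2). Everything in this last step is bookkeeping; the real work is entirely in the input cited for part~(1).
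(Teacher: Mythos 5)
The paper offers no proof of this proposition: it is imported verbatim from \cite[Corollary~5.6]{HX} and \cite[Lemma~3.3]{XZ}. Your sketch correctly reconstructs the standard argument behind those citations --- the parabolic Jantzen sum formula, the coincidence of the Jantzen and radical filtrations (the one genuinely deep ingredient, which you rightly flag and defer to \cite{HX}), and the $W_I$-straightening that collects the coefficient of $[M^\frp(\xi)]$ into $\sum_{\beta\in\Psi_{\mu,\xi}}(-1)^{\ell(w_\beta)}$ --- so in substance you are doing exactly what the paper does, namely resting the result on the cited literature, while additionally explaining why it holds.
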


\begin{Lemma}\label{lem7}
		Let $\mu, \nu\in\Lambda^{\frp_I}$ such that $\mu\neq \nu $. If $[M^\frp(\mu): L(\nu)]\neq0$, then there exists a series $\nu=\mu^k<\cdots<\mu^1<\mu^0=\mu$ such that 
       $\mu^{i}=(w_is_{\beta_i})\cdot\mu^{i-1}$ for some $\beta_i\in\Phi^+\backslash\Phi_I$ and $w_i\in W_I$, and $1\le i\le k$. Moreover, $\langle\mu^i+\rho, \beta_i^\vee\rangle\in\bbZ_{>0}$.
	\end{Lemma}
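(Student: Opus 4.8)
\textbf{Proof proposal for Lemma~\ref{lem7}.} The plan is to induct on the number of parabolic Verma subquotients lying strictly between $L(\nu)$ and the top of $M^\frp(\mu)$, using the BGG-type machinery encoded in Proposition~\ref{HX1} together with Lemma~\ref{lem6}. First I would set up the induction on the length of a maximal chain in the interval $[\nu,\mu]$ of $\Lambda^{\frp_I}$ under the order $>$ (which is finite because $\mathcal O^\frp$ is a highest weight category with finite-length objects). The base case is when $L(\nu)$ occurs already in $\mathrm{Rad}\,M^\frp(\mu)/\mathrm{Rad}^2 M^\frp(\mu)$, or more precisely when $L(\nu)$ is a composition factor of some $M^\frp(\xi)$ with $c(\mu,\xi)\neq 0$ and $\nu=\xi$; here one chain of length one suffices, and the condition $\langle\mu+\rho,\beta^\vee\rangle\in\bbZ_{>0}$ with $\xi=(w_\beta s_\beta)\cdot\mu$ comes directly from the definition of $\Psi_{\mu,\xi}$ in \eqref{psi1} combined with Proposition~\ref{HX1}(2): since $c(\mu,\xi)\neq 0$, the set $\Psi_{\mu,\xi}$ is nonempty, so such a $\beta$ and $w_\beta\in W_I$ exist.

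For the inductive step, I would use the standard fact that if $[M^\frp(\mu):L(\nu)]\neq 0$ with $\mu\neq\nu$, then $L(\nu)$ is a composition factor of $\mathrm{Rad}\,M^\frp(\mu)=\sum_{i\geq 1}\mathrm{Rad}^i M^\frp(\mu)$ at the level of Grothendieck groups. By Proposition~\ref{HX1}(1), $\sum_{i>0}[\mathrm{Rad}^i M^\frp(\mu)]=\sum_{\mu>\xi}c(\mu,\xi)[M^\frp(\xi)]$, so there is some $\xi\in\Lambda^{\frp_I}$ with $\mu>\xi$, $c(\mu,\xi)\neq 0$, and $[M^\frp(\xi):L(\nu)]\neq 0$. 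By Proposition~\ref{HX1}(2) applied to this $\xi$, pick $\beta_1\in\Psi_{\mu,\xi}$, so $\langle\mu+\rho,\beta_1^\vee\rangle\in\bbZ_{>0}$ and $\xi=(w_1 s_{\beta_1})\cdot\mu$ for some $w_1\in W_I$; set $\mu^1=\xi$. If $\mu^1=\nu$ we are done with a chain of length one; otherwise $[M^\frp(\mu^1):L(\nu)]\neq 0$ with $\mu^1\neq\nu$, and the interval $[\nu,\mu^1]$ has strictly shorter maximal chain length than $[\nu,\mu]$ (since $\mu>\mu^1$), so the inductive hypothesis gives the desired series $\nu=\mu^k<\cdots<\mu^1$ with all the stated properties; prepending $\mu^0=\mu$ completes the chain.

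The one point requiring care is ensuring the chain is genuinely decreasing in the order $>$ at each step and that the length of the relevant interval strictly decreases, so that the induction is well-founded; this is where I would invoke that $\mu>\xi$ in Proposition~\ref{HX1}(1) is a strict relation and that $\mathcal O^\frp$ has finite length, so there is no infinite descending chain. I do not actually need Lemma~\ref{lem6} or Lemmas~\ref{1lem4}--\ref{1lem6} for the existence of the chain itself — those are the tools that will be used afterwards (in the proof that $\scI_{i,j}$ is saturated) to show each $\mu^i$ stays inside $\lambda+\mathcal X_r$. So within Lemma~\ref{lem7} proper, the content is essentially a repackaging of the Jantzen sum formula as recorded in Proposition~\ref{HX1}; the main (mild) obstacle is bookkeeping the induction parameter correctly and confirming that $\Psi_{\mu^{i-1},\mu^i}\neq\emptyset$ at each stage, which follows from $c(\mu^{i-1},\mu^i)\neq 0$ via Proposition~\ref{HX1}(2).
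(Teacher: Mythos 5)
Your proposal is correct and follows essentially the same route as the paper: both extract an intermediate weight $\xi$ from the Jantzen-type identity in Proposition~\ref{HX1}(1), use Proposition~\ref{HX1}(2) to see $\Psi_{\mu,\xi}\neq\emptyset$, and iterate; the paper justifies termination by noting $\xi\in W\cdot\mu$ is a finite set, while you invoke finiteness of chains in the interval, which amounts to the same thing.
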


\begin{proof} Thanks to Proposition~\ref{HX1}(1), there is a $\xi\in \Lambda^{\frp_I}$ such that $\mu>\xi\ge \nu$ and  $$c(\mu, \xi) [M^\frp (\xi): L(\nu)]\neq 0.$$
	If $\xi=\nu$, then $c(\mu, \nu)\neq 0$. By Proposition~\ref{HX1}(2), $\Psi_{\mu, \nu}\neq \emptyset$, and hence there is a $\beta\in \Phi^+\setminus \Phi_I$ such that $\nu=(w_\beta s_\beta )\cdot \mu$ and $\langle \mu+\rho, \beta^\vee\rangle \in \mathbb Z_{>0}$. 
    
    If $\xi\neq \nu$, then $c(\mu, \xi)\neq 0$ and $  [M^\frp (\xi): L(\nu)]\neq 0$. By Proposition~\ref{HX1}(2), $\Psi_{\mu, \xi}\neq \emptyset$.  Let $\mu^1=\xi$. Replacing $\mu$ by $\xi$, we apply the above procedure.  Since $\xi\in W\cdot \mu$,  this  procedure will end in finite steps. 
\end{proof}


\textbf{Proof of Theorem~\ref{saturated}:} Let  $\lambda_{I_i, \mathbf c}$ be  in \eqref{deltac}  satisfying  Assumption~\ref{simple11}, where $I_1$ and $I_2$ are defined as   in Definition~\ref{assum12}. Then $\lambda_{I_i, \mathbf c}$ is a 
 special case of  current $\lambda$ in \eqref{tilass}. This allows us  to  freely use  previous results in this section. 

Suppose $\mu\in\scI_{i,r}$. If  $\nu\in\Lambda^{\frp_{I_i}}$ satisfies  $\nu\preceq \mu$, then there exists   a sequence $$\nu=\gamma^0, \gamma^1, \ldots, \gamma^j=\mu$$ in $\Lambda^{\frp_{I_i}}$ such that $\relax [M^{\frp_{I_i}} (\gamma^l): L(\gamma^{l-1})]\neq 0$ for all $1\le l\le j$. 
Since we keep the Assumption~\ref{keyassu}, we have  $p_t-p_{t-1}\ge 2r$ for all $1\le t\le k$. This allows us to apply 
Lemma~\ref{1lem5}, which asserts  that $\scI_{i, r}\subset \lambda_{I_i, \mathbf c}+\mathcal X_r$. Consequently, $\mu\in \scI_{i, r}\subset \lambda_{I_i, \mathbf c}+\mathcal X_{r} $.
Applying  Lemmas \ref{lem6} and \ref{lem7} repeatedly, we deduce that   $\nu\in \lambda_{I_i, \mathbf c}+\mathcal X_{r} $. Finally,  since $i\neq 1$ if $\Phi=B_n$, we conclude that $\nu\in \scI_{i,r}$, as 
 $$\begin{aligned} (\lambda_{I_i, \mathbf c}+\mathcal X_{r} )\cap \Lambda^{\frp_{I_i}}&=\lambda_{I_i, \mathbf c}+(\mathcal X_{r}\cap \Lambda^{\frp_{I_i}}) \text{ by \eqref{tilass}}
\\&= \lambda_{I_i, \mathbf c}+\scK_r \text{ by   Lemma~\ref{1lem31}(1)-(2)}\\&=\scI_{i, r}.\end{aligned}$$
Finally, for each $0\le j<r$, $\scI_{i, j}$ is still saturated since  Assumption~\ref{keyassu} and Assumption~\ref{simple11}  are still available for $M_{I_i, j}$.  
\qed

\end{document}